\theoremstyle{definition}
\def\fnum{equation}
\newtheorem{Thm}[\fnum]{Theorem}
\newtheorem{Cor}[\fnum]{Corollary}
\newtheorem{Lem}[\fnum]{Lemma}
\newtheorem{Con}[\fnum]{Conjecture}
\newtheorem{Rem}[\fnum]{Remark}
\newtheorem{Pro}[\fnum]{Proposition}
\numberwithin{equation}{section}
\newcommand{\Vol}{{\text{Vol}}}
\newcommand{\nn}{{\bf{n}}}
\def\RR{{\bold R}}
\def\SS{{\bold S}}
\newcommand{\dv}{{\text {div}}}
\newcommand{\e}{{\text {e}}}
\newcommand{\Area}{{\text {Area}}}
\newcommand{\cT}{{\mathcal{T}}}
\newcommand{\cB}{{\mathcal{B}}}
\newcommand{\cL}{{\mathcal{L}}}
\newcommand{\eqr}[1]{(\ref{#1})}
\title[Generic singularities]{Generic mean curvature flow I; generic singularities}
\author{Tobias H. Colding}%
\address{MIT, Dept. of Math.\\
77 Massachusetts Avenue, Cambridge, MA 02139-4307.}
\author{William P. Minicozzi II}%
\address{Johns Hopkins University\\
Dept. of Math.\\
3400 N. Charles St.\\
Baltimore, MD 21218}
\thanks{The   authors
were partially supported by NSF Grants DMS  0606629, DMS
0405695,  and NSF FRG grants DMS 
 0854774 and DMS 0853501}
\email{colding@math.mit.edu  and minicozz@math.jhu.edu}
\begin{document}

\maketitle

\begin{abstract}
 It has long been conjectured that starting at a generic smooth closed  embedded surface in $\RR^3$, the mean curvature flow remains smooth until it arrives at a singularity in a neighborhood of which the flow looks like concentric spheres or cylinders.  That is, the only   singularities of a generic flow are spherical or cylindrical.  We will address this conjecture here and in a sequel.  The higher dimensional case will be addressed elsewhere.

The key  in showing this conjecture is to show that
  shrinking spheres, cylinders and planes are the only stable self-shrinkers under the mean curvature flow.  We prove this here in all dimensions.  An easy consequence of this is that every other singularity than spheres and cylinders can be perturbed away.
\end{abstract}

\section{Introduction}

One of the most important problems in mean curvature flow is to understand the possible singularities that the flow goes through.  Singularities are unavoidable as the flow contracts any closed embedded hypersurface in Euclidean space eventually leading to extinction of the evolving hypersurfaces.  Classically, mean curvature flow  was only defined up to the first singular time, but a number of ways to define weak solutions have been developed over the last $30$ years by Brakke, \cite{B}, Evans-Spruck, \cite{EvSp}, and Chen-Giga-Goto, \cite{CGG}; cf. Osher-Sethian, \cite{OSe}.

Through the combined work of Abresch-Langer, Calabi, Epstein, Gage, Grayson, and Hamilton, among others, singularities for the curve shortening flow in the plane, that is, mean curvature flow for curves, are well understood.  In higher dimensions, a key starting point for singularity analysis is Huisken's montonicity formula, \cite{H3}.  This is because the monotonicity implies, according to Huisken, \cite{H3}, that the flow is asymptotically self-similar near a given singularity  and, thus,  is modelled by self-shrinking solutions of the flow.    Huisken's original proof applied to so-called type I singularities, but Ilmanen and White  extended Huisken's formula to weak solutions and proved asymptotic self-similarity for all singularities of the flow with convergence now in the weak sense of geometric measure theory; see lemma $8$ in \cite{I1}.

Here and in a paper to follow, we will show a long-standing   conjecture (of Huisken) classifying the singularities of   mean curvature flow starting at a generic closed embedded surface.  In a third paper, we will address the higher dimensional case.
The current paper shows that, in all dimensions,  the only singularities that cannot be perturbed away are cylinders and spheres.  It also gives an application to generic mean curvature flow in $\RR^3$ showing that   generic mean curvature flow that disappears in a compact point does so in a round point.
 Well known examples  of Angenent and numerics of Angenent, Chopp, and Ilmanen discussed below show that there is virtually no hope of classifying the singularities  for mean curvature flow starting at an arbitrary hypersurface in $\RR^{n+1}$ for $n > 1${\footnote{Since time slices of self-shrinkers are minimal hypersurfaces for a conformally changed metric on $\RR^{n+1}$ (see Section \ref{s:one}), some classification of self-shrinkers may be possible in $\RR^3$ using the ideas of \cite{CM5}--\cite{CM10}.}}; the thrust of Huisken's conjecture is that this can be done for generic initial hypersurfaces.

Under various assumptions, like convexity, mean convexity, and two-convexity, or for curves in the plane, the blow ups of singularities have been classified by various authors including Huisken, Gage-Hamilton, Grayson, Huisken-Sinestrari, and White.
 Unlike in our case,   in all previous classifications of possible singularities for $n > 1$, assumptions, like mean convexity, were made that immediately guaranteed that none of the exotic singularities described above could occur.  For instance, since mean convexity is preserved under the flow, if the initial surface is mean convex, then so are  all blow ups   and, since none of the exotic self-similar flows mentioned above are mean convex, they are immediately ruled out as singularities.

\bigskip
A one-parameter family $M_t$ of hypersurfaces in $\RR^{n+1}$ flows by mean curvature if
\begin{equation}
	\left( \partial_t x \right)^{\perp} =  \bar{H} \, ,
\end{equation}
where
  $\bar{H} = - H \, \nn$ is the mean curvature vector, $\nn$ is the outward unit normal, $v^{\perp}$ is the  normal part of a vector $v$,
  and the mean curvature $H$ is given by
\begin{equation}	\label{e:defH}
	H = \dv \,  \nn \, .
\end{equation}
With this convention, $H$ is $n/R$ on the $n$-sphere of radius $R$ in $\RR^{n+1}$ and $H$ is $k/R$ on the ``cylinder'' $\SS^k \times \RR^{n-k} \subset \RR^{n+1}$ of radius $R$.  Thus, in either case, the mean curvature vector ``points inwards'' and, hence, the flow contracts.

The simplest (non-static) mean curvature flow is given by the one-parameter family of shrinking spheres $M_t \subset \RR^{n+1}$ centered at the origin and with radius  $\sqrt{-2nt}$ for $t \leq 0$.   This  is a smooth flow except at the origin at time $0$ when the flow becomes extinct.  In \cite{H1}, Huisken showed that MCF starting at any smooth compact convex initial hypersurface in $\RR^{n+1}$ remains smooth and convex until it becomes extinct at a point and if we rescale the flow about the point in space-time where it becomes extinct, then the rescalings converge to round spheres.
  Huisken-Sinestrari, \cite{HS1}, \cite{HS2},   and  White, \cite{W2}, \cite{W3}, have proven a number of striking and important results about MCF of mean convex  hypersurfaces and their singularities and Huisken-Sinestrari, \cite{HS3}, have developed a theory for MCF with
  surgery for two-convex hypersurfaces in $\RR^{n+1}$ ($n\geq 3$) using their analysis of singularities (and their blow ups).

  Huisken's proof that   convex hypersurfaces become extinct in round points applied for $n \geq2$, but the corresponding result for convex curves was proven   by Gage and Hamilton in \cite{GH}.  In fact, in \cite{G1} (see also \cite{G3}, \cite{EpG},  \cite{Ha1}, \cite{H5}),  Grayson showed that any simple closed smooth curve in $\RR^2$ stays smooth under the curve shortening flow, eventually becomes convex, and thus will become extinct in a ``round point''.  The situation is more complicated for surfaces   where there are many other potential types of singularities that can arise.  For instance,  Grayson constructed a rotationally symmetric dumbbell in \cite{G2} where the neck pinches off before the two bells become extinct.   For rescalings of the singularity at the neck, the resulting blow ups cannot be extinctions and, thus, are certainly not spheres.  In fact, rescalings of the singularity converge to shrinking cylinders; we refer to White's survey
 \cite{W1} for further discussion of this example.

The family of shrinking spheres of radius $\sqrt{-2nt}$ is self-similar in the sense that
 $M_t$ is given by
\begin{equation}	\label{e:selfshr}
	M_t = \sqrt{-t} \, \, M_{-1} \, .
\end{equation}
A MCF  $M_t$ satisfying \eqr{e:selfshr} is called a self-shrinker.
Self-shrinkers play an important role in the study of mean curvature flow, not least because they describe all possible blow ups at a given singularity of a mean curvature flow.    To explain this,  we will need the notion of a tangent flow, cf. \cite{I1}, \cite{W4}, which generalizes the tangent cone construction from minimal surfaces.  The basic idea is that we can rescale a MCF in space and time to obtain a new MCF thereby expanding a small neighborhood of the point that we want to focus on.  Huisken's monotonicity gives uniform control over these rescalings and a standard compactness theorem then gives a subsequence converging to a limiting solution of the MCF.  This limit is called a tangent flow.  A tangent flow will
 achieve equality in Huisken's monotonicity formula and, thus, must   be a self-shrinker by \cite{H3},
 \cite{I1}.

 The precise definition of a {\it {tangent flow}} at a point  $(x_0 , t_0)$  in space-time of a MCF
  $M_t$ is as follows:
   First translate $M_t$ in space-time to move $(x_0 , t_0)$ to $(0,0)$ and then take a sequence of parabolic dilations
$(x,t) \to (c_j \, x ,c_j^2 \, t)$ with $c_j \to \infty$ to get MCF's $M^j_t = c_j \,  \left( M_{c_j^{-2} \, t+ t_0} - x_0 \right)$.  Using Huisken's monotonicity formula, \cite{H3}, and Brakke's compactness theorem, \cite{B}, White \cite{W4} and Ilmanen \cite{I1} show that a subsequence of the $M^j_t$'s converges weakly
 to a limiting flow $\cT_t$ that we will call
  a {\emph{tangent flow}} at   $(x_0,t_0)$. Moreover, another application of Huisken's monotonicity shows that
   $\cT_t$ is a self-shrinker.  It is not known whether $\cT_t$ is unique. That is, whether different sequences of dilations might lead to different tangent flows.

In \cite{I1}, Ilmanen proved that in $\RR^3$  tangent flows at the first singular time must be smooth, although he left open the possibility of multiplicity.
However, he conjectured that the multiplicity must be one:

\begin{Con}	\label{c:m1}
(Ilmanen, see page $7$ of \cite{I1}; cf. \cite{E3})
For a smooth one-parameter family of closed embedded surfaces in $\RR^3$ flowing by mean curvature,
  every tangent flow at the first singular time has multiplicity one.
\end{Con}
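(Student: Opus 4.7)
The plan is to argue by contradiction, combining Ilmanen's regularity for tangent flows in $\RR^3$ with the stability theory for self-shrinkers developed in the present paper. Suppose some tangent flow $\cT_t$ at the first singular time $(x_0, t_0)$ has multiplicity $m \geq 2$. By \cite{I1}, the support of $\cT_t$ is a smooth properly embedded self-shrinker $\Sigma_t$, and $\cT_t = m \cdot \Sigma_t$. Since the flow $M_t$ is smooth for $t < t_0$, the rescalings $M^j_t$ in the definition of $\cT_t$ are smooth embedded surfaces converging to $m \Sigma_t$ as integral Brakke flows.

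The first step is to upgrade weak convergence to smooth multi-sheeted convergence. At each regular point of $\Sigma_{-1}$ the Gaussian density of the limit equals the integer $m$, so White's version of Brakke regularity forces smooth convergence with multiplicity $m$ on a neighborhood. Embeddedness of each $M^j_{-1}$ in $\RR^3$ then imposes a linear ordering on the sheets: on every compact $K \subset \Sigma_{-1} \setminus \Sing(\Sigma_{-1})$, for all large $j$, $M^j_{-1}$ decomposes in a tubular neighborhood of $K$ as $m$ ordered normal graphs $u^j_1 < \cdots < u^j_m$ with $\|u^j_i\|_{C^2(K)} \to 0$. A separation argument using Jordan--Brouwer in $\RR^3$ rules out sheets crossing one another inside the tube.

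Next, I would extract a limiting Jacobi field. Because each $u^j_i$ satisfies the self-shrinker equation to leading order, its linearization $Lu := \Delta_\Sigma u - \tfrac{1}{2}\langle x, \nabla u\rangle + (|A|^2 + \tfrac{1}{2})\, u$ applied to $u^j_i$ is quadratically small in $\|u^j_i\|_{C^2}$. Therefore the renormalized gap $w^j = (u^j_2 - u^j_1)/\|u^j_2 - u^j_1\|_{C^0(K)}$ nearly satisfies $Lw^j = 0$ and is nonnegative. Schauder estimates together with a diagonal argument across an exhaustion of $\Sigma$ produce a bounded, nonnegative, nontrivial $w$ on $\Sigma$ with $Lw = 0$. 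The existence of such a positive Jacobi field, paired with the stability classification proved in this paper, forces $\Sigma$ to be a plane, cylinder, or sphere. One then closes the loop using embeddedness of the pre-singular flow: on the sphere, a positive Jacobi field corresponds to scaling and would force extinction strictly before $t_0$; on the cylinder, the admissible positive bounded Jacobi field is a constant, and the gap between the two sheets evolves by an approximately linear heat equation with strictly positive zero-order term $|A|^2 + \tfrac{1}{2}$, which by the strong maximum principle drives the sheets to collide in finite time before $t_0$, contradicting embeddedness of $M_t$.

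The hard part, in my view, is the Jacobi-field step on the noncompact cylinder. The drift operator $L$ has a rich kernel from translations and rotations, so one cannot exclude bounded solutions purely by a Liouville theorem; one must ensure that the particular limit $w$ arising from the gap between two sheets is not a translational Jacobi field but instead has the positive, ``non-translational'' character that triggers the maximum-principle collision above. Ruling this out requires quantitative control on the embedded nature of $M^j_t$ at large spatial scales along the axis of the cylinder, uniformly in $j$, and matching this large-scale control with the local linearization is the crux of Conjecture \ref{c:m1}.
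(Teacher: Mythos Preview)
The statement you are addressing is labeled a \emph{Conjecture} in the paper; the authors do not prove it and in fact explicitly assume a version of it (``smooth up to and including the first singular time'') as a hypothesis in Theorem~\ref{c:grayson}. So there is no proof in the paper to compare against.

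That said, your sketch contains a genuine gap, and it is precisely the obstruction that makes the conjecture hard. The surfaces $M^j_{-1}$ are \emph{not} self-shrinkers: they are single time-slices of a mean curvature flow, and a fixed time-slice of MCF satisfies no elliptic equation. Consequently your sentence ``each $u^j_i$ satisfies the self-shrinker equation to leading order'' is false, and the renormalized gap $w^j = (u^j_2 - u^j_1)/\|u^j_2 - u^j_1\|$ has no reason to satisfy $Lw^j \approx 0$. What the gap does satisfy, in rescaled time $s = -\log(-t)$, is an approximate \emph{parabolic} equation $\partial_s w \approx Lw$; passing from this to a positive \emph{elliptic} Jacobi field $Lw = 0$ on $\Sigma$ would require controlling the gap uniformly over all rescaled times, not merely along the subsequence defining the tangent flow. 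That uniform control is essentially the content of the conjecture, not a lemma one can cite. Your final paragraph locates the difficulty in identifying \emph{which} Jacobi field arises on the cylinder, but the real obstruction is earlier: producing any positive solution of $Lw=0$ at all from the parabolic picture.

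A smaller point: if you \emph{did} have a positive $w$ on $\Sigma$ with $Lw = 0$, you would not need the $F$-stability classification or any case analysis on spheres and cylinders. The log-substitution argument (as in part (2) of Lemma~\ref{l:L2W12} with $\mu = 0$) gives the stability inequality $\int_\Sigma \eta^2(|A|^2+\tfrac12)\,\e^{-|x|^2/4} \le \int_\Sigma |\nabla\eta|^2\,\e^{-|x|^2/4}$ for all compactly supported $\eta$, i.e.\ $\mu_1 \ge 0$, which directly contradicts Theorem~\ref{t:spectral0} ($\mu_1 \le -\tfrac12$ for every self-shrinker with polynomial volume growth).
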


If this conjecture holds, then it would follow from Brakke's regularity theorem that  near a singularity the flow can be written as a graph of a function with small gradient over the tangent flow.

We will say that a MCF is smooth up to and including the first singular time if every tangent flow at the first singular time is smooth and has multiplicity one.  Conjecturally,  all MCF in $\RR^3$  are smooth up to the first singular time.

\bigskip
A self-similarly shrinking solution to MCF is completely determined by the $t=-1$ time-slice and, thus, we sometimes think of a self-similar flow as just that time-slice.

The simplest  self-shrinkers are $\RR^n$, the sphere of radius
$\sqrt{-2nt}$, and more generally  cylindrical products  $\SS^k \times \RR^{n-k}$ (where the $\SS^k$ has radius $\sqrt{-2kt}$).   All of these examples are mean convex (i.e., have $H \geq 0$) and, in fact, these are the only mean convex examples under mild assumptions; see \cite{H3}, \cite{H4}, \cite{AbLa}, and Theorem  \ref{t:huisken}.

Without the assumption on mean convexity, then there are expected to be many more examples of self-shrinkers in $\RR^3$.   In particular, Angenent, \cite{A}, has constructed a self-shrinking torus
(``shrinking donut'')  of revolution{\footnote{In fact, for every $n\geq 2$, by rotating a curve in plane, Angenent constructed an embedded self-shrinker in $\RR^{n+1}$ that is topologically  $\SS^1 \times \SS^{n-1}$.  The curve   satisfies  an ODE that  can be interpreted as the geodesic equation for a singular metric.}} and there is numerical evidence for a number of other examples  (see Chopp, \cite{Ch}, Angenent-Chopp-Ilmanen, \cite{AChI}, Ilmanen, \cite{I3}, and Nguyen, \cite{N1}, \cite{N2}).  We will see in this paper that all self-shrinkers except the simplest are highly unstable and, thus, hard to find.
We will use this instability to perturb them away in a generic flow.

Angenent used his self-shrinking torus to give a new proof that the dumbbell has a neck pinching singularity before the two bells become extinct.  The idea is to make the neck of the dumbbell long and thin and the bells on either side large enough to contain two large round spheres.  By the maximum principle, the  interior of the MCF of the dumbbell will contain the shrinking spheres and, thus, cannot become extinct until after the spheres do.  On the other hand, Angenent used the self-shrinking torus to encircle the neck of the dumbbell and, thus, conclude that the neck  would pinch off before the spheres had shrunk to points; see also White, \cite{W1}, for a beautiful expository discussion of the dumbbell, tangent flows, and self-similar solutions.

While $M_t$ will always be a one-parameter family of hypersurfaces flowing by mean curvature,
we will use $\Sigma$ to denote a single hypersurface and $\Sigma_s$ to denote a one-parameter variation of $\Sigma$.  Frequently, $\Sigma$ will be the time $t=-1$ slice of a self-shrinking solution $M_t$ of the mean curvature flow.

Given $x_0 \in \RR^{n+1}$ and $t_0 > 0$,  define the functional $F_{x_0 , t_0}$ (see \cite{H3}, $(6)$ in \cite{AChI}, or page $6$ in \cite{I3}; cf.  $2.4$ in \cite{I2}) by
 \begin{equation}	\label{e:Ft0}
 	F_{x_0 , t_0} (\Sigma) = (4\pi t_0)^{-\frac{n}{2}} \, \int_{\Sigma} \, \e^{\frac{-|x- x_0|^2}{4t_0}} \, d\mu \, .
\end{equation}
The main point of these functionals is that $\Sigma$ is a critical point of $F_{x_0,t_0}$ precisely when it is the time $t=-t_0$ slice of a self-shrinking solution of the mean curvature flow that becomes extinct at $x=x_0$ and $t=0$.
The {\it {entropy}} $\lambda = \lambda (\Sigma)$ of $\Sigma$ will
be the supremum  of the $F_{x_0,t_0}$ functionals
\begin{equation}	\label{e:lamb}
	\lambda =   \sup_{x_0,t_0} \, \, F_{x_0 , t_0} (\Sigma)   \, .
\end{equation}
The key properties of the entropy $\lambda$ are:
\begin{itemize}
\item   $\lambda$ is non-negative and invariant under dilations, rotations, or translations of $\Sigma$.
\item $\lambda (M_t)$ is non-increasing in $t$ if the hypersurfaces $M_t$ flow by mean curvature.
\item The critical points of $\lambda$ are self-shrinkers for the mean curvature flow.
\end{itemize}
These properties are the main {\underline{advantages}} of the entropy functional over the $F$ functionals.  The main {\underline{disadvantage}} of the entropy is that it need not depend smoothly on $\Sigma$.  To deal with this,
we will say that a self-shrinker is {\emph{entropy-stable}} if it is a local minimum   for the entropy functional.{\footnote{Here ``local'' means with respect to hypersurfaces that can be written as a graph  over the given hypersurface of a function with small $C^2$ norm. In particular, we do not require the support to be compact.}}

To illustrate our results, we will first specialize to the case where $n=2$, that is to mean curvature flow of surfaces in $\RR^3$.

\begin{Thm}	\label{c:nonlin1a}
Suppose that $\Sigma \subset \RR^3$ is a smooth complete embedded self-shrinker without boundary and with polynomial volume growth.
\begin{itemize}
\item If $\Sigma$ is not a sphere, a plane, or a cylinder,  then there is a graph $\tilde{\Sigma}$ over $\Sigma$ of a compactly supported function with arbitrarily small $C^m$ norm (for any fixed $m$) so that
 $\lambda( \tilde{\Sigma})<\lambda(\Sigma)$.
 \end{itemize}
In  particular,   $\Sigma$  cannot arise as a tangent flow to the MCF starting from $\tilde{\Sigma}$.
\end{Thm}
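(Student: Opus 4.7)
The plan is to reduce entropy instability of $\Sigma$ to a second-order variational question for the smoother functionals $F_{x_0,t_0}$, and then to classify the self-shrinkers for which this second-order question has a positive answer. Since $\lambda(\Sigma) = \sup_{x_0,t_0} F_{x_0,t_0}(\Sigma)$ is attained at the natural center of $\Sigma$ as a self-shrinker, one should call $\Sigma$ \emph{$F$-stable} when for every compactly supported normal variation $\Sigma_s$ there exists a compensating curve $(x_0(s), t_0(s))$ such that $\partial_s^2 F_{x_0(s), t_0(s)}(\Sigma_s) \geq 0$ at $s=0$. Allowing $(x_0(s), t_0(s))$ to vary is essential because translations of $\Sigma$ — which leave the entropy unchanged — produce strictly negative eigenvalues for the pure $F$-second-variation operator, and these must be cancelled by moving $x_0$. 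Using the drift-Laplace operator $L = \Delta - \tfrac12 \langle x, \nabla \cdot \rangle + |A|^2 + \tfrac12$ arising in this second variation, the first main step of the proof would be the classification: the only smooth complete embedded $F$-stable self-shrinkers in $\RR^{n+1}$ with polynomial volume growth are hyperplanes, round spheres and generalized cylinders, paralleling in spirit Huisken's theorem for mean convex shrinkers (Theorem \ref{t:huisken}).

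Granted this classification, the hypothesis that $\Sigma$ is none of these forces $\Sigma$ to be $F$-unstable. I would then produce an explicit compactly supported normal variation $\Sigma_s$ with arbitrarily small $C^m$ norm, together with a compensating curve $(x_0(s), t_0(s))$, so that
\[
F_{x_0(s), t_0(s)}(\Sigma_s) \;=\; \lambda(\Sigma) \;-\; c \, s^2 \;+\; O(s^3)
\]
for some $c > 0$ and small $s \neq 0$. Compact support with small $C^m$ norm comes from cutoff and rescaling of an $F$-destabilizing variation; the Gaussian weight combined with polynomial volume growth of $\Sigma$ makes the cutoff errors negligible.

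The main obstacle is the upgrade from this local decrease of a single $F_{x_0(s),t_0(s)}$ to a genuine decrease of the \emph{supremum} $\lambda(\Sigma_s) = \sup_{y_0, s_0} F_{y_0, s_0}(\Sigma_s)$: a priori the maximizer of $F_{\cdot,\cdot}(\Sigma_s)$ could jump far away from $(x_0, t_0)$ under perturbation. I would address this in two stages. First, a properness estimate: using polynomial volume growth and the Gaussian weight, $F_{y_0, s_0}(\Sigma_s) \to 0$ as $s_0 \to 0^+$, $s_0 \to \infty$, or $|y_0| \to \infty$, uniformly for small $s$, so all maximizers lie in a fixed compact region of $(y_0, s_0)$-space. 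Second, joint continuity of $F$ in $(\Sigma, y_0, s_0)$ together with the strict-maximum property for $\Sigma$ at $(x_0,t_0)$ (from Huisken's monotonicity applied to the self-shrinker flow through $\Sigma$) forces the maximizer for $\Sigma_s$ to lie near $(x_0, t_0)$; a local comparison with the quadratic decrease above then yields $\lambda(\Sigma_s) < \lambda(\Sigma)$. The ``in particular'' conclusion is immediate from the stated monotonicity of entropy under MCF: if $\Sigma$ arose as a tangent flow $\cT_t$ from $\tilde{\Sigma} = \Sigma_s$, then $\lambda(\Sigma) = \lambda(\cT_t) \leq \lambda(\tilde{\Sigma}) < \lambda(\Sigma)$, a contradiction.
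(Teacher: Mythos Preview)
Your overall architecture matches the paper exactly: define $F$-stability, classify $F$-stable self-shrinkers (Theorem \ref{t:liketo}), and then upgrade $F$-instability to entropy-instability (Theorem \ref{t:nonlin1c}).  Two points need correction, and the first one exposes a genuine conceptual gap.

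First, your classification is wrong: generalized cylinders $\SS^k \times \RR^{n-k}$ with $0<k<n$ are \emph{$F$-unstable}, not $F$-stable (see Case 2 in the proof of Theorem \ref{t:linearstable2}, where an explicit destabilizing variation is built from a linear coordinate on the $\RR^{n-k}$ factor).  The correct statement of Theorem \ref{t:liketo} is that spheres and hyperplanes are the \emph{only} $F$-stable self-shrinkers.  This matters because it shows that your upgrade step ``$F$-unstable $\Rightarrow$ entropy-unstable'' cannot hold unconditionally --- it fails for cylinders.  The missing hypothesis is that $\Sigma$ \emph{does not split off a line}: without it the function $(x_0,t_0) \mapsto F_{x_0,t_0}(\Sigma)$ has a non-strict maximum (an entire line of maximizers along the axis), and your maximizer-confinement argument collapses.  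The paper isolates this in Lemma \ref{l:strict} and Theorem \ref{t:nonlin1c}; in $\RR^3$ the hypothesis is automatic once $\Sigma$ is not a plane or cylinder, by Corollary \ref{c:curvey2}, but you should make that reduction explicit.

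Second, your properness estimate is incorrect as stated.  One has $\lim_{s_0 \to 0^+} F_{y_0,s_0}(\Sigma_s) = 1$ for $y_0 \in \Sigma_s$ (not $0$), and for noncompact $\Sigma$ there is no reason for $F_{y_0,s_0}(\Sigma_s) \to 0$ as $|y_0| \to \infty$.  The paper's Steps (4)--(5) in the proof of Theorem \ref{t:nonlin1c} handle this by comparing $\Sigma_s$ to $\Sigma$ outside the compact support of the variation, invoking the strict global maximum on $\Sigma$ from Lemma \ref{l:strict}, and using a lower bound on $\partial_{t_0} F$ (Lemma \ref{l:basicF}(2)) together with a covering argument for the regime $s_0 \to 0^+$.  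Your sketch needs these refinements in place of the claimed decay.
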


Motivated by this theorem, we will next define an ad hoc notion
of generic MCF that requires the least amount of technical set-up, yet should suffice for many applications.
A piece-wise MCF    is a finite collection of MCF's $M^i_t$ on time intervals $[t_i , t_{i+1}]$ so that each $M^{i+1}_{t_{i+1}}$ is the graph over $M^i_{t_{i+1}}$ of a function $u_{i+1}$,
   \begin{align}
   	\Area \, \left( M^{i+1}_{t_{i+1}} \right) &= \Area \, \left( M^i_{t_{i+1}} \right) \, , \\
    \lambda \, \left( M^{i+1}_{t_{i+1}} \right) &\leq  \lambda \, \left( M^i_{t_{i+1}} \right) \, .
   \end{align}
With this definition, area is non-increasing in $t$ even across the jumps.

For simplicity, we assume  in  Theorem  \ref{c:grayson} below that the MCF is smooth up to and including the first singular time.  As mentioned, this would be the case for any MCF in $\RR^3$ if the multiplicity one conjecture, Conjecture \ref{c:m1}, holds.

 The following theorem can be thought of as a generalization of the  results of Gage-Hamilton \cite{GH}, Grayson \cite{G1},
 and Huisken \cite{H1}.

 \begin{Thm}	\label{c:grayson}
For any closed embedded surface $\Sigma \subset \RR^3$, there exists a piece-wise MCF $M_t$ starting at $\Sigma$ and defined up to time $t_{0}$ where the surfaces become singular.  Moreover, $M_t$ can be chosen so that if
\begin{equation}	\label{e:boundeddiam}
\liminf_{t \to t_{0}} \, \, \frac{\text{diam} M_t}{\sqrt {t_{0}-t}}< \infty\, ,
\end{equation}
then $M_t$ becomes extinct in a round point.
\end{Thm}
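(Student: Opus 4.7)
The plan is to construct the piece-wise MCF by iteratively applying Theorem~\ref{c:nonlin1a} to replace each non-round tangent flow by a strict entropy competitor. First I would run the smooth MCF from $\Sigma$ until its first singular time $\tau$. By the assumed smoothness up to the first singular time, Huisken's monotonicity together with the Ilmanen--White tangent flow construction produces a smooth complete embedded self-shrinker $\Sigma^\star\subset\RR^3$ with polynomial volume growth corresponding to the tangent flow at $\tau$. If $\Sigma^\star$ is a sphere, plane, or cylinder, I set $t_0=\tau$ and stop; otherwise I will introduce a jump in the piece-wise MCF slightly before $\tau$.

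To build the jump, I would apply Theorem~\ref{c:nonlin1a} to $\Sigma^\star$ to produce a compactly supported and arbitrarily $C^m$-small graph $\tilde\Sigma^\star$ over $\Sigma^\star$ with $\lambda(\tilde\Sigma^\star)<\lambda(\Sigma^\star)$. For $t$ sufficiently close to $\tau$, the parabolic rescalings of $M_t$ about the singular point are $C^m$-close to $\Sigma^\star$, so the corresponding perturbation of $M_t$ can be realized as a $C^m$-small graph $M^2_t$ over $M_t$. Because $\lambda$ is continuous in $C^m$ on surfaces of controlled volume growth and is dilation- and translation-invariant, a sufficiently close choice of $t$ transfers the strict entropy gap to $\lambda(M^2_t)<\lambda(M_t)$, and a further small dilation of $M^2_t$ enforces the area equality required at a jump without changing the entropy.

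I would then iterate: restart the smooth MCF from $M^2_t$, reach its first singular time, inspect the tangent flow, and perturb again. After finitely many such jumps the procedure terminates with a tangent flow that is a sphere, a plane, or a cylinder. For the ``moreover'' clause, the hypothesis $\liminf_{t\to t_0}\diam(M_t)/\sqrt{t_0-t}<\infty$ prevents the tangent flow at $t_0$ from being noncompact (parabolic rescalings converge to a piece of the tangent flow, so a noncompact shrinker would force the rescaled diameter to blow up). This rules out the plane and the cylinder, so the tangent flow at $t_0$ is a shrinking sphere, which is by definition extinction in a round point.

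The hard part is showing termination of the iteration in finitely many jumps, since a naive application of Theorem~\ref{c:nonlin1a} only guarantees an arbitrarily small entropy decrease per jump. I expect this to require a compactness argument: the set of smooth complete embedded self-shrinkers in $\RR^3$ with polynomial volume growth and $\lambda\le\lambda(\Sigma)$ is compact modulo rigid motion (via Brakke compactness combined with Huisken's monotonicity), and the entropy drop furnished by Theorem~\ref{c:nonlin1a} should be uniformly bounded below on the closed subset of this moduli space consisting of self-shrinkers that are not a sphere, plane, or cylinder. Combined with the monotonicity of $\lambda$ along each smooth piece, this upgrades each jump to a uniform entropy drop and terminates the procedure in finitely many steps.
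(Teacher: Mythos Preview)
Your overall strategy matches the paper's: run the flow, read off a tangent self-shrinker at the singular time, perturb using the entropy-instability of non-generic shrinkers, and iterate. You also correctly identify the termination step as the crux. The gap is in how you propose to obtain the uniform entropy drop.

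First, invoking ``Brakke compactness combined with Huisken's monotonicity'' yields only varifold limits; it does not give smooth compactness, and Theorem~\ref{c:nonlin1a} produces a perturbation (and hence an entropy drop) only for smooth self-shrinkers. The paper instead uses the smooth compactness theorem of \cite{CM3} (Theorem~\ref{c:cpt} here), which in addition to the area/entropy bound requires a \emph{genus} bound---available from the initial surface and preserved under both MCF and graphical replacement, but absent from your argument.

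Second, you defer the diameter hypothesis to the end, stopping whenever the tangent flow is a sphere, plane, or cylinder and otherwise perturbing. For a uniform drop you then need the complement of $\{\SS^2,\RR^2,\SS^1\times\RR\}$ in the full moduli space of self-shrinkers with $\lambda\le\lambda(\Sigma)$ to be compact. Even granting smooth compactness of the ambient space, this requires that the plane and the cylinder each be isolated, which you do not establish and which the paper does not prove. The paper sidesteps this entirely by using the diameter hypothesis from the outset: the tangent flows encountered are all \emph{closed} with diameter at most $D$, so one works inside the class $S_D$ of Corollary~\ref{c:cpt2}. In $S_D$ the only generic shrinker is $\SS^2$, and $\SS^2$ is isolated there by Huisken's classification of mean convex closed self-shrinkers; hence $S_D\setminus\{\SS^2\}$ is compact and Corollary~\ref{c:nl} delivers a fixed $\epsilon>0$ per jump. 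That is the real role of the rescaled-diameter condition in this argument---it is what makes termination work, not merely an endgame filter on the terminal shrinker.
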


The meaning of \eqr{e:boundeddiam} is that each time slice of a tangent flow (at  time $t_0$) has uniformly bounded
diameter after rescaling.

Theorem \ref{c:grayson} will eventually be  a corollary of our main theorem in \cite{CM1} about generic MCF.  However, it follows directly from our  
classification of entropy stable self-shinkers together with compactness of the space of all self-shrinkers with a fixed bound on area and  genus  and  serves to illustrate some of the central ideas about generic MCF.  (The compactness of self-shrinkers with area and genus bound was
 proven in \cite{CM3}.)  Here is why it follows directly from these two results (a detailed proof is given in Section \ref{s:8} of this paper):

\begin{quote}
  Starting at the given surface,   flow by mean curvature till the evolving surface is sufficiently close to a time slice in a tangent flow.  If this time slice is not a sphere yet has diameter bounded by a fixed number, then by the classification of stable self-shrinkers we can find a small graph over it where the entropy has gone down by a fixed amount.  Start the MCF at this new surface and flow till the evolving surface is sufficiently close to a time slice in the corresponding tangent flow.  If this time slice is also not a sphere yet has a fixed bound for the diameter,  we can continue the process of making a replacement and get again that the entropy has gone down a fixed amount.  As the entropy is always positive this process has to terminate and we get the theorem.
Note that the entropy goes down by a definite amount after each replacement follows from the compactness theorem of \cite{CM3}.
  \end{quote}

One consequence of Theorem \ref{c:grayson} is that if the initial surface is topologically not a sphere, then the piece-wise flow must develop a non-compact   (after rescaling) singularity.

Note that even flows that become extinct at a point can develop non-compact singularities.
For example,
Altschuler-Angenent-Giga, \cite{AAG},  constructed a mean convex initial surface in $\RR^3$ whose   MCF is smooth until it becomes extinct in a ``non-compact'' point.  In fact, the tangent flow at the extinction point is a cylinder.  This is in contrast to the case of curves where Grayson,
\cite{G1}, has shown  that all tangent flows are compact; cf. \cite{Ha1} and \cite{H5}.

 \subsection{Higher dimensions}

 As already mentioned, the main result of this paper is the classification of generic singularities. In higher dimensions, this is the following:

 \begin{Thm}	\label{t:nonlin1a}
Suppose that $\Sigma$ is a smooth complete embedded
self-shrinker without boundary and with polynomial volume growth.
\begin{enumerate}
\item If $\Sigma$ is not equal to  $\SS^k\times \RR^{n-k}$, then
there is a graph $\tilde{\Sigma}$ over $\Sigma$ of a  function with arbitrarily small $C^m$ norm (for any fixed $m$) so that
  $\lambda ( \tilde{\Sigma}) < \lambda (\Sigma)$.
  \item If $\Sigma$ is not $\SS^n$ and does not split off a line, then   the function in (1) can be taken to have compact support.
  \end{enumerate}
  In  particular,   in either case, $\Sigma$  cannot arise as a tangent flow to the MCF starting from $\tilde{\Sigma}$.
\end{Thm}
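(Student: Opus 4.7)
The plan is to prove the theorem by classifying entropy-stable self-shrinkers: I want to show that the only ones (with polynomial volume growth) are the generalized cylinders $\SS^k\times\RR^{n-k}$. The route goes through a more tractable intermediate notion, call it $F$-stability at the shrinker scale $(x_0,t_0)=(0,1)$, namely that the second variation of $F_{0,1}$ is non-negative after coupling each normal variation $f$ with an optimally chosen translation $y\in\RR^{n+1}$ and rescaling $h\in\RR$. The strategy is built on two pillars: (i) entropy-stable implies $F$-stable, and (ii) $F$-stable plus polynomial volume growth implies $\SS^k\times\RR^{n-k}$.

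For (i), I would first check, using Huisken's monotonicity, that for a self-shrinker $\Sigma$ (normalized as the $t=-1$ slice) the supremum in \eqref{e:lamb} is attained at $(x_0,t_0)=(0,1)$, so $\lambda(\Sigma)=F_{0,1}(\Sigma)$. Translations of $\Sigma$ correspond to moving $x_0$ and parabolic dilations to moving $t_0$, and in these two families $F_{x_0,t_0}$ is constant at its maximum. Hence, if a normal variation $f\mathbf{n}$ of $\Sigma$ has $\frac{d^2}{ds^2}\big|_{s=0}F_{0,1}(\Sigma_s)<0$ that is \emph{not} cancelled by a simultaneous motion of $(x_0,t_0)$, then $\sup_{x_0,t_0}F_{x_0,t_0}(\Sigma_s)$ strictly drops for small $s$, which is exactly entropy decrease. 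This is precisely the coupling that defines $F$-stability.

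For (ii), I would compute the second variation of $F_{0,1}$ along $f\mathbf{n}$; it is governed by a weighted Jacobi-type operator
\begin{equation*}
Lf=\Delta f-\tfrac12\langle x,\nabla f\rangle+|A|^2 f+\tfrac12 f,
\end{equation*}
symmetric in the Gaussian weighted space $L^2(\e^{-|x|^2/4}\,d\mu)$. Two explicit eigenfunctions are crucial and account for the ``trivial'' directions: $LH=H$ (generated by dilations) and $L\langle y,\mathbf{n}\rangle=\tfrac12\langle y,\mathbf{n}\rangle$ (generated by translations). $F$-stability demands that the quadratic form $-\int fLf\,\e^{-|x|^2/4}$ be non-negative after optimally subtracting a multiple of $H$ and of translation fields. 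The heart of the matter is to produce, whenever $\Sigma$ is not a generalized cylinder, a positive eigenfunction of $L$ with eigenvalue strictly greater than $1$; by a Perron-type principle for the drift operator $L$, such a positive eigenfunction must be the lowest, and its eigenvalue exceeding the translation/dilation eigenvalues $\tfrac12$ and $1$ forces the coupled second variation in that direction to be strictly negative. Combining the resulting sign information on $H$ with Huisken's classification Theorem~\ref{t:huisken} (mean-convex self-shrinkers are generalized cylinders) closes the loop and delivers (1). I expect this step to be the main obstacle: setting up the correct weighted Sobolev theory on a non-compact $\Sigma$ with only polynomial (not Gaussian) volume growth, ensuring self-adjointness of $L$, existence of a lowest eigenfunction, and the Perron-type positivity argument are each technically delicate.

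For part (2), where I need compact support, the variation produced by (ii) is only in the weighted $L^2$ sense. I would cut off: replace $f$ by $f_R=\chi_R f$ with a standard smooth cutoff supported in $B_R$. The rapid decay of $\e^{-|x|^2/4}$ and the polynomial volume growth together force the tail contribution to $-\int f_R L f_R\,\e^{-|x|^2/4}$ to vanish as $R\to\infty$, so the strict negativity of the coupled second variation persists for $R$ large. The hypothesis that $\Sigma$ does not split off a line enters because it is exactly the obstruction to the translation field $\langle y,\mathbf{n}\rangle$ failing to be approximable by compactly supported functions in the relevant weighted norm; equivalently, it rules out the scenario in which the optimal $y$ needed to cancel the negative eigendirection is a constant translation along a splitting factor. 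The hypothesis $\Sigma\neq\SS^n$ eliminates the other exceptional case in which the unstable direction collapses after cutting off. Granting these, the cut-off variation $f_R\mathbf{n}$ gives a compactly supported $C^m$-small perturbation $\tilde\Sigma$ with $\lambda(\tilde\Sigma)<\lambda(\Sigma)$; the final statement that $\Sigma$ cannot appear as a tangent flow from $\tilde\Sigma$ follows because entropy is monotone under MCF and achieved (in the limit) by any tangent flow.
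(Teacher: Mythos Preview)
Your two pillars and the role of $L$, $H$, $\langle y,\nn\rangle$ match the paper's architecture, and your sketch of pillar~(ii) (if $H$ changes sign then $\mu_1<-1$, giving $F$-instability; otherwise $H\geq 0$ and Theorem~\ref{t:huisken} applies) is essentially the paper's argument. But pillar~(i) as you state it is false, and this is exactly where the distinction between parts~(1) and~(2) lives. The implication ``$F$-unstable $\Rightarrow$ entropy-unstable'' requires the extra hypothesis that $\Sigma$ does not split off a line; this is Theorem~\ref{t:nonlin1c}. The reason is concrete: if $\Sigma=\RR\times\Sigma_0$ splits in the $e_1$-direction, then $F_{r e_1,1}(\Sigma)=F_{0,1}(\Sigma)$ for every $r\in\RR$, so $(x_0,t_0)\mapsto F_{x_0,t_0}(\Sigma)$ attains its maximum along an entire line, not just at $(0,1)$. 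A compactly supported perturbation $\tilde\Sigma$ agrees with $\Sigma$ outside a bounded set, so $F_{r e_1,1}(\tilde\Sigma)\to\lambda(\Sigma)$ as $|r|\to\infty$ and the entropy cannot strictly drop. Equivalently, the Hessian of $G(x_0,t_0,s)=F_{x_0,t_0}(\Sigma_s)$ at $(0,1,0)$ is only negative \emph{semi}-definite when $\Sigma$ splits (the null direction is $y$ tangent to the $\RR$-factor, where $y^\perp\equiv 0$), and your ``strict local max $\Rightarrow$ entropy drops'' step needs definiteness. Consequently your route proves only part~(2), which the paper indeed proves first by combining Theorem~\ref{t:nonlin1c} with Theorems~\ref{t:linearstable} and~\ref{t:linearstable2}.

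Part~(1) needs an ingredient you have not supplied: dimension reduction. Write $\Sigma=\RR^{n-k}\times\Sigma_0$ with $\Sigma_0\subset\RR^{k+1}$ not splitting off a further line. A Fubini computation gives $F^{\RR^{n+1}}_{x,t_0}(\Sigma)=F^{\RR^{k+1}}_{x_0,t_0}(\Sigma_0)$ (with $x_0$ the projection of $x$), hence $\lambda(\Sigma)=\lambda(\Sigma_0)$ and likewise for the product of $\RR^{n-k}$ with any perturbation of $\Sigma_0$. Now apply part~(2) in $\RR^{k+1}$ to $\Sigma_0$ to get a compactly supported graph $\tilde\Sigma_0$ over $\Sigma_0$ with smaller entropy, and set $\tilde\Sigma=\RR^{n-k}\times\tilde\Sigma_0$. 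This is why the graph in~(1) need not (and in general cannot) have compact support, and why your proposed ``prove~(1) first, then cut off to get~(2)'' is backwards. Finally, your explanation of where ``does not split off a line'' enters in~(2)---via approximability of $\langle y,\nn\rangle$---is not the operative mechanism; it enters through Lemma~\ref{l:strict}, which makes $(0,1)$ a \emph{strict} global maximum of $F_{x_0,t_0}(\Sigma)$, the key input to the proof of Theorem~\ref{t:nonlin1c}.
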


In our earlier theorem showing that in $\RR^3$ entropy stable self-shrinkers are all standard, we assumed that the self-shrinker was smooth.  This was a reasonable assumption in $\RR^3$ for applications as (by the theorem of Ilmanen) any tangent flow in $\RR^3$  is indeed smooth.  However,  examples of Vel\'azquez, \cite{V}, show  that tangent flows need not be smooth in higher dimensions{\footnote{In \cite{V},
Vel\'azquez constructed smooth embedded hypersurfaces evolving by MCF in $\RR^8$ whose tangent flow  at the first  singular time is the static Simons cone and is, in particular, not smooth.}}  and, instead, one has the following well known conjecture
(see page $8$ of \cite{I1}):

\begin{Con}  \label{c:codimconj}
Suppose that $M_0 \subset \RR^{n+1}$is a smooth closed embedded hypersurface.  A time slice of any tangent flow of the MCF starting at $M_0$ has multiplicity one and the singular set is of dimension at most $n-3$.{\footnote{By the dimension reduction of \cite{W5}, the estimate on the singular set would follow from ruling out static planes, quasi-static planes, and various polyhedral cones as potential blow ups.  The key for   proving the conjecture is to rule out higher multiplicity static planes.}}
\end{Con}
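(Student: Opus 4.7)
The plan is to attack the two claims in the conjecture separately, using the classification of entropy-stable self-shrinkers (Theorem \ref{t:nonlin1a}) in combination with White's stratification and Brakke-type regularity, with multiplicity one for planes being the essential obstacle. First I would reduce the singular-set dimension bound to the multiplicity-one statement plus a classification of lower-dimensional tangent flows. Concretely, by the dimension-reduction scheme of White, iterated tangent flows at any point in the singular set $\Sing$ of the Brakke flow limit to self-shrinkers with extra translation symmetries, and the standard stratification $\Sing = \cup_j \cS^j$ satisfies $\dim \cS^j \leq j$, where $\cS^j$ consists of points whose tangent flows have spines of dimension at most $j$. The bound $\dim \Sing \leq n-3$ then follows once one rules out, as tangent flows, static multiplicity-$k$ hyperplanes, quasi-static configurations with a plane of symmetry, and polyhedral (static) cones of spine dimension $n-2$.

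For the cones and quasi-static limits I would invoke Theorem \ref{t:nonlin1a} together with Huisken's classification of mean convex self-shrinkers (Theorem \ref{t:huisken}): any such tangent flow, being a smooth multiplicity-one self-shrinker with a line of symmetry, would have to be $\SS^k \times \RR^{n-k}$ with $k \geq 1$; its spine is the $\RR^{n-k}$ factor, which has dimension at most $n-1$ but on the regular stratum contributes only codimension $\geq 3$ pieces provided $k \geq 2$. To handle the remaining case (cylindrical tangent flows $\SS^1 \times \RR^{n-1}$) I would attempt to use the one-sided minimization properties of mean-convex flows (in the spirit of White) to show that the locus of such cylindrical singularities is itself of dimension at most $n-3$, via an improvement-of-flatness argument in the Gaussian metric. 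Combined with Ilmanen's smoothness result (in the form that the support of a time slice of a tangent flow is a smooth embedded self-shrinker away from a singular set where blow-ups are cones), this reduces everything to the multiplicity question.

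The main obstacle, as the footnote indicates, is ruling out higher-multiplicity static hyperplanes as tangent flows. My approach would be to try to convert this into a Bernstein-type theorem in the Gaussian metric by the following scheme: if the tangent flow at $(x_0,t_0)$ has density $\Theta \geq 2$, then Huisken's monotonicity gives $F_{x_0,t_0}(M_t) \geq 2$ for all $t < t_0$, and a parabolic dilation argument produces, at scales just above the singular one, a Brakke flow which can be written as a union of at least two sheets over the would-be plane. One would then try to apply an Allard-type sheeting theorem plus a maximum principle between sheets (flowing them forward by MCF, the distance between sheets satisfies a linearized heat equation whose evolution in the shrinker metric is controlled by the drift Laplacian $\LL$). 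If the gap between sheets cannot go to zero in finite time without producing a smooth pre-singular intersection forbidden by the strong maximum principle, one obtains a contradiction. This step, which is essentially the content of Ilmanen's multiplicity-one conjecture (Conjecture \ref{c:m1}) in the plane case, is where I expect the argument to break down without a substantially new ingredient; absent a solution, the best one can hope to extract from the present methods is the conclusion for \emph{generic} initial data, where Theorem \ref{t:nonlin1a} permits perturbing away any non-cylindrical tangent flow and, in particular, any higher-multiplicity plane.
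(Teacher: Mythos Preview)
This statement is labeled a \emph{conjecture} in the paper and is not proven there; the footnote itself says the key open step is ruling out higher-multiplicity static planes. There is therefore no proof in the paper to compare your proposal against, and you correctly identify this obstacle and concede that your sheeting/maximum-principle sketch is where the argument would break down.

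However, the earlier parts of your proposal also do not go through as written. Your invocation of Theorems \ref{t:nonlin1a} and \ref{t:huisken} to conclude that a tangent flow splitting off a line ``would have to be $\SS^k \times \RR^{n-k}$'' is incorrect. Theorem \ref{t:huisken} assumes $H \geq 0$, which is not known for an arbitrary tangent flow, and Theorem \ref{t:nonlin1a} is a statement about entropy-\emph{instability}: it says that a non-cylindrical self-shrinker can be perturbed to lower entropy, not that it cannot occur as a tangent flow of an unperturbed MCF. A self-shrinker of the form $\RR^{n-k} \times \Sigma_0$ with $\Sigma_0$ any $k$-dimensional self-shrinker (Angenent's shrinking doughnut, for instance) is a perfectly legitimate candidate tangent flow, and nothing in this paper rules such objects out for a general initial hypersurface $M_0$. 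The paper's classification results single out the \emph{stable} singularity models, which is precisely why its conclusions are for \emph{generic} initial data; those results do not feed into White's dimension-reduction machinery for arbitrary $M_0$ in the way you suggest.
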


 ÊWe will show that when $n\leq 6$ our results classifying entropy stable self-shrinkers in $\RR^{n+1}$ hold even for self-similar shrinkers satisfying the smoothness of Conjecture \ref{c:codimconj}.  In fact, we show here the following stronger result:

 \begin{Thm}
 Theorem \ref{t:nonlin1a} holds when $n\leq 6$ and $\Sigma$ is an oriented integral varifold that is smooth off of a singular set with locally finite $(n-2)$-dimensional Hausdorff measure.
 \end{Thm}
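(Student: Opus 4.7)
The plan is to run the smooth proof of Theorem \ref{t:nonlin1a} on the regular part $\Reg(\Sigma)$ and to show that the singular set $\Sing(\Sigma)$ is invisible to the underlying second variation arguments, thanks to its low dimension. Recall that in the smooth case the proof proceeds in two stages: first, using the second variation of the Gaussian area functional $F_{x_0,t_0}$, one produces an unstable direction for any non-cylindrical self-shrinker built out of mean curvature, translations, and dilations; second, one promotes this $F_{x_0,t_0}$-instability to a strict decrease of the entropy $\lambda$ by an appropriate choice of basepoint $(x_0,t_0)$ and a (possibly compactly supported) perturbation. The objective is to show that both stages survive the introduction of a singular set of locally finite $(n-2)$-dimensional Hausdorff measure.

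The key technical ingredient is a family of logarithmic cutoff functions $\phi_\epsilon$ on $\Sigma$, compactly supported in $\Reg(\Sigma)$, equal to $1$ outside an $\epsilon$-neighborhood of $\Sing(\Sigma)$, and satisfying
\begin{equation*}
	\int_{\Sigma} |\nabla \phi_\epsilon|^2 \, \e^{-|x|^2/4}\, d\mu \to 0 \quad \text{as } \epsilon \to 0.
\end{equation*}
Such cutoffs exist because a set of locally finite $(n-2)$-dimensional Hausdorff measure in an $n$-dimensional varifold has vanishing $2$-capacity; on $\Sigma$ this is combined with the assumed polynomial volume growth to control the Gaussian weight. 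With these cutoffs, every integration by parts, Rayleigh-quotient eigenvalue comparison, and drift-Laplacian identity used in the smooth proof can be carried out on $\Reg(\Sigma)$ by truncating test functions by $\phi_\epsilon$ and letting $\epsilon \to 0$; all boundary contributions vanish in the limit. In particular, the eigenfunction of the stability operator $L$ built in the smooth case still produces an $F_{x_0,t_0}$-unstable variation for $\Sigma$, and a compactly supported entropy-decreasing graph $\tilde\Sigma$ is obtained exactly as before, with the graphing function in addition supported away from $\Sing(\Sigma)$.

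The dimensional hypothesis $n\leq 6$ enters through the regularity theory of stable minimal hypersurfaces applied in the conformally changed metric on $\RR^{n+1}$ that converts self-shrinkers into minimal hypersurfaces (cf. the footnote in Section \ref{s:one}). In this range, stable examples satisfying the $(n-2)$-Hausdorff measure hypothesis have no interior singularities, so compactness and blow-up arguments used inside the proof do not create new, more singular limits that would defeat the capacity step. Above this dimension the statement genuinely fails, as Vel\'azquez's Simons-cone tangent flows in $\RR^8$ illustrate.

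The main obstacle I expect is purely technical: arranging the perturbation to vanish near $\Sing(\Sigma)$ while still guaranteeing strict entropy decrease. I would handle this by showing that the $F_{x_0,t_0}$-decrease from the unstable eigenfunction dominates the error introduced by the cutoff, which is of order $\int_\Sigma |\nabla \phi_\epsilon|^2 \, \e^{-|x|^2/4}\, d\mu = o(1)$. Choosing $\epsilon$ sufficiently small and then a sufficiently small perturbation amplitude yields an admissible $\tilde\Sigma$ with $\lambda(\tilde\Sigma) < \lambda(\Sigma)$, and because the perturbation lives on a fixed compact subset of $\Reg(\Sigma)$ its $C^m$ norm can be made arbitrarily small, completing the argument.
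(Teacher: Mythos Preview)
Your approach differs substantially from the paper's, and the discrepancy reveals a gap.

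The paper does \emph{not} rerun the smooth argument with capacity cutoffs. Instead it argues by regularity: if the regular part of $\Sigma$ is $F$-stable, then on sufficiently small balls it satisfies a stability-type inequality $\int |A_g|^2\phi^2\,d\Vol_g \le (1+\epsilon)\int |\nabla_g\phi|^2\,d\Vol_g$ in the conformal metric that makes self-shrinkers minimal (Lemma \ref{l:almoststable}, using Lemma \ref{l:not2bad} to bound $\mu_1$). This is precisely the hypothesis of Schoen--Simon regularity \cite{ScSi}, which for $n\le 6$ forces the singular set to be empty (Theorem \ref{t:mainreg}). Hence $\Sigma$ is actually smooth, and the smooth Theorems \ref{t:liketo} and \ref{t:nonlin1a} apply verbatim (Corollary \ref{c:mainreg}). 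The restriction $n\le 6$ enters only through the Schoen--Simon dimension threshold.

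Your proposal instead tries to carry the entire smooth proof through the singular set via cutoffs. The capacity step is fine for the integration-by-parts identities and the Rayleigh-quotient estimates in Sections \ref{s:three}--\ref{s:noncompactspec}, but the smooth proof also passes through the Huisken classification of mean convex self-shrinkers (Theorem \ref{t:huisken}), which is not purely variational: it uses the pointwise Simons identity, the SSY-type estimate (Proposition \ref{p:ssy}), the rigidity $|A|=\beta H$, Lawson's $\nabla A\equiv 0$ theorem, and a parallel-transport argument on the kernel of $A$. You have not said how these global geometric steps survive a singular set, and I do not see how a cutoff handles them. Tellingly, your own explanation of why $n\le 6$ is needed invokes ``regularity theory of stable minimal hypersurfaces in the conformally changed metric'' --- that is the paper's mechanism, not yours. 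In a pure capacity argument, the codimension-two hypothesis already gives vanishing $2$-capacity in every dimension, so nothing in your outline would distinguish $n\le 6$ from $n\ge 7$. That tension is the signal that the argument should be organized around regularity rather than around cutoffs.
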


\subsection{Outline of the proof}

The key results of this paper are the classification of entropy-stable self-shrinkers given in
Theorem   \ref{c:nonlin1a} and its higher dimensional generalization
Theorem \ref{t:nonlin1a}.  The main technical tool for proving
this is the concept of $F$-stability  which we will define below.

We will see that there are several  characterizations of self-shrinkers.  One of the most useful is that self-shrinkers are   the critical points for the $F_{0,1}$ functional (see Proposition
 \ref{p:critall} below).
 Since this functional is the volume in a conformally changed metric,
they are also minimal surfaces in the conformally changed metric.
 From either point of view,
 every self-shrinker $\Sigma$ is unstable in the usual sense; i.e., there are always nearby hypersurfaces where the $F_{0 , 1}$ functional is strictly less; see Theorem \ref{t:spectral0} below.  This is because translating a self-shrinker
 in space (or time) always lowers the functional.  The stability that we are interested in, which we will call $F$-stability, mods out for these translations:
 \begin{quote}
 A critical point $\Sigma$ for $F_{x_0,t_0}$ is {\emph{$F$-stable}} if for every compactly supported variation{\footnote{A compactly supported variation $\Sigma_s$ of $\Sigma$ is a one-parameter family of graphs over $\Sigma$ given by
 $\{ x + s \, f(x) \, \nn (x) \, | \, x \in \Sigma \}$ where $\nn$ is the unit normal and $f$ is a  function on $\Sigma$ with compact support.}} $\Sigma_s$ with $\Sigma_0 = \Sigma$, there exist variations $x_s$ of $x_0$ and $t_s$ of $t_0$ that make
 $F'' = \left( F_{x_s , t_s}(\Sigma_s) \right)'' \geq 0$ at $s=0$.
 \end{quote}

 \vskip2mm
 We will show that entropy-stable self-shrinkers that do not split off a line must be $F$-stable:

 \begin{Thm}	\label{t:nonlin1c}
Suppose that $\Sigma \subset \RR^{n+1}$ is a  smooth complete embedded self-shrinker with $\partial \Sigma = \emptyset$,  with polynomial volume growth, and $\Sigma$ does not split off a line isometrically.  If $\Sigma$ is $F$-unstable, then there is a compactly supported variation $\Sigma_s$ with $\Sigma_0 = \Sigma$ so that $\lambda (\Sigma_s) < \lambda (\Sigma)$ for all $s \ne 0$.
\end{Thm}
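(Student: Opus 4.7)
The broad strategy is to translate the $F$-instability---which a priori rules out only a one-parameter family of zero modes coming from translation and time-dilation---into a genuine strict drop of entropy. The linchpin is to show that for a self-shrinker $\Sigma$ which does not isometrically split off a line, the function $\Phi(x_0,t_0):=F_{x_0,t_0}(\Sigma)$ attains its global maximum $\lambda(\Sigma)$ at the unique critical point $(0,1)$, with a strictly negative definite Hessian there. Granting this, the implicit function theorem selects, for each compactly supported variation $\Sigma_s$ with $s$ small, a smooth curve $(x(s),t(s))$ of maximizers of $F_{\cdot,\cdot}(\Sigma_s)$ with $(x(0),t(0))=(0,1)$, so that $\lambda(\Sigma_s)=F_{x(s),t(s)}(\Sigma_s)$. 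Feeding the $F$-instability hypothesis into this specific choice of $x_s=x(s)$, $t_s=t(s)$ forces $\lambda''(0)<0$, and since $\lambda'(0)=0$, one concludes $\lambda(\Sigma_s)<\lambda(\Sigma)$ for all sufficiently small $s\neq 0$.

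The first main step is the computation of the derivatives of $\Phi$ in $(x_0,t_0)$ at $(0,1)$. Using the identities $F_{x_0,1}(\Sigma)=F_{0,1}(\Sigma-x_0)$ and $F_{0,t_0}(\Sigma)=F_{0,1}(\Sigma/\sqrt{t_0})$, variations in $x_0$ and $t_0$ correspond to variations of the weighted area $F_{0,1}$ under ambient translation and parabolic dilation, with normal parts $\langle e_i,\nn\rangle$ and $-\tfrac{1}{2}\langle x,\nn\rangle$ respectively. Since $\Sigma$ satisfies $H=\tfrac{1}{2}\langle x,\nn\rangle$, the first variation of $F_{0,1}$ vanishes on every such normal field, so $(0,1)$ is critical. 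For the Hessian I would plug the same fields into the stability operator $L$ associated to $F_{0,1}$ and exploit the identities $L\langle e_i,\nn\rangle=\tfrac{1}{2}\langle e_i,\nn\rangle$ and $L\langle x,\nn\rangle=\langle x,\nn\rangle$, producing quadratic forms whose nullity in the $x_0$ block corresponds exactly to isometric splittings off a line, and whose $t_0$ contribution is strictly negative unless $\Sigma$ is a hyperplane through the origin. Under the hypotheses of the theorem both degeneracies are excluded, so the Hessian is strictly negative definite. I expect the main obstacle to sit precisely here: performing this computation with the correct signs, identifying the kernel cleanly, and controlling the mixed $x_0/t_0$ cross-terms.

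Next I would upgrade criticality to a global maximum. One knows $F_{0,1}(\Sigma)=\lambda(\Sigma)$ for self-shrinkers as a consequence of Huisken's monotonicity applied to the self-similar flow $M_t=\sqrt{-t}\,\Sigma$, and strict negative definiteness at $(0,1)$ together with polynomial volume growth yields quantitative decay of $\Phi$ outside a fixed neighborhood of $(0,1)$: the Gaussian factor kills the integrand as $|x_0|\to\infty$, and the scaling collapses it as $t_0\to 0^+$ or $\infty$. So the supremum is attained in a compact region, and uniquely at $(0,1)$. Standard quantitative stability of nondegenerate strict maxima then furnishes, for $\tilde\Sigma$ sufficiently close to $\Sigma$ in $C^2$, a unique maximizer $(x(\tilde\Sigma),t(\tilde\Sigma))$ of $F_{\cdot,\cdot}(\tilde\Sigma)$ depending smoothly on $\tilde\Sigma$; in particular, for the variation $\Sigma_s$, smooth curves $x(s),t(s)$ with $(x(0),t(0))=(0,1)$.

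Finally I apply this to the destabilizing variation provided by $F$-instability. Setting $\lambda(s):=\lambda(\Sigma_s)=F_{x(s),t(s)}(\Sigma_s)$, the chain rule yields
\begin{equation*}
\lambda'(0)=\nabla_{x_0}F\big|_{(0,1,\Sigma)}\cdot x'(0)+\partial_{t_0}F\big|_{(0,1,\Sigma)}\cdot t'(0)+\partial_s F_{0,1}(\Sigma_s)\big|_{s=0}=0,
\end{equation*}
where the first two terms vanish because $(0,1)$ is critical for $F_{\cdot,\cdot}(\Sigma)$ and the third vanishes because $\Sigma$ is critical for $F_{0,1}$ in the hypersurface direction. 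By the very definition of $F$-instability applied to the curves $x_s=x(s)$, $t_s=t(s)$, the second derivative $\lambda''(0)=(F_{x(s),t(s)}(\Sigma_s))''\big|_{s=0}$ is strictly negative, so Taylor expansion gives $\lambda(\Sigma_s)<\lambda(\Sigma)$ for all sufficiently small $s\neq 0$, completing the proof.
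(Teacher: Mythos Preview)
Your overall architecture---show that $(0,1)$ is a nondegenerate strict global maximum of $(x_0,t_0)\mapsto F_{x_0,t_0}(\Sigma)$, track the maximizer $(x(s),t(s))$ by the implicit function theorem, and then apply $F$-instability to the specific path $x_s=x(s)$, $t_s=t(s)$ to force $\lambda''(0)<0$---is a legitimate route, and the endgame computation is correct. But the step you label ``standard quantitative stability of nondegenerate strict maxima'' is precisely where the real work lies, and your justification for it does not go through when $\Sigma$ is non-compact.

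The parameter space $\RR^{n+1}\times(0,\infty)$ is non-compact, so knowing that $\Phi(x_0,t_0)=F_{x_0,t_0}(\Sigma)$ has a nondegenerate strict maximum at $(0,1)$ does \emph{not} imply that a small perturbation $\Sigma_s$ has its entropy attained anywhere, let alone uniquely near $(0,1)$. Your stated mechanism for decay is also incorrect: for non-compact $\Sigma$ the Gaussian factor does \emph{not} kill the integrand as $|x_0|\to\infty$ (think of how translations act when $\Sigma$ itself extends to infinity), and as $t_0\to 0^+$ one has $F_{x_0,t_0}(\Sigma)\to 1$ for $x_0\in\Sigma$, not $0$. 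What is true (and this is Lemma~\ref{l:strict}, proved by a monotonicity argument along carefully chosen paths in $(x_0,t_0)$-space) is that $\Phi$ stays uniformly \emph{below} $\lambda(\Sigma)$ away from $(0,1)$. Transferring this to $\Sigma_s$ is the substantive content of the proof: the paper devotes its Steps~(4) and~(5) to showing directly that $G(x_0,t_0,s)=F_{x_0,t_0}(\Sigma_s)<\lambda(\Sigma)$ when $|x_0|$ or $|\log t_0|$ is large, exploiting that $\Sigma_s\setminus\Sigma$ is compact so that the two $F$-values differ by an explicitly controllable term. Without this uniform-at-infinity control, your IFT critical point $(x(s),t(s))$ is only a local maximum and you cannot conclude $\lambda(\Sigma_s)=F_{x(s),t(s)}(\Sigma_s)$.

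The paper's proof also organizes the endgame differently: rather than tracking a maximizer, it shows that the Hessian of $G$ at $(0,1,0)$ is negative definite in \emph{all} variables $(x_0,t_0,s)$---the directions with a nonzero $s$-component are handled precisely by $F$-instability, the pure $(x_0,t_0)$ directions by the second variation formula---and then combines this strict local maximum with the far-away control (Steps~(2)--(5)) to bound $\sup_{x_0,t_0}G(x_0,t_0,s)<G(0,1,0)$ directly. This sidesteps any need to know that $\lambda(\Sigma_s)$ is attained or varies smoothly.
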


Thus, we are led to classifying
  $F$-stable self-shrinkers:

 \begin{Thm}	\label{t:liketo}
 If $\Sigma$ is a smooth\footnote{The theorem holds when $n\leq 6$ and $\Sigma$ is an oriented integral varifold that is smooth off of a singular set with locally finite $(n-2)$-dimensional Hausdorff measure.}
 complete embedded self-shrinker in  $\RR^{n+1}$ without boundary and with polynomial volume growth that is $F$-stable with respect to compactly supported variations, then it is either the round sphere or a hyperplane.
 \end{Thm}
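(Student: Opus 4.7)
The plan is to combine the spectral theory of the Jacobi-type operator from the second variation of $F$ with Huisken's classification of mean convex self-shrinkers (Theorem~\ref{t:huisken}). First I would compute the second variation of $F_{x_s, t_s}(\Sigma_s)$ at a critical point $\Sigma$, where $\Sigma_s$ is a normal graph with speed $f$ and $(x_s,t_s)=(sy,1+sh)$. At the critical point this becomes a quadratic form $Q(f,y,h)$ whose leading piece is $-\int_\Sigma (fLf)\,\rho$ with $\rho = (4\pi)^{-n/2}\e^{-|x|^2/4}$ and $L = \mathcal{L} + |A|^2 + \tfrac12$, where $\mathcal{L} = \Delta - \tfrac12\langle x, \nabla\cdot\rangle$, together with cross terms coupling $h$ to $\int fH\rho$ and $y$ to $\int f\langle y,\nn\rangle\rho$, plus a quadratic in $(y,h)$ alone. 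The key identities $LH=H$ and $L\langle y,\nn\rangle = \tfrac12 \langle y,\nn\rangle$ say that $H$ is an eigenfunction of $L$ with eigenvalue $1$ and each $\langle y,\nn\rangle$ an eigenfunction with eigenvalue $\tfrac12$; these are exactly the functions that the compensation in $(y,h)$ can produce. By definition, $F$-stability is $\min_{y,h}Q(f,y,h)\ge 0$ for every compactly supported $f$.

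The main step is to show $F$-stability forces the largest eigenvalue $\mu_1$ of $L$ on the weighted space $L^2(\rho)$ (which makes sense thanks to polynomial volume growth) to satisfy $\mu_1\le 1$, and then that $H$ has a definite sign. A Perron--Frobenius argument makes $\mu_1$ simple, with strictly positive first eigenfunction $\phi_1$. Suppose $\mu_1>1$. Then $\phi_1$ lies in an eigenspace distinct from those of $H$ and of all the $\langle y,\nn\rangle$, so by self-adjointness $\phi_1$ is $L^2(\rho)$-orthogonal to every compensation function. Testing $Q$ on a compactly supported cutoff of $\phi_1$ then yields $Q\approx -\mu_1\int \phi_1^2\rho<0$ with negligible cross terms, contradicting $F$-stability. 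Hence $\mu_1\le 1$; since $LH=H$, either $H\equiv 0$, or $H$ lies in the top eigenspace and, by simplicity, must be a scalar multiple of the positive $\phi_1$. In the first case the self-shrinker equation $H=\tfrac12\langle x,\nn\rangle$ gives $\langle x,\nn\rangle\equiv 0$, which with smoothness and polynomial growth forces $\Sigma$ to be a hyperplane through the origin. In the second, after an orientation choice, $H>0$ on $\Sigma$.

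With $H\ge 0$, Huisken's classification (Theorem~\ref{t:huisken}) identifies $\Sigma$ with $\SS^k\times\RR^{n-k}$ for some $k$. To complete the proof I would exclude the genuine cylinders $0<k<n$ by producing a compactly supported $F$-unstable variation directly: take $f = z_j\chi$, where $z_j$ is a linear coordinate along the axial $\RR^{n-k}$ factor and $\chi$ a large radial cutoff. A short computation gives $Lz_j=\tfrac12 z_j$. Moreover $f$ is $L^2(\rho)$-orthogonal to $H$ (constant on the cylinder) and to every translation function $\langle y,\nn\rangle$, because $\nn$ has no axial component and $z_j\chi$ is odd in the axial direction while degree-one spherical harmonics on $\SS^k$ integrate to zero. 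All cross terms in $Q$ therefore vanish and $\min_{y,h}Q(f,y,h)\le Q(f,0,0)\approx -\tfrac12\int f^2\rho <0$. So genuine cylinders are not $F$-stable, leaving only $\SS^n$ and $\RR^n$.

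The hardest part is making the $\mu_1>1$ step quantitative: the truncation errors on $\phi_1$ and the cross-term contributions that cutting off generates must be controlled in the weighted norm finely enough that the principal negative term dominates, which rests on the Gaussian decay of $\rho$ and polynomial volume growth. A closely related difficulty is the singular setting allowed when $n\le 6$: spectral theory, orthogonality of eigenspaces, and all integrations by parts must be executed with cutoffs vanishing on neighborhoods of the singular set, and the hypothesis of locally finite $(n-2)$-dimensional Hausdorff measure is precisely the borderline codimension that makes these cutoff boundary terms vanish in the limit.
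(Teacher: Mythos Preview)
Your outline is correct and matches the paper's strategy: the second variation at a critical point yields the operator $L=\mathcal L+|A|^2+\tfrac12$, the identities $LH=H$ and $L\langle y,\nn\rangle=\tfrac12\langle y,\nn\rangle$ identify precisely the directions that the variations of $(x_0,t_0)$ can compensate, $F$-stability then forces $H$ not to change sign, Theorem~\ref{t:huisken} classifies $\Sigma$ as $\SS^k\times\RR^{n-k}$, and one rules out the genuine cylinders by the axial variation. The paper carries this out as two separate results, Theorem~\ref{t:linearstable} in the closed case and Theorem~\ref{t:linearstable2} in the open case, but the ingredients are exactly yours.

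Two comments on details. First, a small slip in the cylinder step: you wrote $\min_{y,h}Q(f,y,h)\le Q(f,0,0)<0$, but $F$-instability requires $Q(f,y,h)<0$ for \emph{every} $(y,h)$. This does hold, since once the cross terms vanish the remaining $(y,h)$-contributions $-h^2\int H^2\rho-\tfrac12\int\langle y,\nn\rangle^2\rho$ are non-positive; just state it that way.

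Second, and more substantively, in the non-compact case the paper does \emph{not} cut off a global eigenfunction $\phi_1$. The positive solution produced by your Perron--Frobenius step (the paper's Lemma~\ref{l:pde}) is not known to lie in the weighted $L^2$ space, so orthogonality to $H$ is not available a priori, and controlling the cutoff errors against the cross terms would require exactly the growth information you do not have. The paper's device is to use instead the lowest \emph{Dirichlet} eigenfunction $u$ on a large intrinsic ball $B_R$: this $u$ is compactly supported from the start, and Lemmas~\ref{l:nearbd}--\ref{l:ortho} establish the quantitative near-orthogonality $[\,uH\,]_{B_R}^2\le\tfrac12[H^2]_{B_R}[u^2]_{B_R}$ for $R$ large, which is what makes the second variation strictly negative for all $(y,h)$. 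Your acknowledged ``hardest part'' is exactly this, and the Dirichlet-eigenfunction trick is the clean way to do it. The verification that $H$ itself lies in weighted $W^{1,2}$ (needed for the simplicity argument) is Step~1 of Theorem~\ref{c:pde} and uses only the finiteness of $\mu_1$ together with part~(2) of Lemma~\ref{l:L2W12}.
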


 The main steps in the proof of Theorem \ref{t:liketo} are:
 \begin{itemize}
 \item Show that $F$-stability implies mean convexity (i.e., $H \geq 0$).
 \item Classify the mean convex self-shrinkers (see Theorem \ref{t:huisken}
below).
\end{itemize}

The connection between $F$-stability and mean convexity comes from that the mean curvature $H$ 
turns out to be an eigenfunction for the second variation operator for the $F_{0,1}$ functional on a self-shrinker $\Sigma$; see Theorem \ref{t:secvar} and Theorem \ref{t:spectral}.   When $\Sigma$ is closed (so that the spectral theory is easiest), it is then almost immediate to see that $F$-stability is equivalent to $H$ being the lowest eigenfunction which, in turn, is equivalent to that $H$ does not change sign (i.e., mean convexity).  Although spectral theory of open manifolds is  more complicated, we show the corresponding result  in Section \ref{s:noncompactspec} to  get that  $F$-stability implies mean convexity.

The classification of mean convex self-shrinkers began with
  \cite{H3},  where Huisken showed that the only smooth {\underline{closed}}  self-shrinkers with non-negative mean curvature in $\RR^{n+1}$ (for $n > 1$) are round spheres (i.e., $\SS^n$).  When $n=1$,   Abresch and Langer, \cite{AbLa},   had already shown that the circle is the only simple closed self-shrinking curve.
 In a second paper, \cite{H4}, Huisken dealt with the non-compact case.  He showed in \cite{H4} that  the only smooth {\underline{open}} embedded self-shrinkers  in $\RR^{n+1}$
with $H \geq 0$, polynomial volume growth, and $|A|$ bounded are
 isometric products of a round sphere and a linear subspace (i.e. $\SS^k\times \RR^{n-k}\subset \RR^{n+1}$).    We will show  that Huisken's classification holds even without the $|A|$ bound which will be crucial for our applications:

 \begin{Thm}	\label{t:huisken}
 $\SS^k\times \RR^{n-k}$ are the only smooth complete embedded self-shrinkers without boundary, with polynomial volume growth, and   $H \geq 0$ in $\RR^{n+1}$.
  \end{Thm}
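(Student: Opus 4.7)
The strategy is to follow Huisken's approach (\cite{H3}, \cite{H4}) for mean-convex self-shrinkers, but to replace the pointwise maximum-principle step --- which previously required an \emph{a priori} $|A|$ bound --- by a weighted integration-by-parts against the Gaussian density $\e^{-|x|^2/4}$, exploiting the polynomial volume growth hypothesis. First I dispose of the degenerate case $H \equiv 0$: the shrinker equation $H = \langle x,\nn\rangle/2$ then forces $\langle x,\nn\rangle \equiv 0$, so $\Sigma$ is a smooth embedded minimal cone; smoothness and completeness make $\Sigma$ a hyperplane through the origin, which is the $k=0$ case. Henceforth assume $H \not\equiv 0$. On a self-shrinker $H$ satisfies the linear elliptic equation $\LL H = (\tfrac12 - |A|^2)\,H$, where $\LL = \Delta - \tfrac12 \langle x,\nabla \cdot\rangle$ is the drift Laplacian; the strong maximum principle applied to the nonnegative solution $H$ promotes $H \geq 0$ to $H > 0$ everywhere on $\Sigma$.

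With $H > 0$, set $f = |A|^2/H^2$. Using Simons' identity together with the shrinker equation one derives the Huisken-type identity
\begin{equation*}
\LL f \;=\; -\frac{2}{H}\,\langle \nabla H, \nabla f\rangle \;-\; \frac{2}{H^4}\,\bigl| H\,\nabla A - \nabla H\otimes A \bigr|^2 ,
\end{equation*}
so $f$ is a supersolution of a drift equation with a nonpositive defect. The plan is to pair this identity against a test function of the form $Hf\,\varphi^2\,\e^{-|x|^2/4}$ with $\varphi$ a smooth radial cutoff supported in $\Sigma \cap B_R$, and integrate. Since $\LL$ is self-adjoint with respect to the weighted measure $\e^{-|x|^2/4}\,d\mu$, the resulting identity has no boundary terms and reduces to a weighted Dirichlet-type energy plus error terms controlled by $|\nabla \varphi|$. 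Letting $\varphi \uparrow 1$ (i.e.\ $R \to \infty$) will, modulo integrability, force the defect $|H\,\nabla A - \nabla H \otimes A|$ to vanish identically.

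The main obstacle is justifying this limit without an \emph{a priori} $|A|$ bound. It requires Gaussian-weighted estimates of the form $\int_\Sigma (1 + |A|^2 + |\nabla A|^2)\,\e^{-|x|^2/4}\,d\mu < \infty$, along with sufficient higher weighted integrability of $|A|$ to dominate the error terms involving $f$, $Hf$, and $f^2$. Such estimates must be established intrinsically from polynomial volume growth and the shrinker identity $|H| \leq |x|/2$, by testing Simons' identity against weighted cutoffs and using the drift term $-\tfrac12\langle x,\nabla \cdot\rangle$ to absorb the derivatives of the weight --- this is precisely where polynomial volume growth becomes essential. Once the weighted identity passes to the limit, $H\,\nabla A \equiv \nabla H \otimes A$; tracing gives $\nabla(A/H) \equiv 0$, so $A/H$ is a parallel symmetric $2$-tensor. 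A standard de Rham-type splitting, together with completeness, embeddedness, $H > 0$, and the shrinker equation pinning the sphere radius to $\sqrt{2k}$, then identifies $\Sigma$ isometrically with some $\SS^k \times \RR^{n-k}$.
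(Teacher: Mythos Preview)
Your overall strategy is right in spirit --- use weighted integration by parts against $\e^{-|x|^2/4}$ to avoid the $|A|$ bound, then derive rigidity of $A/H$ --- and this is exactly the philosophy the paper follows. But there is a genuine gap at the step you yourself flag as the ``main obstacle'': the weighted integrability estimates.

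You propose to obtain $\int_\Sigma (|A|^2 + |\nabla A|^2)\,\e^{-|x|^2/4} < \infty$ and higher powers ``by testing Simons' identity against weighted cutoffs.'' This does not close. The weighted Simons identity reads $\cL\,|A|^2 = 2|\nabla A|^2 + |A|^2 - 2|A|^4$; testing it against a cutoff $\eta^2$ produces an identity relating $[\eta^2|\nabla A|^2]$, $[\eta^2|A|^2]$, and $[\eta^2|A|^4]$, but with the $|A|^4$ term on the wrong side to absorb. Polynomial volume growth and $|H|\le |x|/2$ do not help here; neither gives any control on $|A|$. The paper's key technical idea, which you are missing, is a \emph{stability-type inequality} coming from the fact that $H>0$ is a positive eigenfunction of $L$: testing $\cL\log H = \tfrac12 - |A|^2 - |\nabla\log H|^2$ against $\phi^2$ yields $[\phi^2|A|^2]\le [\,|\nabla\phi|^2 + \tfrac12\phi^2\,]$. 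Using this with $\phi=\eta\,|A|$ and playing it off against the Simons inequality --- the Schoen--Simon--Yau mechanism --- is what produces $[|A|^4] + [|\nabla A|^2] < \infty$ (Proposition~\ref{p:ssy}). Without this input, your integration-by-parts limit cannot be justified.

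There is also a secondary issue with your choice of test function. Pairing against $Hf\,\varphi^2\,\e^{-|x|^2/4}$ with $f=|A|^2/H^2$ introduces \emph{negative} powers of $H$ (the error terms involve $f^2=|A|^4/H^4$), and you have no lower bound on $H$ at infinity; the upper bound $|H|\le|x|/2$ points the wrong way. The paper sidesteps this entirely by working with $\log H$ rather than $H^{-1}$: it pairs $|A|^2$ against $\log H$ and pairs $|A|$ against itself, obtaining directly that $|A|=\beta H$ for a constant $\beta$ (equivalent to your $\nabla(A/H)=0$). This requires only the weighted $L^2$ control of $|A|^2$, $|A|^2|\nabla\log H|$, and $|\nabla|A||$, all of which follow from the Schoen--Simon--Yau step. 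Once $|A|=\beta H$ and hence $|\nabla A|^2=|\nabla|A||^2$, the classification proceeds as in Huisken via Lawson's theorem.
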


  The $\SS^k$ factor in Theorem \ref{t:huisken} is round and has radius $\sqrt{2k}$; we allow the possibilities of a hyperplane (i.e., $k=0$) or a sphere ($n-k = 0$).

\subsection{Conventions and notation}
Using the definition
\eqr{e:defH},  $H$ is $2/R$ on the sphere of radius $R$ in $\RR^3$ and $H$ is $1/R$ on the cylinder of radius $R$.   If $e_i$ is an orthonormal frame for $\Sigma$ and $\nn$ is a unit normal, the coefficients of the second fundamental form are defined to be
\begin{equation}
	a_{ij} = \langle \nabla_{e_i} e_j , \nn \rangle \, .
\end{equation}
In particular, we have
\begin{equation}	\label{e:aij}
	\nabla_{e_i} \nn  = - a_{ij} e_j \, .
\end{equation}
Since $\langle \nabla_{\nn} \nn , \nn \rangle =0$, we have that $H = \langle \nabla_{e_i} \nn , e_i \rangle = - a_{ii}$ where by convention we are summing over repeated indices.

When $L$ is a differential operator, we will say that $u$ is an eigenfunction with eigenvalue $\mu$ if $L \, u = - \mu \, u$ and $u$ is not identically zero.  The Laplacian $\Delta$ is defined to be the divergence of the gradient; thus, on $\RR^n$, it is given by $\Delta u = \sum_{i=1}^n u_{x_i x_i}$.  With this convention, the eigenvalues of the Laplacian are non-negative on any closed manifold.

The parabolic distance between two pairs of points $(x_1,t_1)$ and $(x_2,t_2)$ in space-time is denoted by $d_P((x_1,t_1),(x_2,t_2))$ and equal to $\max \{|x_1-x_2|,\sqrt{|t_1-t_2|}\}$.

Finally, there is a second, and related, way to study the asymptotic structure near a singularity.  In this case, one considers a new flow which combines the mean curvature flow with a continuous rescaling process $\tilde{M}_s = \frac{1}{\sqrt{-t } } \, M_t$ where $s(t)= - \log (-t)$ is a  reparameterization of time.  The resulting flow is called the {\emph{rescaled mean curvature flow}}{\footnote{Our definition of the rescaled flow differs slightly from Huisken's definition in \cite{H3} since Huisken's rescaling fixes the time $t= - \frac{1}{2}$ slice rather than the $t= -1$.}} and a one-parameter family $\tilde{M}_s$   satisfies it if
\begin{equation}
	\left( \partial_s x \right)^{\perp} =   - H \, \nn + \frac{x}{2}  \, ,
\end{equation}
where $x$ is the position vector.  In this case, we are rescaling about $x=0$ and $t=0$, but translations in space and time give similar equations for rescaling about other points.   It is not hard to see that the fixed points of the rescaled mean curvature flow are precisely the self-shrinkers. The result of Huisken, \cite{H1}, about extinction in round points, can be reformulated in terms rescaled MCF flow as stating that the  rescaled MCF of a convex hypersurface exists for all time and converges asymptotically to the round sphere.   Obviously, this implies that the tangent flows at the singularity are round spheres.

\tableofcontents

\section{The $F$ functional and Huisken's monotonicity formula}

We will need to recall Huisken's monotonicity formula (see \cite{H3}, \cite{E1},
\cite{E2}).
To do this, first
define the non-negative function $\Phi$ on $\RR^{n+1} \times (-\infty,0)$ by
 \begin{equation}
\Phi  (x,t)  = [-4\pi t]^{-\frac{n}{2}}
\,\e^{\frac{|x|^2}{4t}}\, ,
\end{equation}
and then set
$\Phi_{(x_0,t_0)} (x,t) = \Phi (x-x_0, t-t_0)$.
  If $M_t$ is a solution to
the MCF and $u$ is a $C^2$ function, then
\begin{equation}    \label{e:huisken}
\frac{d}{dt} \int_{M_t} u \, \Phi_{(x_0,t_0)}=
-\int_{M_t} \left| H\nn- \frac{(x-x_0)^{\perp}
}{2\,(t_0-t)}\right|^2 \,u\,\Phi_{(x_0,t_0)} + \int_{M_t} \left[
u_t - \Delta u \right] \, \Phi
 \, .
\end{equation}
When $u$ is identically one, we get the monotonicity formula
\begin{equation}    \label{e:huisken2}
\frac{d}{dt} \int_{M_t}   \Phi_{(x_0,t_0)}=
-\int_{M_t} \left| H\nn-\frac{(x-x_0)^{\perp}
}{2\,(t_0-t)}\right|^2 \,\Phi_{(x_0,t_0)}
 \, .
\end{equation}
Huisken's density is the limit of $\int_{M_t} \Phi_{x_0,t_0}$ as $t\to t_0$.
That is,
\begin{equation}
\Theta_{x_0,t_0}=\lim_{t\to t_0} \int_{M_t}\Phi_{x_0,t_0}\, ;
\end{equation}
this limit exists by the monotonicity \eqr{e:huisken2} and the density is non-negative as the integrand $\Phi_{x_0,t_0}$ is non-negative.    Note also that it easily follows from the monotonicity that Huisken's density is upper semi-continuous
in the following sense:  Given $x_0$, $t_0$, and $\epsilon>0$, there exists $\delta>0$ such that if $d_P((x,t),(x_0,t_0))<\delta$, then
\begin{equation}
\Theta_{x,t}<\Theta_{x_0,t_0}+\epsilon\, .
\end{equation}

The $F$ functional and Huisken's weighted volume are related by the following identity
\begin{equation}
F_{x_0,t_0}(M_{-1})=(4\pi t_0)^{-n/2}\int_{M_{-1}}\e^{-\frac{|x-x_0|^2}{4t_0}}=\int_{M_{-1}}   \Phi_{(x_0,t_0)}(x,0)\, .
\end{equation}

\begin{Rem}
The   density was defined so that it is one on a hyperplane.
In \cite{St}, A. Stone calculated Huisken's density, and thus the $F$ functional, on spheres and cylinders.  On spheres, the density is $4/\e \approx 1.47$ and on cylinders it is $\sqrt{2\pi/\e} \approx 1.52$.
\end{Rem}

We will use two simple properties of the $F_{x_0 , t_0} $ functional defined in \eqr{e:Ft0}.  The first is that if we scale $\Sigma$ by some $\alpha > 0$ about the origin, then the scaled hypersurface $\alpha \, \Sigma$ satisfies
\begin{equation}	\label{e:scaleF}
		F_{0 , \alpha^2 \, t_0} (\alpha \, \Sigma)  = F_{0 , t_0} (\Sigma)  \, .
\end{equation}
Similarly, we get a corresponding equality for rescalings about an arbitrary $x_0 \in \RR^{n+1}$.
The second property that we will need is that if the one-parameter family of hypersurfaces $M_t$ flows by mean curvature and $t > s$, then
Huisken's monotonicity formula  \eqr{e:huisken2} gives
\begin{equation}	\label{e:huiskenF}
		F_{x_0 , t_0} ( M_t)  \leq  F_{x_0 , t_0 + (t-s)} (M_s)  \, .
\end{equation}

Recall that we defined the entropy $\lambda = \lambda (\Sigma)$ of $\Sigma$ in \eqr{e:lamb} as the supremum  of the $F_{x_0,t_0}$ functionals
\begin{equation}	
	\lambda =   \sup_{x_0,t_0} \, \, F_{x_0 , t_0} (\Sigma)   \, .
\end{equation}
We conclude that the entropy is monotone under the mean curvature flow:

\begin{Lem}	\label{l:entromono}
The entropy $\lambda$  is non-increasing in $t$ if the hypersurfaces $M_t$ flow by mean curvature or by the rescaled mean curvature flow.
\end{Lem}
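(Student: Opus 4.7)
The plan is to deduce the entropy monotonicity directly from the two properties of the $F$ functional already recorded in the paper, namely the Huisken-type inequality \eqref{e:huiskenF} and the scaling identity \eqref{e:scaleF} (together with its translation analogue).

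For the mean curvature flow statement, fix $s < t$ and fix any parameters $x_0 \in \RR^{n+1}$ and $t_0 > 0$. Applying \eqref{e:huiskenF} gives
\[
F_{x_0 , t_0}(M_t) \;\le\; F_{x_0 , t_0 + (t-s)}(M_s) \;\le\; \sup_{y_0 , s_0} F_{y_0 , s_0}(M_s) \;=\; \lambda(M_s).
\]
Since the right-hand side is independent of $(x_0 , t_0)$, taking the supremum on the left yields $\lambda(M_t) \le \lambda(M_s)$. Thus $\lambda(M_t)$ is non-increasing along the MCF.

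For the rescaled mean curvature flow, the idea is to reduce to the MCF case via the dilation invariance of $\lambda$. Given $\tilde M_s = \frac{1}{\sqrt{-t}}\, M_t$ with $s = -\log(-t)$, a change of variables in the definition of $F_{x_0 , t_0}$ shows (as in \eqref{e:scaleF}) that for any $\alpha > 0$
\[
F_{x_0 , t_0}(\alpha \, \Sigma) \;=\; F_{x_0/\alpha , \, t_0/\alpha^2}(\Sigma),
\]
and taking the supremum over $(x_0 , t_0)$ on both sides gives $\lambda(\alpha\Sigma) = \lambda(\Sigma)$; $\lambda$ is likewise invariant under translations. Consequently $\lambda(\tilde M_s) = \lambda(M_t)$, and since $s \mapsto t(s) = -\e^{-s}$ is strictly increasing, monotonicity of $\lambda$ in $t$ along the MCF transfers to monotonicity in $s$ along the rescaled MCF.

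There is no genuine obstacle here: the only subtle point is making sure that the supremum defining $\lambda$ may be interchanged with the monotonicity inequality, which is handled simply by bounding each $F_{x_0 , t_0}(M_t)$ pointwise before taking the supremum, so that attainability of $\lambda$ is never needed. The argument is little more than a repackaging of the ingredients already proved in the section.
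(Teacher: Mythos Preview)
Your argument is correct and is precisely the paper's approach: the MCF case is deduced from \eqref{e:huiskenF} by bounding each $F_{x_0,t_0}(M_t)$ before taking the supremum, and the rescaled case follows by combining this with the dilation invariance of $\lambda$ coming from \eqref{e:scaleF}. The paper's proof is just the one-line sketch of what you have written out.
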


\begin{proof}
The first claim follows immediately from \eqr{e:huiskenF}, while the second claim follows from  \eqr{e:huiskenF} and \eqr{e:scaleF}.
\end{proof}

%We also have that the entropy is lower semi-continuous:

%\begin{Cor}
%($\lambda$ is lower semi-continuous.)
%\end{Cor}

%\begin{proof}
%\end{proof}

\section{Self-shrinkers and self-shrinking MCF}	\label{s:selfs}

A hypersurface is said to be a {\emph{self-shrinker}} if it is the time $t=-1$ slice{\footnote{In \cite{H3}, Huisken defines self-shrinkers to be the time $t= - \frac{1}{2}$ slice of a self-shrinking MCF; consequently, Huisken gets that $H = \langle x , \nn \rangle$.}} of a self-shrinking MCF that disappears at $(0,0)$, i.e., of a MCF satisfying  $M_t = \sqrt{-t} \, M_{-1}$.  Thanks to the next lemma we will later use slight abuse of notation and identify a self-shrinker with the corresponding self-shrinking MCF and sometimes we will also simply think of a self-shrinker as a hypersurface satisfying the following equation for the mean curvature and the normal
\begin{equation}
H=\frac{\langle x,\nn\rangle}{2}\, .\label{e:selfsSigma}
\end{equation}

 \begin{Lem}	\label{l:ss}
If a hypersurface $\Sigma$ satisfies  \eqr{e:selfsSigma}, then $M_t=\sqrt{-t}\,\Sigma$ flows by mean curvature and
\begin{equation} \label{e:selfs}
H_{M_t}=-\frac{\langle  x,\nn_{M_t}\rangle}{2t}\, .
\end{equation}

Conversely if a one-parameter family of hypersurfaces $M_t$ flows by mean curvature, then $M_t$ satisfies  $M_t = \sqrt{-t} \, M_{-1}$
 if and only if
  $M_{t}$ satisfies \eqr{e:selfs}.
\end{Lem}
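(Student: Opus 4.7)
The plan is to treat both directions of the lemma by directly computing the velocity of the scaled family and matching it against the mean curvature vector, using only the scaling behavior of the unit normal and the mean curvature.

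First I would prove the forward implication. Parameterize $M_t = \sqrt{-t}\,\Sigma$ by the natural map $x(y,t) = \sqrt{-t}\, y$ for $y \in \Sigma$ and $t < 0$. A direct differentiation gives $\partial_t x = -y/(2\sqrt{-t}) = x/(2t)$, so the normal part of the time derivative is $(\partial_t x)^{\perp} = \langle x, \nn_{M_t} \rangle / (2t) \,\nn_{M_t}$. Since dilation by $\sqrt{-t}$ preserves the unit normal but divides the mean curvature by $\sqrt{-t}$, we have $\nn_{M_t}(x) = \nn_\Sigma(y)$ and $H_{M_t}(x) = H_\Sigma(y)/\sqrt{-t}$. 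Substituting $y = x/\sqrt{-t}$ into the self-shrinker equation $H_\Sigma = \langle y, \nn_\Sigma\rangle/2$ produces
\begin{equation*}
 H_{M_t}(x) = \frac{1}{\sqrt{-t}}\cdot\frac{\langle x/\sqrt{-t}, \nn_{M_t}\rangle}{2} = -\frac{\langle x, \nn_{M_t}\rangle}{2t},
\end{equation*}
which is \eqref{e:selfs}. Combining this with the computation of $(\partial_t x)^{\perp}$ shows $(\partial_t x)^{\perp} = -H_{M_t}\,\nn_{M_t}$, so $M_t$ flows by mean curvature.

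Next I would handle the converse. One direction is immediate: if $M_t = \sqrt{-t}\,M_{-1}$, then $M_{-1}$ satisfies \eqref{e:selfsSigma} (apply the forward implication and read off the equation at $t=-1$, or equivalently set $t=-1$ in the derivation above), and then the forward implication gives \eqref{e:selfs} for all $t<0$. For the other direction, suppose $M_t$ is a MCF satisfying \eqref{e:selfs}. Define $\tilde M_t := \sqrt{-t}\,M_{-1}$. By the forward implication applied to $\Sigma := M_{-1}$ (which satisfies \eqref{e:selfsSigma} by evaluating \eqref{e:selfs} at $t=-1$), the family $\tilde M_t$ is itself a MCF, and it clearly agrees with $M_t$ at $t=-1$. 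Uniqueness of smooth mean curvature flow with prescribed initial data then forces $M_t = \tilde M_t$ for all $t < 0$.

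The argument is essentially bookkeeping; there is no deep obstacle. The two places that require care are (i) keeping the scaling of $H$ versus $\nn$ straight when pulling the self-shrinker equation from time $-1$ to time $t$, and (ii) in the converse, invoking the uniqueness of MCF to conclude that the two flows $M_t$ and $\tilde M_t$ coincide rather than merely having the same normal velocity; an alternative here is to observe that the geometric flow is determined by its normal speed, and both $M_t$ and $\tilde M_t$ have normal velocity $x^{\perp}/(2t)$, so they coincide as a one-parameter family of hypersurfaces up to tangential reparameterization.
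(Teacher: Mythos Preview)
Your proof is correct and follows essentially the same approach as the paper: the forward implication via direct computation of $\partial_t x$ and the scaling of $H$ and $\nn$ is identical, and for the converse the paper likewise reuses the velocity computation to get \eqref{e:selfs} from self-similarity and then invokes uniqueness of MCF with shared initial data to get the other direction. One small wording caution: in the converse, saying ``apply the forward implication'' to deduce that $M_{-1}$ satisfies \eqref{e:selfsSigma} is circular as stated---what you actually use (and what the paper spells out) is the velocity identity $(\partial_t x)^\perp = x^\perp/(2t)$, which holds for any dilated family regardless of the self-shrinker hypothesis, combined with the MCF equation.
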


\begin{proof}
If $\Sigma$ is a hypersurface that satisfies that $H=\frac{\langle x,\nn\rangle}{2}$, then we set $M_t=\sqrt{-t}\,\Sigma$ and $x(p,t)=\sqrt{-t}\,p$ for $p\in \Sigma$.  It follows that $\nn_{M_t}(x(p,t))=\nn_{\Sigma}(p)$, $H_{M_t}(x(p,t))=\frac{H_{\Sigma}(p)}{\sqrt{-t}}$, and $\partial_t x=-\frac{p}{2\sqrt{-t}}$.  Thus, $(\partial_t x)^{\perp}=-\frac{\langle p,\nn\rangle}{2\sqrt{-t}}=-H_{M_t}(x(p,t))$.  This proves that $M_t$ flows by mean curvature and shows \eqr{e:selfs}.

On the other hand, suppose that $M_t$ is a one-parameter family of hypersurfaces flowing by mean curvature.
A computation shows that
\begin{equation}
(-t)^{\frac{3}{2}} \, \partial_t\left( \frac{x}{\sqrt{-t}}\right)=-t\, \partial_t x +\frac{x}{2} \, .
\end{equation}
If $\frac{M_t}{\sqrt {-t}}=M_{-1}$, then
\begin{equation}
0=(-t)^{\frac{3}{2}} \, \langle \partial_t\left( \frac{x}{\sqrt{-t}}\right),\nn_{M_{-1}}\rangle=-t\,\langle  \partial_t x,\nn_{M_{-1}}\rangle +\frac{1}{2}\langle x,\nn_{M_{-1}}\rangle\, .
\end{equation}
Hence, since $M_t$ flow by the mean curvature it follows that
\begin{equation}
H_{M_{-1}}=-\langle \partial_t x,\nn_{M_{-1}}\rangle = \frac{\langle x , \nn_{M_{-1}} \rangle }{2} \, .
\end{equation}
The equation for $H_{M_t}$ for general $t$ follows by scaling.

Finally, if $M_t$ flows  by mean curvature and satisfies \eqr{e:selfs}, then, by the first part of the
lemma, it follows that $N_t=\sqrt{-t}M_{-1}$ also flows by the mean curvature and has the same
initial condition as $M_t$; thus $M_t=N_t$ for $t\geq -1$.
\end{proof}	

\begin{Cor}
If the hypersurfaces $M_t$ flow by mean curvature, then $M_t$ is a self-similar shrinking MCF if and only $\int_{M_t}\Phi$ is constant.
\end{Cor}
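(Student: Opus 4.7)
The plan is to read the corollary directly off Huisken's monotonicity formula \eqr{e:huisken2} together with the characterization of self-shrinking MCF in Lemma \ref{l:ss}. Observe that the corollary, as stated, implicitly refers to the functional $\int_{M_t} \Phi_{(0,0)}$ (since no other basepoint $(x_0,t_0)$ is fixed), so we set $(x_0,t_0)=(0,0)$ throughout and work with
\[
	\frac{d}{dt}\int_{M_t}\Phi_{(0,0)} = -\int_{M_t}\Bigl|H\nn - \frac{x^{\perp}}{-2t}\Bigr|^2 \Phi_{(0,0)},
\]
which holds for $t<0$ by \eqr{e:huisken2}.

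For the forward direction, suppose $M_t=\sqrt{-t}\,M_{-1}$. Then by Lemma \ref{l:ss} (specifically \eqr{e:selfs}), we have $H_{M_t} = -\langle x, \nn_{M_t}\rangle/(2t)$, which is exactly the statement that the integrand $|H\nn - x^{\perp}/(-2t)|^2$ vanishes identically on $M_t$. Hence the derivative above is zero for all $t<0$, so $\int_{M_t}\Phi_{(0,0)}$ is constant.

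For the converse, suppose $\int_{M_t}\Phi_{(0,0)}$ is constant in $t$. The derivative above is then identically zero, and since the integrand is pointwise non-negative and $\Phi_{(0,0)}>0$, we conclude $H\nn = x^{\perp}/(-2t)$ at every point of $M_t$, for each $t<0$. Taking the inner product with $\nn$ gives $H = -\langle x,\nn\rangle/(2t)$, which is exactly the self-shrinker equation \eqr{e:selfs}. The second part of Lemma \ref{l:ss} then yields $M_t=\sqrt{-t}\,M_{-1}$, as desired.

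There is no serious obstacle: the only subtlety is making sure the vanishing of the nonnegative integrand $|H\nn - x^{\perp}/(-2t)|^2\Phi_{(0,0)}$ holds pointwise (not merely in integral), which is automatic from continuity of the integrand on the smooth flow together with $\Phi_{(0,0)}>0$. Everything else is an invocation of the two results already established above.
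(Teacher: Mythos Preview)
Your proof is correct and follows essentially the same approach as the paper: combine the equality case of Huisken's monotonicity formula \eqr{e:huisken2} with the characterization of self-shrinking flows in Lemma \ref{l:ss}. The paper's one-line proof cites \eqr{e:huiskenF} rather than \eqr{e:huisken2}, but your more explicit use of the vanishing of the non-negative integrand is exactly what that citation is pointing to.
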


\begin{proof}
This follows by combining Lemma \ref{l:ss} with \eqr{e:huiskenF}.
\end{proof}

Another consequence of Lemma \ref{l:ss} is the following standard corollary:

\begin{Cor}	\label{c:mincone}
If $\Sigma$ is a self-shrinker and $H \equiv 0$, then $\Sigma$ is a minimal cone.  In particular, if $\Sigma$ is also smooth and embedded, then it is a hyperplane through $0$.
\end{Cor}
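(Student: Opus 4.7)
The plan is to exploit the self-shrinker equation \eqref{e:selfsSigma}, which together with $H\equiv 0$ immediately forces $\langle x,\nn\rangle \equiv 0$ on $\Sigma$. Geometrically this says the position vector field $x$ is everywhere tangent to $\Sigma$, so $\Sigma$ is invariant under the flow of $x$. Since the flow of $x$ in $\RR^{n+1}$ is the one-parameter family of dilations $x\mapsto \e^s x$, invariance under this flow means exactly that $\alpha\,\Sigma = \Sigma$ for every $\alpha>0$; i.e., $\Sigma$ is a cone. Combined with $H\equiv 0$, this gives that $\Sigma$ is a minimal cone.

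For the second assertion, I would argue that a smooth complete embedded cone without boundary must be a hyperplane through the origin. First, the cone property and completeness force $0\in\Sigma$: pick any $p\in\Sigma$, then the rays $\alpha p\in\Sigma$ for $\alpha>0$, and letting $\alpha\to 0$ the points accumulate at $0$, which must lie in $\Sigma$ since $\Sigma$ is closed. Now, $\Sigma$ is smooth at $0$, so it has a well-defined tangent hyperplane $T_0\Sigma$. The tangent cone of $\Sigma$ at $0$ equals $T_0\Sigma$ by smoothness, but also equals $\Sigma$ itself because $\Sigma$ is already a cone with vertex at $0$. Hence $\Sigma = T_0\Sigma$ is a hyperplane through $0$.

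The only non-routine point is step in which one passes from ``$\langle x,\nn\rangle\equiv 0$'' to ``$\Sigma$ is scale-invariant,'' and this is immediate from the identification of the dilation flow with the flow of the radial vector field together with the fact that a smooth hypersurface is preserved by the flow of any vector field tangent to it. Everything else is formal. The ``main obstacle,'' if one must name one, is being careful that completeness (rather than compactness) is what guarantees $0\in\Sigma$, but this follows from closedness of $\Sigma$ in $\RR^{n+1}$, which is part of being a complete embedded hypersurface without boundary.
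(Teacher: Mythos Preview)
Your argument is correct, and it reaches the same conclusion by a slightly different route than the paper. The paper's proof invokes Lemma~\ref{l:ss}: since $H\equiv 0$, the constant family $M_t=\Sigma$ is a static MCF, and since $\Sigma$ satisfies the self-shrinker equation, the converse direction of Lemma~\ref{l:ss} gives $M_t=\sqrt{-t}\,M_{-1}$, i.e., $\Sigma=\sqrt{-t}\,\Sigma$ for all $t<0$. You instead read the self-shrinker equation directly as $\langle x,\nn\rangle\equiv 0$, observe that this makes the Euler vector field $x$ tangent to $\Sigma$, and conclude scale-invariance from invariance under its flow. The two arguments are really the same fact seen from two angles: the paper's use of Lemma~\ref{l:ss} (and the uniqueness step at its end) is exactly what your open--closed argument along the orbits $s\mapsto \e^s p$ accomplishes, once one unwinds it. Your version has the minor advantage of being self-contained and not passing through the MCF formalism, while the paper's version is shorter given that Lemma~\ref{l:ss} is already in hand. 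For the second assertion, the paper simply asserts that the only smooth embedded cone is a hyperplane through $0$; your tangent-cone argument supplies the detail the paper omits. Both proofs implicitly use that $\Sigma$ is a closed subset of $\RR^{n+1}$ to globalize the scale-invariance; you flag this, the paper does not, but the level of rigor is the same.
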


\begin{proof}
Since $H\equiv 0$, $M_t = \Sigma$ is a static solution of the MCF.  Hence, by
  Lemma \ref{l:ss},  $\sqrt{-t} \, \Sigma = \Sigma$ for all $t< 0$ and, thus, $\Sigma$ is a cone.  The second claim follows since the only smooth cone through $0$ is a hyperplane.
\end{proof}

\subsection{Volume bounds and a priori bounds for blow ups}

Let $M_t\subset \RR^{n+1}$ be a MCF with initial hypersurface $M_0$ where $M_0$ is smooth closed and embedded.

\begin{Lem}  \label{l:volb}
If $M_0$ and $M_t$ are as above and $T>0$ is given, then there exists a constant $V = V(\Vol (M_0),T)> 0$ so that for all $r>0$, all $x_0\in \RR^{n+1}$, and all $t\geq T$
\begin{equation}
\Vol (B_r(x_0)\cap M_{t})\leq V\,r^n\, .
\end{equation}
\end{Lem}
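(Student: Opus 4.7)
The plan is to prove the volume bound as a direct application of Huisken's monotonicity formula \eqr{e:huisken2}, which is already quoted in the excerpt. The strategy is the standard one: on a ball $B_r(x_0) \cap M_t$, the backwards heat kernel $\Phi_{(x_0,t_0)}$ is bounded below when $t_0 - t$ is comparable to $r^2$, while the initial weighted integral is trivially bounded by a constant depending on $\Vol(M_0)$.

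First I would pick auxiliary parameters. For a given $t \geq T$, $x_0 \in \RR^{n+1}$, and $r > 0$, set
\begin{equation}
	t_0 = t + r^2 \, ,
\end{equation}
so that $t_0 - t = r^2 > 0$ and Huisken's formula \eqr{e:huisken2} is applicable on the interval $[0,t]$ with basepoint $(x_0,t_0)$. Monotonicity of the weighted volume backwards from $t_0$ gives
\begin{equation}
	\int_{M_t} \Phi_{(x_0,t_0)}(x,t) \, d\mu \;\leq\; \int_{M_0} \Phi_{(x_0,t_0)}(x,0) \, d\mu \;\leq\; (4\pi t_0)^{-n/2} \, \Vol(M_0) \, ,
\end{equation}
where in the last step I used the pointwise bound $\e^{-|x-x_0|^2/(4t_0)} \leq 1$.

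Next I would bound the left-hand side from below by restricting to $B_r(x_0) \cap M_t$. For $x$ in this ball, $|x-x_0|^2 \leq r^2 = t_0 - t$, so
\begin{equation}
	\Phi_{(x_0,t_0)}(x,t) = \bigl[ 4\pi (t_0 - t) \bigr]^{-n/2} \, \e^{-|x-x_0|^2/[4(t_0-t)]} \;\geq\; (4\pi r^2)^{-n/2} \, \e^{-1/4} \, .
\end{equation}
Combining the two inequalities and multiplying through by $(4\pi r^2)^{n/2} \e^{1/4}$ gives
\begin{equation}
	\Vol(B_r(x_0) \cap M_t) \;\leq\; \e^{1/4} \left( \frac{r^2}{t+r^2} \right)^{\!n/2} \Vol(M_0) \;\leq\; \frac{\e^{1/4}}{T^{n/2}} \, \Vol(M_0) \, r^n \, ,
\end{equation}
where in the last step I used $t + r^2 \geq t \geq T$. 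Setting $V = \e^{1/4} \, T^{-n/2} \, \Vol(M_0)$ finishes the proof.

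There is no real obstacle here: the argument is a textbook application of Huisken's monotonicity and the only moving part is the choice $t_0 = t + r^2$, which balances the Gaussian scale against the ball radius so that $\Phi$ is bounded below on $B_r(x_0)$ by a multiple of $r^{-n}$. The hypothesis $t \geq T > 0$ is used exactly once, to turn the scale-invariant ratio $r^2/(t+r^2)$ into a uniform $r^n$-bound with a constant depending only on $T$ and $\Vol(M_0)$.
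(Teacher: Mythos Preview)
Your proof is correct and follows essentially the same argument as the paper: both choose $t_0 = t + r^2$, bound $\Phi_{(x_0,t_0)}$ below by $(4\pi r^2)^{-n/2}\e^{-1/4}$ on $B_r(x_0)\cap M_t$, and bound the initial weighted integral above by $(4\pi t_0)^{-n/2}\Vol(M_0)$, arriving at the identical constant $V = \e^{1/4}\,T^{-n/2}\,\Vol(M_0)$.
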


\begin{proof}
For any $t_0>t$ to be chosen later,  Huisken's monotonicity formula gives
\begin{align}
[4\pi (t_0-t)]^{-\frac{n}{2}}\,  \e^{-\frac{1}{4}}\,   \Vol (B_{\sqrt{t_0-t}}(x_0)\cap M_t)
 \leq &
[4\pi (t_0-t)]^{-\frac{n}{2}}     \int_{B_{\sqrt{t_0-t}}(x_0)\cap M_t}                 \e^{\frac{|x-x_0|^2}{4(t-t_0)}}\notag\\
 \leq &
\int_{M_t}\Phi_{x_0,t_0}(\cdot,t)\leq  \int_{M_0}\Phi_{x_0,t_0}(\cdot,0)\\
 =  &  [4\pi t_0]^{-\frac{n}{2}}     \int_{M_0}                                                         \e^{-\frac{|x-x_0|^2}{4t_0}}
\leq (4\pi T)^{-\frac{n}{2}}\, \Vol (M_0)\, .\notag
\end{align}
Hence,
\begin{equation}
\Vol (B_{\sqrt{t_0-t}}(x_0)\cap M_t)\leq \e^{\frac{1}{4}}\,\left(\frac{(t_0-t)}{T}\right)^{\frac{n}{2}}\, \Vol (M_0)\, .
\end{equation}
Setting $t_0=t+r^2$ and $V = \e^{\frac{1}{4}} \, \Vol \, (M_0) \, T^{ - \frac{n}{2}}$, the claim now easily follows.
\end{proof}

Given a MCF $M_t$ as above a {\it{limit flow}} (or a {\it{blow up flow}}), $\cB_t$, is defined as follows (cf., for instance, page 675--676 of \cite{W2} and chapter 7 of \cite{I2}).  Let $(x_j,t_j)$ be a sequence of points in space-time and $c_j$ a sequence of positive numbers with $c_j\to \infty$.  A {\it{limit flow}} is a Brakke limit of the sequence of rescaled flows $t\to c_j\,(M_{c^{-2}_j\,t+t_j}-x_j)$.  Such limits exist by Brakke's compactness theorem, \cite{B}, and the a priori volume bound in Lemma \ref{l:volb}.  Note that when all $x_j$ are equal to a common point $x_0$ and the $t_j$'s are equal to a common time $t_0$, then the limit flow is a tangent flow at $(x_0,t_0)$ (translated to the origin in space-time).

\begin{Cor}
If $M_0$ and $M_t$ are as above, then there exists a constant $V > 0$ so that for any limit flow $\cB_t$, all $r>0$, $x_0\in \RR^{n+1}$, and all $t$
\begin{equation}
\Vol (B_r(x_0)\cap \cB_{t})\leq V\,r^n\, .
\end{equation}
\end{Cor}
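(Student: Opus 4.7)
The plan is to reduce the claim directly to Lemma \ref{l:volb} using the scaling invariance of $n$-dimensional volume, and then pass through the Brakke limit. By definition, $\cB_t$ arises as a Brakke limit of rescaled flows $M^j_t = c_j\,(M_{c_j^{-2}t + t_j} - x_j)$ with $c_j \to \infty$. A direct change of variables gives the key identity
\begin{equation}
\Vol(B_r(x_0) \cap M^j_t) = c_j^n \, \Vol\bigl( B_{r/c_j}(x_j + c_j^{-1} x_0) \cap M_{c_j^{-2}t + t_j} \bigr) \, ,
\end{equation}
so a bound $V\,r^n$ on the left-hand side is equivalent to a bound $V\,(r/c_j)^n$ on the right-hand side with the same constant --- which is exactly the form of estimate supplied by Lemma \ref{l:volb}, since the $r^n$ scaling matches the $c_j^n$ factor.

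Concretely, I would fix any $T_0 \in (0, T^{*})$ where $T^{*}$ is the first singular time of $M_t$. For basepoints with $t_j \in [T_0, T^{*})$ (the only range relevant for singularity analysis) and any fixed $t$, one has $c_j^{-2}t + t_j \geq T_0/2$ for all $j$ sufficiently large. Applying Lemma \ref{l:volb} with $T = T_0/2$ to the right-hand side of the identity above and cancelling the $c_j^n$ factors yields
\begin{equation}
\Vol(B_r(x_0) \cap M^j_t) \leq V(\Vol(M_0),\, T_0/2) \, r^n \, ,
\end{equation}
uniformly in $j$, for every $r>0$, every $x_0 \in \RR^{n+1}$, and every $t$ in the domain of $\cB$.

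The final step --- and the only genuinely technical one --- is to pass this uniform bound to the Brakke limit. Here I expect to invoke standard Brakke-limit machinery: Brakke convergence of the time slices (as Radon measures or integral varifolds) gives upper-semicontinuity of mass on closed sets, so the uniform bound on each closed ball for the $M^j_t$'s transfers to $\cB_t$ in the limit. This is available from the references already cited in the paper (Brakke's compactness theorem \cite{B}, together with the formulations in White \cite{W2} and Ilmanen \cite{I2}). The main obstacle is thus essentially bookkeeping: verifying that the weak-convergence framework applies at the fixed time slice $t$, so that one may integrate the characteristic function of $B_r(x_0)$ against the limit measure and compare with the uniform pre-limit bound.
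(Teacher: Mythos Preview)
Your approach is essentially the same as the paper's: both reduce to Lemma \ref{l:volb} via scaling invariance of the bound $\Vol(B_r \cap M_t) \leq V r^n$, and your scaling identity is correct. The paper's proof is a terse two sentences that amount to exactly what you wrote out in detail.

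There is one genuine gap. You restrict to basepoints with $t_j \in [T_0, T^*)$ and dismiss the remaining case with the parenthetical ``the only range relevant for singularity analysis.'' But the corollary is stated for \emph{all} limit flows, and the definition just above allows the sequence $(x_j, t_j)$ to be arbitrary; in particular $t_j$ may accumulate at $0$. In that regime your choice of $T_0$ cannot be made uniformly, and the constant $V(\Vol(M_0), T_0/2)$ from Lemma \ref{l:volb} blows up as $T_0 \to 0$. The paper closes this gap with a one-line observation: since $M_0$ is smooth, the flow is smooth for a short time, and hence any blow-up with $t_j$ near $0$ is a static multiplicity-one plane (which trivially satisfies the bound). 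That observation is exactly what you need to justify reducing to $t_j \geq T > 0$.

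A minor technical remark on your final step: the correct semicontinuity to invoke is that for weak-$*$ (Radon measure) convergence $\mu_j \to \mu$, one has $\mu(U) \leq \liminf_j \mu_j(U)$ for \emph{open} sets $U$, which is what transfers the uniform upper bound $\mu_j(B_r(x_0)) \leq V r^n$ to the limit. Your phrase ``upper-semicontinuity of mass on closed sets'' describes the inequality $\limsup_j \mu_j(K) \leq \mu(K)$, which goes the wrong way for this purpose.
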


\begin{proof}
This follows immediately from the lemma after noting that, since the initial hypersurface $M_0$ is smooth, the original
 flow remains smooth for a short time and, thus, any blow up near (in time) the initial hypersurface is a static plane.  Thus, to prove the corollary, we may assume that the blow up is for times $t_j \geq T$ for some $T > 0$; the corollary now follows from the lemma.
\end{proof}

If $\Sigma$ is a hypersurface in $\RR^{n+1}$, then we say that $\Sigma$ has polynomial volume growth if there are constants $C$ and $d$ so that for all $r \geq 1$
\begin{equation}	\label{e:poly}
	\Vol \, (B_r(0) \cap \Sigma) \leq C \, r^d \, .
\end{equation}

Note that, by the corollary, any time-slice of a blow up has polynomial volume growth.

\section{A variational characterization of self-shrinkers}	\label{s:one}

In Lemma \ref{l:ss} in the previous section, we saw that a hypersurface is a self-shrinker is equivalent to that:
\begin{itemize}
\item  It satisfies the second order differential equation  ${H} = \,  \frac{\langle x , \nn \rangle }{2}$.
\end{itemize}

In this section, we will see that there are two other equivalent characterizations for  a hypersurface $\Sigma$ to be a self-shrinker:
\begin{itemize}
\item It is a minimal hypersurface for the conformally changed metric $g_{ij} =
\e^{-\frac{|x|^2}{2n}} \, \delta_{ij}$ on $\RR^{n+1}$.
\item It is a  critical points for   the $F_{x_0,t_0}$ functionals with respect to variations in all three parameters, i.e., critical for variations of $\Sigma$, $x_0$, and $t_0$.
\end{itemize}
The equivalence of these is proven in Lemma \ref{l:varl0} and Proposition \ref{p:critall}, respectively.

Unless explicitly stated otherwise, the results of the rest of the paper hold for hypersurfaces
    in $\RR^{n+1}$ for any $n$.

\subsection{The first variation of $F_{x_0 , t_0}$}

We will say that the one-parameter family of smooth embedded hypersurfaces $\Sigma_s \subset \RR^{n+1}$ is a variation of $\Sigma$ if $\Sigma_s$ is given by a one-parameter family of embeddings $X_s : \Sigma \to \RR^{n+1}$ with $X_0$ equal to the identity.  We will call the vector field $\frac{\partial X_s}{\partial s} \big|_{s=0}$ the variation vector field.

 \begin{Lem}	\label{l:varl0}
Let $\Sigma_s \subset \RR^{n+1}$ be a variation of $\Sigma$ with variation vector field
$\Sigma_0' = f \, \nn$.   If $x_s$ and $t_s$ are variations of $x_0$ and $t_0$ with   $x_0' = y$ and
 $t_{0}' = h$, then $\frac{\partial }{\partial s} \,  \, \left( F_{ x_s , t_s} (\Sigma_s) \right)$ is
\begin{equation}	\label{e:Fprime0}
	  (4\pi \, t_0)^{-\frac{n}{2}} \,  \int_{\Sigma}  \left[ f \, \left( H - \frac{ \langle x-x_0 ,  \nn \rangle}{2t_0}
		\right)
		+ h \, \left( \frac{|x-x_0|^2}{4t_0^2} - \frac{n}{2t_0} \right)  +
	  \frac{  \langle x-x_0 , y \rangle }{2t_0}    \right] \, \e^{\frac{-|x-x_0|^2}{4t_0}} \, d\mu \, .
\end{equation}
\end{Lem}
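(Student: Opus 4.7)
The plan is to compute the derivative directly by splitting the integrand into its three functional dependencies on the parameter $s$: the scalar prefactor $(4\pi t_s)^{-n/2}$, the exponential weight $\e^{-|x-x_s|^2/(4 t_s)}$ (evaluated pointwise, for $x$ in a fixed surface), and the domain of integration $\Sigma_s$ together with its area element. Since all three dependencies are smooth, the total derivative at $s=0$ is the sum of the three partial contributions, so I would compute each in turn and add.

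First I would handle the prefactor: $\frac{d}{ds} (4\pi t_s)^{-n/2}\big|_{s=0} = -\frac{nh}{2 t_0}(4\pi t_0)^{-n/2}$, which contributes the $-\frac{h n}{2 t_0}$ piece after factoring out $(4\pi t_0)^{-n/2}$. Next, with $\Sigma$ fixed, the pointwise derivative of $\e^{-|x-x_s|^2/(4 t_s)}$ is obtained by chain rule: differentiating $\phi(x_0,t_0) = -|x-x_0|^2/(4t_0)$ gives $\partial_{x_0}\phi \cdot y = \langle x-x_0, y\rangle/(2t_0)$ and $\partial_{t_0}\phi \cdot h = h|x-x_0|^2/(4 t_0^2)$, producing the two middle terms in \eqr{e:Fprime0}.

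The remaining contribution comes from varying the surface $\Sigma_s$ with normal velocity $f\nn$, holding $x_0$ and $t_0$ fixed. For a smooth function $g$ on $\RR^{n+1}$, the first variation of weighted area under a normal variation is
\begin{equation*}
\left.\frac{d}{ds}\int_{\Sigma_s} g\, d\mu\right|_{s=0} = \int_\Sigma \bigl[ f \, \langle \nabla g,\nn\rangle + g\,\dv_\Sigma (f\nn) \bigr]\, d\mu = \int_\Sigma f \, \bigl[\langle \nabla g, \nn\rangle + g H\bigr]\, d\mu,
\end{equation*}
where I used that $\dv_\Sigma(f\nn) = f H$ since $\dv_\Sigma \nn = -a_{ii} = H$ in the paper's sign convention (from \eqr{e:aij}) and $\nabla_\Sigma f$ is tangential. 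Applied to $g = \e^{-|x-x_0|^2/(4 t_0)}$, we have $\nabla g = -\frac{x-x_0}{2 t_0}\, g$, so $\langle \nabla g, \nn\rangle = -\frac{\langle x-x_0, \nn\rangle}{2 t_0}\, g$, producing precisely the $f\bigl(H - \frac{\langle x-x_0,\nn\rangle}{2 t_0}\bigr)$ piece in \eqr{e:Fprime0}.

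Adding the three contributions and factoring out the common prefactor $(4\pi t_0)^{-n/2}\, \e^{-|x-x_0|^2/(4 t_0)}\, d\mu$ yields the stated formula. There is no serious obstacle here — the computation is routine — though one should be careful to use the sign convention $H = \dv \nn$ consistently when applying the first variation formula, since the coefficient of the $f$-term (namely $H - \langle x-x_0,\nn\rangle/(2 t_0)$) is exactly the self-shrinker operator that appears in the subsequent variational characterization, and getting the sign right is what makes Proposition \ref{p:critall} come out correctly.
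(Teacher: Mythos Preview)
Your proof is correct and follows essentially the same approach as the paper: a direct product-rule computation of the derivative. The only cosmetic difference is in how the terms are grouped---the paper differentiates the log of the full weight $(4\pi t_s)^{-n/2}\e^{-|x-x_s|^2/(4t_s)}$ in one step (picking up the surface-position, $x_s$, and $t_s$ contributions together, cf.\ \eqr{e:logder}) and then multiplies by the area-element derivative $(d\mu)'=fH\,d\mu$, whereas you separate the prefactor, the $(x_s,t_s)$-derivative of the exponential, and the full surface variation of $\int_{\Sigma_s} g\,d\mu$; both groupings yield the same sum.
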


\begin{proof}
 From the first variation formula (for area), we know that
 \begin{equation}	\label{e:fhdm}
 	(d\mu)' = f \, H \, d \mu \, .
 \end{equation}
 The $s$ derivative of the weight  $\e^{-|x-x_s|^2/(4t_s)}$ will have three separate terms coming from the variation of the surface, the variation of $x_s$, and the variation of $t_s$.  Using, respectively, that $\nabla  |x-x_s|^2  = 2\, (x-x_s)$,
 \begin{equation}
	\partial_{t_s}  \log \left[ (4\pi t_s)^{-n/2} \,   \e^{-\frac{|x-x_s|^2}{4t_s}} \right] = \frac{-n}{2t_s} + \frac{|x-x_s|^2}{4t_s^2}
\end{equation}
and  $\partial_{x_s}  |x-x_s|^2  = 2\, (x_s-x)$, we get that
the derivative of  $\log \, \left[ (4\pi \, t_s)^{-\frac{n}{2}} \, \e^{-|x-x_s|^2/(4t_s)} \right]$ at $s=0$ is given by
 \begin{equation}	\label{e:logder}
 	  - \frac{ f}{2t_0} \, \langle x-x_0 , \nn \rangle  + h \, \left( \frac{|x-x_0|^2}{4t_0^2} - \frac{n}{2t_0} \right) +
	  \frac{ 1}{2t_0} \, \langle x-x_0 , y \rangle \, .
 \end{equation}
 Combining this with \eqr{e:fhdm} gives \eqr{e:Fprime0}.
\end{proof}

 \subsection{Critical points of $F_{x_0,t_0}$ are self-shrinkers}

 We will say that $\Sigma$ is a critical point for $F_{x_0 , t_0} $ if it is critical with respect to all normal variations in $\Sigma$ and all variations in $x_0$  and   $t_0$.  Strictly speaking, it is the triplet $(\Sigma, x_0 , t_0)$ that is a critical point of $F$, but we will refer   to $\Sigma$ as a critical point of $F_{x_0 , t_0}$.
 The next proposition shows that $\Sigma$ is a critical point for $F_{x_0 , t_0} $ if and only if it is the time $-t_0$ slice of a self-shrinking solution of the mean curvature flow that becomes extinct at the point $x_0$ and time $0$.

 \begin{Pro}	\label{p:critall}
$\Sigma$ is a critical point for $F_{x_0 , t_0} $ if and only if  $H = \frac{ \langle x-x_0 ,  \nn \rangle}{2t_0}$.
\end{Pro}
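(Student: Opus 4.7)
The plan is to use the first variation formula of Lemma \ref{l:varl0}, which writes the $s$-derivative of $F_{x_s,t_s}(\Sigma_s)$ at $s=0$ as a linear combination of three terms with independent coefficients $f$ (normal speed of $\Sigma_s$), $h$ (speed of $t_s$), and $y$ (velocity of $x_s$). Criticality of $\Sigma$ for $F_{x_0,t_0}$ means this derivative vanishes for every compactly supported $f$ and every $y\in\RR^{n+1}$, $h\in\RR$.

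For the ``only if'' direction, I would first specialize to variations with $y=0=h$, so that Lemma \ref{l:varl0} reduces to
\begin{equation*}
\int_\Sigma f\left(H-\frac{\langle x-x_0,\nn\rangle}{2t_0}\right)\e^{-|x-x_0|^2/(4t_0)}\,d\mu=0
\end{equation*}
for all compactly supported $f$. Since the Gaussian weight is strictly positive, this forces the pointwise identity $H=\langle x-x_0,\nn\rangle/(2t_0)$. For the ``if'' direction, assume this self-shrinker equation. Then the $f$-coefficient in Lemma \ref{l:varl0} vanishes identically, and to finish I must verify the two remaining identities
\begin{equation*}
\int_\Sigma\langle x-x_0,y\rangle\,\phi\,d\mu=0,\qquad \int_\Sigma\left(\frac{|x-x_0|^2}{4t_0^2}-\frac{n}{2t_0}\right)\phi\,d\mu=0,
\end{equation*}
where $\phi=\e^{-|x-x_0|^2/(4t_0)}$, hold automatically.

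Both identities will follow from the divergence theorem on $\Sigma$ in the form $\int_\Sigma\dv_\Sigma W=\int_\Sigma H\langle W,\nn\rangle\,d\mu$, applied respectively to $W=\phi\,y$ (for the translation identity) and $W=\phi(x-x_0)$ (for the dilation identity). Using $\nabla^\Sigma\phi=-\phi(x-x_0)^T/(2t_0)$, $\dv_\Sigma(x-x_0)=n$, and the orthogonal decomposition $|x-x_0|^2=|(x-x_0)^T|^2+\langle x-x_0,\nn\rangle^2$, substituting the self-shrinker equation for $H$ on the right-hand side produces exactly the cancellations needed to leave each identity. The conceptual point is that $F$ depends on three parameters, so criticality enforces the self-shrinker equation together with two ``hidden'' integral identities reflecting translation and dilation invariance. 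The main technical subtlety is that the divergence theorem must be applied on possibly noncompact $\Sigma$; however, the Gaussian weight $\phi$ decays rapidly enough that any mild regularity (e.g., polynomial volume growth, which is implicit in $F_{x_0,t_0}$ being finite and smoothly variable in all three parameters) suffices to justify the vanishing of boundary terms at infinity.
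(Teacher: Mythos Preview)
Your proof is correct and is essentially the same computation as the paper's, just packaged differently: the paper routes the two integral identities through the operator $\cL v = \Delta v - \tfrac{1}{2}\langle x,\nabla v\rangle$ and its weighted self-adjointness (Lemmas~\ref{l:self}, \ref{l:opquant}, \ref{l:rightsign}), applying $[\cL v]=0$ with $v=x_i$ and $v=|x|^2$, while you apply the divergence theorem directly to the ambient vector fields $\phi\,y$ and $\phi\,(x-x_0)$. These are the same integration by parts, and your handling of the noncompact case via Gaussian decay and polynomial volume growth matches exactly what the paper does in Corollary~\ref{c:self} and Lemma~\ref{l:rightsign}.
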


\begin{proof}
It suffices to show that if $H = \frac{ \langle x-x_0 ,  \nn \rangle}{2t_0}$, then the last two terms in \eqr{e:Fprime0} vanish for every $h$ and every $y$.  Obviously it is enough to show this for $x_0=0$ and $t_0=1$ and this follows from Lemma \ref{l:rightsign}   below (or more precisely \eqr{e:rsa} and \eqr{e:mass} below).
\end{proof}

\subsection{Identities at self-shrinkers}  In this subsection,
 $\Sigma \subset \RR^{n+1}$ is a smooth embedded hypersurface; $\Delta$, $\dv$, and $\nabla$ are the (submanifold) Laplacian, divergence, and gradient, respectively, on $\Sigma$.

 We will need below that the linear operator
 \begin{equation}	\label{e:divww}
       \cL\,v=\Delta \, v -\frac{1}{2}\,  \langle  x , \nabla v  \rangle
	= \e^{\frac{|x|^2}{4}} \,   \dv \, \left( \e^{\frac{-|x|^2}{4}} \,  \nabla v \right)
\end{equation}
  is self-adjoint in a weighted $L^2$ space.   That is, the following lemma:

  \begin{Lem}   \label{l:self}
  If $\Sigma \subset \RR^{n+1}$ is a hypersurface, $u$ is a $C^1$ function with compact support,
  and $v$ is a $C^2$ function, then
  \begin{equation}
\int_{\Sigma} u \, (\cL\,v) \, \e^{\frac{-|x|^2}{4}}  =
	- \int_{\Sigma}  \langle \nabla v , \nabla u  \rangle \, \e^{\frac{-|x|^2}{4}} \, .	\label{e:selfadjoint0}
\end{equation}
  \end{Lem}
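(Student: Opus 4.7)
The plan is to exploit the divergence-form representation of $\cL$ already written out in \eqref{e:divww} and then simply integrate by parts; this reduces the lemma to a one-line application of the divergence theorem on $\Sigma$.

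First I would verify the identity $\cL v = \e^{|x|^2/4}\,\dv\!\left(\e^{-|x|^2/4}\,\nabla v\right)$ that appears in \eqref{e:divww}. Expanding the right-hand side by the Leibniz rule gives
\begin{equation*}
\e^{|x|^2/4}\,\dv\!\left(\e^{-|x|^2/4}\,\nabla v\right)
  = \Delta v - \tfrac{1}{4}\bigl\langle \nabla |x|^2,\,\nabla v\bigr\rangle
  = \Delta v - \tfrac{1}{2}\bigl\langle x^{T},\,\nabla v\bigr\rangle,
\end{equation*}
where $x^{T}$ denotes the tangential component of the position vector on $\Sigma$. Since $\nabla v$ is tangent to $\Sigma$, we have $\langle x^{T},\nabla v\rangle=\langle x,\nabla v\rangle$, which gives precisely $\cL v$.

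Next I would multiply by $u\,\e^{-|x|^2/4}$; the two exponential factors cancel and leave the clean expression
\begin{equation*}
u\,(\cL v)\,\e^{-|x|^2/4} \;=\; u\,\dv\!\left(\e^{-|x|^2/4}\,\nabla v\right).
\end{equation*}
Since $u$ is $C^{1}$ with compact support, the vector field $X := u\,\e^{-|x|^2/4}\,\nabla v$ is compactly supported on $\Sigma$, so the divergence theorem on $\Sigma$ (applied to $X$, with no boundary contribution) yields
\begin{equation*}
\int_{\Sigma} u\,\dv\!\left(\e^{-|x|^2/4}\nabla v\right)
  = -\int_{\Sigma} \bigl\langle \nabla u,\,\e^{-|x|^2/4}\nabla v\bigr\rangle
  = -\int_{\Sigma} \langle \nabla v,\nabla u\rangle\,\e^{-|x|^2/4},
\end{equation*}
which is exactly \eqref{e:selfadjoint0}.

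There is no genuine obstacle here: the only thing to be slightly careful about is that the compact support of $u$ (and the tangency of $\nabla v$, which lets us replace $x$ by $x^{T}$ freely) is what allows a clean application of the divergence theorem on the possibly non-compact hypersurface $\Sigma$ without picking up boundary or decay terms. Symmetry of the bilinear form $(u,v)\mapsto \int_{\Sigma}\langle\nabla u,\nabla v\rangle\,\e^{-|x|^2/4}$ in $u,v$ then gives self-adjointness of $\cL$ on, say, $C^{2}_{c}(\Sigma)$, as claimed.
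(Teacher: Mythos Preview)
Your proof is correct and takes essentially the same approach as the paper, which simply says the lemma follows immediately from Stokes' theorem and the divergence-form identity \eqref{e:divww}. You have supplied the details the paper omits, including the verification of \eqref{e:divww} and the observation that the compact support of $u$ is what justifies the integration by parts on a possibly non-compact $\Sigma$.
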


  \begin{proof}
The lemma follows immediately from Stokes' theorem and \eqr{e:divww}.
\end{proof}

The next corollary is an easy extension of Lemma \ref{l:self} used later to justify computations when $\Sigma$ is not closed.

  \begin{Cor}   \label{c:self}
  Suppose that $\Sigma \subset \RR^{n+1}$ is a complete hypersurface without boundary.
   If $u , v$ are $C^2$ functions with
   \begin{equation}	\label{e:inl2}
   	\int_{\Sigma} \left( |u \, \nabla v| + |\nabla u| \, |\nabla v| + | u \, \cL\, v| \right) \, \e^{\frac{-|x|^2}{4}}  < \infty \, ,
   \end{equation}
   then we get
  \begin{equation}	
  \int_{\Sigma} u \, (\cL\, v) \, \e^{\frac{-|x|^2}{4}}  =
	- \int_{\Sigma}  \langle \nabla v , \nabla u  \rangle \, \e^{\frac{-|x|^2}{4}} \, .	\label{e:selfadjoint}
\end{equation}
  \end{Cor}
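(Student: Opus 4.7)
The plan is to reduce to the compactly supported case of Lemma \ref{l:self} by truncating with cutoff functions, then pass to the limit using the integrability hypotheses \eqr{e:inl2} together with dominated convergence.

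First, I would construct a sequence of Lipschitz cutoff functions $\phi_j : \Sigma \to [0,1]$ with compact support, $\phi_j \equiv 1$ on increasingly large regions of $\Sigma$, and $|\nabla \phi_j| \leq C/j$. Since $\Sigma$ is complete without boundary, Hopf--Rinow gives that the intrinsic closed balls $\overline{B_R^{\Sigma}(p)}$ about a fixed point $p \in \Sigma$ are compact, so one may take $\phi_j(x) = \eta(r(x)/j)$, where $r$ is the intrinsic distance to $p$ and $\eta : [0,\infty) \to [0,1]$ is a fixed smooth function that is $1$ on $[0,1]$ and $0$ on $[2,\infty)$. (If one prefers a smooth cutoff, a small mollification works and does not affect the estimates. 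The integration by parts in Lemma \ref{l:self} extends immediately to Lipschitz cutoffs since Stokes' theorem holds in that generality.)

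Next, since $u\phi_j$ has compact support in $\Sigma$, I would apply Lemma \ref{l:self} with $u\phi_j$ in place of $u$ to obtain
\begin{equation}
\int_{\Sigma} u \, \phi_j \, (\cL v) \, \e^{-|x|^2/4}
= -\int_{\Sigma} \phi_j \, \langle \nabla u, \nabla v \rangle \, \e^{-|x|^2/4}
- \int_{\Sigma} u \, \langle \nabla \phi_j, \nabla v \rangle \, \e^{-|x|^2/4}.
\end{equation}
Then I would let $j \to \infty$ in each of the three terms. The integrands in the first two terms converge pointwise to $u(\cL v)\e^{-|x|^2/4}$ and $-\langle \nabla u, \nabla v\rangle \e^{-|x|^2/4}$, and are dominated in absolute value by $|u\,\cL v|\e^{-|x|^2/4}$ and $|\nabla u|\,|\nabla v|\e^{-|x|^2/4}$ respectively, both of which are integrable by hypothesis \eqr{e:inl2}. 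Dominated convergence therefore gives the desired limits on both sides.

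The only remaining step, and the crux of the argument, is to show that the cross term involving $\nabla \phi_j$ vanishes in the limit. Since $|\nabla \phi_j| \leq C/j$ and $\nabla \phi_j$ is supported where $r(x) \geq j$, one estimates
\begin{equation}
\left| \int_{\Sigma} u \, \langle \nabla \phi_j, \nabla v \rangle \, \e^{-|x|^2/4} \right|
\leq \frac{C}{j} \int_{\Sigma} |u|\,|\nabla v| \, \e^{-|x|^2/4},
\end{equation}
and the integral on the right is finite by hypothesis \eqr{e:inl2}, so this term is $O(1/j) \to 0$. Combining the three limits yields \eqr{e:selfadjoint}, completing the proof. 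The one nontrivial point, and the place to be careful, is the existence of cutoffs with the prescribed gradient bound on a complete manifold that is not assumed properly embedded; this is precisely what intrinsic completeness together with Hopf--Rinow provides.
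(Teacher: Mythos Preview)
Your proof is correct and follows essentially the same strategy as the paper: multiply $u$ by an intrinsic cutoff $\phi_j$, apply Lemma~\ref{l:self}, and pass to the limit using the integrability hypotheses \eqr{e:inl2} and dominated convergence. The only cosmetic difference is that the paper takes $\phi_j$ to cut off linearly between $B_j$ and $B_{j+1}$ (so $|\nabla\phi_j|\le 1$ and the cross term dies by pointwise convergence $|\nabla\phi_j|\to 0$ under DCT), whereas you use the scaled cutoff $\eta(r/j)$ with $|\nabla\phi_j|\le C/j$ and kill the cross term by the explicit $O(1/j)$ bound; either choice works.
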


  \begin{proof}
   Within this proof, we will use square brackets $\left[ \cdot \right]$ to denote weighted integrals
   \begin{equation}
    \left[ f \right] = \int_{\Sigma} f \, \e^{\frac{-|x|^2}{4}} \, .
   \end{equation}
  Given any $\phi$ that is $C^1$ with compact support, we can apply Lemma \ref{l:self} to $\phi \, u$ and $v$ to get
 \begin{equation}	\label{e:phite}
\left[ \phi \, u \, \cL\, v\right]
    =
	- \left[  \phi \,   \langle \nabla v , \nabla u  \rangle  \right]
    - \left[  u \,   \langle \nabla v , \nabla \phi \rangle  \right] \, .
\end{equation}
Next, we apply this with $\phi = \phi_j$ where $\phi_j$ is one on the intrinsic ball of radius $j$ in $\Sigma$ centered at a fixed point and $\phi_j$ cuts off linearly to zero between $B_j$ and $B_{j+1}$.  Since $|\phi_j|$ and $|\nabla \phi_j |$ are bounded by one, $\phi_j \to 1$, and $|\nabla \phi_j| \to 0$, the dominated convergence theorem (which applies because of \eqr{e:inl2})
gives that
\begin{align}
	\left[ \phi_j \, u \, \cL\,v\right]  & \to
	\left[  u \, \cL\,v\right]   \, , \\
	\left[  \phi_j \,   \langle \nabla v , \nabla u  \rangle  \right] &\to \left[    \langle \nabla v , \nabla u  \rangle  \right] \, , \\
	\left[  u \,   \langle \nabla v , \nabla \phi \rangle  \right]  &\to 0 \, .
\end{align}
Substituting this into \eqr{e:phite} gives the corollary.
\end{proof}

We will use Corollary \ref{c:self} in the proof of Lemma \ref{l:rightsign} below.  To keep things short, we will say that a function $u$ is ``in the weighted $W^{2,2}$ space'' if
\begin{equation}	\label{e:wted22}
	\int_{\Sigma} \left( |u|^2 + |\nabla u|^2 + |\cL\, u |^2 \right) \,  \e^{\frac{-|x|^2}{4}}  < \infty \, .
\end{equation}
The point is that if $u$ and $v$ are both in the weighted $W^{2,2}$ space, then \eqr{e:inl2} is satisfies and we can apply Corollary \ref{c:self} to $u$ and $v$.  In particular, since the right side of \eqr{e:selfadjoint} is symmetric in $u$ and $v$, it follows that
 \begin{equation}	\label{e:selfc2}
\int_{\Sigma} u \, (\cL\, v)\, \e^{\frac{-|x|^2}{4}}  =
\int_{\Sigma} v \, (\cL\, u) \, \e^{\frac{-|x|^2}{4}}   \, .
\end{equation}

The next lemma applies the self-adjoint operator  $\cL$ to natural geometric quantities on a self-shrinker;   $x_i$ is the $i$-th component of the position vector $x$, i.e., $x_i = \langle x , \partial_i \rangle$.

 \begin{Lem}	\label{l:opquant}
 If $\Sigma \subset \RR^{n+1}$ is a hypersurface with  $H = \frac{ \langle x , \nn \rangle}{2}$, then
 \begin{align}
 	\cL\, x_i &= - \frac{1}{2} x_i \, ,  \label{e:deltaxi}  \\
 	\cL\,|x|^2 &= 2n - |x|^2 \, .   \label{e:operx2}
 \end{align}
 \end{Lem}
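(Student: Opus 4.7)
The plan is to compute both identities directly from the definition $\cL v = \Delta v - \tfrac{1}{2}\langle x, \nabla v\rangle$, using two standard submanifold-geometry facts and then invoking the self-shrinker equation $H = \tfrac{1}{2}\langle x,\nn\rangle$ at the very end so the extra terms cancel.

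The two facts I will use are: (i) the tangential gradient of a coordinate function on $\Sigma$ is the tangential part of $\partial_i$, that is $\nabla x_i = \partial_i - n_i\,\nn$ where $n_i = \langle \partial_i,\nn\rangle$; and (ii) the Laplacian of the position vector equals the mean curvature vector, $\Delta x = \bar H = -H\,\nn$, so componentwise $\Delta x_i = -H\,n_i$. For the first identity I then compute
\begin{equation*}
\langle x,\nabla x_i\rangle = \langle x,\partial_i\rangle - \langle x,\nn\rangle\,n_i = x_i - \langle x,\nn\rangle\,n_i,
\end{equation*}
so
\begin{equation*}
\cL x_i = -H\,n_i - \tfrac{1}{2}x_i + \tfrac{1}{2}\langle x,\nn\rangle\,n_i.
\end{equation*}
Substituting $\tfrac{1}{2}\langle x,\nn\rangle = H$ makes the coefficient of $n_i$ vanish, leaving $\cL x_i = -\tfrac{1}{2}x_i$.

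For the second identity I use the product rule $\Delta |x|^2 = 2\langle x,\Delta x\rangle + 2|\nabla x|^2$; since $|\nabla x|^2 = \sum_j |e_j|^2 = n$ and $\Delta x = -H\nn$ this gives $\Delta |x|^2 = -2H\langle x,\nn\rangle + 2n$. Also, $\nabla |x|^2 = 2x^T$ where $x^T$ is the tangential part of $x$, so
\begin{equation*}
\langle x,\nabla|x|^2\rangle = 2|x^T|^2 = 2\bigl(|x|^2 - \langle x,\nn\rangle^2\bigr).
\end{equation*}
Combining,
\begin{equation*}
\cL|x|^2 = -2H\langle x,\nn\rangle + 2n - |x|^2 + \langle x,\nn\rangle^2.
\end{equation*}
Plugging in $\langle x,\nn\rangle = 2H$ gives $-4H^2 + 4H^2$ in the $H$-terms, which cancel, yielding $\cL|x|^2 = 2n - |x|^2$.

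There is no real obstacle here; this is a direct verification once the two submanifold identities are recalled. The only thing to watch is the sign convention $\bar H = -H\nn$ adopted in the paper, which is what produces the clean cancellation once the self-shrinker equation $H = \tfrac{1}{2}\langle x,\nn\rangle$ is inserted in the final step.
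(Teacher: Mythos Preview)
Your proof is correct and follows essentially the same route as the paper's: both use $\Delta x = -H\,\nn$, compute the drift term via $\nabla x_i = (\partial_i)^T$ and $\nabla|x|^2 = 2x^T$, and then cancel the remaining terms with the self-shrinker equation. The only cosmetic difference is that the paper substitutes $H = \tfrac{1}{2}\langle x,\nn\rangle$ immediately after computing $\Delta x_i$ (writing things in terms of $x^\perp$ and $x^T$), whereas you postpone this substitution to the final line; the computations are otherwise identical.
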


 \begin{proof}
  Since $\Delta \, x = - H \, \nn $ on any hypersurface and
  $H = \frac{ \langle x , \nn \rangle}{2}$ on a self-shrinker, we have
  \begin{equation}
 	\Delta x_i = \langle - H \nn , \partial_i \rangle = \frac{-1}{2} \, \langle  x^{\perp} , \partial_i \rangle =
		 \frac{-1}{2} \, \langle  x  , \partial_i \rangle +  \frac{1}{2} \, \langle  x  ,  (\partial_i )^T \rangle \, ,
 \end{equation}
 giving \eqr{e:deltaxi}.  Combining this with
  $\nabla |x|^2 = 2 \, x^{T}$ where $x^T$ is the tangential projection of $x$ gives
  \begin{equation}  \label{e:deltax2}
 	\Delta |x|^2 = 2 \, \langle \Delta x , x \rangle + 2\, |\nabla x|^2 =
		- \left| x^{\perp} \right|^2 + 2n \, .
\end{equation}
 The second claim \eqr{e:operx2}  follows since $\frac{1}{2} \, \langle  x , \nabla |x|^2 \rangle = \langle  x ,    x^T \rangle = |x^T|^2$.
 \end{proof}

 In the rest of this section,  we will always assume that $\Sigma$ is smooth, complete, $\partial \Sigma = \emptyset$,   $\Sigma$ has polynomial volume growth, and    $H = \frac{ \langle x , \nn \rangle}{2}$.

 Equations \eqr{e:rsa} and \eqr{e:mass} in the next lemma are used in the proof of Proposition \ref{p:critall} whereas \eqr{e:rs}, \eqr{e:mass2}, and the
 corollary that follow the next lemma are first used in the next section when we compute the second
 variation of the $F$ functional at a critical point.

\begin{Lem}	\label{l:rightsign}
If $\Sigma$ is complete, $\partial \Sigma = \emptyset$,   $\Sigma$ has polynomial volume growth, and    $H = \frac{ \langle x , \nn \rangle}{2}$, then
\begin{align}	\label{e:rsa}
   \int_{\Sigma}  \left(   |x|^2 - 2n \right)  \, \e^{\frac{-|x|^2}{4}}  &= 0 \, , \\
     \int_{\Sigma} x \, \e^{\frac{-|x|^2}{4}}  &= 0 =  \int_{\Sigma} x \, |x|^2 \e^{\frac{-|x|^2}{4}}   \, , \label{e:mass} \\
	    \int_{\Sigma}  \left(  |x|^4 - 2n (2n+4) + 16 \, H^2  \right) \, \e^{\frac{-|x|^2}{4}} & = 0 \, . \label{e:rs}
\end{align}
 Finally, if $w \in \RR^{n+1}$ is a constant vector, then
  \begin{equation}	\label{e:mass2}
 	\int_{\Sigma} \langle x , w \rangle^2 \, \e^{\frac{-|x|^2}{4}}  = 2 \, \int_{\Sigma} \left|   w^T \right|^2 \, \e^{\frac{-|x|^2}{4}} \, .
 \end{equation}
\end{Lem}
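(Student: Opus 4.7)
The plan is to derive all four identities by exploiting the self-adjointness of the drift Laplacian $\cL$ in the Gaussian-weighted $L^2$ space (Corollary \ref{c:self}), combined with the two key identities $\cL x_i = -\tfrac12 x_i$ and $\cL|x|^2 = 2n-|x|^2$ from Lemma \ref{l:opquant}. Before doing anything else, I would verify that each choice of $(u,v)$ I use lies in the weighted $W^{2,2}$ space of \eqr{e:wted22}: since the weight $\e^{-|x|^2/4}$ decays faster than any polynomial and $\Sigma$ has polynomial volume growth, any polynomial in $|x|$ (and also $H$, because $|H|=|\langle x,\nn\rangle|/2\leq |x|/2$) satisfies the integrability hypothesis \eqr{e:inl2}. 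This is the only place the polynomial growth assumption is used, and is the step most likely to need care — but once it is in hand, all four identities become essentially one-line manipulations.

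For \eqr{e:rsa}, apply Corollary \ref{c:self} with $u\equiv 1$ and $v=|x|^2$: the right-hand side of \eqr{e:selfadjoint} vanishes because $\nabla u=0$, so $\int_\Sigma \cL|x|^2\,\e^{-|x|^2/4}=0$, which by \eqr{e:operx2} is exactly \eqr{e:rsa}. For the first half of \eqr{e:mass}, repeat the same argument with $v=x_i$ and \eqr{e:deltaxi}. For the second half of \eqr{e:mass}, use \eqr{e:selfc2} with $u=x_i$ and $v=|x|^2$: the left side equals $2n\int x_i\,\e^{-|x|^2/4} - \int x_i|x|^2\,\e^{-|x|^2/4}$ while the right side equals $-\tfrac12\int x_i|x|^2\,\e^{-|x|^2/4}$; using the first half of \eqr{e:mass} to kill the $\int x_i$ term, this forces $\int_\Sigma x_i|x|^2\,\e^{-|x|^2/4}=0$.

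For \eqr{e:rs}, apply Corollary \ref{c:self} with $u=v=|x|^2$. The left-hand side of \eqr{e:selfadjoint} becomes $\int|x|^2(2n-|x|^2)\,\e^{-|x|^2/4}=2n\int|x|^2\,\e^{-|x|^2/4}-\int|x|^4\,\e^{-|x|^2/4}$. For the right side, I would compute $\nabla|x|^2 = 2x^T$, so $|\nabla|x|^2|^2 = 4|x^T|^2 = 4(|x|^2-|x^\perp|^2)$; since $x^\perp = \langle x,\nn\rangle\,\nn = 2H\,\nn$, we get $|\nabla|x|^2|^2 = 4|x|^2-16H^2$. Equating the two sides and applying \eqr{e:rsa} to replace $\int|x|^2\,\e^{-|x|^2/4}$ by $2n\int\e^{-|x|^2/4}$ yields \eqr{e:rs} after a trivial rearrangement. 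For \eqr{e:mass2}, set $v = \langle x,w\rangle = w_i x_i$; by linearity and \eqr{e:deltaxi}, $\cL v = -\tfrac12 v$, and $\nabla v = w^T$. Applying self-adjointness with $u=v$ gives $-\tfrac12\int\langle x,w\rangle^2\,\e^{-|x|^2/4} = -\int|w^T|^2\,\e^{-|x|^2/4}$, which is \eqr{e:mass2}.

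The main obstacle, as noted, is technical rather than conceptual: verifying that each of the pairs $(u,v)$ above genuinely satisfies the integrability hypothesis \eqr{e:inl2} of Corollary \ref{c:self}, so that the self-adjointness can be invoked. This requires combining polynomial volume growth with the Gaussian decay of the weight; once established, the rest of the proof is a short sequence of algebraic identities driven by Lemma \ref{l:opquant}.
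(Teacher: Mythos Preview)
Your proposal is correct and follows essentially the same approach as the paper: the same pairs $(u,v)$ are plugged into the self-adjointness identity (Corollary~\ref{c:self}) in the same order, using the same formulas from Lemma~\ref{l:opquant}, and the integrability justification via polynomial volume growth plus Gaussian decay is exactly what the paper invokes as well.
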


 \begin{proof}
 In  this proof,  we will use
 square brackets $\left[ \cdot \right]$ to denote  weighted integrals
 \begin{equation}
 	\left[ f \right] =       \int_{\Sigma}   f \,   \e^{\frac{-|x|^2}{4}}  \, .
 \end{equation}
 We will repeatedly use that the constant functions, the function $x_i$, the function $\langle x , w \rangle$, and the function $|x|^2$ are all in the weighted $W^{2,2}$ space and, thus, that we can apply (the self-adjointness)
 Corollary \ref{c:self} to any pair of these functions.

 To get the first claim \eqr{e:rsa}, use $u=1$ and $v= |x|^2$ in \eqr{e:selfc2} to get
 \begin{equation}
 	0 = \left[ 1 \, \cL |x|^2 \right] = \left[ 2n - |x|^2 \right] \, ,
 \end{equation}
 where the last equality used that $\cL |x|^2 = 2n - |x|^2$ by \eqr{e:operx2} in Lemma \ref{l:opquant}.  The first equality in \eqr{e:mass} follows similarly by taking $u=1$
 and $v = x_i$ and using that $\cL x_i = - \frac{1}{2} \, x_i $ by \eqr{e:deltaxi}.

 To get the second equality in \eqr{e:mass}, argue similarly
 with $u= x_i$ and $v = |x|^2$ to get
  \begin{equation}
  	- \frac{1}{2} \left[ x_i \, |x|^2 \right] = \left[ |x|^2 \, \cL x_i \right] = \left[ x_i \, \cL |x|^2 \right] = \left[ x_i \, (2n - |x|^2) \right] =
		-   \left[ x_i \, |x|^2 \right]  \, ,
 \end{equation}
 where  the last equality used that $\left[ x_i \right] = 0$.  Since the constants in front of the $\left[ x_i \, |x|^2 \right] $ terms differ, we get that
$ \left[ x_i \, |x|^2 \right] = 0$ as claimed.

 To get \eqr{e:rs}, apply \eqr{e:selfadjoint} with $u = v = |x|^2$ and \eqr{e:operx2} to get
 \begin{equation}
 		 2n \, \left[   |x|^2 \right] - \left[ |x|^4  \right]  =
		  \left[ |x|^2 \, \cL |x|^2 \right] = - \left[ \left| \nabla |x|^2 \right|^2 \right] = - 4 \, \left[ \left| x^T \right|^2 \right]
		    \, .
 \end{equation}
 This gives \eqr{e:rs} since $\left| x^T \right|^2 =   |x|^2 -   \left| x^{\perp} \right|^2    =   |x|^2 - 4 \, H^2$ and $\left[ |x|^2 \right] = \left[ 2n \right]$.
Finally,
  the last claim \eqr{e:mass2} follows from \eqr{e:selfadjoint} with $u = v = \langle x , w \rangle$ since $\cL\, \langle x , w \rangle =
  - \frac{1}{2}  \langle x , w \rangle$ and $\nabla   \langle x , w \rangle = w^T$.
   \end{proof}

\begin{Cor}	\label{c:rs}
If $\Sigma$ is as in Lemma \ref{l:rightsign}, then
\begin{equation}
 \int_{\Sigma}  \left[  \left( \frac{|x|^2}{4} - \frac{n}{2} \right)^2 - \frac{n}{2}
		\, \right] \, \e^{\frac{-|x|^2}{4}}   =  -    \int_{\Sigma} H^2
 \,   \e^{\frac{-|x|^2}{4}}  \, .
 \end{equation}
\end{Cor}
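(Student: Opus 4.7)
The plan is to expand the square on the left-hand side and reduce everything to weighted integrals of $1$, $|x|^2$, and $|x|^4$, then substitute the two identities already established in Lemma \ref{l:rightsign}. For brevity, let $[f] = \int_\Sigma f \, \e^{-|x|^2/4} \, d\mu$. Expanding, one finds
\begin{equation}
	\Bigl(\tfrac{|x|^2}{4}-\tfrac{n}{2}\Bigr)^2 - \tfrac{n}{2}
	= \tfrac{1}{16}|x|^4 - \tfrac{n}{4}|x|^2 + \tfrac{n^2}{4} - \tfrac{n}{2},
\end{equation}
so the left-hand side of the claimed identity equals $\tfrac{1}{16}[|x|^4] - \tfrac{n}{4}[|x|^2] + \bigl(\tfrac{n^2}{4} - \tfrac{n}{2}\bigr)[1]$.

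Next I would substitute the two identities \eqr{e:rsa} and \eqr{e:rs} from Lemma \ref{l:rightsign}, namely $[|x|^2] = 2n\,[1]$ and $[|x|^4] = 2n(2n+4)\,[1] - 16\,[H^2]$. Plugging these in gives
\begin{equation}
	\tfrac{1}{16}\bigl(2n(2n+4)\,[1] - 16\,[H^2]\bigr) - \tfrac{n}{4}\cdot 2n\,[1] + \Bigl(\tfrac{n^2}{4} - \tfrac{n}{2}\Bigr)[1]
	= \Bigl(\tfrac{n(n+2)}{4} - \tfrac{n^2}{2} + \tfrac{n^2}{4} - \tfrac{n}{2}\Bigr)[1] - [H^2].
\end{equation}
A short arithmetic check shows the coefficient of $[1]$ simplifies to $\tfrac{n^2+2n-2n^2+n^2-2n}{4}=0$, which yields $-[H^2]$ as required.

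There is essentially no obstacle: the work was already done in Lemma \ref{l:rightsign}, where the nontrivial integrations-by-parts against the drift operator $\cL$ were carried out (in particular, $[|x|^4]$ was reduced using $\cL|x|^2 = 2n - |x|^2$ together with $|x^T|^2 = |x|^2 - 4H^2$). The corollary is simply the algebraic repackaging of those two identities into the form that is most convenient for the second variation computation in the next section, where the quantity $\tfrac{|x|^2}{4}-\tfrac{n}{2}$ naturally appears as the $t_0$-derivative of the weight $\log\bigl[(4\pi t_0)^{-n/2}\e^{-|x|^2/(4t_0)}\bigr]$ evaluated at $t_0=1$ (compare \eqr{e:logder}).
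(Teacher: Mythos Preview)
Your proof is correct and follows essentially the same route as the paper: expand the square, reduce to weighted integrals of $1$, $|x|^2$, $|x|^4$, and substitute the identities \eqr{e:rsa} and \eqr{e:rs} from Lemma~\ref{l:rightsign}. The paper's version differs only cosmetically (it multiplies through by $4$ before expanding), and your closing remark about why this quantity matters for the second variation is apt.
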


\begin{proof}
As in the proof of Lemma \ref{l:rightsign}, square brackets $\left[ \cdot \right]$ will denote  weighted integrals.
Squaring things out gives
\begin{align}
 4 \,   \left[  \left( \frac{|x|^2}{4} - \frac{n}{2} \right)^2 - \frac{n}{2}
		\, \right] & = \left[    \frac{|x|^4}{4} -  n \, |x|^2   +  n^2 - 2 \, n \right]  \notag \\
		&= \left[  n (n+2) - 4 \, H^2  -  n (2n) + n^2 - 2n \right]  = - 4 \, \left[ H^2 \right] \, ,
\end{align}
where the second equality used Lemma \ref{l:rightsign}.
\end{proof}

\section{The second variation of $F_{x_0,t_0}$ and the operator $L_{x_0,t_0}$} \label{s:two}

In this section, we will calculate the second variation formula of the $F$ functional for simultaneous variations in all three parameters $\Sigma$, $x_0$, and $t_0$.  The most important case is when $\Sigma$ is a critical point where we will use our calculation to formulate a notion of stability.

As we saw in Section \ref{s:one},    critical points of the $F$ functional are the same as critical points for $F_{x_0,t_0}$ where $x_0$ and $t_0$ are fixed and we vary the hypersurface alone and these are easily seen to characterize self-shrinkers and are equivalent to being a
 minimal hypersurface in $\RR^{n+1}$  in a conformally changed metric.   In Section \ref{s:three} we will see that
 it turns out that as minimal hypersurfaces in the conformally changed metric, they are unstable in the usual sense of minimal surfaces (cf. chapter $1$ in \cite{CM4}).
  Equivalently,
  every critical point of the $F$ functional is unstable if you fix $x_0$ and $t_0$ and vary $\Sigma$ alone.

  In spite of this, it is still possible to formulate a natural version of stability for the full $F$ functionals.  The point is that the apparent instability comes from translating   the hypersurface in space and time and  our notion of stability will account for this.

 \subsection{The general second variation formula}

 In the next theorem, $\Sigma_s$ will be a one-parameter family of hypersurfaces where the $s$ derivative $\partial_s \Sigma_s$ will be assumed to be normal to $\Sigma_s$ for each $s$.  Namely,
 we assume that  $\partial_s \Sigma_s = f \, \nn$ where $f$ is a function and $\nn$ is the unit normal to $\Sigma_s$ (and we have suppressed the $s$ dependence).
 This allows us to apply Lemma \ref{l:varl0} for each $s$ and then differentiate with respect to $s$.

  \begin{Thm}	\label{t:secvar0}
 If $\Sigma_s$ is a normal variation of $\Sigma$, $x_s$ is a variation of $x_0$,  and $t_s$ is a variation of $t_0$ with
 \begin{align}
 	\partial_s \big|_{s=0} \, \Sigma_s = f \nn ,  \,  \partial_s  \big|_{s=0} x_s = y ,   {\text{ and }} \partial_s  \big|_{s=0} t_s = h \, ,  \\
	\partial_{s} \big|_{s=0} \, f = f' ,  \,  \partial_{ss}  \big|_{s=0} x_s = y' ,   {\text{ and }} \partial_{ss}  \big|_{s=0} t_s = h' \, ,
\end{align}
then setting $F'' = \partial_{ss} \big|_{s=0} \, \left( F_{x_s , t_s} (\Sigma_s) \right)$ gives
 \begin{align}	\label{e:secvar20}
	F'' &=    (4\pi t_0)^{-n/2} \, \int_{\Sigma}
	 \left[ - f \,     L_{x_0,t_0} \, f
		  +  f\,h\,\frac{ \langle x - x_0,  \nn \rangle}{t_0^2}    -   h^2 \, \frac{|x-x_0|^2 - nt_0}{2t_0^3}
		\right. \notag \\
		&\quad + f \, \frac{ \langle y , \nn \rangle}{t_0}  - \frac{|y|^2}{2t_0}  - h \, \frac{\langle x-x_0 , y \rangle}{t_0^2} \notag \\
		&\quad +  \left( f \, \left( H - \frac{ \langle x-x_0 ,  \nn \rangle}{2t_0} \right) +
		h \, \left( \frac{|x - x_0|^2}{4 t_0^2} - \frac{n}{2t_0} \right) +   \langle  \frac{ x -x_0}{2t_0}  , y \rangle \right)^2
		   \\
		 &\quad +   \left. f' \, \left( H - \frac{ \langle x-x_0 ,  \nn \rangle}{2t_0}
		\right)
		+ h' \, \left( \frac{|x-x_0|^2}{4t_0^2} - \frac{n}{2t_0} \right)  +
	  \frac{  \langle x-x_0 , y' \rangle }{2t_0}    \right]
		 \, \e^{\frac{-|x-x_0|^2}{4t_0}}  \, d\mu  \, ,\notag
\end{align}
where $L_{x_0,t_0} = \Delta + |A|^2 - \frac{1}{2t_0} \, \langle x - x_0, \nabla (\cdot) \rangle + \frac{1}{2t_0} $ is a (non-symmetric) second order operator.
\end{Thm}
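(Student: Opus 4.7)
The plan is to view Lemma \ref{l:varl0} as a formula valid at every $s$, not merely $s=0$, and then differentiate once more. Write
\begin{equation}
W_s \;=\; \log\!\Bigl( (4\pi t_s)^{-n/2}\, \e^{-|x-x_s|^2/(4t_s)} \Bigr),
\qquad
d\mu_s = \text{area element of }\Sigma_s,
\end{equation}
so that $F_{x_s,t_s}(\Sigma_s) = \int_{\Sigma_s} \e^{W_s}\, d\mu_s$. The three contributions to $\partial_s W_s$ (from the motion of a point on $\Sigma_s$ in the direction $f_s\nn_s$, from $\partial_s x_s = y_s$, and from $\partial_s t_s = h_s$) together with the first variation of area $\partial_s d\mu_s = f_s H_s\, d\mu_s$ give
\begin{equation}
\partial_s W_s + f_s H_s \;=\; G_s,
\end{equation}
where $G_s$ is exactly the bracket from Lemma \ref{l:varl0}. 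Therefore
\begin{equation}
\partial_s F_{x_s,t_s}(\Sigma_s) \;=\; \int_{\Sigma_s} G_s\, \e^{W_s}\, d\mu_s,
\qquad
F'' \;=\; \int_{\Sigma}\bigl( \partial_s G_s + G_s^{\,2}\bigr)\e^{W_0}\, d\mu_0\bigg|_{s=0}.
\end{equation}
The square term $G_s^{\,2}$ in the integrand is precisely the squared expression in \eqr{e:secvar20}, which accounts for two of the lines of the claimed formula automatically.

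Next I would compute $\partial_s G_s\big|_{s=0}$ term by term using the standard geometric identities for a normal variation: $\partial_s X = f\nn$, $\partial_s \nn = -\nabla f$, and $\partial_s H = -\Delta f - |A|^2 f$. Differentiating $f_s H_s$ produces $f' H - f(\Delta f + |A|^2 f)$; differentiating $\tfrac{f_s}{2t_s}\langle x-x_s,\nn_s\rangle$ uses $\partial_s\langle x-x_s,\nn_s\rangle = f - \langle y,\nn\rangle - \langle x-x_0,\nabla f\rangle$ together with $\partial_s(1/t_s) = -h/t_0^2$; the remaining pieces are analogous and produce only algebraic expressions in $f,h,y,f',h',y'$ together with $|x-x_0|^2$, $\langle x-x_0,\nn\rangle$, etc. The primed variations $f', h', y'$ each appear linearly and combine, via Lemma \ref{l:varl0} applied at $s=0$, into the last line of \eqr{e:secvar20}.

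The only nontrivial grouping is that the terms
\begin{equation}
-f\,\Delta f \;-\; |A|^2 f^2 \;-\;\frac{f^2}{2t_0}\;+\;\frac{f}{2t_0}\langle x-x_0,\nabla f\rangle
\end{equation}
must be recognized as $-f\, L_{x_0,t_0}\!f$, which is exactly the definition of $L_{x_0,t_0}$ stated in the theorem. Once this identification is made, matching coefficients of $fh\,\langle x-x_0,\nn\rangle$, of $f\langle y,\nn\rangle$, of $|y|^2$, of $h\,\langle x-x_0,y\rangle$, and of $h^2$ against the target formula is a short bookkeeping check.

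The main obstacle is purely bookkeeping: keeping track of the several chain-rule contributions for each of $x_s$, $t_s$, $\Sigma_s$ (and their second derivatives) without sign errors, and verifying that the derivatives of the weight $\e^{W_s}$ and the area element $d\mu_s$ reassemble cleanly into $G_s^{\,2}$. The underlying structural reason this works is that $\e^{W_s}\, d\mu_s$ satisfies $\partial_s(\e^{W_s} d\mu_s) = G_s\, \e^{W_s} d\mu_s$, so the computation is really Leibniz applied twice with the convenient relation $\partial_s\log(\e^{W_s} d\mu_s) = G_s$.
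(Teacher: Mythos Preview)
Your proposal is correct and is essentially the paper's own proof: the paper also differentiates the first variation formula \eqr{e:Fprime0} once more in $s$, obtaining the $G_s^2$ term from the derivative of the weighted measure and computing $\partial_s G_s$ term by term using exactly the identities $H' = -\Delta f - |A|^2 f$, $\nn' = -\nabla f$, and $\langle x-x_s,\nn\rangle' = f - \langle y,\nn\rangle - \langle x-x_0,\nabla f\rangle$ that you list. Your packaging via $\partial_s\log(\e^{W_s}d\mu_s) = G_s$ is a clean way to see why the square appears, but the substance of the computation is the same.
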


\begin{proof}
Within this proof, we will use square brackets $\left[ \cdot \right]_{x_s,t_s}$ to denote  weighted integrals, i.e.,
 \begin{equation}
 	\left[ g \right]_{x_s,t_s} =  (4\pi t_s)^{- \frac{n}{2} } \,    \int g \,   \e^{\frac{-|x-x_s|^2}{4t_s}} \, d \mu  \, .
 \end{equation}
Letting primes denote derivatives with respect to $s$ at $s=0$, differentiating  \eqr{e:Fprime0} gives
\begin{align}	
	F'' &=  \left[
		 f \, \left( H - \frac{ \langle x - x_s ,  \nn \rangle}{2t_s}
		\right)'
		+ h \, \left( \frac{|x - x_s|^2}{4t_s^2} - \frac{n}{2t_s} \right)'
		+
	     \langle \left( \frac{ x-x_s}{2t_s} \right)' , y \rangle
		\right.   \notag \\
		&\quad +   \left( f \, \left( H - \frac{ \langle x-x_0 ,  \nn \rangle}{2t_0} \right) +
		h \, \left( \frac{|x-x_0|^2}{4t_0^2} - \frac{n}{2t_0} \right) +     \langle \frac{x -x_0}{2t_0} , y \rangle \right)^2
		   \\
		&\quad +
		\left. f' \, \left( H - \frac{ \langle x-x_0 ,  \nn \rangle}{2t_0}
		\right)
		+ h' \, \left( \frac{|x-x_0|^2}{4t_0^2} - \frac{n}{2t_0} \right)  +
	  \frac{  \langle x-x_0 , y' \rangle }{2t_0}    \right]_{x_0,t_0}
		\, ,\notag
\end{align}
where we used \eqr{e:logder} to compute the term on the second line.
By \eqr{e:big4} and \eqr{e:Nip} in Appendix \ref{s:AC}, we have that
\begin{align}
	H' &= - \Delta f - |A|^2 \, f \, , \\
	\nn' &= - \nabla f  \, .		\label{e:fromap}
\end{align}
Since $x' = f \, \nn$,   we get
\begin{equation}	\label{e:xxsp}
	\langle x - x_s , \nn \rangle ' = f -  \langle y , \nn \rangle -  \langle x - x_0 ,  \nabla f \rangle  \, .
\end{equation}
 Hence, since $\left( t_s^{-1} \right)' = -h\, t_0^{-2}$ and $(x_s)' = y$, we have
\begin{align}	
	\left( H - \frac{ \langle x-x_s ,  \nn \rangle}{2t_s}
		\right)'  &=  - \Delta f - |A|^2 \, f  -   \frac{ f - \langle y , \nn \rangle -  \langle x - x_0 ,  \nabla f \rangle }{2t_0} + \frac{h}{2 \, t_0^2} \, \langle x - x_0 , \nn \rangle
		  \notag \\
			&=  -  L_{x_0,t_0} \, f + \frac{h}{2 \, t_0^2} \, \langle x - x_0 , \nn \rangle
		+ \frac{\langle y , \nn \rangle}{2 t_0}
		 \, ,
\end{align}	
where the second equality used the definition of $L_{x_0,t_0}$.   The second term is given by
\begin{align}
	\left( \frac{|x - x_s|^2}{4t_s^2} - \frac{n}{2t_s} \right)'  &= - h \, \frac{|x-x_0|^2}{2t_0^3} +
	\frac{ \langle x - x_0, f \, \nn \rangle}{ 2t_0^2}  -
	\frac{  \langle x - x_0 , y \rangle}{2t_0^2}
	+ \frac{n}{2t_0^2 } \, h \notag  \\
	&=  f \, \frac{ \langle x - x_0,  \nn \rangle}{2t_0^2}
	- h \, \frac{|x-x_0|^2 - nt_0}{2t_0^3}   -
	\frac{\langle x - x_0 , y \rangle}{2t_0^2} \,   .
\end{align}
 For the third term, observe that
 \begin{equation}
 	\left( \frac{ x-x_s}{2t_s} \right)' = \frac{ f \nn - y - h (x-x_0)/t_0}{2t_0}  \, .
 \end{equation}
Combining  these gives the theorem.
  \end{proof}

 \subsection{The second variation at a critical point}

In this subsection, we specialize our calculations from the previous section to the case where $\Sigma$ is a critical point.

In the next theorem and throughout the remainder of this paper we let $L= L_{0,1}$ be the (non-symmetric) second order operator
\begin{equation}
L = \cL+|A|^2+\frac{1}{2}=\Delta + |A|^2 - \frac{1}{2} \, \langle x , \nabla (\cdot) \rangle + \frac{1}{2}\, .
\end{equation}

  \begin{Thm}	\label{t:secvar}
 Suppose that $\Sigma$ is complete, $\partial \Sigma = \emptyset$,   $\Sigma$ has polynomial volume growth, and    $\Sigma$ is a critical point for $F_{0 ,1}$.
  If $\Sigma_s$ is a normal variation of $\Sigma$, $x_s$, $t_s$ are  variations with $x_0 = 0$ and $t_0 = 1$, and
 \begin{equation}
 	\partial_s \big|_{s=0} \, \Sigma_s = f\nn ,  \,  \partial_s  \big|_{s=0} x_s = y ,   {\text{ and }} \partial_s  \big|_{s=0} t_s = h \, ,
\end{equation}
then setting $F'' = \partial_{ss} \big|_{s=0} \, \left( F_{x_s , t_s} (\Sigma_s) \right)$ gives
 \begin{equation}	\label{e:secvar2}
	F''=    (4\pi)^{-n/2} \, \int_{\Sigma}
	 \left( - f \,     Lf
		  + 2f\,h\,H   -   h^2 \, H^2
		+ f \, \langle y , \nn \rangle  - \frac{\langle y , \nn \rangle^2}{2}
		\right)
		 \, \e^{\frac{-|x|^2}{4}}  \, d\mu  \,  .
\end{equation}
\end{Thm}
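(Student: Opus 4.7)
The plan is to start from the general second variation formula in Theorem \ref{t:secvar0}, specialize to $x_0=0$, $t_0=1$, and then systematically exploit the critical-point equation $H=\tfrac{1}{2}\langle x,\nn\rangle$ together with the weighted identities in Lemma \ref{l:rightsign} and Corollary \ref{c:rs} to collapse everything to the four terms claimed.

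First I would dispose of the ``second-order parameter'' terms, i.e.\ those containing $f'$, $h'$, and $y'$. Their integrands are exactly the integrands appearing in the first variation formula \eqref{e:Fprime0}. Since $\Sigma$ is a critical point, the $f'$ factor $H-\tfrac{1}{2}\langle x,\nn\rangle$ vanishes pointwise, while the $h'$ and $y'$ factors integrate to zero against $\mathrm{e}^{-|x|^2/4}\,d\mu$ by \eqref{e:rsa} and \eqref{e:mass} respectively. Hence the entire last line of \eqref{e:secvar20} disappears after integration.

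Next I would expand the large quadratic term. Since $H-\tfrac{1}{2}\langle x,\nn\rangle=0$ on $\Sigma$, it reduces to
\[
\Bigl(h\bigl(\tfrac{|x|^{2}}{4}-\tfrac{n}{2}\bigr)+\tfrac{1}{2}\langle x,y\rangle\Bigr)^{2}
=h^{2}\bigl(\tfrac{|x|^{2}}{4}-\tfrac{n}{2}\bigr)^{2}+h\,\langle x,y\rangle\bigl(\tfrac{|x|^{2}}{4}-\tfrac{n}{2}\bigr)+\tfrac{1}{4}\langle x,y\rangle^{2}.
\]
The cross term combines with $-h\,\langle x,y\rangle$ from \eqref{e:secvar20} into an integrand proportional to $\langle x,y\rangle$ and $\langle x,y\rangle\,|x|^{2}$, both of which integrate to zero by \eqref{e:mass}. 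For the $h^{2}$ contributions, I would collect $-h^{2}\frac{|x|^{2}-n}{2}+h^{2}(\tfrac{|x|^{2}}{4}-\tfrac{n}{2})^{2}$; using \eqref{e:rsa} to evaluate the first piece and Corollary \ref{c:rs} to rewrite the second, these collapse to $-h^{2}\int H^{2}\,\mathrm{e}^{-|x|^{2}/4}$, producing the $-h^{2}H^{2}$ term. The remaining piece $\tfrac{1}{4}\langle x,y\rangle^{2}-\tfrac{1}{2}|y|^{2}$ is handled by \eqref{e:mass2} applied with $w=y$, giving $\tfrac{1}{2}|y^{T}|^{2}-\tfrac{1}{2}|y|^{2}=-\tfrac{1}{2}\langle y,\nn\rangle^{2}$.

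Finally I would absorb the cross term $fh\,\langle x,\nn\rangle/t_{0}^{2}$ into $2fh\,H$ using $\langle x,\nn\rangle=2H$, and leave the $f\langle y,\nn\rangle/t_{0}$ term as is, and recognize $-f\,L_{0,1}f=-fLf$ with $L=L_{0,1}$. Assembling these pieces gives exactly \eqref{e:secvar2}. The only real subtlety is bookkeeping: one must verify that every integrand appearing with $f'$, $h'$, $y'$, $h\langle x,y\rangle$, or $\langle x,y\rangle|x|^{2}$ lies in the weighted $W^{2,2}$ class so that Corollary \ref{c:self} and the identities of Lemma \ref{l:rightsign} actually apply—this is where polynomial volume growth of $\Sigma$ enters essentially. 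Once this is in place, the simplification is purely algebraic.
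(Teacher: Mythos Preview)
Your proposal is correct and follows essentially the same route as the paper's proof: both start from the general formula in Theorem \ref{t:secvar0} at $x_0=0$, $t_0=1$, kill the $f'$, $h'$, $y'$ terms and the $f$-part of the square via the critical-point equation and \eqref{e:rsa}, \eqref{e:mass}, then reduce the remaining $h^2$, $\langle x,y\rangle^2$, and cross terms using Corollary \ref{c:rs} and \eqref{e:mass2}. The only difference is the order in which you group the terms; the paper first passes to the intermediate display \eqref{e:Fprime21} and then simplifies, whereas you treat the pieces thematically, but the algebra and the lemmas invoked are identical.
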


\begin{proof}
Since $\Sigma$ is a critical point for $F_{0,1}$, we have by \eqr{e:Fprime0} that
\begin{equation}	\label{e:215}
	H = \frac{ \langle x , \nn \rangle}{2} \, .
\end{equation}
 Within this proof, we will use square brackets $\left[ \cdot \right]$ to denote  weighted integrals:
 \begin{equation}
 	\left[ g \right] =  (4\pi)^{- \frac{n}{2} } \,    \int_{\Sigma}  g \,   \e^{\frac{-|x|^2}{4}} \, d \mu  \, .
 \end{equation}
It follows from Lemma \ref{l:rightsign}  that
\begin{equation}	\label{e:11012)}
	     \left[ \frac{|x|^2}{4} - \frac{n}{2} \right] = 0  {\text{ and }}
	\left[  x   \right] = \left[ x \, |x|^2 \right] = 0
	 \, .
\end{equation}
Theorem \ref{t:secvar0} (with $x_0 = 0$ and $t_0 = 1$) gives
\begin{align}	\label{e:Fprime20}
	F'' &=  \left[
		- f \, L \, f
		 + f \, h \,  \langle x , \nn \rangle  - h^2 \frac{|x|^2 - n}{2} + f \,  \langle y , \nn \rangle - \frac{|y|^2}{2} - h \,   \langle x , y \rangle
		\right]   \notag \\
		&\quad + \left[ \, \left(
		h \, \left( \frac{|x|^2}{4} - \frac{n}{2} \right) +   \frac{ 1}{2} \, \langle x  , y \rangle \right)^2
		\right]
		\, ,
\end{align}
where we used \eqr{e:215} and \eqr{e:11012)} to see that a number of the terms from Theorem \ref{t:secvar0} vanish.
Squaring out the last term and using \eqr{e:215} and   \eqr{e:11012)} gives
\begin{align}	\label{e:Fprime21}
	F'' &=  \left[
		- f \, L \, f
		 + 2 \, f \, h \,  H  -   \frac{n}{2} \, h^2 + f \,  \langle y , \nn \rangle - \frac{|y|^2}{2}
		\right]   \notag \\
		&\quad + \left[
		h^2  \left( \frac{|x|^2}{4} - \frac{n}{2} \right)^2  + \frac{1}{4} \, \langle x , y \rangle^2  \right]
		+ h \, \left[  \left( \frac{|x|^2}{4} - \frac{n}{2} \right) \, \langle x , y \rangle \right]
		\, .
\end{align}

Using the last two equalities in \eqr{e:11012)}, we see that the last term in \eqr{e:Fprime21} vanishes and thus
\begin{align}
	F'' &=     \left[ - f \,    Lf
		 + 2f\,h\,H + h^2 \,  \left( \frac{|x|^2}{4} - \frac{n}{2} \right)^2 - \frac{n \, h^2}{2}
		+ f \, \langle y , \nn \rangle  - \frac{|y|^2}{2} + \frac{ 1}{4} \, \langle x  , y \rangle^2
		\right]  \notag \\
		&= \left[ - f \,   Lf
		 + 2f\,h\,H   -   h^2 \, H^2
		+ f \, \langle y , \nn \rangle  - \frac{\left| y^{\perp} \right|^2}{2}
		\right]
		 \, ,
\end{align}
where the second equality used Corollary \ref{c:rs} and \eqr{e:mass2}.
  \end{proof}

\subsection{$F$-stable self-shrinkers}

We will say that a critical point $\Sigma$ for $F_{0,1}$ is {\emph{$F$-stable}} if for every normal variation $f \, \nn$ there exist variations of $x_0$ and $t_0$ that make  $F'' \geq 0$.

Roughly speaking, $\Sigma$ is stable when the only way to decrease the $F_{0,1}$ functional is to translate in space or time.  It is easy to see that the sphere of radius $\sqrt{2n}$ is $F$-stable in $\RR^{n+1}$:

\begin{Lem}	\label{l:sn}
The $n$-sphere of radius $\sqrt{2n}$ in $\RR^{n+1}$ is   $F$-stable.
\end{Lem}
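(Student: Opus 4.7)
The proof will proceed by direct computation using the second variation formula from Theorem \ref{t:secvar}, showing that on $\Sigma = \sqrt{2n}\,\SS^n$ one can always choose $y \in \RR^{n+1}$ and $h \in \RR$ so that $F'' \geq 0$. The first step is to record four simplifications that are special to the round sphere: the position vector is purely normal, so $\nn = x/\sqrt{2n}$ and $x^T \equiv 0$; the mean curvature $H = \sqrt{n/2}$ is constant; $|A|^2 = 1/2$; and, consequently, $\langle x, \nabla u\rangle = 0$ for every $u \in C^1(\Sigma)$. Together these reduce the operator appearing in Theorem \ref{t:secvar} to $L = \Delta + 1$ on $\Sigma$.

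Next I would decompose an arbitrary normal variation as $f = a + \langle b, x\rangle + f^\perp$, where $a \in \RR$, $b \in \RR^{n+1}$, and $f^\perp$ is orthogonal in the weighted $L^2$ inner product to the constants and to each coordinate function $x_i$. This splitting is consistent because $\int_{\Sigma} x_i \,\e^{-|x|^2/4} = 0$ by Lemma \ref{l:rightsign}. Plugging into $L = \Delta + 1$ and using the classical spherical-harmonic spectrum of $\sqrt{2n}\,\SS^n$, where $-\Delta$ has eigenvalues $k(k+n-1)/(2n)$, I get $L(1) = 1$, $L(\langle b,x\rangle) = \tfrac{1}{2}\langle b, x\rangle$, and, on the $f^\perp$ sector,
\begin{equation*}
-\int_{\Sigma} f^\perp \, L f^\perp \, \e^{-|x|^2/4} \;\geq\; \frac{1}{n}\int_{\Sigma} (f^\perp)^2 \,\e^{-|x|^2/4}\;\geq\;0,
\end{equation*}
since the lowest eigenvalue of $-\Delta$ on the orthogonal complement of $1, x_1,\dots,x_{n+1}$ equals $2(n+1)/(2n)$.

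Substituting into the formula of Theorem \ref{t:secvar} and using Lemma \ref{l:rightsign} along with the symmetry identity $\int_{\Sigma} x_i x_j \,\e^{-|x|^2/4} = \delta_{ij}\cdot (2n/(n+1))\int_{\Sigma}\e^{-|x|^2/4}$, all cross terms between distinct pieces of the decomposition vanish. I expect $F''$ to split cleanly into three independent blocks: an $(a,h)$-block that collapses to a negative multiple of $(a - h\sqrt{n/2})^2$, a $(b,y)$-block that collapses to a negative multiple of $|y - \sqrt{2n}\,b|^2$, and the non-negative $f^\perp$-contribution above. Choosing $h = a\sqrt{2/n}$ and $y = \sqrt{2n}\,b$ kills the first two blocks, leaving $F'' \geq 0$, which proves $F$-stability.

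The only mildly delicate step is completing the square in the $(b, y)$-block to recognize the negative definite form as $-|y - \sqrt{2n}\,b|^2$ up to a positive constant, which fixes the correct choice of $y$; the sphere's symmetry makes all the numerical factors line up cleanly, so there is no real obstacle.
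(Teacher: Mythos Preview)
Your proposal is correct and follows essentially the same route as the paper's proof: reduce $L$ to $\Delta+1$ on the sphere, decompose $f$ into a constant, a linear part, and a remainder orthogonal to both, use the spectral gap $\mu_2-1=1/n$ on the remainder, and complete the square in the $(a,h)$- and $(b,y)$-blocks to pick $h=a\sqrt{2/n}$ and $y=\sqrt{2n}\,b$. The only cosmetic differences are that the paper writes the linear part as $\langle z,\nn\rangle$ rather than $\langle b,x\rangle$ (equivalent since $\nn=x/\sqrt{2n}$) and drops the Gaussian weight, which is a constant $\e^{-n/2}$ on the sphere and so plays no role; in particular your appeals to Lemma~\ref{l:rightsign} are valid but unnecessary here, since the orthogonality relations are just the standard ones for spherical harmonics.
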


\begin{proof}
Note that
  $x^T = 0$,  $A$ is $1/\sqrt{2n}$ times the metric, and  $L = \Delta + 1$.  Therefore, by Theorem \ref{t:secvar},
  the lemma will follow from showing that
given an arbitrary normal variation $f \nn$, there exist $h \in \RR$ and $y \in \RR^{n+1}$ so that
  \begin{equation}	\label{e:secvarsn2}
	  \int_{\SS^n}
	 \left[ - f \, \left(   \Delta f + f
		\right) + \sqrt{2n} \, f\,h    -   \frac{n}{2} \, h^2
		+ f \, \langle y , \nn \rangle  - \frac{\langle y , \nn \rangle^2}{2}
		\right]    \geq 0  \, .
\end{equation}
Recall that the eigenvalues of the Laplacian{\footnote{See, e.g., ($14$) on page $35$ of Chavel, \cite{Ca}.}} on the $n$-sphere of radius one are given by $k^2 + (n-1) \, k$ for $k= 0 , 1, \dots$ with $0$ corresponding to the constant function and the first non-zero eigenvalue $n$ corresponding to the restrictions of the linear functions in $\RR^{n+1}$.  It follows that the eigenvalues of $\Delta$ on the sphere of radius $\sqrt{2n}$ are given by
\begin{equation}
	\mu_k = \frac{k^2 + (n-1) \, k}{2n} \, ,
\end{equation}
with $\mu_0 = 0$ corresponding to the constant functions and $\mu_1 = \frac{1}{2}$ corresponding to the linear functions.  Let $E$ be the space of $W^{1,2}$ functions that are orthogonal to constants and linear functions; equivalently, $E$ is the span of all the eigenfunctions for $\mu_k$ for all $k \geq 2$.
Therefore, we can choose $a \in \RR$ and $z \in \RR^{n+1}$ so that
\begin{equation}
	f_0 \equiv f - a - \langle z , \nn \rangle  \in E \, .
\end{equation}
Using the orthogonality of the different eigenspaces, we get that
\begin{align}	\label{e:p1}
	\int_{\SS^n}  -f \, (\Delta f +  f) &\geq (\mu_2- 1) \, \int_{\SS^n} f_0^2 + (\mu_1 - 1) \, \int_{\SS^n} \langle z , \nn \rangle^2 + (\mu_0 - 1) \, \int_{\SS^n} a^2 \notag \\
	&= \frac{1}{n} \, \int_{\SS^n} f_0^2  - \frac{1}{2} \, \int_{\SS^n} \langle z , \nn \rangle^2   -  \int_{\SS^n} a^2 \, .
\end{align}
Again using the orthogonality of different eigenspaces, we get
  \begin{equation}	 	\label{e:p2}
	  \int_{\SS^n}
	 \left[ \sqrt{2n} \, f\,h
		+ f \, \langle y , \nn \rangle
		\right]    =    \int_{\SS^n}
	 \left[ \sqrt{2n} \, a \,h
		+  \langle z , \nn \rangle \, \langle y , \nn \rangle
		\right]
		  \, .
\end{equation}
Combining \eqr{e:p1} and \eqr{e:p2}, we get that the left hand side of \eqr{e:secvarsn2} is greater than or equal to
  \begin{align}	 	\label{e:p3}
	 & \int_{\SS^n}  \left[ \frac{f_0^2 }{n}  - \frac{1}{2}   \langle z , \nn \rangle^2   -   a^2+
	  \sqrt{2n} \, a \,h    -   \frac{n}{2} \, h^2
		+  \langle z , \nn \rangle \, \langle y , \nn \rangle   - \frac{\langle y , \nn \rangle^2}{2}
		\right]   \notag \\
		& \quad =
		 \int_{\SS^n}  \left[ \frac{f_0^2 }{n}  -
		 \frac{1}{2} \left(  \langle z , \nn \rangle  - \langle y , \nn \rangle \right)^2   -
		 \left( a - \frac{\sqrt{n} \, h }{\sqrt{2}} \right)^2 		\right]
		  \, .
\end{align}
This can be made non-negative by choosing $y = z$ and $h = \frac{\sqrt{2} \, a }{\sqrt{n}} $.

\end{proof}

  We will prove the following converse:

\begin{Thm}		\label{t:linearstable}
The $n$-sphere of radius $\sqrt{2n}$ is the only (smooth) embedded closed $F$-stable hypersurface in  $\RR^{n+1}$ for any $n \geq 2$.
\end{Thm}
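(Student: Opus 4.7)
The plan is to show that $F$-stability forces $H$ to have constant sign, and then invoke the classification of mean convex self-shrinkers (Theorem~\ref{t:huisken}). Since $\Sigma$ is closed, the only possibility among $\SS^{k} \times \RR^{n-k}$ is $k = n$, yielding the round sphere, whose radius is pinned down to $\sqrt{2n}$ by the self-shrinker equation $H = \langle x, \nn\rangle / 2$.

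The key ingredients are the eigenfunction identities $LH = H$ and $L\langle y, \nn\rangle = \tfrac{1}{2}\langle y, \nn\rangle$ for every constant $y \in \RR^{n+1}$ (Theorem~\ref{t:spectral}, alluded to in the outline of the proof), combined with the weighted self-adjointness of $L$ from Corollary~\ref{c:self}. In the paper's convention, $H$ and $\langle y, \nn\rangle$ are eigenfunctions of $-L$ with eigenvalues $-1$ and $-\tfrac{1}{2}$ respectively, and eigenfunctions for distinct eigenvalues are orthogonal in the weighted $L^{2}$ inner product with density $e^{-|x|^{2} / 4}$.

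First I would decompose an arbitrary normal variation as $f = \alpha H + \langle z, \nn\rangle + f^{\perp}$, where $f^{\perp}$ is weighted-$L^{2}$ orthogonal to $H$ and to every $\langle w, \nn\rangle$. Plugging this into $F''$ from Theorem~\ref{t:secvar} and using weighted self-adjointness to kill all cross terms, the expression should collapse, after completing the squares in $h$ and $y$, to
\begin{equation*}
F''(f, h, y) \;=\; Q(f^{\perp}) \;-\; a\,(\alpha - h)^{2} \;-\; \tfrac{1}{2}(z - y)^{T} M\, (z - y),
\end{equation*}
where $Q(g) = -\int_{\Sigma} g\,L g\, e^{-|x|^{2}/4}$, $a = \int_{\Sigma} H^{2}\, e^{-|x|^{2}/4}$, and $M_{ij} = \int_{\Sigma} \nn_{i}\nn_{j}\, e^{-|x|^{2}/4}$. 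Since $a \geq 0$ and $M$ is positive semi-definite, the last two terms are non-positive for every $(h,y)$ and vanish at $h = \alpha$, $y = z$. Thus $F$-stability reduces to the single inequality $Q(f^{\perp}) \geq 0$ on the weighted-$L^{2}$ orthogonal complement of $\mathrm{span}(H) + \{\langle y, \nn\rangle : y \in \RR^{n+1}\}$.

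To close the argument, suppose toward a contradiction that $H$ changes sign. Since $\Sigma$ is closed and connected and $-L$ is (weighted) self-adjoint elliptic, standard spectral theory gives a first eigenfunction $\phi_{1}$ that is strictly positive, with simple first eigenvalue $\lambda_{1}$; hence $H$ cannot be a first eigenfunction, and so $\lambda_{1} < -1$ strictly. Because $\lambda_{1} < -1 < -\tfrac{1}{2}$, the function $\phi_{1}$ is weighted-$L^{2}$ orthogonal to both $H$ and every $\langle y, \nn\rangle$, so it lies in the orthogonal complement above. Inserting $f = \phi_{1}$ into the reduced formula gives $\sup_{h,y} F''(\phi_{1}, h, y) = Q(\phi_{1}) = \lambda_{1} \int_{\Sigma} \phi_{1}^{2} \, e^{-|x|^{2}/4} < 0$, contradicting $F$-stability. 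Hence $H$ has constant sign; after possibly reversing orientation, $H \geq 0$, and Theorem~\ref{t:huisken} together with closedness forces $\Sigma = \SS^{n}$ of radius $\sqrt{2n}$. The main obstacle is simply verifying the spectral facts for $-L$ on the closed self-shrinker (discrete spectrum, strict positivity and simplicity of the first eigenfunction); once those are in place the rest of the argument is essentially algebraic.
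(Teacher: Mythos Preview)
Your proposal is correct and follows essentially the same route as the paper: both argue by contradiction that if $H$ changes sign then the lowest eigenfunction $\phi_1$ of $L$ has eigenvalue $\mu_1 < -1$, is therefore weighted-$L^2$ orthogonal to $H$ and to every $\langle y,\nn\rangle$, and gives $F''<0$ for all $(h,y)$; then mean convexity plus Huisken's classification yields the sphere. Your decomposition $f = \alpha H + \langle z,\nn\rangle + f^\perp$ and the completion of squares is a cleaner repackaging that even characterizes $F$-stability as $Q\ge 0$ on the orthogonal complement, whereas the paper simply plugs $\phi_1$ directly into \eqr{e:secvar2} and bounds each term; but the substance is identical. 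One small note: for closed $\Sigma$ the paper appeals to Huisken's original closed classification \cite{H3} rather than Theorem~\ref{t:huisken}, which avoids the heavier non-compact machinery, though either reference works here.
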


In the non-compact case, we show:

\begin{Thm}		\label{t:linearstable2}
The $n$-plane is the only complete (smooth) embedded non-compact $F$-stable hypersurface in
 $\RR^{n+1}$  with no boundary
  and   polynomial volume growth.
  \end{Thm}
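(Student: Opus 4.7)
The strategy parallels the proof of Theorem \ref{t:linearstable} in the compact case, but must be modified to handle the non-compact spectral theory of the second variation operator $L$. My plan has three stages: (i) use $F$-stability to show that the mean curvature $H$ does not change sign; (ii) apply Huisken's classification (Theorem \ref{t:huisken}) to reduce to $\Sigma = \SS^k \times \RR^{n-k}$; (iii) construct an explicit compactly supported destabilizing variation on every such cylinder with $k \geq 1$, leaving only the hyperplane.

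For stage (i), since $\Sigma$ is $F$-stable it is in particular a critical point of $F_{0,1}$ and, by Proposition \ref{p:critall}, a self-shrinker. A direct calculation (which will appear in the subsequent sections of the paper) gives $LH = H$, so $H$ is an eigenfunction of $L$ with eigenvalue $-1$. I plan to invoke the spectral analysis developed in Section \ref{s:noncompactspec}, which shows that the bottom eigenvalue of $-L$ on the Gaussian-weighted $L^2$ space equals $-1$ and is attained by a positive (Harnack-type) first eigenfunction. The $F$-stability condition, after optimizing over $h \in \RR$ and $y \in \RR^{n+1}$, reduces to a quadratic form on compactly supported functions in which the $H$-eigenfunction mode and the translation modes $\langle y, \nn \rangle$ are precisely cancelled by the $h$- and $y$-variations; positivity of this form forces the bottom eigenvalue of $-L$ to equal $-1$, and a standard first-eigenfunction uniqueness argument then identifies $H$ with a constant multiple of that bottom eigenfunction. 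In particular, $H$ has constant sign. After reorienting so that $H \geq 0$, Theorem \ref{t:huisken} yields $\Sigma = \SS^k \times \RR^{n-k}$ for some $0 \leq k \leq n$; non-compactness of $\Sigma$ forces $k \leq n-1$.

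For stage (iii), suppose toward a contradiction that $k \geq 1$. Writing points of $\Sigma = \SS^k(\sqrt{2k}) \times \RR^{n-k}$ as $(\sqrt{2k}\,\theta, z)$ with $\theta \in \SS^k$ and $z \in \RR^{n-k}$, take $f = z_1$. On the cylinder $|A|^2 = \tfrac{1}{2}$ and a direct computation gives $L z_1 = \tfrac{1}{2} z_1$; meanwhile, the odd symmetries of $z_1$ (in the $z_1$-variable) and of $\nn = (\theta, 0)$ (in $\theta$) force $\int_\Sigma z_1 H\, \e^{-|x|^2/4}\, d\mu = 0$ and $\int_\Sigma z_1\,\nn_i\, \e^{-|x|^2/4}\, d\mu = 0$ for every $i$. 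Substituting into Theorem \ref{t:secvar} leaves
\[
F'' = -\tfrac{1}{2}\int_\Sigma z_1^2\, \e^{-|x|^2/4}\, d\mu - h^2 \int_\Sigma H^2\, \e^{-|x|^2/4}\, d\mu - \tfrac{1}{2}\int_\Sigma \langle y, \nn\rangle^2\, \e^{-|x|^2/4}\, d\mu,
\]
so the supremum over $(h, y)$ is attained at $(0, 0)$ and equals $-\tfrac{1}{2}\int_\Sigma z_1^2\, \e^{-|x|^2/4}\, d\mu < 0$. Although $z_1$ is not compactly supported, the same conclusion holds for $f_R = z_1\,\chi(|x|/R)$ when $R$ is large, by a routine dominated-convergence argument using the Gaussian weight and the polynomial volume growth. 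This contradicts the $F$-stability of $\Sigma$, so $k = 0$ and $\Sigma$ is a hyperplane.

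The principal technical obstacle lies in stage (i): the operator $L$ is not symmetric, the underlying manifold is non-compact, and the natural function space carries a Gaussian weight, so the spectral-theoretic statements needed (existence of a positive first eigenfunction, identification of the bottom eigenvalue of $-L$ with $-1$, and an associated simplicity result) require the careful analysis deferred to Section \ref{s:noncompactspec}. Once this is granted, Theorem \ref{t:huisken} and the explicit instability calculation above complete the proof.
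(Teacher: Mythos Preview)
Your proposal is correct and takes essentially the same route as the paper: rule out sign-changing $H$ via the non-compact spectral theory of Section \ref{s:noncompactspec}, invoke Theorem \ref{t:huisken} to reduce to $\SS^k \times \RR^{n-k}$, and then destabilize the non-trivial cylinders with the linear coordinate $z_1$ cut off at large radius (this is the paper's Case 2, with identical computations). Your description of stage (i) is slightly off --- Section \ref{s:noncompactspec} does not show $\mu_1 = -1$ directly, but rather that sign-changing $H$ forces $\mu_1 < -1$ (Theorem \ref{c:pde}) and that $\mu_1 < -1$ forces $F$-instability via a Dirichlet eigenfunction on a large ball that is only \emph{almost} orthogonal to $H$ (Lemmas \ref{l:nearbd} and \ref{l:ortho}) --- but you correctly flag this as the principal technical obstacle, and the logic you outline is the contrapositive of what is actually proved there.
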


We will prove Theorem \ref{t:linearstable} in Section \ref{s:sect6} and then prove Theorem \ref{t:linearstable2} in Section \ref{s:noncompactspec}.
 Theorem \ref{t:liketo} from the introduction follows from combining Theorems \ref{t:linearstable} and \ref{t:linearstable2}.

\section{Eigenfunctions and eigenvalues of $L$}	\label{s:three}

Throughout this section, $\Sigma \subset \RR^{n+1}$ is a smooth hypersurface without boundary, $H = \frac{\langle x , \nn \rangle}{2}$ and
\begin{equation}
	L = \Delta + |A|^2 + \frac{1}{2} - \frac{1}{2} \, \langle x , \nabla (\cdot) \rangle
	\end{equation}
	 is as in Theorem \ref{t:secvar}.

In this section we will show that $H$ and the translations are eigenfunctions of $L$.  We will see that the eigenvalue corresponding to $H$ is $-1$, whereas the eigenvalue corresponding to the translations (in space) is $-\frac{1}{2}$.  We will also see that in a weighted space these eigenfunctions are orthogonal.    In fact, orthogonality is a direct consequence of that they are eigenfunctions corresponding to different eigenvalues and that in a weighted space the operator $L$ is self-adjoint.  Namely, we show the following theorem:

\begin{Thm}	\label{t:spectral}
The mean curvature $H$ and the normal part $\langle v , \nn \rangle$ of a constant vector field $v $  are eigenfunctions of $L$ with
\begin{equation}	\label{e:spec1}
	LH =  H  {\text{ and }} L \langle v , \nn \rangle =  \frac{1}{2} \,  \langle v , \nn \rangle \, .
\end{equation}
Moreover, if $\Sigma$ is compact, then $L$ is self-adjoint in the weighted space and
\begin{equation}	\label{e:selfad}
	- \int_{\Sigma} (u_1 \, L u_2) \, \e^{\frac{-|x|^2}{4}}   =  \int_{\Sigma} \left( \langle \nabla u_1 , \nabla u_2 \rangle - |A|^2 \, u_1 \, u_2 - \frac{1}{2} \, u_1 \, u_2  \right) \,
		\e^{\frac{-|x|^2}{4}}  \, .
\end{equation}
\end{Thm}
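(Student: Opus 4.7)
The plan is to verify each of the three claims by direct computation, using only the self-shrinker equation $H = \langle x, \nn\rangle/2$, the Codazzi equation, and the already-established self-adjointness of $\cL$ (Lemma \ref{l:self}).

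The self-adjointness identity \eqr{e:selfad} is essentially immediate. Since $L = \cL + |A|^2 + \tfrac{1}{2}$ and the multiplication operator $|A|^2 + \tfrac{1}{2}$ is trivially symmetric in the weighted $L^2$ pairing, the claim reduces to integration by parts for $\cL$. Applying Lemma \ref{l:self} to $u = u_1$ and $v = u_2$ (compact support is automatic from compactness of $\Sigma$) and separating out the $|A|^2 + \tfrac{1}{2}$ piece produces exactly the right-hand side of \eqr{e:selfad}.

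For the two eigenvalue equations, I would work at a point $p \in \Sigma$ in an orthonormal frame $\{e_i\}$ chosen so that $\nabla_{e_i} e_j \big|_p = a_{ij}\nn$, and first establish the first-derivative formulas
\[
\nabla_{e_i}\langle v, \nn\rangle = -a_{ij}\langle v, e_j\rangle, \qquad \nabla_{e_i} H = -\tfrac{1}{2} a_{ij}\langle x, e_j\rangle,
\]
together with $\nabla_{e_i}\langle x, e_j\rangle = \delta_{ij} + 2 a_{ij} H$; the self-shrinker equation enters in the last two identities via $\langle x,\nn\rangle = 2H$. For $L\langle v,\nn\rangle$: differentiating once more gives $\Delta\langle v,\nn\rangle = -(\nabla_i a_{ij})\langle v, e_j\rangle - |A|^2 \langle v,\nn\rangle$, which the Codazzi identity $\sum_i \nabla_i a_{ij} = -\nabla_j H$ converts to $\langle v,\nabla H\rangle - |A|^2 \langle v,\nn\rangle$. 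The drift term works out to $-\tfrac{1}{2}\langle x, \nabla\langle v,\nn\rangle\rangle = -\langle v, \nabla H\rangle$ using the formula for $\nabla H$ above, so the two $\nabla H$ contributions and the two $|A|^2$ contributions cancel, leaving $L\langle v,\nn\rangle = \tfrac{1}{2}\langle v,\nn\rangle$. The $LH$ computation is parallel: $\nabla_{e_i}\langle x, e_j\rangle = \delta_{ij} + 2 a_{ij} H$ produces an extra $\tfrac{1}{2} H$ term when computing $\Delta H$, and Codazzi combined with the drift cancellation yields $\Delta H = \tfrac{1}{2}\langle x,\nabla H\rangle + \tfrac{1}{2} H - |A|^2 H$, so $LH = H$.

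There is no deep obstacle here: the computations are local and routine once the self-shrinker relation is invoked. The only point of care is the bookkeeping of indices and tracking where $\langle x, \nn\rangle = 2H$ is used. The mildly nontrivial observation is that the Codazzi contribution and the drift term must cancel exactly, which is what fixes the precise coefficient $\tfrac{1}{2}$ appearing both in the drift and in the constant part of $L$, and is ultimately the algebraic origin of the eigenvalues $1$ (for $H$) and $\tfrac{1}{2}$ (for $\langle v,\nn\rangle$).
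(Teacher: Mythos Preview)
Your proposal is correct and follows essentially the same approach as the paper: the self-adjointness is reduced to Lemma \ref{l:self} via $L = \cL + |A|^2 + \tfrac12$, and the eigenvalue identities are obtained by differentiating $H = \tfrac12\langle x,\nn\rangle$ and $\langle v,\nn\rangle$ in a normal frame, invoking Codazzi, and checking the drift cancellation --- this is exactly the content of the paper's Lemma \ref{l:H}. The only cosmetic difference is the order of the two computations and your explicit packaging of the key intermediate identity $\nabla_{e_i}\langle x,e_j\rangle = \delta_{ij} + 2a_{ij}H$.
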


Once we have this, then we can use a theorem of Huisken together with the usual characterization
 of the first eigenvalue to conclude that if $\Sigma$ is a closed $F$-stable self-shrinker, then $\Sigma$ must be  a sphere.

\subsection{$H$ and $\langle v , \nn \rangle$ are eigenfunctions of $L$}

\begin{Lem}		\label{l:H}
$L \, H = H$ and $L \, \langle v , \nn \rangle =  \frac{1}{2} \,  \langle v , \nn \rangle$ for
any constant vector field $v$.
\end{Lem}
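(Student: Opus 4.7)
\medskip

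\noindent\textbf{Proof plan.} My plan is to deduce both identities from a single ``linearization of the self-shrinker constraint'' calculation, which I find cleaner than computing $\Delta H$ and $\Delta \langle v,\nn\rangle$ by hand via Simons-type identities. Consider the function
\begin{equation}
	\phi(\Sigma) = 2H - \langle x,\nn \rangle
\end{equation}
which vanishes identically on any self-shrinker. First I will compute the first variation of $\phi$ along an arbitrary normal variation $f\,\nn$ of $\Sigma$. Using the standard identities $H' = -\Delta f - |A|^2 f$ and $\nn' = -\nabla f$ (recorded in the appendix referenced in the proof of Theorem \ref{t:secvar0}), together with $x' = f\,\nn$, gives
\begin{equation}
	\phi' = -2\Delta f - 2|A|^2 f - f + \langle x,\nabla f \rangle = -2\, L f,
\end{equation}
where the last equality is just the definition of $L$. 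This identity holds on any hypersurface (the self-shrinker equation itself is not used yet).

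Now I will exploit that translations and parabolic dilations send self-shrinkers to (shifted/rescaled) self-shrinkers, to compute $\phi'$ in two specific directions. For a constant vector $v \in \RR^{n+1}$, take the family $\Sigma_s = \Sigma + s v$; the normal variation field is $f = \langle v,\nn \rangle$, and since $H$ and $\nn$ are unchanged while $x$ is replaced by $x + sv$, one reads off $\phi_s = 2H - \langle x + sv,\nn\rangle = -s\,\langle v,\nn\rangle$, hence $\phi'|_{s=0} = -\langle v,\nn\rangle$. Equating with $-2L\langle v,\nn\rangle$ yields $L\langle v,\nn\rangle = \tfrac{1}{2} \langle v,\nn\rangle$. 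For the dilation, take $\Sigma_s = \sqrt{1-s}\,\Sigma$; here the normal variation field is $f = -H$ (since $\partial_s x|_{s=0} = -x/2$ and $H = \langle x,\nn\rangle/2$), while $H$ scales as $H/\sqrt{1-s}$ and $\langle x,\nn\rangle$ scales as $\sqrt{1-s}\,\langle x,\nn\rangle$. Differentiating $\phi_s = 2H/\sqrt{1-s} - \sqrt{1-s}\,\langle x,\nn\rangle$ at $s=0$ gives $\phi'|_{s=0} = H + \tfrac{1}{2}\langle x,\nn\rangle = 2H$. Equating with $-2L(-H) = 2\,LH$ yields $LH = H$.

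The only real step is the variational identity $\phi' = -2\,Lf$; everything else is bookkeeping with the two symmetries. I would not expect any obstacle beyond keeping signs straight in the formulas for $H'$ and $\nn'$, which are standard. As a sanity check one can instead do the brute-force computation: differentiate $H = \tfrac{1}{2}\langle x,\nn\rangle$ and $\langle v,\nn\rangle$ twice using $\nabla_{e_i}\nn = -a_{ij}e_j$ and the Codazzi identity $\sum_i \nabla_i a_{ij} = -\nabla_j H$, and the same eigenvalues $1$ and $\tfrac{1}{2}$ fall out; this also verifies the computation is purely local and does not require compactness or any growth assumption on $\Sigma$.
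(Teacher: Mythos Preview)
Your argument is correct and takes a genuinely different route from the paper. The paper proves the lemma by direct local computation: it differentiates $H=\tfrac{1}{2}\langle x,\nn\rangle$ twice using $\nabla_{e_i}\nn=-a_{ij}e_j$ and the Codazzi identity to obtain $\Delta H$, and similarly computes $\Delta\langle v,\nn\rangle$, then assembles $LH$ and $L\langle v,\nn\rangle$ by hand. Your approach instead linearizes the self-shrinker constraint $\phi=2H-\langle x,\nn\rangle$ to get the clean identity $\phi'=-2Lf$, and then evaluates $\phi'$ along the one-parameter families generated by translations and parabolic dilations, reading off the eigenvalues from how $\phi$ transforms under these symmetries. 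This is more conceptual: it \emph{explains} why $H$ and $\langle v,\nn\rangle$ are eigenfunctions (they are the infinitesimal generators of the symmetries under which the class of self-shrinkers is preserved), whereas the paper's computation verifies the identities without illuminating their origin. The paper's direct approach, on the other hand, yields slightly more along the way (e.g.\ the explicit formula $2\nabla_{e_i}H=-a_{ij}\langle x,e_j\rangle$, which is reused later).

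One small point you should make explicit: the families $\Sigma+sv$ and $\sqrt{1-s}\,\Sigma$ are not \emph{normal} variations, so the identity $\phi'=-2Lf$ does not apply to them verbatim. What saves you is that the tangential part of the variation contributes $\nabla_{V^T}\phi_0$ to $\phi'$, and since $\phi_0\equiv 0$ on the self-shrinker $\Sigma$ this vanishes; hence $\phi'$ agrees with the value computed from the normal component $f$ alone. This is implicit in your write-up but deserves one sentence.
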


\begin{proof}
Letting $e_i$ be an orthonormal frame for $\Sigma$ and
differentiating $H = \frac{1}{2} \, \langle x , \nn \rangle$ as in theorem $4.1$ of \cite{H3} (which has a different normalization) gives
\begin{equation}	\label{e:jc2}
	2 \, \nabla_{e_i} \, H= \langle e_i , \nn \rangle  + \langle x , \nabla_{e_i} \nn \rangle = - a_{ij} \, \langle x , e_j \rangle \, ,
\end{equation}
where the first equality used that $\nabla_e x = e$ for any vector $e$ and the second equality used \eqr{e:aij}.
Working at a point $p$ and choosing the frame $e_i$ so that $\nabla_{e_i}^T e_j (p) = 0$, differentiating again gives at $p$ that
\begin{equation}	\label{e:jc3}
	2 \, \nabla_{e_k} \, \nabla_{e_i} \, H=
	 - a_{ij,k} \, \langle x , e_j \rangle
	  - a_{ij} \, \langle e_k , e_j \rangle
	 - a_{ij} \, \langle x , \nabla_{e_k} e_j \rangle   \, .
\end{equation}
Using that $\nabla_{e_i}^T e_j = 0$ at $p$, the last term in \eqr{e:jc3} can be rewritten as
\begin{equation}	\label{e:forlast}
	 - a_{ij} \, \langle x , \nabla_{e_k} e_j \rangle=
	  -a_{ij} \, \langle x,  a_{kj} \, \nn \rangle
	 =-2 \, Ha_{ij}a_{kj}
	 \, .
\end{equation}
Taking the trace of \eqr{e:jc3}, using Codazzi's equation (i.e., $a_{ij,i} = a_{ii,j}= -H_j$; see, e.g., (B.5) in \cite{Si}), and substituting \eqr{e:forlast}
gives at $p$ that
\begin{equation}	\label{e:jc4a}
	  2 \, \Delta \, H  = \langle x , \nabla H \rangle + H - 2 \, |A|^2 \, H
	    \, ,
\end{equation}
and, thus,
\begin{equation}	\label{e:jc4}
	 L \, H \equiv \Delta \, H + |A|^2 \, H + \frac{1}{2} \, H - \frac{1}{2}
	  \langle x , \nabla H \rangle   = H
	    \, .
\end{equation}
Since $p$ is arbitrary and \eqr{e:jc4} is independent of the frame, we conclude that $L \, H = H$.

Fix a constant vector   $v \in \RR^{n+1}$ and set $f= \langle v , \nn \rangle$.  We have
\begin{equation}	\label{e:miracle}
	\nabla_{e_i} f = \langle v , \nabla_{e_i} \nn \rangle = - a_{ij} \, \langle v , e_j \rangle \, .
\end{equation}
Working again at a fixed point $p$ and choosing the frame as above,
differentiating again and using the Codazzi equation gives at $p$ that
\begin{equation}
	\nabla_{e_k} \nabla_{e_i} f =  - a_{ik,j} \, \langle v , e_j \rangle
	- a_{ij} \, \langle v , a_{jk} \, \nn \rangle\, .
\end{equation}
Taking the trace gives
\begin{equation}	\label{e:notyet}
	\Delta f =   \langle v , \nabla H \rangle
	- |A|^2 \, f \, .
\end{equation}
(Note that we have not used the self-shrinker equation for $H$ in \eqr{e:notyet}, but we will use it next.)  From \eqr{e:jc2}, we know that
\begin{equation}
	\langle v , \nabla H \rangle = - \frac{1}{2} \, \langle x , e_j \rangle \,  a_{ij} \, \langle v , e_i  \rangle
	= \frac{1}{2} \langle x  , \nabla f \rangle \, ,
\end{equation}
where the last equality used \eqr{e:miracle}.  Substituting this back into \eqr{e:notyet}, we finally get that $L \, f =
\Delta \, f +   |A|^2 \, f + \frac{1}{2}   \, f - \frac{1}{2} \, \langle x , \nabla f \rangle =
\frac{1}{2} \, f $.
\end{proof}

\begin{proof}
(of Theorem \ref{t:spectral}).
The two claims in \eqr{e:spec1} are proven in Lemma \ref{l:H}.  Self-adjointness of $L$ in the weighted space follows immediately from
Lemma \ref{l:self}.
\end{proof}

\subsection{The eigenvalues of $L$}

Recall that we will say that $\mu \in \RR$ is an eigenvalue of
$L$ if there is a (not identically zero) function $u$ with
$L \, u = - \mu \, u $.
 The operator $L$
 is not self-adjoint with respect to the $L^2$-inner product because of the first order term,
 but it is self-adjoint with respect to the weighted inner product by Theorem \ref{t:spectral}.    In particular, standard spectral theory{\footnote{These facts are proven in a similar setting (Dirichlet eigenvalues for divergence operators on smooth, bounded open sets in $\RR^n$) in theorems $1$ and $2$ on pages $335$ and $336$, respectively, in Evans, \cite{Ev}.  The proof uses only linear elliptic theory and self-adjointness and carries over to this setting with standard modifications.}} gives the following corollary:

\begin{Cor}	\label{t:evans}
If $\Sigma \subset \RR^{n+1}$ is a smooth closed hypersurface, then
\begin{enumerate}
\item $L$ has real eigenvalues $\mu_1 < \mu_2 \leq \dots$ with $\mu_k \to \infty$.
\item There is an orthonormal basis $\{ u_k \}$ for the weighted $L^2$ space with $L \, u_k = - \mu_k \, u_k$.
\item The lowest eigenvalue $\mu_1$ is characterized by
\begin{equation}	\label{e:lam0}
	\mu_1 = \, \inf_{f} \,  \, \frac{    \int_{\Sigma} \left(  |\nabla f |^2 - |A|^2 \, f^2
	- \frac{1}{2} \, f^2 \right) \, \e^{- \frac{|x|^2}{4} } }{   \int_{\Sigma} f^2 \, \e^{- \frac{|x|^2}{4} } } \, ,
\end{equation}
where the infimum is taken over smooth functions $f$ (that are not identically zero).
\item Any eigenfunction for $\mu_1$ does not change sign and, consequently, if $u$ is any weak solution of $L u = - \mu_1 u$ then $u$ is a constant multiple of $u_1$.
\end{enumerate}
\end{Cor}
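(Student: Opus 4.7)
The plan is to reduce the corollary to the standard spectral theorem for compact self-adjoint operators on a Hilbert space, exactly as in the Euclidean Dirichlet theory that Evans carries out on bounded domains, making only those adjustments dictated by the fact that we now work on a closed manifold with a smooth positive weight. By Theorem \ref{t:spectral}, on the closed manifold $\Sigma$ the operator $L$ is symmetric in the weighted inner product $\langle u,v\rangle_w = \int_\Sigma uv\, \e^{-|x|^2/4}\,d\mu$, with associated symmetric bilinear form
\begin{equation}
B(u,v) = \int_\Sigma \bigl( \langle \nabla u,\nabla v\rangle - |A|^2 uv - \tfrac12 uv \bigr)\,\e^{-|x|^2/4}\,d\mu.
\end{equation}
Since $\Sigma$ is smooth and compact, both $|A|^2$ and $\e^{-|x|^2/4}$ are bounded above and below by positive constants, so the weighted $W^{1,2}$ space coincides with the usual $W^{1,2}(\Sigma)$ with equivalent norms, and one can pick $\lambda_0 > 0$ large enough that $B_{\lambda_0}(u,v) := B(u,v) + \lambda_0 \langle u,v\rangle_w$ is coercive on $W^{1,2}(\Sigma)$.

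Next I would apply Lax--Milgram to $B_{\lambda_0}$ to obtain a bounded inverse $G : L^2(\Sigma,\e^{-|x|^2/4}d\mu) \to W^{1,2}(\Sigma)$ of $-L + \lambda_0 I$. Composing with the compact Rellich embedding $W^{1,2}(\Sigma) \hookrightarrow L^2(\Sigma)$, which holds on any smooth compact manifold, makes $G$ a compact operator on the weighted $L^2$ space; symmetry of $B$ makes it self-adjoint. The spectral theorem then provides a weighted-$L^2$ orthonormal basis $\{u_k\}$ of eigenfunctions of $G$ whose nonzero eigenvalues accumulate only at $0$; unwinding the shift gives real $\mu_k$, ordered $\mu_1 < \mu_2 \leq \dots$, with $\mu_k \to \infty$ and $L u_k = -\mu_k u_k$. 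Standard elliptic regularity for this linear operator with smooth coefficients on smooth compact $\Sigma$ promotes each weak eigenfunction to a smooth one. This gives (1) and (2).

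For (3), the Rayleigh quotient formula is the usual variational characterization of the smallest eigenvalue: the infimum of $B(u,u)/\|u\|_w^2$ over $W^{1,2} \setminus \{0\}$ equals $\mu_1$. A minimizer exists because any minimizing sequence is bounded in $W^{1,2}$ by coercivity of $B_{\lambda_0}$ and has a subsequence converging strongly in $L^2$ by Rellich; the limit attains the infimum, and the Euler--Lagrange equation identifies it as a weak, hence smooth, eigenfunction for $\mu_1$.

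For (4), I would use the standard first-eigenfunction argument. If $u_1$ attains the infimum in (3), so does $|u_1|$, because $|\nabla|u_1|| = |\nabla u_1|$ a.e.\ and the remaining terms depend only on $u_1^2$. Thus $|u_1|$ is a nonnegative weak eigenfunction for $\mu_1$; elliptic regularity makes it smooth, and the strong maximum principle applied to the linear equation $(L+\mu_1)|u_1|=0$ (whose zero-order coefficient is bounded on compact $\Sigma$) forces $|u_1| > 0$ everywhere, so $u_1$ itself has a single sign. For uniqueness, decompose any other weak solution $u$ of $Lu = -\mu_1 u$ as $u = a u_1 + v$ with $v \perp_w u_1$; then $v$ is again a first eigenfunction, so if $v \not\equiv 0$ the same argument makes $v$ strictly of one sign, contradicting orthogonality to the strictly positive $u_1$. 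Hence $v \equiv 0$. The only step that is not purely mechanical is verifying that the ``standard modifications'' from the Euclidean Dirichlet setting to our weighted compact-manifold setting really do go through; this reduces to the equivalence of weighted and unweighted Sobolev spaces on compact $\Sigma$ and to Rellich and the strong maximum principle on compact manifolds, all of which are standard.
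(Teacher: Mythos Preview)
Your proposal is correct and is precisely the ``standard modification'' of the Evans argument that the paper invokes; the paper itself gives no proof beyond a footnote citing theorems 1 and 2 in \cite{Ev} and remarking that self-adjointness plus linear elliptic theory carry over to this weighted closed-manifold setting. Your write-up simply makes explicit the coercivity shift, Lax--Milgram, Rellich compactness, and the strong-maximum-principle argument for simplicity of $\mu_1$ that the paper takes for granted.
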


\begin{Rem}
There is a corresponding result when $\Sigma$ is not closed.  In this case, we choose a
smooth  open subset   $\Omega \subset \Sigma$  with compact closure and restrict to functions with compact support in $\Omega$.  In this case, (1) and (2) are unchanged, the infimum in (3) is over smooth functions with compact support in $\Omega$, and (4) is for Dirichlet eigenfunctions (i.e., ones that vanish on $\partial \Omega$).
\end{Rem}

\section{Spheres are the only closed $F$-stable self-shrinkers}	\label{s:sect6}

We now have all of the tools that we need to prove that spheres are the only smooth, embedded closed self-shrinkers that are  $F$-stable (this is Theorem \ref{t:linearstable}).

\begin{proof}
(of Theorem \ref{t:linearstable}.)
Since $\Sigma$ is closed, the mean curvature $H$ cannot vanish identically.
By Huisken's classification, \cite{H3},
it suffices to show that $H$ does not change sign (by the Harnack inequality, it is then positive everywhere).
We will argue by contradiction, so suppose  that
 $H$ changes sign.

 In order to show that $\Sigma$ is unstable, we must find a function $f$ on $\Sigma$ so that $F'' < 0$ no matter what values of $h$ and $y$ that we choose.  Given a variation $f \nn$ of $\Sigma$, a variation $y$ of $x_0$, and a variation $h$ of $t_0$,
  Theorem \ref{t:secvar} gives
  \begin{equation}	\label{e:secvar2a}
	F''=    (4\pi)^{-n/2} \, \int_{\Sigma}
	 \left[ - f \,    Lf
		 + 2f\,h\,H   -   h^2 \, H^2
		+ f \, \langle y , \nn \rangle  - \frac{\langle y , \nn \rangle^2}{2}
		\right]
		 \, \e^{\frac{-|x|^2}{4}}   \, .
\end{equation}

 We know from Theorem \ref{t:spectral} that $H$ is an eigenfunction for $L$ with eigenvalue $-1$.  However, since $H$ changes sign,  (4) in  Theorem \ref{t:evans} implies that $-1$ is not the smallest eigenvalue for $L$.  Thus, we get a positive function $f $ with
$Lf = \mu \, f$ where  $\mu > 1$.
Since $L$ is self-adjoint in the weighted $L^2$ space by \eqr{e:selfad}, we see that $f$
must be orthogonal to the eigenspaces with different eigenvalues (this is (2) in Theorem \ref{t:evans}).
Therefore, by Theorem \ref{t:spectral}, $f$ is orthogonal to $H$ and the translations, i.e., for any
 $y \in \RR^{n+1}$ we have
\begin{equation}
	0 = \int_{\Sigma} f \, H \, \e^{\frac{-|x|^2}{4}} =  \int_{\Sigma} f \, \langle y , \nn \rangle \, \e^{\frac{-|x|^2}{4}} \, .
\end{equation}
Substituting this into \eqr{e:secvar2a} gives
  \begin{equation}	\label{e:secvar2aa}
	 (4\pi)^{n/2} \, F''=    \int_{\Sigma}
	 \left[ - f \,   \mu \, f
		     -   h^2 \, H^2
		  - \frac{\langle y , \nn \rangle^2}{2}
		\right]
		 \, \e^{\frac{-|x|^2}{4}}  \leq -  \mu \, \int_{\Sigma} f^2 \, \e^{\frac{-|x|^2}{4}}
		  \, .
\end{equation}
Since $\mu > 1$ is positive, it follows that $F'' < 0$ for every choice of $h$ and $y$ as desired.
\end{proof}

\section{Entropy}

 The entropy $\lambda (\Sigma)$  of a hypersurface $\Sigma$ is defined to be
 \begin{equation}
 	\lambda (\Sigma) = \sup_{x_0 \in \RR^{n+1} , t_0 > 0} \, \, F_{x_0 , t_0} (\Sigma) \, .
 \end{equation}
Even though the supremum is over non-compact  space-time,
  we will see that this supremum is achieved on two important classes of hypersurfaces: compact hypersurfaces with $\lambda>1$ and self-shrinkers with polynomial volume growth.

\subsection{The entropy is achieved for compact hypersurfaces with $\lambda > 1$}

We will need a few simple properties of the $F$ functionals that we summarize in the following lemma:

\begin{Lem}	\label{l:basicF}
If $\Sigma \subset \RR^{n+1}$ is a smooth complete embedded hypersurface without boundary and with polynomial volume growth, then
\begin{enumerate}
\item $F_{x_0,t_0} (\Sigma) $ is a smooth function of $x_0$ and $t_0$ on $\RR^{n+1} \times (0,\infty)$.

\item  Given any $t_0 > 0$ and any  $x_0$, we have
$
	\partial_{t_0} F_{x_0,t_0} (\Sigma)
\geq
 -    \frac{ \lambda (\Sigma) }{4} \,   \sup_{\Sigma} H^2$.
 \item For each $x_0$,   $\lim_{t_0 \to 0} F_{x_0,t_0} (\Sigma)$ is  $1$ if $x_0 \in \Sigma$ and is $0$ otherwise.
\item If $\Sigma$ is closed, then $\lambda (\Sigma) < \infty$.
\end{enumerate}
 \end{Lem}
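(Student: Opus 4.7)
The plan is to handle the four parts in order, with the technical heart at (2) and the other three parts following from Gaussian concentration combined with the polynomial volume growth hypothesis.

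For (1), I would differentiate under the integral sign: every partial derivative of the integrand $(4\pi t_0)^{-n/2}\e^{-|x-x_0|^2/(4t_0)}$ with respect to $(x_0,t_0)$ is a polynomial in $x-x_0$ and $1/t_0$ multiplied by that same Gaussian, and on any compact set in $(x_0,t_0)$-space with $t_0>0$ this is dominated by a fixed Gaussian with at most polynomial prefactor. Polynomial volume growth of $\Sigma$ makes such a majorant integrable, and dominated convergence gives smoothness of $F_{x_0,t_0}(\Sigma)$ in $(x_0,t_0)$ to all orders.

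The main work goes into (2). Writing $\Phi$ for $(4\pi t_0)^{-n/2}\e^{-|x-x_0|^2/(4t_0)}$, direct differentiation gives
\[
\partial_{t_0}F_{x_0,t_0}(\Sigma)=\int_\Sigma\left[\frac{|x-x_0|^2}{4t_0^2}-\frac{n}{2t_0}\right]\Phi\,d\mu\, .
\]
I would then decompose $|x-x_0|^2=|(x-x_0)^T|^2+\langle x-x_0,\nn\rangle^2$ and integrate the tangential piece by parts on $\Sigma$ against $\Phi$, using $\sum_i\langle\nabla_{e_i}(x-x_0)^T,e_i\rangle=n-H\langle x-x_0,\nn\rangle$ together with the fact that the tangential gradient of $\Phi$ is $-\tfrac{(x-x_0)^T}{2t_0}\Phi$. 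The divergence theorem is justified by a cutoff argument on intrinsic balls: the Gaussian weight decays super-polynomially while the integrands grow at most polynomially, so boundary contributions vanish in the limit. The identity that emerges is
\[
\partial_{t_0}F_{x_0,t_0}(\Sigma)=\int_\Sigma\left(\frac{\langle x-x_0,\nn\rangle}{2t_0}-\frac{H}{2}\right)^2\Phi\,d\mu-\int_\Sigma\frac{H^2}{4}\Phi\,d\mu\, ,
\]
and dropping the nonnegative first term, then replacing $H^2$ by its supremum, gives $\partial_{t_0}F_{x_0,t_0}(\Sigma)\geq-\tfrac{\sup H^2}{4}\int_\Sigma\Phi\,d\mu\geq-\tfrac{\lambda(\Sigma)}{4}\sup_\Sigma H^2$, where the last step uses $F_{x_0,t_0}(\Sigma)\leq\lambda(\Sigma)$. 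The main obstacle is arriving at this perfect square: naive Cauchy--Schwarz on the cross term $\int H\langle x-x_0,\nn\rangle\Phi/(2t_0)$ would introduce a factor of $\int(\langle x-x_0,\nn\rangle/(2t_0))^2\Phi$ that cannot be absorbed into $\sup H^2$, so completing the square in $\langle x-x_0,\nn\rangle/(2t_0)$ against $H/2$ is the correct algebraic move.

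For (3), when $x_0\in\Sigma$ I would split $\Sigma$ into $\Sigma\cap B_r(x_0)$ and its complement for a small fixed $r$: on the outside, $|x-x_0|\geq r$ makes the integrand exponentially small in $1/t_0$ while polynomial volume growth controls the tail; on the inside, rescaling by $\sqrt{t_0}$ flattens $\Sigma$ to its tangent plane $T_{x_0}\Sigma$, on which the Gaussian integrates to $1$. If $x_0\notin\Sigma$ with $d=\dist(x_0,\Sigma)>0$, factoring out $\e^{-d^2/(8t_0)}$ gives a prefactor vanishing faster than any polynomial while leaving a Gaussian integral bounded by polynomial volume growth. For (4), by (1) the function $F_{x_0,t_0}(\Sigma)$ is continuous; since $\Sigma$ is compact, $F_{x_0,t_0}(\Sigma)\leq(4\pi t_0)^{-n/2}\Vol(\Sigma)\to 0$ as $t_0\to\infty$, and $F_{x_0,t_0}(\Sigma)\to 0$ as $|x_0|\to\infty$ with $t_0$ in a compact set (since $|x-x_0|$ is uniformly large on compact $\Sigma$), while $F_{x_0,t_0}(\Sigma)$ tends to $1$ or $0$ as $t_0\to 0^+$ by (3); so the supremum is attained on a compact subset of $\RR^{n+1}\times(0,\infty)$ and is therefore finite.
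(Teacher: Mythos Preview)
Your treatment of (1)--(3) is correct and matches the paper's proof closely. For (2) in particular, your perfect-square identity is the same computation the paper carries out: the paper writes $\e^{|x|^2/(4t_0)}\Delta\e^{-|x|^2/(4t_0)}$, integrates by parts (Stokes), and then applies $2ab\le a^2+b^2$ to the cross term $\tfrac{H\langle x,\nn\rangle}{2t_0}-\tfrac{|x^\perp|^2}{4t_0^2}$; that algebraic step is identical to your completing the square in $\tfrac{\langle x-x_0,\nn\rangle}{2t_0}$ against $\tfrac{H}{2}$.

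Your argument for (4) has a gap. You want to conclude that the supremum of $F_{x_0,t_0}(\Sigma)$ is finite by showing it is attained on a compact set, but your handling of the regime $t_0\to 0^+$ invokes only the \emph{pointwise} limit from (3). That does not exclude a sequence $(x_0^k,t_0^k)$ with $t_0^k\to 0$ and $x_0^k$ moving along $\Sigma$ on which $F$ blows up; you would need the convergence in (3) to be uniform in $x_0$, which you have not established. The paper avoids this by observing that a smooth closed $\Sigma$ satisfies the uniform density bound $\Vol(B_R(x_0)\cap\Sigma)\le V R^n$ for \emph{all} $R>0$ and all $x_0$ (smoothness for small $R$, compactness for large $R$), after which a dyadic annulus decomposition bounds $F_{x_0,t_0}(\Sigma)$ by a constant depending only on $V$ and $n$, uniformly in $(x_0,t_0)$. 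Alternatively, you can salvage your own approach: the intermediate inequality $\partial_{t_0}F\ge -\tfrac14(\sup_\Sigma H^2)\,F$ that you derived in (2) \emph{before} invoking $\lambda(\Sigma)$ integrates to $F_{x_0,t_0}(\Sigma)\le \e^{(1-t_0)\sup H^2/4}\,F_{x_0,1}(\Sigma)\le \e^{\sup H^2/4}(4\pi)^{-n/2}\Vol(\Sigma)$ for all $t_0\le 1$, which closes the gap.
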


 \begin{proof}
 Property (1) follows immediately from differentiating under the integral sign and the polynomial volume growth.

  Since the hypotheses and conclusion in (2)   are invariant under translation, it suffices to do the case where $x_0 = 0$.
The first variation formula
(Lemma \ref{l:varl0}) gives
\begin{equation}
	\partial_{t_0} F_{0,t_0} (\Sigma) = (4\pi t_0)^{ - \frac{n}{2} } \,
	\int_{\Sigma}  \frac{|x|^2 - 2nt_0}{4 t_0^2} \, \e^{-\frac{|x|^2}{4t_0}} \, .
\end{equation}
Since $\Delta |x|^2 = 2n - 2 \, H \, \langle x , \nn \rangle$ and $\Delta \, \e^f = \e^f \, \left( \Delta f + |\nabla f|^2 \right)$, we have
\begin{align}
	 \e^{\frac{|x|^2}{4t_0}} \, \, \Delta \, \e^{-\frac{|x|^2}{4t_0}} &=
	 \frac{|x^T|^2}{4 \, t_0^2} - \frac{2n}{4\, t_0} + \frac{H \, \langle x , \nn \rangle}{2 \, t_0}
	 =   \frac{|x|^2- 2n\, t_0}{4 \, t_0^2} -  \frac{|x^{\perp}|^2}{4\,  t_0^2}  + \frac{H \, \langle x , \nn \rangle}{2t_0}  \notag \\
	 &\leq \frac{|x|^2- 2n\, t_0}{4\,  t_0^2}  + \frac{H^2}{4} \, ,
	\end{align}
where the inequality used $2ab \leq a^2 + b^2$.  Since $\Sigma$ has polynomial volume growth and
the vector field $\nabla \e^{-\frac{|x|^2}{4t_0}}$ decays exponentially,    Stokes' theorem gives
\begin{equation}	\label{e:from77}
	\partial_{t_0} F_{0,t_0} (\Sigma) \geq -  (4\pi t_0)^{ - \frac{n}{2} } \,
	\int_{\Sigma}  \frac{H^2}{4} \, \e^{-\frac{|x|^2}{4t_0}}  \geq - \frac{1}{4} \, F_{x_0 , t_0} (\Sigma)
		 \, \sup_{\Sigma} H^2 \geq  - \frac{\lambda (\Sigma) }{4} \, \sup_{\Sigma} H^2
	\, ,
\end{equation}
where the last inequality used that, by definition, $\lambda (\Sigma) \geq F_{x_0 , t_0} (\Sigma)$.

Property (3) is a standard consequence of the fact that smooth hypersurfaces are approximated by a hyperplane on small scales (the function $(4\pi t_0)^{ - \frac{n}{2} } \,
	  \e^{-\frac{|x-x_0|^2}{4t_0}} $ is a heat kernel on a hyperplane through $x_0$ and has integral one on the hyperplane independent of $t_0$).
	
	  To see property (4), observe that since $\Sigma$ is smooth and closed there is a constant $V>0$ so that for every $x_0$ and for every radius $R>0$
 \begin{equation}	\label{e:Vvol}
 	\Vol \, (B_R(x_0) \cap \Sigma) \leq V \, R^n \, .
\end{equation}
Property (4)  follows easily from this.

 \end{proof}

  \begin{Lem}	\label{l:cpt}
If $\Sigma \subset \RR^{n+1}$ is a smooth closed embedded hypersurface and $\lambda (\Sigma) > 1$, then there exist $ x_0 \in \RR^{n+1}$ and $t_0 > 0$ so that $\lambda = F_{x_0 , t_0} (\Sigma)$.
\end{Lem}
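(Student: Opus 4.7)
The plan is to show that the supremum defining $\lambda(\Sigma)$ is attained on a compact subset of $\RR^{n+1}\times(0,\infty)$, after which the continuity of $F_{x_0,t_0}(\Sigma)$ in $(x_0,t_0)$ from Lemma \ref{l:basicF}(1) produces a maximizer. Concretely, I would establish three decay statements that, combined with the hypothesis $\lambda>1$, confine the supremum to a compact region.

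First, since $\Sigma$ is closed and hence has finite area, uniformly in $x_0$ one has $F_{x_0,t_0}(\Sigma)\leq (4\pi t_0)^{-n/2}\,\Area(\Sigma)\to 0$ as $t_0\to\infty$. Second, because $\Sigma$ is compact, say $\Sigma\subset B_R(0)$, for $|x_0|\geq 2R$ we have $|x-x_0|\geq |x_0|/2$ on $\Sigma$, so for any interval $[\tau,T]\subset(0,\infty)$,
\begin{equation*}
F_{x_0,t_0}(\Sigma)\leq (4\pi\tau)^{-n/2}\,\e^{-|x_0|^2/(16T)}\,\Area(\Sigma)\longrightarrow 0
\end{equation*}
as $|x_0|\to\infty$ uniformly for $t_0\in[\tau,T]$. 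Third, and this is the delicate point, I would show that
\begin{equation*}
\limsup_{t_0\to 0^+}\,\sup_{x_0\in\RR^{n+1}}\,F_{x_0,t_0}(\Sigma)\leq 1.
\end{equation*}
Since $\lambda>1$, this creates the strict gap that rules out sequences with $t_0\to 0$.

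The main obstacle is the third bound; Lemma \ref{l:basicF}(3) is only pointwise and does not suffice, since for small $t_0$ the value $F_{x_0,t_0}(\Sigma)$ can stay close to $1$ when $x_0$ is near, but not on, $\Sigma$. To get uniformity, I would use that a smooth compact $\Sigma$ admits a uniform radius $r_0>0$ such that $\Sigma\cap B_{r_0}(p)$ is, for each $p\in\Sigma$, a graph over the tangent plane $T_p\Sigma$ with uniformly bounded $C^2$ norm. For $x_0$ with $\dist(x_0,\Sigma)\geq r_0/2$ the Gaussian forces $F\to 0$ as $t_0\to 0$; for $x_0$ with nearest point $p$ and $d=|x_0-p|\leq r_0/2$, decomposing $\Sigma$ into $\Sigma\cap B_{r_0}(p)$ plus a tail, the tail is exponentially small and the local piece is compared to the flat Gaussian integral over $T_p\Sigma$, which equals $\e^{-d^2/(4t_0)}\leq 1$. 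The uniform $C^2$ bound on the graphing function then upgrades this to $F_{x_0,t_0}(\Sigma)\leq 1+C\sqrt{t_0}$ for some $C$ depending only on $\Sigma$.

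Combining the three estimates, there exist $0<\tau<T$ and $\rho>0$ such that $F_{x_0,t_0}(\Sigma)<(\lambda+1)/2<\lambda$ whenever $(x_0,t_0)\notin \overline{B_\rho(0)}\times[\tau,T]$. The function $F_{x_0,t_0}(\Sigma)$ is continuous on the compact set $\overline{B_\rho(0)}\times[\tau,T]$ and therefore attains its maximum there; by the choice of the compact set, that maximum equals $\lambda(\Sigma)$, which proves the lemma.
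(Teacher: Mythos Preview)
Your proposal is correct and the overall architecture matches the paper's: rule out large $t_0$, large $|x_0|$, and small $t_0$, then maximize over a compact set. The difference is entirely in the small-$t_0$ step.

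The paper handles $t_0\to 0$ by combining the pointwise limit in Lemma~\ref{l:basicF}(3) with the derivative lower bound in Lemma~\ref{l:basicF}(2), namely $\partial_{t_0}F_{x_0,t_0}(\Sigma)\geq -\tfrac{\lambda}{4}\sup_{\Sigma}H^2$. One first uses (3) and continuity in $x_0$ to cover $\Sigma$ by finitely many balls on each of which $F_{x,t_i}(\Sigma)<1+\epsilon$ at some small time $t_i$, and then uses (2) to propagate this bound down to all $t_0\leq t_i$; points $x_0$ outside a fixed tubular neighborhood are handled by the crude Gaussian tail bound. Your argument instead produces the uniform estimate $\sup_{x_0}F_{x_0,t_0}(\Sigma)\leq 1+C\sqrt{t_0}$ directly from the local $C^2$ graph structure over tangent planes, bypassing both (2) and the covering. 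This is a legitimate and arguably cleaner route: it is the standard quantitative version of ``smooth points have Gaussian density one,'' it never touches the integration-by-parts identity behind (2), and it immediately gives a rate. (In fact your graph computation gives $1+Ct_0$ rather than $1+C\sqrt{t_0}$, since $\sqrt{1+|\nabla u|^2}\leq 1+C|y|^2$ and the Gaussian second moment is $2nt_0$.) The paper's route, on the other hand, has the virtue of using only the already-established Lemma~\ref{l:basicF} and a bound on $H$ rather than the full second fundamental form.
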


\begin{proof}
 For each fixed $t_0 > 0$, it is easy to see that $\lim_{|x_0| \to \infty} \, F_{x_0,t_0} (\Sigma) = 0$ by the exponential decay of the weight function together with the compactness of $\Sigma$.    In particular, for each fixed $t_0 > 0$, the maximum of   $F_{x_0,t_0} (\Sigma)$ is achieved at some $x_0$.  Moreover, the first variation formula (Lemma \ref{l:varl0}) shows that this maximum occurs when the weighted integral of $(x-x_0)$ vanishes, but this can only occur when $x_0$ lies in the convex hull of $\Sigma$.   It remains to take the supremum of these maxima as we vary $t_0$, i.e., we must show that the $F$ functionals are strictly less than $\lambda (\Sigma) $ when $t_0$ goes to either zero or infinity.

It is easy to see that  $F_{x_0,t_0} (\Sigma)$ goes to zero (uniformly in $x_0$) as $t_0 \to \infty$ since
\begin{equation}
	F_{x_0 , t_0} (\Sigma) \leq (4\pi \, t_0)^{ - \frac{n}{2} } \, \Vol (\Sigma) \, .
\end{equation}
Fix an $\epsilon > 0$ with $1 + 3 \, \epsilon < \lambda (\Sigma)$.
 Given any $x$, property (3) in Lemma \ref{l:basicF} gives a $t_x > 0$ so that
 $F_{x,t_0} (\Sigma) < 1 + \epsilon$ for all $t_0 \leq t_x$.
 Using the continuity (in $x$) of $F_{x,t_x} (\Sigma)$ and the compactness of $\Sigma$, we can choose a finite collection of points $x_i$, positive numbers $t_i < \frac{\epsilon}{C_2}$ (with $C_2$ from (2) in Lemma \ref{l:basicF}), and radii $r_i > 0$ so that
 \begin{itemize}
 \item $ \Sigma \subset \cup_i B_{r_i} (x_i)$.
 \item For every $x \in B_{2r_i}(x_i)$, we have  $F_{x,t_i} (\Sigma) < 1 + \epsilon$.
 \end{itemize}
 If we let $\bar{r}$ be the minimum of the $r_i$'s and let $\bar{t}$ be the minimum of the $t_i$'s, then property (2) in Lemma \ref{l:basicF} gives   for any $x_0$ in the $\bar{r}$ tubular neighborhood of $\Sigma$ and any $t_0 \leq \bar{t}$ that
 \begin{equation}
 	F_{x_0 , t_0} (\Sigma) \leq  F_{x_0 , t_{i(x_0)}} (\Sigma) + C_2 \, (t_{i(x_0)} - t_0) \leq
	1+ 2 \epsilon  \, ,
 \end{equation}
 where $i(x_0)$ satisfies $x_0 \subset B_{2r_{i(x_0)}} (x_{i(x_0)})$.  It follows that $F_{x_0 , t_0} (\Sigma)$ is strictly less than $\lambda$ for $t_0$ sufficiently small and the lemma follows.
\end{proof}

\subsection{The entropy is achieved for  self-shrinkers with polynomial volume growth}

The next lemma shows that, for a self-shrinker $\Sigma$ with polynomial volume growth, the function $(x_0,t_0) \to F_{x_0,t_0} (\Sigma)$ has  a strict (global) maximum at $x_0 =0 , t_0 = 1$, unless  $\Sigma$ splits off a line isometrically.  Most of the proof is concerned with handling the case where $\Sigma$ is non-compact; when $\Sigma$ is closed, the lemma is an easy consequence of Huisken's monotonicity for the associated self-similar flow (cf. section $8$ of \cite{W4}).

\begin{Lem}	\label{l:strict}
Suppose that $\Sigma$ is a smooth complete embedded self-shrinker with $\partial \Sigma = \emptyset$,   polynomial volume growth, and $\Sigma$  does not split off a line  isometrically.  Given $\epsilon > 0$, there exists $\delta > 0$ so
\begin{equation}
	\sup \, \{ F_{x_0 , t_0}(\Sigma) \, | \, |x_0| + |\log t_0| > \epsilon \} \, < \, \lambda - \delta \, .
\end{equation}
\end{Lem}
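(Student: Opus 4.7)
The plan is to establish the strict inequality $F_{x_0, t_0}(\Sigma) < \lambda$ pointwise for every $(x_0, t_0) \neq (0, 1)$ and then upgrade this to the uniform gap on the set $\{|x_0| + |\log t_0| \geq \epsilon\}$.

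For the pointwise bound, I apply Huisken's monotonicity \eqr{e:huisken2} to the self-similar MCF $M_s = \sqrt{1-s}\,\Sigma$ (defined for $s < 1$ and extinct at $(0,1)$). For any base point $(x_0, t_0)$ with $t_0 > 0$, a change of variable $y = x/\sqrt{1-s}$ identifies
\[
\int_{M_s} \Phi_{(x_0, t_0)}(x, s)\, d\mu(x) = F_{x_0/\sqrt{1-s},\, (t_0-s)/(1-s)}(\Sigma),
\]
which is non-increasing in $s$ on $(-\infty, \min(0, t_0))$; at $s = 0$ it equals $F_{x_0, t_0}(\Sigma)$, and as $s \to -\infty$ it tends to $F_{0, 1}(\Sigma) = \lambda$. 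Hence $F_{x_0, t_0}(\Sigma) \leq \lambda$, with equality forcing $M_s$ to be self-similar also about $(x_0, t_0)$. Equating the two shrinker equations $H = \langle x, \nn\rangle/2$ and $H = \langle x - x_0, \nn\rangle/(2 t_0)$ on $\Sigma$ gives $(t_0 - 1)\langle x, \nn\rangle = -\langle x_0, \nn\rangle$. When $t_0 = 1$ (so $x_0 \neq 0$), this reads $\langle x_0, \nn\rangle \equiv 0$, so the constant field $x_0$ is everywhere tangent and $\Sigma$ splits off a line along $x_0$. When $t_0 \neq 1$, the field $X(x) = x + x_0/(t_0-1)$ is tangent to $\Sigma$, so $\Sigma$ is a cone with vertex $-x_0/(t_0-1)$; combining the cone structure with the self-shrinker equation (the cone's $H$ scales as $1/t$ along rays while $\langle x, \nn\rangle$ is constant on each ray) forces $H \equiv 0$, and Corollary \ref{c:mincone} then shows $\Sigma$ is a hyperplane, which splits off a line. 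Either case contradicts the hypothesis.

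For the uniform gap I argue by contradiction: assume $(x_k, t_k)$ satisfies $|x_k| + |\log t_k| \geq \epsilon$ and $F_{x_k, t_k}(\Sigma) \to \lambda$. It suffices to show $(x_k, t_k)$ has an accumulation point in $\RR^{n+1} \times (0, \infty)$, since continuity (Lemma \ref{l:basicF}(1)) together with the strict pointwise bound then contradict $F_{x^*, t^*}(\Sigma) = \lambda$ at a limit $(x^*, t^*) \neq (0, 1)$. Three escape modes must be ruled out. If $|x_k| \to \infty$ with $t_k$ bounded above, then splitting $\Sigma = (\Sigma \cap B_{|x_k|/2}(0)) \cup (\Sigma \setminus B_{|x_k|/2}(0))$ and using $e^{-|x-x_k|^2/(4t_k)} \leq e^{-|x_k|^2/(16 t_k)}$ on the inner piece (with polynomial volume growth) plus a Gaussian tail bound on the outer piece gives $F_{x_k, t_k}(\Sigma) \to 0$. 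If $t_k \to \infty$, I apply the monotonicity inequality of Step 1 with $s_k < 0$ chosen so that $(t_k - s_k)/(1 - s_k) = T$ for a fixed $T > 1$, yielding $F_{x_k, t_k}(\Sigma) \leq F_{y_k, T}(\Sigma)$ where $y_k = x_k\sqrt{(T-1)/(t_k-1)}$; either $y_k$ stays bounded, in which case continuity and the strict pointwise inequality at a limit $(y^*, T) \neq (0, 1)$ supply the gap, or $|y_k| \to \infty$ and the first case applies. A symmetric choice $T < 1$ handles $t_k \to 0$.

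The main obstacle is handling the non-compact time regime $t_k \to 0$ or $t_k \to \infty$: direct Gaussian arguments fail to give uniform decay because the weight $(4\pi t_k)^{-n/2}$ can blow up or vanish while $\Sigma$ is itself non-compact. The Huisken monotonicity trick of Step 1 resolves this by pinning the time coordinate at a fixed value $T \neq 1$; at this fixed $T$ the pointwise strict inequality and continuity together furnish the required uniform gap.
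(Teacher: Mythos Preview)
Your Step~1 is correct and, in fact, recovers the paper's central monotonicity more cleanly than the paper does. After reparametrizing by $\sigma = 1/\sqrt{1-s}$, your Huisken-monotonicity curve $s \mapsto \bigl(x_0/\sqrt{1-s},\,(t_0-s)/(1-s)\bigr)$ becomes exactly the paper's path $\sigma \mapsto (\sigma x_0,\,1+(t_0-1)\sigma^2)$, so the two monotonicity statements coincide; you get yours from Huisken's formula directly, while the paper computes $g'(s)$ by hand via the weighted integration-by-parts of Corollary~\ref{c:self}. Your equality analysis (yielding either a constant tangent vector or a cone, hence a splitting or a plane) then gives the strict pointwise inequality $F_{x_0,t_0}(\Sigma)<F_{0,1}(\Sigma)=\lambda$ for $(x_0,t_0)\neq(0,1)$, which is stronger than the paper's second-variation strict local maximum. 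One caveat: applying Huisken's identity \eqr{e:huisken2} to the non-compact flow $\sqrt{1-s}\,\Sigma$ requires exactly the justification (polynomial volume growth killing the boundary terms) that the paper's direct computation supplies.

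Step~2, however, has a genuine gap. The assertion that $F_{x_k,t_k}(\Sigma)\to 0$ when $|x_k|\to\infty$ with $t_k$ bounded above is false for non-compact $\Sigma$, which is precisely the hard case of the lemma. The ``outer piece'' $\Sigma\setminus B_{|x_k|/2}(0)$ contains a full neighborhood of $x_k$ (since $|x_k|>|x_k|/2$), so there is no Gaussian tail there: if $x_k$ lies on or near an end of $\Sigma$, the Gaussian centered at $x_k$ sees a nearly-flat piece of surface and contributes something of order~$1$. Since your other two escape modes both reduce to this one, the entire compactness argument collapses.

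The fix is immediate from your own Step~1. Each path $\sigma\mapsto(\sigma x_0,\,1+(t_0-1)\sigma^2)$, $\sigma\in(0,1]$, starts at $(0,1)$ and by the intermediate value theorem crosses the \emph{compact} shell $K_{\epsilon'}=\{|x|+|\log t|=\epsilon'\}$ for any $0<\epsilon'<\epsilon$. Your strict pointwise inequality plus continuity of $F$ give $\sup_{K_{\epsilon'}}F\le\lambda-\delta$ for some $\delta>0$, and your monotonicity then gives $F_{x_0,t_0}(\Sigma)\le\lambda-\delta$ for every $(x_0,t_0)$ with $|x_0|+|\log t_0|\ge\epsilon$. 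This is exactly how the paper combines its strict local maximum with its Claim; you simply need to use your ingredients the same way rather than through a compactness argument.
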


\begin{proof}
We will prove first that the function $(x_0,t_0) \to F_{x_0 , t_0}(\Sigma)$ has a strict local maximum at $(0,1)$.  We do this by  showing that $(0,1)$ is a critical point (in fact, the only critical point) and then showing that the second derivative at $(0,1)$ is strictly negative.
The bulk of the proof is then devoted to showing that the function is decreasing along a family of paths that start at $(0,1)$ and run off to infinity (or time goes to zero) and whose union is all of space-time.

Since $\Sigma$ is fixed and has polynomial volume growth, we may think of
  $F_{x_0 , t_0}(\Sigma)$ as a smooth function of $x_0$ and $t_0$.  Since $\Sigma$ is a self-shrinker, it follows from Proposition
  \ref{p:critall} that the $\RR^{n+1} \times \RR$-gradient of this function vanishes at $x_0 =0$ and $t_0 =1$.

  The second variation formula (Theorem \ref{t:secvar}) with $\Sigma_s \equiv \Sigma$ gives that the second derivative of $F_{x_s , t_s}(\Sigma)$  at $s=0$ along the path $x_s = s\, y, \, t_s = 1 + s \, h$ is
 \begin{equation}	\label{e:secvar27}
	  - (4\pi)^{-n/2} \, \int_{\Sigma}
	 \left(  h^2 \, H^2
		+   \frac{\left| y^{\perp} \right|^2}{2}
		\right)
		 \, \e^{\frac{-|x|^2}{4}}  \, d\mu  \,  .
\end{equation}
This expression is clearly non-positive.  Moreover, the second term vanishes only when $y$ is everywhere tangent to $\Sigma$; this can happen only when $y=0$ since $\Sigma$ does not split off a line.  Likewise, the first term can vanish only when $h=0$ or when $\Sigma$ is a minimal cone; the latter does not occur here since hyperplanes are the only smooth minimal cones.
  We conclude that $F_{x_0 , t_0}(\Sigma)$ has a strict local maximum at $x_0 = 0$ and $t_0 = 1$.
Thus, the lemma will follow from the following claim:

{\bf{Claim}}: If we
 fix $y \in \RR^{n+1}$  and $a \in \RR$  and then define   $g(s)$   by
\begin{equation}
	g(s) = F_{s y , 1 + a \, s^2} (\Sigma) \, ,
\end{equation}
then $g'(s)  \leq  0$ for all $s > 0$ with $1+as^2 > 0$ (the second condition is automatic for $a\geq0$).

The rest of the proof will be devoted to proving the claim.

\vskip2mm
{\bf{Preliminaries: weighted spaces and self-adjoint operators}}:
We will use square brackets $\left[ \cdot \right]_s$ to denote the weighted integral
 $\left[ f \right]_s = (4\pi \, t_s)^{ - \frac{n}{2} } \, \int_{\Sigma} f \, \e^{- \frac{|x-x_s|^2}{4\, t_s} } \, d\mu$
with $x_s  = sy$ and $t_s = 1 + a \, s^2 $ and define the operator $\cL_s$
 \begin{equation}	\label{e:divcLs}
       \cL_s v = \Delta v  -  \langle \frac{x - x_s}{2t_s} \, , \nabla v \rangle
	= \e^{\frac{|x-x_s|^2}{4t_s}} \,   \dv \, \left( \e^{\frac{-|x-x_s|^2}{4t_s}} \,  \nabla v \right) \, .
\end{equation}
 A slight variation of the proof of Corollary \ref{c:self} gives that
\begin{equation}	\label{e:varcself}
	- \left[ u \, \cL_s v \right]_s = \left[ \langle \nabla u , \nabla v \rangle \right]_s
\end{equation}
whenever $u$ and $v$ are $C^2$ functions with
$\left[ |u \, \nabla v| + |\nabla u| \, |\nabla v| + |u \, \cL_s v | \right]_s < \infty$.

 To apply this, we will compute $\cL_s$ on $\langle x   , y \rangle$  and  $|x|^2$   (cf. Lemma
\ref{l:opquant} where we did this for $s=0$).   First, since $\nabla \langle x   , y \rangle = y^T$ and $\Delta \langle x , y \rangle = - H \,
\langle \nn , y \rangle=
- \frac{ \langle x , \nn \rangle}{2} \, \langle \nn , y \rangle = - \frac{ \langle x , y^{\perp} \rangle}{2} $, we get
\begin{equation}	\label{e:cls1}
	2 \, \cL_s \, \langle x  , y \rangle  = - \langle x , y^{\perp} \rangle   - \langle \frac{x-x_s}{t_s} , y^T \rangle
	\, .
\end{equation}
Similarly, using   $\nabla  \, \left| x \right|^2 = 2 x^T$ and  $\Delta |x|^2 = 2n -  \left| x^{\perp} \right|^2$ by
 \eqr{e:deltax2}  gives
 \begin{equation}    \label{e:deltax02}
   \cL_s \, \left| x \right|^2  = 2n - \left| x^{\perp} \right|^2  - \langle \frac{ x-x_s}{t_s} , x^T \rangle    \, .
\end{equation}

It follows from the polynomial volume growth and \eqr {e:cls1} that we can
apply \eqr{e:varcself} with $u=1$ and $v = \langle x  , y \rangle$ to get
\begin{equation}    \label{e:secli3}
	\left[   \langle (x - x_s)^T , y^T \rangle  \right]_s =  - t_s \, \left[
	\langle x^{\perp} , y^{\perp} \rangle \right]_s
\, .
\end{equation}
Similarly, the polynomial volume growth and \eqr{e:deltax02} allow us to apply  \eqr{e:varcself} with $u=1$ and $v = |x|^2$ to get
  \begin{equation}   \label{e:firsttt}
   \left[  \langle (x -x_s)^T , x^T \rangle \right]_s =  t_s \, \left[ 2n - \left| x^{\perp} \right|^2
   \right]_s
  \, .
  \end{equation}

{\bf{The derivative of $g$}}:
The first variation formula (Lemma \ref{l:varl0}) gives
\begin{equation}    \label{e:gps}
	g'(s) = \left[  t_s'  \, \left( \frac{|x - x_s|^2}{4t_s^2} - \frac{n}{2t_s} \right) +
 \frac{\langle x-x_s , y \rangle}{2t_s} \right]_s \, .
\end{equation}
It is useful to set $z= x-x_s$ so that $x = z + sy$.  Using     \eqr{e:secli3}, we   compute the weighted $L^2$ inner product of
$y^T$ and $z^T$:
\begin{equation}	\label{e:zyT}
	\left[ \frac{\langle z^T , y^T \rangle}{t_s} \right]_s =
	-  \left[
	\langle (z+ sy)^{\perp} , y^{\perp} \rangle \right]_s
	=
	 - \left[ \langle z^{\perp} , y^{\perp} \rangle + s \, \left| y^{\perp} \right|^2 \right]_s \, ,
\end{equation}
Similarly,  we can rewrite   \eqr{e:firsttt} as
\begin{equation}
	 \left[ \frac{\left| z^T \right|^2}{t_s} + s\,  \frac{\langle z^T ,  y^T \rangle}{t_s} \right]_s =   \left[ 2n - \left| z^{\perp} \right|^2
	 - 2 \, s \, \langle  z^{\perp} , y^{\perp} \rangle
	 - s^2 \left|  y^{\perp} \right|^2
   \right]_s   \, .
\end{equation}
Using \eqr{e:zyT} to evaluate the $\langle z^T ,  y^T \rangle$ term on the left gives
\begin{align}	\label{e:zT2}
	\left[ \frac{ \left| z^T  \right|^2}{t_s}  \right]_s &=   s \, \left[ \langle z^{\perp} , y^{\perp} \rangle + s \, \left| y^{\perp} \right|^2 \right]_s
	+  \left[ 2n - \left| z^{\perp} \right|^2
	 - 2 \, s \, \langle  z^{\perp} , y^{\perp} \rangle
	 - s^2 \left|  y^{\perp} \right|^2
   \right]_s
	\notag \\
	&= \left[ 2n - \left| z^{\perp} \right|^2 - s \, \langle y^{\perp} , z^{\perp} \rangle \right]_s \, .
\end{align}
Using \eqr{e:zT2},
the first  term in \eqr{e:gps} can be written as
\begin{align}    \label{e:gps2t}
 t_s' \left[     \frac{|z|^2}{4t_s^2} - \frac{n}{2t_s}  \right]_s & = \frac{ t_s' }{4\, t_s} \,
 \left[    \frac{|z^{T}|^2}{t_s} +  \frac{|z^{\perp}|^2}{t_s } -  2n  \right]_s\,   \notag \\
 & = \frac{ t_s' }{4\, t_s} \,
 \left[    \left( \frac{1}{t_s} - 1 \right) \,   |z^{\perp}|^2 -  s\, \langle y^{\perp} , z^{\perp} \rangle  \right]_s \, .
 \end{align}
 Similarly, using  \eqr{e:zyT}, we can rewrite the second term in \eqr{e:gps} as
 \begin{equation}    \label{e:gps2ta}
	  \left[
 \frac{\langle z , y \rangle}{2t_s} \right]_s  =  \frac{1}{2} \, \left[
 \frac{\langle z^T , y^T \rangle}{ t_s} +
 \frac{\langle z^{\perp} , y^{\perp} \rangle}{ t_s} \right]_s
 =    \frac{1}{2} \, \left[  \left( \frac{1}{t_s} - 1 \right) \, \langle z^{\perp} , y^{\perp} \rangle
 - s \, \left| y^{\perp} \right|^2 \right]_s \, .
 \end{equation}
 Since we set $t_s = 1 + a s^2$,  we have $t_s' = 2 a \, s$ and
\begin{equation}
	-\frac{t_s'}{4t_s} \, s =  \frac{1}{2} \, \left( \frac{1}{t_s} - 1 \right) = \frac{-2a \, s^2 }{4t_s}\, ,
\end{equation}
so the cross-terms (i.e., the $\langle z^{\perp} , y^{\perp} \rangle$ terms) are equal in
\eqr{e:gps2t} and \eqr{e:gps2ta}.
This gives
\begin{align}    \label{e:gpsagain}
	4 \, t_s \, g'(s) &= t_s' \,  \left( \frac{1}{t_s} - 1 \right) \,  \left[   |z^{\perp}|^2    \right]_s -
	4\,  a \, s^2 \left[ \langle z^{\perp} , y^{\perp} \rangle \right]_s -
	2 \, s \, t_s \, \left[ \left| y^{\perp} \right|^2 \right]_s  \notag \\
	&=  - \frac{2\, s}{t_s} \,
	 \left[  a^2 \, s^2 \,    |z^{\perp}|^2    +
	2\,  a \, s \, t_s \,    \langle z^{\perp} , y^{\perp} \rangle  +
	 t_s^2 \,   \left| y^{\perp} \right|^2 \right]_s    \\
	 &= - \frac{2\, s}{t_s} \,
	 \left[  \left|     a  \, s  \,    z^{\perp} +
	 t_s y^{\perp} \right|^2 \right]_s
	\, .  \notag
\end{align}
This is clearly non-positive for all   $a \in \RR$, $y \in \RR^{n+1}$, and $s > 0$ with $t_s >0$.
\end{proof}

\subsection{The equivalence of $F$-stability and entropy-stability}	\label{s:deform}

We can now prove
 that $F$-stability and entropy-stability are equivalent   for self-shrinkers that do not split off a line isometrically.
Within the proof, we will need the following elementary lemma:

\begin{Lem}		\label{l:wtHL}
There is a constant $C_n$ depending only on $n$ so that if
 $\Sigma \subset \RR^{n+1}$ is a complete hypersurface, then for  any $x_0 \in \RR^{n+1}$ and $t_0 > 0$
\begin{equation}	\label{e:tzf1}
	    (4\pi t_0)^{ - \frac{n}{2} } \,
	\int_{\Sigma}   |x|^2  \, \e^{-\frac{|x - x_0|^2}{4t_0}}  \leq 2 \,
	   \lambda (\Sigma) \, \left(  C_n \, t_0 + |x_0|^2 \right)  \, .
\end{equation}
\end{Lem}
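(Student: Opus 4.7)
\textbf{Proof plan for Lemma \ref{l:wtHL}.}

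The plan is to split $|x|^2$ into a piece controlled directly by the $F$ functional and a piece that requires a little weight manipulation. The starting point is the elementary inequality $|x|^2 \leq 2|x-x_0|^2 + 2|x_0|^2$. Substituting into the weighted integral gives
\begin{equation*}
(4\pi t_0)^{-n/2}\int_\Sigma |x|^2\,\e^{-\frac{|x-x_0|^2}{4t_0}}
\;\leq\; 2\,(4\pi t_0)^{-n/2}\int_\Sigma |x-x_0|^2\,\e^{-\frac{|x-x_0|^2}{4t_0}} + 2|x_0|^2\,F_{x_0,t_0}(\Sigma).
\end{equation*}
The second term is immediately bounded by $2|x_0|^2\,\lambda(\Sigma)$ by the definition of entropy, so everything reduces to estimating the first term.

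For the first term I would absorb the polynomial factor $|x-x_0|^2$ into the exponential at the cost of halving the exponent. Concretely, setting $u=|x-x_0|^2/(8t_0)$ and using $ue^{-2u}\leq (1/e)e^{-u}$ (since $ue^{-u}\leq 1/e$), one obtains the pointwise bound
\begin{equation*}
|x-x_0|^2\,\e^{-\frac{|x-x_0|^2}{4t_0}} \;\leq\; \frac{8 t_0}{e}\,\e^{-\frac{|x-x_0|^2}{8t_0}}.
\end{equation*}
Rewriting the resulting Gaussian as $(4\pi t_0)^{-n/2}\e^{-|x-x_0|^2/(8t_0)} = 2^{n/2}(4\pi(2t_0))^{-n/2}\e^{-|x-x_0|^2/(4(2t_0))}$ identifies the integral as $2^{n/2}F_{x_0,2t_0}(\Sigma)$, which is again bounded by $2^{n/2}\lambda(\Sigma)$.

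Putting the two estimates together yields
\begin{equation*}
(4\pi t_0)^{-n/2}\int_\Sigma |x|^2\,\e^{-\frac{|x-x_0|^2}{4t_0}}
\;\leq\; 2\lambda(\Sigma)\!\left(\tfrac{8\cdot 2^{n/2}}{e}\,t_0 + |x_0|^2\right),
\end{equation*}
so the lemma holds with $C_n = 8\cdot 2^{n/2}/e$. There is no real obstacle here — the only point requiring a moment's thought is the elementary bound $ue^{-2u}\leq e^{-u}/e$ that lets one trade the polynomial weight for a change of scale in the Gaussian; everything else is just the definition of $\lambda$.
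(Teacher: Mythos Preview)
Your argument is correct and cleaner than the paper's own proof. The initial reduction is the same: both you and the paper use $|x|^2 \leq 2|x-x_0|^2 + 2|x_0|^2$ (the paper does this after the change of variables $y=(x-x_0)/\sqrt{t_0}$, which is the same inequality) and bound the $|x_0|^2$ term directly by the entropy. The difference is in how the remaining integral $(4\pi t_0)^{-n/2}\int_\Sigma |x-x_0|^2 \e^{-|x-x_0|^2/(4t_0)}$ is handled. The paper first extracts from the entropy the Euclidean volume bound $\Vol(B_R\cap\tilde\Sigma)\leq C\,\lambda(\Sigma)\,R^n$ and then chops the integral into dyadic annuli $B_{j+1}\setminus B_j$, summing the resulting series to produce the constant $C_n$. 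Your pointwise trick $u\,\e^{-2u}\leq \e^{-u}/\e$ bypasses all of this: it converts the polynomial weight into a dilation of the Gaussian, so the whole term becomes $\frac{8}{\e}\,t_0\cdot 2^{n/2} F_{x_0,2t_0}(\Sigma)$ and the entropy bound applies directly. Your route is shorter and yields the explicit value $C_n = 8\cdot 2^{n/2}/\e$; the paper's decomposition is more pedestrian but has the minor advantage that the same annular argument would handle higher moments $\int |x|^{2k}\,\e^{-|x-x_0|^2/(4t_0)}$ without further thought.
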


\begin{proof}
We can assume that $\lambda (\Sigma ) < \infty$ since there is otherwise nothing to prove.
The translation and dilation $x \to y = \frac{x-x_0}{\sqrt{t_0}}$ takes $\Sigma$ to   $\tilde{\Sigma}$ with $d \mu_{\Sigma} = t_0^{ \frac{n}{2}}  \, d \mu_{\tilde{\Sigma}}$ so that
\begin{align}	\label{e:tzf1b}
	          (4\pi t_0)^{ - \frac{n}{2} } \,
	\int_{\Sigma}   |x|^2  \, \e^{-\frac{|x - x_0|^2}{4t_0}} \, d \mu_{\Sigma}  &  =
	    (4\pi )^{ - \frac{n}{2} }  \, \int_{\tilde{\Sigma}}   \left|\sqrt{t_0} \,  y + x_0
	    \right|^2  \, \e^{-\frac{|y|^2}{4}} \,   d \mu_{\tilde{\Sigma}} \notag \\
	&\leq 2 \, |x_0|^2 \, \lambda (\tilde{\Sigma}) + 2 \, t_0 \,     (4\pi )^{ - \frac{n}{2} }  \,  \int_{\tilde{\Sigma}}    |y|^2   \, \e^{-\frac{|y|^2}{4}}    \, d \mu_{\tilde{\Sigma}}
	 \, .
\end{align}
To bound the second term by the entropy, note that
\begin{equation}	\label{e:volentb}
	\left( 4 \pi \, R^2 \right)^{- \frac{n}{2}} \, \e^{ - \frac{1}{4} } \, \Vol \, (B_R \cap \tilde{\Sigma})
	\leq  F_{0, R^2} (\tilde{\Sigma}) \leq \lambda (\tilde{\Sigma}) \, .
\end{equation}
Using this volume bound and chopping the integral up into annuli $B_{j+1} \setminus B_j$ gives
\begin{equation}	\label{e:got2t}
	 (4\pi )^{ - \frac{n}{2} }  \, \int_{\tilde{\Sigma}}    |y|^2   \, \e^{-\frac{|y|^2}{4}}  \, d \mu_{\tilde{\Sigma}} \leq   \e^{ \frac{1}{4} } \, \lambda (\tilde{\Sigma}) \,
	\sum_{j=0}^{\infty}
	\left(   \e^{ - \frac{j^2}{4} } \, (j+1)^{n+2} \right) = C_n \,    \lambda (\tilde{\Sigma}) \,  ,
\end{equation}
where $C_n$ depends only on $n$.
The lemma follows from these bounds and the invariance of the entropy under dilations and translations that gives
$\lambda (\Sigma) = \lambda (\tilde{\Sigma})$.
\end{proof}

\begin{proof}
(of Theorem \ref{t:nonlin1c}).
   Assume  that $\Sigma$ is not $F$-stable and, thus, there is a
  one-parameter normal variation $\Sigma_s$  for $s \in
[-2\, \epsilon ,   2 \, \epsilon ]$ with $\Sigma_0 = \Sigma$   so that:
\begin{enumerate}
\item[(V1)] For each $s$, the variation vector field is given by a function $f_{\Sigma_s}$ times the normal $\nn_{\Sigma_s}$ where every $f_{\Sigma_s}$ is supported in a fixed compact subset of $\RR^{n+1}$.
\item[(V2)] For any variations $x_s$ and $t_s$ with $x_0 = 0$ and $t_0 = 1$, we get that
\begin{equation}
	\partial_{ss} \big|_{s=0} \, F_{x_s , t_s} (\Sigma_s) < 0 \, .
\end{equation}
\end{enumerate}
We will use this to prove that $\Sigma$ is also entropy-unstable.

\vskip2mm
\noindent
{\bf{Setting up the proof}}:
Define a function $G: \RR^{n+1} \times \RR^{+} \times [-2\, \epsilon ,   2 \, \epsilon ] \to \RR^{+} $ by
\begin{equation}
	G(x_0 , t_0 , s) = F_{x_0 , t_0} \, \left( \Sigma_s \right) \, .
\end{equation}
We will show that there exists some $\epsilon_1 > 0$ so that if $s \ne 0$ and $|s| \leq \epsilon_1$, then
\begin{equation}	\label{e:goal}
	\lambda (\Sigma_s) \equiv \sup_{x_0 , t_0} \, G(x_0 , t_0 , s) < G(0,1,0) = \lambda (\Sigma) \, ,
\end{equation}
and this will give the theorem with $\tilde{\Sigma}$ equal to $\Sigma_s$ for any   $s  \ne 0$ in $ (-\epsilon_1 ,\epsilon_1)$; by taking $s >0$ small enough, we can arrange that $\tilde{\Sigma}$ is as close as we like to $\Sigma_0 = \Sigma$.

The remainder of the proof is devoted to establishing \eqr{e:goal}.  The key points will be:
\begin{enumerate}
\item   $G$ has a strict local maximum at $(0,1,0)$.
\item  The restriction of $G$ to $\Sigma_0$, i.e., $G(x_0 , t_0, 0)$, has a strict global maximum at $(0,1)$.
\item	 $\left| \partial_s G \right|$ is uniformly bounded on compact sets.
\item  $G(x_0,t_0 , s)$ is strictly less than $G(0,1,0)$ whenever  $|x_0|$ is sufficiently large.
\item $G(x_0,t_0 , s)$ is strictly less than $G(0,1,0)$ whenever $\left| \log t_0 \right|$ is sufficiently large.
\end{enumerate}

\vskip2mm
\noindent
{\bf{The proof of \eqr{e:goal} assuming (1)--(5)}}:
We will divide into three separate regions depending on the size of $|x_0|^2 + ( \log t_0 )^2$.

First,
it follows from steps (4) and (5) that there is some $R > 0$ so that \eqr{e:goal} holds for every $s$ whenever
\begin{equation}
	x_0^2 + ( \log t_0 )^2 > R^2 \, .
\end{equation}

Second, as long as $s$ is small, step (1) implies that \eqr{e:goal} holds when $x_0^2 + ( \log t_0 )^2$ is sufficiently small.

Finally,  in the intermediate region where $x_0^2 + ( \log t_0 )^2$  is bounded from above and bounded uniformly away from zero, step (2) says that $G$ is strictly less than $\lambda (\Sigma)$ at $s=0$ and step (3) says that the $s$ derivative of $G$ is uniformly bounded.  Hence,  there exists some $\epsilon_3 > 0$ so that
$G(x_0 , t_0 , s)$ is strictly less than $\lambda (\Sigma)$ whenever $(x_0 , t_0)$ is in the intermediate region as long as  $|s| \leq \epsilon_3$.

This completes the proof of \eqr{e:goal} assuming (1)--(5).

\vskip2mm
\noindent
{\bf{The proof of  (1)--(5)}}:

\vskip2mm
\noindent
{\bf{Step (1)}}: Since $\Sigma$ is a self-shrinker, it follows from Proposition
  \ref{p:critall}
  that $\nabla G$ vanishes at $(0,1,0)$.   Given $y \in \RR^{n+1}$ , $a \in \RR$,  and $b \in \RR \setminus \{ 0 \}$, the second derivative of $G( sy ,  1 + a s ,   b s)$  at $s=0$  is just $b^2 \, \partial_{ss} \big|_{s=0} \, F_{x_s , t_s} (\Sigma_s) $ with $x_s = s \frac{y}{b}$ and $t_s = 1 + \frac{a}{b} \, s$.  In particular,  it is negative by (V2).  On the other hand, by
  Theorem \ref{t:secvar}, the second derivative of  $G( sy ,  1 + a s ,   0)$  at $s=0$ is given by
    \begin{equation}	\label{e:secvar27a}
	   (4\pi)^{-n/2} \, \int_{\Sigma}
	 \left( -
	 a^2 \, H^2
		-   \frac{\left| y^{\perp} \right|^2}{2}
		\right)
		 \, \e^{\frac{-|x|^2}{4}}  \, d\mu  \,  .
\end{equation}
Since $\Sigma$ does not split off a line, it is not a hyperplane and, thus, $H$ cannot be zero everywhere; similarly, $\left| y^{\perp} \right|$ can only vanish identically when $y=0$.  We conclude that
   the Hessian of $G$ at $(0,1,0)$ is negative definite.  It follows that $G$ has a strict local maximum at $(0,1,0)$, so there exists $\epsilon_2 \in (0,\epsilon)$ so that
   \begin{equation}	\label{e:localmax}
   	G(x_0,t_0 , s) < G(0,1,0) {\text{ if }} 0 < x_0^2 + (\log t_0)^2 + s^2 < \epsilon_2^2 \, .
   \end{equation}

\vskip2mm
\noindent
{\bf{Step (2)}}:
If   we restrict $G$ to $\Sigma_0$, then
  Lemma \ref{l:strict} gives that $G(\cdot , \cdot , 0)$ has a strict global maximum at $(0,1,0)$.  It follows that $\lambda (\Sigma) = G(0,1,0)$ and
  there exists $\delta > 0$ so that
   \begin{equation}	\label{e:globalmax}
   	G(x_0,t_0 , 0) < G(0,1,0) - \delta {\text{ for all $x_0, \, t_0$ with }} \frac{\epsilon_2^2}{4}  < x_0^2 + (\log t_0)^2   \, .
   \end{equation}

\vskip2mm
\noindent
{\bf{Step (3)}}:
Using the first variation formula (Lemma \ref{l:varl0}),
we get that
\begin{equation}	\label{e:sG}
	  \partial_s \, G(x_0,t_0,s) =
 (4\pi \, t_0)^{-\frac{n}{2}} \,  \int_{\Sigma_s}    f_{\Sigma_s} \, \left( H_{\Sigma_s} -
 \frac{ \langle x-x_0 ,  \nn_{\Sigma_s} \rangle}{2t_0}
		\right)
		\, \e^{\frac{-|x-x_0|^2}{4t_0}} \, d\mu_{\Sigma_s}  \, ,
\end{equation}
where  $H_{\Sigma_s}$ is the mean curvature of $\Sigma_s$ and
$ d\mu_{\Sigma_s}$ is the volume measure on $\Sigma_s$.  Since $f_{\Sigma_s}$ is smooth and
 compactly supported and $\Sigma$ has polynomial volume growth,
$\partial_s G$ is evidently continuous in all three variables $x_0$, $t_0$, and $s$.  We conclude that $\partial_s G$ is uniformly bounded on compact sets.

\vskip2mm
\noindent
{\bf{Step (4)}}:  Since $\Sigma_s$ is a $C^2$ family of  surfaces that vary only in a compact set and $s$ is in the compact interval $[-\epsilon , \epsilon]$, there are uniform upper bounds
\begin{equation}
	\Vol (\Sigma_s \setminus \Sigma) \leq C_V {\text{ and }} \max_{\Sigma_s \setminus \Sigma} |x| \leq C_x \, .
\end{equation}
It follows that for $|x_0| > C_x + R$ with $R > 0$ we have
\begin{align}	\label{e:gxts0}
	G(x_0 , t_0 , s) &\leq  (4\pi \, t_0)^{ - \frac{n}{2}  } \, \left( \int_{\Sigma_s \setminus \Sigma} \e^{ - \frac{|x-x_0|^2}{4t_0} } +
	  \int_{ \Sigma} \e^{ - \frac{|x-x_0|^2}{4t_0} }  \right) \notag \\
	& \leq C_V \, (4\pi \, t_0)^{ - \frac{n}{2}  } \,  \e^{ - \frac{R^2}{4t_0} } + G(x_0 , t_0 , 0)\, .
\end{align}
Therefore, if we define   functions $u_R (r) = r^{ - \frac{n}{2}  } \,  \e^{ - \frac{R^2}{4r} }$ for $r>0$, then
for $|x_0| > C_x + R$ we have
\begin{equation}
	G(x_0 , t_0 , s) \leq C_V \, (4\pi)^{ - \frac{n}{2}  } \,   u_R(t_0) + G(x_0 , t_0 , 0) \, ,
\end{equation}
so step (4) follows from step (2) and showing that $\sup_{ r> 0} \, u_R (r)$ converges to zero as $R \to \infty$.  To see this, note that $u_R(r) = R^{-n} \, u_1 \left( \frac{r}{R^2} \right)$ and
$u_1$ is bounded since it is continuous and goes to zero as $r$ goes to either zero
or  infinity.

\vskip2mm
\noindent
{\bf{Step (5)}}:
The easy case  when $t_0$ is large follows  from step (2) since we always have
\begin{equation}	
	G(x_0 , t_0 , s) \leq   (4\pi \, t_0)^{ - \frac{n}{2}  } \, \Vol (\Sigma_s \setminus \Sigma )
	+ G(x_0 , t_0 , 0) \leq
	C_V \, (4\pi \, t_0)^{ - \frac{n}{2}  } + G(x_0 , t_0 , 0)  \, .
\end{equation}
This estimate also immediately gives a uniform bound for $G(x_0 , t_0 , s)$ for all $t_0 \geq 1$:
\begin{equation}		\label{e:Gt01}
	\sup_{t_0 \geq 1} \, \, G(x_0 , t_0 , s) \leq
	C_V \, (4\pi )^{ - \frac{n}{2}  } + \lambda (\Sigma)  \, .
\end{equation}

The   case
  when $t_0$ is close to zero will follow from using
   that $\lim_{t_0 \to 0} G(x_0 , t_0 , s) \leq 1$ for each fixed $(x_0 , s)$  together with a covering argument in $(x_0 , s)$ (using compactness), and then showing that once $G(x_0 , t_0 , s)$ is small for some $t_0 > 0$, then it is also small for all   $t \leq t_0$.
  We will make this precise next.

 We begin by noting
  that
(by compactness) there is a uniform constant $C_H$ with
\begin{equation}	\label{e:CH}
	\sup_{|s| \leq \epsilon } \, \, \left( \sup_{\Sigma_s \setminus \Sigma} \, \left| H_{\Sigma_s} \right|  \right) \leq C_H \, .
\end{equation}
Arguing as in \eqr{e:from77}, we have
\begin{align}	\label{e:tzerosmall}
	\partial_{t_0} \, G(x_0 , t_0 , s)
	&\geq   -    (4\pi t_0)^{ - \frac{n}{2} } \,
	\int_{\Sigma}  \frac{H_{\Sigma}^2}{4} \, \e^{-\frac{|x - x_0|^2}{4t_0}}
	 -    (4\pi t_0)^{ - \frac{n}{2} } \,
	\int_{\Sigma_s \setminus \Sigma}  \frac{H_{\Sigma_s}^2}{4} \, \e^{-\frac{|x - x_0|^2}{4t_0}}
	   \notag \\
	   &\geq -   \lambda (\Sigma) \, \left( |x_0|^2 + C_n \, t_0 \right) - \frac{C_H^2}{4} \,  G(x_0 , t_0 , s) \, ,
\end{align}
where $C_n$ is a constant depending only on $n$,  the second inequality used Lemma \ref{l:wtHL}
and the self-shrinker equation to bound the first term,  and the second inequality used \eqr{e:CH} to bound the second term.

By step (4), there exists some $R_0 > 0$ so that if $|x_0| \geq R_0$, then $G(x_0 , t_0 , s)$ is strictly less than $\lambda (\Sigma)$.  Note that
\eqr{e:tzerosmall} gives a constant $C_0$ so that if $|x_0| \leq R_0 + 2$ and $t_0 \leq 1$, then
\begin{equation}	\label{e:tzerosmall0}
	\partial_{t_0} \, \left[ \e^{ \frac{C_H^2 \, t_0 }{4}  } \, G(x_0 , t_0 , s) \right] \geq -   C_0 \, .
\end{equation}
The point is that $C_0$ does not depend on $x_0$, $s$, or $t_0$.  One immediate consequence of this
together with \eqr{e:Gt01} is that there is a uniform upper bound  $\lambda (\Sigma_s)  \leq
\bar{\lambda}$.

It follows from step (2) above together with (3) in Lemma \ref{l:basicF}
that $G(0,1,0) = \lambda (\Sigma) > 1$, so we can
fix   $\alpha > 0$ with
\begin{equation}
	1 + 3 \, \alpha < \lambda (\Sigma) \, .
\end{equation}
Define a constant $t_{\alpha} > 0$ by
$(1+ \alpha) \, \e^{ \frac{ C_H^2 \, t_{\alpha} }{4} } + C_0 \, t_{\alpha} = 1 + 2 \, \alpha$ so that for $t \in (0, t_{\alpha} ]$
\begin{equation}	\label{e:talpha}
	(1+ \alpha) \, \e^{ \frac{ C_H^2 \, t  }{4} } + C_0 \, t  \leq 1 + 2 \, \alpha \, .
\end{equation}

 Given any $x \in \RR^{n+1}$ and $s \in [-\epsilon , \epsilon]$, property (3) in Lemma \ref{l:basicF} gives some $t_{x,s} > 0$ so that
 $G(x,t_0,s) < 1 + \alpha$ for all $t_0 \leq t_{x,s}$.
 Using the continuity (in both $x$ and $s$) of $G$ and the compactness of each $\overline{B_{R_0 + 1}}$ and $[-\epsilon , \epsilon]$, we can choose a finite collection of pairs $(x_i ,s_i) \in \overline{B_{R_0+ 1}}   \times [-\epsilon , \epsilon]$, positive numbers $t_i <
  \min \, \{ t_\alpha  , 1 \}$, and radii $r_i \in (0,1) $ so that
 \begin{itemize}
 \item  For each $s \in [-\epsilon , \epsilon]$ and $x \in \overline{B_{R_0+1}}$, there exists $i (x,s)$ so that
 $x \in  B_{r_{i(x,s)}} (x_{i(x,s)})$ and $|s-s_{i(x,s)}| < r_{i(x,s)}$.
 \item For each  $i$,   we have  $
 G(x, t_i , s) < 1 + \alpha$  for every $x \in B_{2r_i}(x_i)$ and $s$ with $|s-s_i| < r_i$.
 \end{itemize}
 If we let  $\bar{t}$ be the minimum of the $t_i$'s, then
 \eqr{e:tzerosmall0} gives   for any $s \in [-\epsilon , \epsilon]$, any $x \in \overline{B_{R_0}}$, and any $t_0 \leq \bar{t}$ that
 \begin{align}	\label{e:bart}
 	G(x , t_0 , s) &\leq \e^{ \frac{C_H^2 \, t_0}{4} } \, G(x , t_0 , s) \leq  \e^{ \frac{C_H^2 \, t_{i(x,s)}}{4} } \,  G(x , t_{i(x,s)}, s)   + C_0 \, (t_{i(x,s)} - t_0)  \notag \\
	&\leq
	1+ 2\alpha \, ,
 \end{align}
 where the last inequality used \eqr{e:talpha}, $i(x,s)$ satisfies $|s-s_{i(x,s)}| < r_i$, and $x \subset B_{2r_{i(x,s)}} (x_{i(x,s)})$.   It follows that $G(x , t_0 , s)$ is strictly less than $\lambda(\Sigma)$ for all $t_0 \leq  \tilde{t}$.

\end{proof}

Combining Theorem \ref{t:linearstable} and Theorem  \ref{t:nonlin1c}, we get that the sphere is the only closed entropy-stable self-shrinker:

 \begin{Cor}	\label{t:nonlin1csphere}
Suppose that $\Sigma \subset \RR^{n+1}$ is a  smooth closed embedded self-shrinker,  but is not a round sphere.  Then $\Sigma$ can be perturbed  to an arbitrarily close hypersurface $\tilde{\Sigma}$ where the entropy is strictly less.  In  particular,   $\Sigma$ (or a translation of it) cannot arise as a tangent flow to the MCF starting from $\tilde{\Sigma}$.
\end{Cor}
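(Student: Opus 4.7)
The plan is to combine Theorem \ref{t:linearstable}, which identifies the round sphere as the unique closed $F$-stable self-shrinker, with Theorem \ref{t:nonlin1c}, which upgrades $F$-instability (in the absence of a split line) to entropy-instability via a compactly supported perturbation. The tangent-flow consequence will then follow immediately from Huisken's monotonicity formula together with Lemma \ref{l:strict}, which pins down the entropy of a self-shrinker as its $F_{0,1}$ value.

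First I would observe that since $\Sigma$ is compact, it cannot split off a line isometrically: any isometric $\RR$-factor would force $\Sigma$ to be non-compact. Thus the non-splitting hypothesis of Theorem \ref{t:nonlin1c} is automatic. Since $\Sigma$ is not a round sphere, Theorem \ref{t:linearstable} implies $\Sigma$ is $F$-unstable. Applying Theorem \ref{t:nonlin1c} then produces a normal variation $\Sigma_s$ with $\Sigma_0 = \Sigma$, generated by a compactly supported function, such that $\lambda(\Sigma_s) < \lambda(\Sigma)$ for every $s \neq 0$. Smooth dependence on $s$ makes $\Sigma_s$ arbitrarily $C^m$-close to $\Sigma$ for small $s$, so we may take $\tilde{\Sigma} = \Sigma_s$ for any sufficiently small $s \neq 0$.

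For the tangent-flow statement I would argue by contradiction. Suppose some translate of $\Sigma$ arises as a time slice of a tangent flow $\cT_t$ to the MCF $\tilde M_t$ starting from $\tilde{\Sigma}$, at some space-time point $(x_0,t_0)$. By the self-shrinker equation and the definition of tangent flow, the Gaussian density satisfies $\Theta_{x_0,t_0} = F_{0,1}(\Sigma)$ (translation of $\Sigma$ does not affect this value since we center at $x_0$). Lemma \ref{l:strict}, applicable because closed $\Sigma$ does not split off a line, identifies $F_{0,1}(\Sigma) = \lambda(\Sigma)$. On the other hand, Huisken's monotonicity \eqr{e:huiskenF} (applied along the flow $\tilde M_t$ from the initial time) gives $\Theta_{x_0,t_0} \leq F_{x_0,t_0}(\tilde{\Sigma}) \leq \lambda(\tilde{\Sigma})$. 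Chaining the inequalities yields $\lambda(\Sigma) \leq \lambda(\tilde{\Sigma}) < \lambda(\Sigma)$, the desired contradiction.

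The substantive analytic work has already been absorbed into Theorems \ref{t:linearstable} and \ref{t:nonlin1c}, so no serious obstacle remains here; the corollary is essentially a bookkeeping exercise combining entropy-monotonicity, translation/dilation invariance of $\lambda$, and the fact that tangent flows attain equality in Huisken's monotonicity formula. The only point worth writing carefully is the passage from ``tangent flow equals a translate of $\Sigma$'' to ``$\Theta_{x_0,t_0} = \lambda(\Sigma)$'', which is where Lemma \ref{l:strict} is used.
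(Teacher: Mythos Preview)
Your argument is correct and matches the paper's proof exactly: combine Theorem \ref{t:linearstable} (closed non-sphere implies $F$-unstable) with Theorem \ref{t:nonlin1c} (closed, hence non-splitting, plus $F$-unstable implies entropy-unstable). The paper leaves the tangent-flow consequence implicit; your contradiction via $\lambda(\Sigma) = F_{0,1}(\Sigma) = \Theta_{x_0,t_0} \leq \lambda(\tilde{\Sigma}) < \lambda(\Sigma)$ using Lemma \ref{l:strict} and Huisken monotonicity is the intended mechanism and is correctly spelled out.
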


\begin{proof}
Since $\Sigma$ is a closed self-shrinker, but is not the round sphere, Theorem \ref{t:linearstable} implies that $\Sigma$ is $F$-unstable.  Clearly, $\Sigma$ does not split off a line, so Theorem  \ref{t:nonlin1c} gives that it is entropy-unstable.
\end{proof}

\section{Self-shrinkers with bounded diameter}	\label{s:8}

In this section, we will give a short direct proof of Theorem \ref{c:grayson}.  This theorem will follow immediately
from the main theorem in \cite{CM1},  but the direct proof in this section  serves to illustrate the key ideas in a simpler case.  The results in this section are for surfaces in $\RR^3$; most of the ingredients in the proof work for all dimensions, but  the following compactness theorem from \cite{CM3}  is proven only for surfaces in $\RR^3$:

\begin{Thm}	\label{c:cpt}
(\cite{CM3}).
 Given a non-negative integer $g$ and a positive constant $V$,
the space  of smooth complete embedded self-shrinkers $\Sigma$ with genus at most $g$,  $\partial \Sigma = \emptyset$, and
the scale-invariant area bound
 $\Area \, \left( B_{R}(x_0) \cap \Sigma \right) \leq V \, R^2$  for all $x_0 \in \RR^3$ and all $R > 0$ is compact.   Namely, any sequence  has a subsequence that converges in the
 topology of  $C^k$ convergence on compact subsets for any $k \geq 2$.
\end{Thm}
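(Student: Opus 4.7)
Let $\{\Sigma_i\}$ be a sequence as in the hypothesis. The plan is to first extract a weak subsequential limit using the uniform scale-invariant area bound, and then upgrade the weak convergence to smooth convergence on compact subsets by establishing uniform curvature estimates (with the genus bound used to rule out pathological blow-ups).

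First, the uniform area bound $\Area(B_R(x_0)\cap \Sigma_i)\leq V R^2$ lets me pass (after a diagonal subsequence) to an integral varifold limit $\Sigma_\infty$ on $\RR^3$. The self-shrinker equation $H=\langle x,\nn\rangle/2$ survives in the limit in the sense of varifolds, so $\Sigma_\infty$ is a stationary varifold for the Gaussian area $F_{0,1}$, i.e., a weak self-shrinker. The task is to show that this convergence is actually smooth (with multiplicity one) on compact sets away from at most finitely many singular points, and then upgrade to genuine smooth convergence.

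Next I would establish a \emph{uniform local curvature bound}: for every compact $K\subset\RR^3$, $\sup_i \sup_{K\cap\Sigma_i}|A|<\infty$. Suppose not, and pick $p_i\in K\cap\Sigma_i$ with $|A|(p_i)=:\lambda_i\to\infty$. Rescale the surfaces by $\lambda_i$ centered at $p_i$; the rescaled self-shrinker equation becomes $H_{\tilde\Sigma_i}=\lambda_i^{-2}\langle \lambda_i x+\lambda_i p_i,\nn\rangle/2\to 0$ uniformly on compact sets, so any smooth subsequential limit $\tilde\Sigma_\infty$ is a complete, properly embedded minimal surface in $\RR^3$ with $|A|(0)=1$, bounded density at infinity (inherited from $V$), and genus at most $g$. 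This is where the key input enters: by the lamination/compactness theory for embedded minimal surfaces in $\RR^3$ with bounded genus and quadratic area growth (as developed in the Colding--Minicozzi minimal surface papers and the Choi--Schoen / Anderson techniques applied in the finite-genus setting), any such surface must be a plane, which contradicts $|A|(0)=1$. The fact that the scale-invariant area bound is inherited at every scale is precisely what makes the rescaling argument work; and the genus bound is what rules out helicoid-like or higher-topology limits. The $\epsilon$-regularity from the standard monotonicity/Allard-type theorem adapted to the Gaussian setting then promotes uniform $|A|$ bounds to $C^k$ bounds via interior Schauder estimates applied to the graphical equation satisfied by each $\Sigma_i$ over its tangent plane.

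Finally, with uniform $C^k$ bounds on compact subsets (at least away from the finite set $\cS$ where curvature might concentrate), Arzel\`a--Ascoli gives $C^k$ convergence of a subsequence to a smooth limit $\Sigma_\infty$. The residual issue is multiplicity and the nature of $\cS$: the bounded genus hypothesis rules out infinite accumulation of handles, so $\cS$ is finite; a point-removability argument (monotonicity plus the smoothness estimate) then shows $\cS$ is empty and the convergence is smooth and of multiplicity one globally. \textbf{Main obstacle.} The hardest step is the uniform curvature estimate; producing a contradiction from a rescaled blow-up limit requires a Bernstein-type statement for properly embedded minimal surfaces in $\RR^3$ with quadratic area growth and finite genus, and this is genuinely a three-dimensional phenomenon (using the lamination theory of embedded minimal surfaces). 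It is precisely this step that fails in higher dimensions, which is why the compactness theorem is restricted to $\RR^3$.
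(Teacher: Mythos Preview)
The paper does not contain a proof of this statement; Theorem~\ref{c:cpt} is quoted from the companion paper \cite{CM3} and used as a black box (immediately afterwards, in the proof of Corollary~\ref{c:cpt2}). So there is no ``paper's own proof'' to compare against here.

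On the merits of your outline: the overall architecture (varifold limit from the area bound, blow-up to obtain curvature control, Arzel\`a--Ascoli) is the right shape, but the key step has a genuine gap. You assert that a complete properly embedded minimal surface in $\RR^3$ with finite genus and quadratic area growth must be a plane. This is false: the catenoid is properly embedded, has genus zero, and has quadratic area growth, yet $|A|\not\equiv 0$. So your blow-up contradiction does not close as written. The Bernstein-type input you would actually need is for \emph{stable} minimal surfaces (do Carmo--Peng, Fischer-Colbrie--Schoen), but nothing in your rescaling produces stability of the limit.

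The argument in \cite{CM3} is organized differently: rather than proving a uniform $|A|$ bound on every compact set, one shows that curvature can concentrate at only \emph{finitely many} points (with the number controlled by the genus, since each non-flat blow-up such as a catenoid carries topology or forces multiplicity), obtains smooth multiplicity-one convergence away from this finite set, and then argues that the singular set is in fact empty. You gesture at exactly this in your final paragraph (``at least away from the finite set $\cS$\ldots a point-removability argument''), but that is inconsistent with the uniform curvature bound you claimed two paragraphs earlier; the proof has to be built around the latter structure from the start, not appended as a hedge.
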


Throughout this section, we will fix a non-negative integer $g$ and a positive constant
 $\bar{\lambda}$; these will be bounds for the genus and entropy which will in applications come from bounds on the initial surface in a MCF.
 We will not say so explicitly below, but all constants in this section will be allowed to depend on $g$ and $\bar{\lambda}$.

 Given a constant $D > 0$,  let $S_D = S_{g,\bar{\lambda},D}$ denote the space of all smooth closed embedded self-shrinkers in $\RR^3$
 with genus at most $g$,
 entropy at most $\bar{\lambda}$, and diameter at most $D$.

As a corollary of Theorem \ref{c:cpt}, we get a smooth compactness theorem for $S_{D}$:

\begin{Cor}	\label{c:cpt2}
 For each fixed  $D$, the space $S_{D}$
 is compact.   Namely, any sequence in $S_{D}$   has a subsequence that converges uniformly in the $C^k$ topology (any fixed $k$)
 to a surface in $S_{D}$.
\end{Cor}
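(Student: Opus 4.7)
The plan is to reduce this to Theorem~\ref{c:cpt} in three steps: verify its hypotheses uniformly on $S_D$, extract a convergent subsequence, and check that the limit lies in $S_D$.

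First, I would observe that the entropy bound $\bar{\lambda}$ automatically supplies the scale-invariant area bound required by Theorem~\ref{c:cpt}. Indeed, \eqref{e:volentb} (essentially a restriction of the integral defining $F_{x_0, R^2}$ to $B_R(x_0)$) gives
\begin{equation}
(4\pi R^2)^{-1}\,\e^{-1/4}\,\Area(B_R(x_0) \cap \Sigma) \leq F_{x_0, R^2}(\Sigma) \leq \lambda(\Sigma) \leq \bar{\lambda} \, ,
\end{equation}
so $\Area(B_R(x_0) \cap \Sigma) \leq V\,R^2$ with $V = 4\pi\,\e^{1/4}\,\bar{\lambda}$, uniformly over $\Sigma \in S_D$.

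Second, I would establish that every $\Sigma \in S_D$ is contained in a fixed compact ball. At a point $p \in \Sigma$ where $|x|^2$ attains its minimum, one has $\nabla |x|^2 = 2\,x^T = 0$ and $\Delta |x|^2 \geq 0$; combined with \eqref{e:deltax2}, this forces $|x(p)|^2 = |x^\perp(p)|^2 \leq 2n = 4$. With the diameter bound, $\Sigma \subset \overline{B_{2+D}(0)}$. Now, given a sequence $\{\Sigma_j\} \subset S_D$, Theorem~\ref{c:cpt} yields a subsequence converging in $C^k$ on compact subsets of $\RR^3$ to a smooth complete embedded self-shrinker $\Sigma_\infty$ with $\partial \Sigma_\infty = \emptyset$. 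Because all $\Sigma_j$ lie in the fixed ball $\overline{B_{2+D}(0)}$, the convergence on the compact set $\overline{B_{3+D}(0)}$ captures each $\Sigma_j$ entirely, the limit $\Sigma_\infty$ is also contained in $\overline{B_{2+D}(0)}$, and a boundaryless complete self-shrinker sitting inside a compact set is itself compact; thus $\Sigma_\infty$ is a smooth closed embedded hypersurface.

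Finally, I would verify the three conditions defining $S_D$ for the limit. The diameter bound passes by continuity under $C^0$ convergence. For the entropy, for each fixed $(x_0,t_0)$ the uniform $C^k$ convergence on the fixed compact ball $\overline{B_{3+D}(0)}$ yields $F_{x_0,t_0}(\Sigma_j) \to F_{x_0,t_0}(\Sigma_\infty)$, and each of these is bounded by $\bar{\lambda}$, so taking the supremum gives $\lambda(\Sigma_\infty) \leq \bar{\lambda}$. For the genus bound, once $j$ is large enough that $\Sigma_j$ is a smooth normal graph over $\Sigma_\infty$ with $C^2$ norm going to zero, $\Sigma_j$ is diffeomorphic to $\Sigma_\infty$, and hence the genus of $\Sigma_\infty$ equals that of $\Sigma_j$ for all large $j$, in particular it is at most $g$. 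The deep input here is Theorem~\ref{c:cpt} itself; the only additional geometric ingredient is the maximum-principle observation forcing the uniform ball containment, and that containment is precisely what promotes ``convergence on compact subsets'' to smooth convergence of the full closed surfaces, making the passage to the limit routine.
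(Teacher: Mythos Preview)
Your proof is correct and follows the same outline as the paper: use the entropy bound to obtain the scale-invariant area bound, invoke Theorem~\ref{c:cpt}, and then argue that the limit stays in $S_D$. The paper's version is terser --- it simply notes that the diameter bound passes to the limit (otherwise some $\Sigma_j$ far out in the sequence would violate it) and concludes the limit is closed and in $S_D$, leaving the entropy and genus verifications implicit. Your extra maximum-principle step pinning every $\Sigma \in S_D$ inside $\overline{B_{2+D}(0)}$ is a pleasant addition: it converts ``$C^k$ convergence on compact subsets'' into honest uniform $C^k$ convergence of the whole closed surfaces, which makes the subsequent checks (diameter, entropy via $F_{x_0,t_0}(\Sigma_j)\to F_{x_0,t_0}(\Sigma_\infty)$, genus via diffeomorphism) entirely transparent. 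This is not a genuinely different route, just a cleaner packaging of the same argument.
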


\begin{proof}
Given $\Sigma \in S_D$,
the bound on the entropy gives the scale-invariant area bound
\begin{equation}
	(4\pi)^{-1} \, \e^{ - \frac{1}{4} } \,
\frac{ \Area \, \left( B_{R}(x_0) \cap \Sigma \right) }{R^2}	 \leq   (4\pi \, R^2 )^{-1} \int_{B_R(x_0) \cap \Sigma} \e^{ - \frac{|x-x_0|^2}{4 R^2} } \leq
	F_{x_0 , R^2} (\Sigma) \leq \bar{\lambda} \, .
\end{equation}
Therefore,
Corollary \ref{c:cpt} gives a subsequence that converges on compact subsets to a (possibly non-compact) limiting self-shrinker.  However, the limit must have diameter at most $D$ (if not, one of the surfaces sufficiently far out in the subsequence would have diameter greater than $D$), so it is closed and also in $S_{D}$.
\end{proof}

The next result is a corollary of the compactness theorem for self-shrinkers together with the result that the round sphere is the only closed entropy-stable self-shrinker.

\begin{Cor}		\label{c:nl}
Given $D >0$, there exists $\epsilon > 0$ so that if $\Sigma \in S_{D}$ is not  the round sphere,  then there is a graph $\Gamma$ over $\Sigma$ with
$\lambda ( \Gamma) < \lambda (\Sigma) -  \epsilon$.
\end{Cor}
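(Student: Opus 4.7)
The plan is to argue by contradiction, combining the compactness theorem for $S_D$ with Corollary \ref{t:nonlin1csphere}. Suppose the conclusion fails: then there is a sequence $\Sigma_i \in S_D$, with each $\Sigma_i$ not equal to the round sphere, such that for every graph $\Gamma$ over $\Sigma_i$ one has $\lambda(\Gamma) \geq \lambda(\Sigma_i) - 1/i$. By Corollary \ref{c:cpt2}, after passing to a subsequence, $\Sigma_i$ converges smoothly to some limit $\Sigma_\infty \in S_D$.

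I would then split into two cases according to whether $\Sigma_\infty$ is the round sphere. If $\Sigma_\infty$ is not the round sphere, Corollary \ref{t:nonlin1csphere} produces a graph $\tilde\Sigma_\infty$ over $\Sigma_\infty$ with $\lambda(\tilde\Sigma_\infty) < \lambda(\Sigma_\infty) - 2\delta$ for some $\delta > 0$. The next step is to transfer this perturbation to the $\Sigma_i$ for $i$ large: since $\Sigma_i \to \Sigma_\infty$ smoothly, the defining graph function of $\tilde\Sigma_\infty$ over $\Sigma_\infty$ pulls back to graph functions over $\Sigma_i$ yielding surfaces $\tilde\Sigma_i$ that converge smoothly to $\tilde\Sigma_\infty$. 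Continuity of the entropy on the class of closed hypersurfaces under smooth convergence (which follows from the fact, extracted from the proof of Lemma \ref{l:cpt}, that the supremum defining $\lambda$ is attained at $(x_0,t_0)$ lying in a fixed compact set determined by the diameter, volume, and entropy bounds) then gives $\lambda(\Sigma_i) \to \lambda(\Sigma_\infty)$ and $\lambda(\tilde\Sigma_i) \to \lambda(\tilde\Sigma_\infty)$. Hence for $i$ large, $\lambda(\tilde\Sigma_i) < \lambda(\Sigma_i) - \delta$, contradicting the hypothesis once $1/i < \delta$.

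In the remaining case, $\Sigma_\infty$ is the round sphere. Since $H > 0$ on the sphere and $\Sigma_i \to \Sigma_\infty$ in $C^2$ on compact sets, we have $H_{\Sigma_i} > 0$ for $i$ large. Theorem \ref{t:huisken} (whose closed $n=2$ case is Huisken's original classification in \cite{H3}) then forces each such $\Sigma_i$ to itself be a round sphere, which contradicts our choice of $\Sigma_i$ as a non-sphere. This dispatches both cases and completes the contradiction.

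The main obstacle is making the transfer of the entropy-decreasing graph from $\Sigma_\infty$ to $\Sigma_i$ rigorous, i.e., verifying that smooth convergence of closed hypersurfaces in $S_D$ implies continuity of the entropy. Unlike each individual $F_{x_0,t_0}$, which is manifestly continuous in the surface, $\lambda$ is a supremum over the non-compact parameter space $\RR^{n+1}\times (0,\infty)$. However, the uniform bounds on diameter, $F$-derived area, and entropy for sequences in $S_D$ force the sup to be attained on a uniformly bounded set of $(x_0,t_0)$, so the argument reduces to the continuity of $F_{x_0,t_0}(\Sigma)$ on a compact parameter set, which is immediate from the dominated convergence theorem.
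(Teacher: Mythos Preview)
Your argument is correct in outline, and your Case~2 (ruling out $\Sigma_\infty = \SS^2$ via mean convexity and Huisken's classification) matches exactly what the paper does when it observes that $\SS^2$ is isolated in $S_D$. The overall contradiction-plus-compactness structure is a valid alternative to the paper's direct finite-covering argument.

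The real difference is in Case~1. You construct new perturbations $\tilde\Sigma_i$ over each $\Sigma_i$ by pulling back the perturbation of $\Sigma_\infty$, and then invoke continuity of $\lambda$ under smooth convergence. The paper avoids this step entirely with a simpler observation: for a self-shrinker $\Sigma_1$ one has $\lambda(\Sigma_1) = F_{0,1}(\Sigma_1)$ by Lemma~\ref{l:strict}, and since $F_{0,1}$ is manifestly continuous in the surface, every $\Sigma$ in a small $C^2$ neighborhood $U_1$ satisfies
\[
\lambda(\Sigma) \geq F_{0,1}(\Sigma) \geq F_{0,1}(\Sigma_1) - \tfrac{\epsilon_1}{2} = \lambda(\Sigma_1) - \tfrac{\epsilon_1}{2}.
\]
The \emph{single fixed} graph $\tilde\Sigma_1$ then works as the entropy-decreasing perturbation for every $\Sigma \in U_1$ (nearby surfaces are graphs over each other), and a finite cover of the compact set $S_D \setminus \{\SS^2\}$ finishes. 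This only uses the trivial inequality $\lambda \geq F_{0,1}$, never upper semi-continuity of $\lambda$.

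Your entropy-continuity claim can be completed, but your justification is slightly off: the $\tilde\Sigma_i$ are \emph{not} in $S_D$ (they are not self-shrinkers), so the uniform bounds on the maximizing $(x_0,t_0)$ must come from the smooth convergence $\tilde\Sigma_i \to \tilde\Sigma_\infty$ rather than from membership in $S_D$. The upper bound on $t_0$ and the bound on $x_0$ are routine, but the uniform positive lower bound on $t_0$ needs the uniform $\sup|H|$ bound (via property~(2) of Lemma~\ref{l:basicF}) together with a uniform-in-$i$ version of property~(3), which takes a little more care than you indicate. The paper's route buys you complete freedom from this issue.
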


\begin{proof}
Given any  $\Sigma_1 \in S_{D}$ that is not the round sphere,  Corollary  \ref{t:nonlin1csphere} gives a graph  $\tilde{\Sigma}_1$ over $\Sigma_1$ and a constant $\epsilon_1 > 0$ so that
\begin{equation}
		\lambda ( \tilde{\Sigma}_1 ) \leq \lambda (\Sigma_1) - \epsilon_1 \, .
\end{equation}
It follows from the first variation formula (Lemma \ref{l:varl0}) that  $F_{0,1}(\Sigma_s)$ is continuous in $s$ when $\Sigma_s$ varies continuously in a $C^2$ neighborhood of $\Sigma_1$. We can therefore choose a neighborhood $ {U}_1$ of $\Sigma_1$ in the $C^2$ topology so that
\begin{equation}
	\min_{\Sigma \in  {U}_1} \, \lambda (\Sigma) \geq
	\min_{\Sigma \in  {U}_1} \, F_{0,1}(\Sigma) \geq F_{0,1} (\Sigma_1)  - \frac{\epsilon_1}{2}
	= \lambda (\Sigma_1) - \frac{\epsilon_1}{2} \, ,
\end{equation}
where the last equality used  Lemma \ref{l:strict}.
After possibly  shrinking the neighborhood $U_1$ of $\Sigma_1$, we can assume that each pair of surfaces in $U_1$ are graphs over each other.

We observe next  $\SS^2$ is an isolated point in $S_{D}$ with respect to the $C^2$ topology since any sufficiently close self-shinker is still closed and mean convex and, thus, a round $\SS^2$ by Huisken's classification, \cite{H3}.
 In particular,
since $S_{D}$ is compact in the $C^2$ topology by Corollary \ref{c:cpt2}, so is $S_{D} \setminus
\{ \SS^2 \}$.  Therefore,
we can choose a finite set of self-shrinkers $\Sigma_i \in  S_{D} \setminus   \{ \SS^2 \}$, graphs $\tilde{\Sigma}_i$ over $\Sigma_i$, neighborhoods $U_i$ of $\Sigma_i$, and constants $\epsilon_i > 0$ so that:
\begin{itemize}
\item  $S_{D} \setminus \{ \SS^2 \} \subset \cup U_i$ \, .
\item If
$\Sigma \in U_i \cap S_{D} $, then
$\lambda ( \tilde{\Sigma}_i ) \leq \lambda (\Sigma) - \epsilon_1/2$ and $\tilde{\Sigma}_i$ is a graph over $\Sigma$.
\end{itemize}
The corollary follows with $\epsilon = \frac{1}{2} \, \min \epsilon_i$.
\end{proof}

\subsection{MCF's that disappear in compact points generically do so in round points}

We will need the following consequence of   Brakke's theorem, \cite{B}, or White, \cite{W5}:

\begin{Lem}	\label{l:brakke}
Suppose that $M_t$ is a MCF of smooth closed surfaces in $\RR^3$ for $t < 0$
and $\Sigma_0$ is a closed smooth embedded self-shrinker equal to the   $t=-1$ time-slice of a multiplicity one tangent flow to  $M_t$ at $(0,0)$. Then
  we can choose a sequence $s_j > 0$ with $s_j \to 0$ so that
\begin{equation}
  \frac{1}{\sqrt{s_j}} \, M_{- s_j}  {\text{ is a graph over }} \Sigma_0  {\text{ of a function }} u_j
   {\text{ with }}  ||u_j||_{C^{2}} \to 0  \, .
\end{equation}
\end{Lem}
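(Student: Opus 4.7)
My plan is to start with the definition of a tangent flow and extract the natural sequence of times, then use the multiplicity-one hypothesis together with Brakke's local regularity theorem (or its sharpened version due to White) to upgrade the weak convergence of rescalings to smooth $C^2$ graph convergence.

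By the definition of tangent flow recalled in the introduction, there is a sequence $c_j \to \infty$ such that the parabolically rescaled flows $M^j_t = c_j \, M_{c_j^{-2}\,t}$ converge as Brakke flows to a tangent flow $\mathcal{T}_t$ at $(0,0)$, and by hypothesis this tangent flow has multiplicity one with $t=-1$ time-slice equal to $\Sigma_0$; since $\mathcal{T}_t$ is a self-shrinking MCF, $\mathcal{T}_t = \sqrt{-t}\,\Sigma_0$ by Lemma \ref{l:ss}. Set $s_j = c_j^{-2}$, so that $s_j \searrow 0$. Then by the very definition of the rescaling,
\begin{equation}
\frac{1}{\sqrt{s_j}}\,M_{-s_j} \;=\; c_j\,M_{-c_j^{-2}} \;=\; M^j_{-1},
\end{equation}
so this sequence of surfaces converges, as Radon measures with integer multiplicity, to $\Sigma_0$ with multiplicity one.

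To upgrade the weak convergence to $C^2$ graph convergence, I would invoke Brakke's local regularity theorem (in the form proved by White, \cite{W5}). The key point is that at every point $p \in \Sigma_0$, the tangent flow $\mathcal{T}_t$ restricted to a small parabolic neighborhood of $(p,-1)$ is a smooth, multiplicity-one MCF (since $\Sigma_0$ is smooth). By the upper semi-continuity of Gaussian density together with multiplicity one, for all sufficiently large $j$ the Gaussian densities of $M^j_t$ in a fixed parabolic ball around $(p,-1)$ are arbitrarily close to one (or more precisely, close to the density of the smooth limit). Brakke/White regularity then yields uniform curvature bounds on the rescaled flows in that parabolic neighborhood, and combined with the weak convergence this promotes the convergence to smooth convergence on compact subsets, in particular in $C^2$. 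Since $\Sigma_0$ is closed (hence compact), a finite covering by such neighborhoods yields uniform $C^2$ convergence on all of $\Sigma_0$. Thus for $j$ large, each $M^j_{-1}$ is a normal graph over $\Sigma_0$ of a function $u_j$ with $\|u_j\|_{C^2} \to 0$.

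The main (only real) obstacle is verifying the hypotheses of Brakke's regularity theorem from the multiplicity-one assumption — this is precisely the reason multiplicity one was invoked, since higher multiplicity would allow sheeting behavior that prevents any $C^2$ graph representation. Once multiplicity one is in hand, the regularity theorem applies directly and the rest of the argument is a standard compactness/covering routine; no further analysis of the self-shrinker equation is needed, since $\Sigma_0$ enters only as a smooth embedded surface serving as the graphical base.
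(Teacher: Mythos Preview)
Your approach is essentially the same as the paper's: extract the rescaling sequence from the definition of tangent flow, then invoke White's version of Brakke's regularity theorem \cite{W5} to promote the multiplicity-one Brakke convergence to $C^{2}$ graph convergence, using compactness of $\Sigma_0$ to make the estimates uniform. The only place the paper is more careful is in verifying the density hypothesis of \cite{W5}: rather than appealing loosely to ``upper semi-continuity of Gaussian density,'' it shows explicitly that $F_{x_0,t_0}(\Sigma_0) < 1+\epsilon$ for all small $t_0$ (Lemma~\ref{l:basicF}), then uses a uniform Lipschitz bound on $(x_0,t_0)\mapsto F_{x_0,t_0}(M^j_{-1})$ together with Radon-measure convergence to get the bound $F_{x_0,t_0}(M^j_{-1}) < 1+2\epsilon$ uniformly on a compact set of $(x_0,t_0)$, propagates this forward by Huisken monotonicity, and finally shifts the $s_j$ slightly so that the resulting $C^{2,\alpha}$ bounds land at $t=-1$.
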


\begin{proof}
The main point is to show that Huisken's density is sufficiently close to one on all sufficiently small scales for the rescalings of $M_t$.  Once we have this, then Brakke's theorem will give uniform estimates and the lemma will follow easily.

  Fix $\epsilon > 0$ small (to be given by theorem $3.1$ in \cite{W5}).  Since $\Sigma_0$ is a smooth closed embedded surface, (2) and (3) in Lemma \ref{l:basicF}  give some $\bar{r} > 0$ so that
  \begin{equation}	\label{e:choosebarr}
  	\sup_{ t_0 \leq \bar{r}} \, \left( \sup_{x_0 \in \RR^3} \, F_{x_0 , t_0} (\Sigma_0) \right) < 1 + \epsilon \, .
  \end{equation}

    The definition of tangent flow (see, e.g., Lemma $8$ in \cite{I1} or \cite{W4}) gives a sequence $s_j > 0$ with $s_j \to 0$ so that  the rescaled flows
  $M^j_t = \frac{1}{\sqrt{s_j}} \, M_{ \frac{t}{s_j}}$ converge to the multiplicity-one  flow $\sqrt{-t} \, \Sigma_0$.    Let $M^j_{-1} = \frac{1}{\sqrt{s_j}} \, M_{ \frac{-1}{s_j}}$ be the time $t=-1$ slice of the $j$-th rescaled flow.  We can assume that the $M^j_{-1}$'s converge to $\Sigma_0$ as Radon measures and with respect to Hausdorff distance (see $7.1$ in \cite{I2}).

 We will use the convergence together with \eqr{e:choosebarr} to get uniform bounds for the $F$ functionals on the $M^j_{-1} $'s.  To do this, define a sequence of functions $g_j$ by
 \begin{equation}
 	g_j (x_0 , t_0) = F_{x_0 , t_0} (M^j_{-1} )  \, .
 \end{equation}
 We will only consider the $g_j$'s on the region   $\overline{B} \times [\bar{r}/3 , \bar{r} ]$ where $B \subset \RR^3$ is a fixed ball that contains $\Sigma_0$ and all of the $M^j_{-1} $'s.
 It follows from the first variation formula (Lemma \ref{l:varl0}) that the $g_j$'s are uniformly Lipschitz in this region with
 \begin{equation}	\label{e:gjlip}
 	\sup_{ \overline{B} \times [\bar{r}/3 , \bar{r} ]} \, \, \left| \nabla_{x_0 , t_0} \, g_j \right| \leq C \, ,
 \end{equation}
 where $C$ depends on $\bar{r}$, the radius of the ball $B$, and the scale-invariant local area bounds for the
  $M^j_{-1} $'s which are uniformly bounded by
Lemma \ref{l:volb}.
Since the $M^j_{-1} $'s converge to $\Sigma_0$ as Radon measures and $\Sigma_0$ satisfies
\eqr{e:choosebarr}, it follows from \eqr{e:choosebarr} that
\begin{equation}
	\lim_{j \to \infty} \, g_j (x_0 , t_0) < 1 + \epsilon {\text{ for each fixed }}
	(x_0 , t_0) \in B \times [\bar{r}/3 , \bar{r} ] \, .
\end{equation}
Combining this with \eqr{e:gjlip} and the compactness of $\overline{B} \times [\bar{r}/3 , \bar{r} ]$, there exists some $\bar{j}$ sufficiently large so that for all $j > \bar{j}$ we have
\begin{equation}	\label{e:densityb1}
	\sup_{ \overline{B} \times [\bar{r}/3 , \bar{r} ]} \, F_{x_0 , t_0} (M^j_{-1} ) =
	\sup_{ \overline{B} \times [\bar{r}/3 , \bar{r} ]} \, g_j(x_0 , t_0) < 1 + 2\, \epsilon \, .
\end{equation}
  By  \eqr{e:huiskenF} (which used Huisken's monotonicity formula), we get for every $t> -1$ that
\begin{equation}	\label{e:huiskenFa}
		F_{x_0 , t_0} ( M^j_t)  \leq  F_{x_0 , t_0 + (t+1)} (M^j_{-1})  \, .
\end{equation}
Hence, if $t \in (-1 + \frac{\bar{r}}{3} , - 1 +   \frac{2 \bar{r}}{3})$,   $t_0 \leq
 \frac{\bar{r}}{3}$, and $j > \bar{j}$, then \eqr{e:densityb1} and \eqr{e:huiskenFa} together yield
 \begin{equation}
 	F_{x_0 , t_0} ( M^j_t)  < 1 + 2\, \epsilon \, .
 \end{equation}
This is precisely what is needed to apply
theorem $3.1$ in \cite{W5} to get  uniform $C^{2,\alpha}$ bounds on $M^j_t$ for all
 $t \in (-1 + \frac{4 \bar{r}}{9} , - 1 +   \frac{5 \bar{r}}{9})$  for some $\alpha \in (0, 1)$.  We can slightly change the $s_j$'s so that we instead have uniform $C^{2,\alpha}$ bounds on $M^j_{-1}$.

Finally, observe that if $\Sigma_j$ is a sequence of closed surfaces converging to a closed surface $\Sigma_0$ as Radon measures and both the $\Sigma_j$'s and $\Sigma_0$ satisfy uniform $C^{2,\alpha}$ bounds, then the $\Sigma_j$'s must converge to $\Sigma_0$ uniformly in  $C^{2}$.
\end{proof}

\begin{proof}
(of Theorem \ref{c:grayson}).
We will construct a piece-wise MCF with a finite number of discontinuities that eventually becomes extinct in a round point.   This comes from doing a smooth jump just before a  (non-round) singular time, where we replace a time slice of the flow by a graph over it, and the crucial point is to show that there is a positive constant $\epsilon > 0$ so that the entropy   decreases by at least $\epsilon$ after each replacement.  This will come from Corollary \ref{c:nl}.  We repeat this until we get to a singular point where every tangent flow consists of shrinking spheres.

Let $M^0_t$ denote the MCF with initial surface $M_0$ and let $t^{sing}_0$ be the first singular time.  By assumption, all of the tangent flows at $t^{sing}_0$ are smooth, have multiplicity one, and correspond to compact self-shrinkers with diameter at most $D$.  In particular, since the tangent flows are compact, there is only one singular point $x_0 \in \RR^3$.
  Let $\Sigma_0$ be a self-shrinker equal to the $t=-1$ time-slice of a multiplicity one tangent flow at the singularity.
  Since $\Sigma_0$ is assumed to be smooth, closed, embedded and multiplicity one,
     Lemma \ref{l:brakke} gives a  sequence $s_j > 0$ with $s_j \to 0$ so that
\begin{equation}	\label{e:cvgst0}
  \frac{1}{\sqrt{s_j}} \, \left( M^0_{t^{sing}_0 - s_j} - x_0 \right)  {\text{ is a graph over }} \Sigma_0  {\text{ of a function }} u_j
   {\text{ with }}  ||u_j||_{C^{2}} \to 0  \, .
\end{equation}

There are two possibilities.  First, if $\Sigma_0$ is the round sphere for every tangent flow at $x_0$, then \eqr{e:cvgst0} implies that
$M^0_{t^{sing}_0 - s_j}$ is converging to a round sphere for every sequence $s_j \to 0$.

Suppose instead that there is at least one tangent flow so that
$\Sigma_0$ is not the round sphere.  Then,
we can apply Corollary \ref{c:nl} to get a graph $\Gamma_0$ over $\Sigma_0$ with
$	\lambda (\Gamma_0) < \lambda (\Sigma_0) - \epsilon $
where $\epsilon > 0$ is a fixed constant given by the corollary.  When $j$ is sufficiently large,
$\left( \sqrt{s_j} \Gamma_0 \right) + x_0$ is a graph over $M^0_{t^{sing}_0 - s_j}$ and
\begin{equation}
	\lambda ( \left( \sqrt{s_j} \, \Gamma_0 \right) + x_0 ) = \lambda (\Gamma_0) < \lambda (\Sigma_0) - \epsilon  \leq \lambda  (M^0_{t^{sing}_0 - s_j}) - \epsilon \, ,
\end{equation}
where the first equality used
the scale invariance of entropy (by \eqr{e:scaleF}) and the last inequality used
the monotonicity of entropy under MCF (Lemma \ref{l:entromono}).  We then replace $M^0_{t^{sing}_0 - s_j}$ with $\left( \sqrt{s_j} \Gamma_0 \right) + x_0$ and restart the flow.
Since the entropy goes down by a uniform constant $\epsilon > 0$ at each replacement and is non-increasing under MCF, this can only occur a finite number of times and the flow eventually reaches a singularity where all tangent flows are round.
\end{proof}

\section{Non-compact self-shrinkers}	\label{s:noncompactspec}

In this section, suppose that $\Sigma \subset \RR^{n+1}$ is a complete non-compact hypersurface without boundary that satisfies $H = \frac{\langle  x , \nn \rangle}{ 2}$    and  has polynomial volume growth.
 Throughout, $ L= \Delta + |A|^2 + \frac{1}{2} - \frac{1}{2} \, x \cdot \nabla$ will be operator from  the second variation formula for the $F_{0,1}$ functional.

 \subsection{The  spectrum of $L$ when $\Sigma$ is non-compact}

Since $\Sigma$ is non-compact,  there may not be a lowest eigenvalue for $L$.  However, we can still define the bottom of the spectrum (which we still call $\mu_1$) by
\begin{equation}	\label{e:lam1}
	\mu_1 = \, \inf_{f} \,  \, \frac{    \int_{\Sigma} \left(  |\nabla f |^2 - |A|^2 \, f^2
	- \frac{1}{2} \, f^2 \right) \, \e^{- \frac{|x|^2}{4} } }{   \int_{\Sigma} f^2 \, \e^{- \frac{|x|^2}{4} } } \, ,
\end{equation}
where the infimum is taken over smooth functions $f$ with compact support.   Since $\Sigma$ is non-compact,   we must allow the possibility that $\mu_1 = - \infty$.   Clearly, we have that $\mu_1 \leq \mu_1 (\Omega)$ for any subdomain $\Omega \subset \Sigma$.

We show next that $\mu_1$ is always negative; this result will not be used here, but it is used in \cite{CM3}.

\begin{Thm}	\label{t:spectral0}
If the hypersurface $\Sigma \subset \RR^{n+1}$ is a smooth complete embedded self-shrinker without boundary and with polynomial volume growth,
then $\mu_1 \leq  - \frac{1}{2}$.
\end{Thm}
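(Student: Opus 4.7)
The plan is to exploit the translation eigenfunctions constructed in Theorem \ref{t:spectral}. Recall that for any constant vector $v \in \RR^{n+1}$, the function $g = \langle v, \nn \rangle$ satisfies $L g = \tfrac{1}{2} g$, i.e.\ is a (formal) eigenfunction of $L$ with eigenvalue $-\tfrac{1}{2}$. If $\Sigma$ were closed, plugging $g$ into the Rayleigh quotient in \eqr{e:lam1} and integrating by parts would give quotient exactly $-\tfrac{1}{2}$. For noncompact $\Sigma$ the function $g$ is not compactly supported, so the argument must be carried out with cutoff functions, using the polynomial volume growth to control the error.

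First I would choose a direction $v$ so that $g = \langle v, \nn \rangle \not\equiv 0$; this is possible because if every coordinate of $\nn$ vanished identically, $\nn$ itself would vanish. Since $|g| \leq |v|$ is bounded and $\Sigma$ has polynomial volume growth, the weighted integrals $\int_\Sigma g^2 \, \e^{-|x|^2/4}$ and $\int_\Sigma |\nabla g|^2 \, \e^{-|x|^2/4}$ are finite (the latter using $|\nabla g| \leq |A||v|$ together with the standard estimate $\int |A|^2 \e^{-|x|^2/4}<\infty$ that follows from Lemma \ref{l:rightsign} applied to $H$ and from $|A|^2$-bounds, or more simply by recording only what is needed here --- just the integral of $g^2$ --- as in the next step).

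Next, let $\phi_j$ be the standard radial cutoff that equals $1$ on $B_j$, vanishes outside $B_{2j}$, and satisfies $|\nabla \phi_j| \leq C/j$. Apply Lemma \ref{l:self} to $u=g\phi_j^2$ and $v=g$ (both compactly supported after cutoff); using $L g = \tfrac{1}{2} g$ and expanding $\nabla(g\phi_j^2)$, a direct computation gives
\begin{equation}
\int_\Sigma \Bigl(|\nabla(\phi_j g)|^2 - |A|^2 (\phi_j g)^2 - \tfrac{1}{2}(\phi_j g)^2\Bigr)\e^{-|x|^2/4} = -\tfrac{1}{2}\int_\Sigma \phi_j^2 g^2 \,\e^{-|x|^2/4} + \int_\Sigma g^2 |\nabla \phi_j|^2\,\e^{-|x|^2/4}.
\end{equation}
Hence the Rayleigh quotient of the test function $f_j = \phi_j g$ equals
\begin{equation}
\mathcal{R}(f_j) = -\tfrac{1}{2} + \frac{\int_\Sigma g^2 |\nabla \phi_j|^2\,\e^{-|x|^2/4}}{\int_\Sigma g^2 \phi_j^2\,\e^{-|x|^2/4}}.
\end{equation}

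Finally I would let $j\to\infty$. The denominator converges to $\int_\Sigma g^2 \,\e^{-|x|^2/4} > 0$ by monotone convergence, while the numerator is bounded by $(C^2/j^2)\int_\Sigma g^2 \,\e^{-|x|^2/4}$ (using $|\nabla\phi_j|\leq C/j$ and the fact that $\int g^2 \e^{-|x|^2/4}<\infty$), which tends to $0$. Therefore $\mathcal{R}(f_j)\to -\tfrac{1}{2}$, and taking the infimum over compactly supported test functions yields $\mu_1 \leq -\tfrac{1}{2}$. The main (minor) obstacle is just verifying the weighted integrability to justify the cutoff limit, which follows immediately from the boundedness of $g$ and polynomial volume growth; no refined decay is needed.
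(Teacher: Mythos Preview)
Your proposal is correct and follows essentially the same approach as the paper. Both arguments use a translation eigenfunction $g=\langle v,\nn\rangle$ satisfying $Lg=\tfrac{1}{2}g$, cut it off with a compactly supported $\phi_j$, and use polynomial volume growth together with the boundedness $|g|\leq|v|$ to show that the error term $\int g^2|\nabla\phi_j|^2\e^{-|x|^2/4}$ vanishes in the limit; the paper simply makes the specific choice $v=\nn(p)$ for a fixed $p\in\Sigma$ (so that $g(p)=1$ guarantees $g\not\equiv 0$) and organizes the integration by parts slightly differently via the identity $L(\eta g)=\tfrac{1}{2}\eta g+g\,\cL\eta+2\langle\nabla\eta,\nabla g\rangle$, but the underlying computation is the same as yours.
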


\begin{proof}
(of Theorem \ref{t:spectral0}).
   Fix a point $p$ in $\Sigma$ and define a function   $v$ on $\Sigma$ by
\begin{equation}
	v(x)  = \langle \nn (p) , \nn (x) \rangle \, .
\end{equation}
It follows that $v (p) = 1$, $|v| \leq 1$, and, by \eqr{e:spec1},
\begin{equation}
	L \, v = \frac{1}{2} \, v \, .
\end{equation}
When $\Sigma$ is closed, it follows immediately that $\mu_1 \leq - \frac{1}{2}$.  However, if $\Sigma$ is open, then we cannot use $v$ as a test function to get an upper bound for $\mu_1$.
To deal with this,
given any fixed $\mu < - \frac{1}{2}$, we will use $v$ to
  construct a  compactly supported function $u$ with
  \begin{equation}	\label{e:Lstable}
	- \int \left(  u \, L u + \mu \, u^2 \right) \,  \e^{\frac{-|x|^2}{4}} < 0 \, .
  \end{equation}
Given any smooth function $\eta$, we have
\begin{equation}
	L \, (\eta \, v)  = \eta \, L \, v + v \, \cL \, \eta + 2 \langle \nabla \eta , \nabla v
	\rangle
	  = \frac{1}{2} \, \eta \, v  + v \,  \cL \, \eta + 2 \langle \nabla \eta , \nabla v
	\rangle
\, ,
\end{equation}
where  $\cL = \Delta   - \frac{1}{2} \, x \cdot \nabla$ is the operator from Lemma \ref{l:self}.
Taking $\eta$ to have compact support, we get that
\begin{align}	\label{e:notst}
	- \int \eta \, v \, L (\eta \, v) \, \e^{\frac{-|x|^2}{4}} &\leq
	  - \int \left[   \frac{1}{2} \,   \eta^2 \, v^2  +
	\eta \,   v^2  \, \cL \, \eta +
	\frac{1}{2}  \,  \langle \nabla \eta^2 , \nabla v^2
	\rangle  \right] \, \, \e^{\frac{-|x|^2}{4}}  \notag \\
	& =  \int \left[  - \frac{1}{2} \,   \eta^2 \, v^2  +  v^2  \, | \nabla \eta|^2 \right]
	  \, \e^{\frac{-|x|^2}{4}}
	  \, ,
\end{align}
where the  equality came from applying  Lemma \ref{l:self}
to   get
\begin{equation}
	 \int    \frac{1}{2} \,
	 \langle \nabla \eta^2 , \nabla v^2
	\rangle   \, \e^{\frac{-|x|^2}{4}}  = -  \frac{1}{2} \, \int \left( v^2 \, \cL \eta^2 \right) \,  \e^{\frac{-|x|^2}{4}}
	=    -   \int   v^2 \, ( \eta \, \cL \eta + |\nabla \eta|^2  ) \,  \e^{\frac{-|x|^2}{4}}  \, .
\end{equation}
If we take $\eta$ to be identically one on $B_R$ and cut it off linearly to zero on $B_{R+1} \setminus B_R$, then \eqr{e:notst} gives
\begin{equation}	\label{e:contr}
	- \int \left[  \eta \, v \, L (\eta \, v) + \mu \, \eta^2 \, v^2 \right]
	\, \e^{\frac{-|x|^2}{4}} \leq  ( 1 - \mu) \, \int_{\Sigma \setminus B_R} v^2 \, \e^{\frac{-|x|^2}{4}}
	- \left( \frac{1}{2} + \mu \right)  \, \int_{B_R \cap \Sigma}  v^2 \, \e^{\frac{-|x|^2}{4}} \, .
\end{equation}
However, since $|v| \leq 1$ and $\Sigma$ has polynomial volume growth, we know that
\begin{equation}
 \lim_{R \to \infty} \, \, \int_{\Sigma \setminus B_R} v^2 \, \e^{\frac{-|x|^2}{4}} = 0 \, .
\end{equation}
Combining this and $ \left( \frac{1}{2} + \mu \right) > 0$, we see that
  the  right-hand side of \eqr{e:contr} must be negative for all sufficiently large $R$'s.  In particular, when $R$ is large, the function $u = \eta \, v$  satisfies \eqr{e:Lstable}.
  This completes the proof.
\end{proof}

The main result of this subsection is Theorem \ref{c:pde} below which shows that the bottom of the spectrum of $L$ is strictly less than $-1$ if the mean curvature changes sign.

The following standard lemma shows that we get the same $\mu_1$ if we take the infimum over a broader class of functions.

\begin{Lem}	\label{l:infover}
When $\Sigma$ is complete and non-compact, we get the same $\mu_1$ by taking the infimum over Lipschitz functions $f$ satisfying
\begin{equation}	\label{e:allfinte}
	\int_{\Sigma} \left( f^2 + |\nabla f|^2 + |A|^2 \, f^2 \right) \, \e^{- \frac{|x|^2}{4} }    < \infty \, .
\end{equation}
\end{Lem}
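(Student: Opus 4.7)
Denote by $\tilde{\mu}_1$ the infimum of the Rayleigh quotient on the right-hand side of \eqr{e:lam1} taken over all Lipschitz $f$ satisfying \eqr{e:allfinte}. Since every smooth compactly supported function is Lipschitz and automatically satisfies \eqr{e:allfinte} (the integrands are locally bounded and $f$ vanishes outside a compact set), we obtain the trivial inequality $\tilde{\mu}_1 \leq \mu_1$. The content of the lemma is the reverse inequality $\mu_1 \leq \tilde{\mu}_1$, so the plan is to show that any competitor $f$ in the broader class can be approximated by smooth compactly supported functions whose Rayleigh quotients converge to that of $f$.

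Given such an $f$ (which we may assume is not identically zero), I would introduce a family of Lipschitz cutoffs $\eta_R$ on $\Sigma$ with $\eta_R \equiv 1$ on $B_R \cap \Sigma$, $\eta_R \equiv 0$ outside $B_{R+1} \cap \Sigma$, $0 \leq \eta_R \leq 1$, and $|\nabla \eta_R| \leq 1$, and consider $g_R = \eta_R \, f$. Each $g_R$ is Lipschitz with compact support. The goal is to show that as $R \to \infty$
\begin{equation}
\int_\Sigma g_R^2 \, \e^{-\frac{|x|^2}{4}} \to \int_\Sigma f^2 \, \e^{-\frac{|x|^2}{4}}, \quad \int_\Sigma |A|^2 g_R^2 \, \e^{-\frac{|x|^2}{4}} \to \int_\Sigma |A|^2 f^2 \, \e^{-\frac{|x|^2}{4}},
\end{equation}
and
\begin{equation}
\int_\Sigma |\nabla g_R|^2 \, \e^{-\frac{|x|^2}{4}} \to \int_\Sigma |\nabla f|^2 \, \e^{-\frac{|x|^2}{4}}.
\end{equation}
The first two follow at once from dominated convergence using \eqr{e:allfinte}. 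For the gradient term, I would expand
\begin{equation}
|\nabla g_R|^2 = \eta_R^2 \, |\nabla f|^2 + 2 \eta_R \, f \, \langle \nabla f , \nabla \eta_R \rangle + f^2 \, |\nabla \eta_R|^2,
\end{equation}
handle the first summand by dominated convergence, and control the third via $|\nabla \eta_R| \leq 1$ and $\supp \nabla \eta_R \subset B_{R+1}\setminus B_R$, which gives
\begin{equation}
\int_\Sigma f^2 \, |\nabla \eta_R|^2 \, \e^{-\frac{|x|^2}{4}} \leq \int_{(B_{R+1}\setminus B_R)\cap \Sigma} f^2 \, \e^{-\frac{|x|^2}{4}} \to 0.
\end{equation}
The cross term is then dispatched by Cauchy--Schwarz, whose bound is the geometric mean of a uniformly bounded quantity and the integral above. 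Combining these three limits, the Rayleigh quotient of $g_R$ converges to that of $f$.

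Finally, $g_R$ is only Lipschitz, whereas the definition of $\mu_1$ uses smooth compactly supported functions. I would remove this gap by a standard mollification: cover a neighborhood of $\supp g_R$ by finitely many coordinate charts on $\Sigma$, use a partition of unity, and mollify in each chart; the result is a sequence of smooth compactly supported functions converging to $g_R$ in $W^{1,2}$, which suffices to make all three integrals (which are continuous in this topology, using that $|A|^2$ is locally bounded on $\Sigma$) converge to those of $g_R$. A diagonal argument then produces a sequence of smooth compactly supported functions whose Rayleigh quotients converge to that of $f$, giving $\mu_1 \leq R(f)$; taking the infimum over admissible $f$ yields $\mu_1 \leq \tilde{\mu}_1$.

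The only real subtlety to watch is the case $\tilde{\mu}_1 = -\infty$, in which one simply applies the above approximation to a minimizing sequence $f_k$ with $R(f_k) \to -\infty$ and uses the diagonal procedure to produce smooth compactly supported $\varphi_k$ with $R(\varphi_k) \to -\infty$ as well; this forces $\mu_1 = -\infty$. No other step requires more than routine estimates, since the exponential weight $\e^{-|x|^2/4}$ makes all tail integrals small once \eqr{e:allfinte} holds.
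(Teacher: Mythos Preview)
Your proof is correct and follows essentially the same approach as the paper: reduce to compactly supported Lipschitz functions by multiplying $f$ by a standard cutoff $\eta_R$ (the paper's $\phi_j$), then invoke a density argument to pass to smooth functions. Your treatment is in fact slightly more careful than the paper's---you expand $|\nabla g_R|^2$ and handle the cross term and remainder explicitly, you spell out the mollification step (the paper just says ``standard density arguments''), and you address the case $\tilde{\mu}_1=-\infty$---but the overall strategy is identical.
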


\begin{proof}
By standard density arguments, we get the same $\mu_1$ if we take the infimum over Lipschitz functions with compact support.
Let $\phi_j$ be a Lipschitz cutoff function that is identically one on $B_j$ and cuts off to zero linearly between $\partial B_j$ and $\partial B_{j+1}$.  Given any $C^1$ function $f$ (not identically zero) that satisfies \eqr{e:allfinte}, we
 set $f_j = \phi_j \, f$.  It follows from \eqr{e:allfinte} and the monotone convergence theorem that
\begin{align}
	 \int_{\Sigma} \left(  |\nabla f_j |^2 - |A|^2 \, f_j^2
	- \frac{1}{2} \, f_j^2 \right) \, \e^{- \frac{|x|^2}{4} }  &\to    \int_{\Sigma} \left(  |\nabla f |^2 - |A|^2 \, f^2
	- \frac{1}{2} \, f^2 \right) \, \e^{- \frac{|x|^2}{4} }  \, , \\
	\int_{\Sigma} f_j^2 \, \e^{- \frac{|x|^2}{4} }   &\to  \int_{\Sigma} f^2 \, \e^{- \frac{|x|^2}{4} }  \, ,
\end{align}
and the lemma follows from this.
\end{proof}

  The next lemma records integral estimates for   eigenfunctions that are either in the weighted $W^{1,2}$ space or are positive.

    \begin{Lem}	\label{l:L2W12}
  Suppose that $f$ is a $C^2$ function with
 $L \, f = - \mu \, f$ for   $\mu \in \RR$.
 \begin{enumerate}
\item  If    $f$ is in the weighted $W^{1,2}$ space, then  $|A| \, f$ is in the weighted $L^2$ space and
  \begin{equation}	\label{e:L2W12}
  	2 \, \int_{\Sigma}  |A|^2 \, f^2  \, \e^{- \frac{|x|^2}{4} }
	  \leq \int_{\Sigma} \left( (1 - 2 \mu  ) \,  f^2   + 4 \, |\nabla f|^2 \right) \,  \e^{- \frac{|x|^2}{4} }  \, .
  \end{equation}
  \item If $f > 0$ and $\phi$ is in the weighted $W^{1,2}$ space, then
   \begin{equation}	\label{e:Llog2}
  	  \int_{\Sigma}  \phi^2 \, \left(    |A|^2  + |\nabla \log f|^2 \right)   \, \e^{- \frac{|x|^2}{4} }
	  \leq \int_{\Sigma} \left(  4 \, |\nabla \phi  |^2 - 2 \, \mu \, \phi^2 \right) \,  \e^{- \frac{|x|^2}{4} }  \, .
  \end{equation}
  \end{enumerate}
   \end{Lem}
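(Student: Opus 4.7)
The plan is to exploit the weighted self-adjointness of $\cL = \Delta - \tfrac{1}{2}\langle x,\nabla\cdot\rangle$ from Lemma \ref{l:self} applied to the eigenfunction equation, which by the definition $L = \cL + |A|^2 + \tfrac{1}{2}$ can be rewritten as
\[
\cL f = -\bigl(|A|^2 + \tfrac{1}{2} + \mu\bigr) f.
\]
One cannot simply pair this with $f$ and integrate against the weight, since $|A|^2 f^2$ is not a priori in the weighted $L^1$ space — this is essentially the conclusion of (1). I would therefore work with a Lipschitz cutoff $\phi$ of compact support throughout and take limits at the end.

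For part (1), I would multiply the displayed equation by $\phi^2 f$, integrate against $e^{-|x|^2/4}$, and apply Corollary \ref{c:self} to the compactly supported function $\phi^2 f$ and the $C^2$ function $f$. This produces
\[
\int \phi^2 |\nabla f|^2 \, e^{-\frac{|x|^2}{4}} + 2 \int \phi f \langle \nabla \phi, \nabla f\rangle \, e^{-\frac{|x|^2}{4}} = \int \phi^2 \bigl(|A|^2 + \tfrac{1}{2} + \mu\bigr) f^2 \, e^{-\frac{|x|^2}{4}}.
\]
Using the absorbing inequality $|2\phi f \langle \nabla \phi, \nabla f\rangle| \leq \epsilon\, \phi^2 |\nabla f|^2 + \epsilon^{-1} f^2 |\nabla \phi|^2$ and taking $\phi = \phi_j$ a standard sequence of cutoffs equal to $1$ on $B_j$ with $|\nabla \phi_j|\leq 1$, the hypothesis that $f$ lies in the weighted $W^{1,2}$ space forces $\int f^2 |\nabla \phi_j|^2 e^{-|x|^2/4} \to 0$. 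Choosing $\epsilon_j \to 0$ slowly enough, applying monotone convergence on the (nonnegative) LHS and dominated convergence on the RHS then yields even the sharper bound $\int |A|^2 f^2 \,e^{-|x|^2/4} \leq \int \bigl(|\nabla f|^2 - (\tfrac12+\mu) f^2\bigr) e^{-|x|^2/4}$, from which \eqref{e:L2W12} is immediate.

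For part (2), the positivity of $f$ permits the substitution $w = \log f$. A direct computation using $\nabla f = f\nabla w$ and $\Delta f = f(\Delta w + |\nabla w|^2)$ gives
\[
\cL w = \frac{\cL f}{f} - |\nabla w|^2 = -\bigl(|A|^2 + \tfrac{1}{2} + \mu\bigr) - |\nabla w|^2.
\]
Testing this against a compactly supported Lipschitz $\phi^2$ and applying self-adjointness yields
\[
-2 \int \phi \langle \nabla \phi, \nabla w\rangle \, e^{-\frac{|x|^2}{4}} = -\int \phi^2 \bigl(|A|^2 + \tfrac{1}{2} + \mu + |\nabla w|^2\bigr) \, e^{-\frac{|x|^2}{4}}.
\]
The absorbing inequality $2|\phi \langle \nabla \phi, \nabla w\rangle| \leq |\nabla \phi|^2 + \phi^2 |\nabla w|^2$ cancels the $\phi^2|\nabla w|^2$ term on the right against the one on the left, producing $\int \phi^2(|A|^2 + |\nabla w|^2) e^{-|x|^2/4} \leq \int\bigl(|\nabla \phi|^2 - (\tfrac12+\mu)\phi^2\bigr) e^{-|x|^2/4}$, which is stronger than \eqref{e:Llog2}. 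To pass from compactly supported Lipschitz $\phi$ to arbitrary $\phi$ in the weighted $W^{1,2}$ space, I would apply the inequality to $\phi_j \phi$ and use Fatou on the nonnegative LHS and dominated convergence on the RHS.

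The only real obstacle is analytic rather than conceptual: ensuring the absorbing constant and the cutoff limit can be taken together without loss of control, since $|A|^2 f^2$ is not known to be weighted-integrable before the estimate is established. This is precisely why monotone convergence — which allows the limit integral to be $+\infty$ at intermediate stages — is the correct tool on the LHS, while the $W^{1,2}$ hypothesis is needed to kill the boundary error on the RHS.
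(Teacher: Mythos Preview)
Your approach matches the paper's: both parts proceed by rewriting the eigenfunction equation as $\cL f = -(|A|^2 + \tfrac12 + \mu)f$, testing against a compactly supported cutoff (either $\phi^2 f$ against $\cL f$ or equivalently $\phi^2$ against $\cL f^2$), and passing to the limit via monotone convergence. Your limiting argument in part~(1) is in fact slightly more careful than the paper's and yields the sharper equality $\int |A|^2 f^2 \,e^{-|x|^2/4} = \int\bigl(|\nabla f|^2 - (\tfrac12+\mu)f^2\bigr)e^{-|x|^2/4}$; the paper simply bounds the cross-term by $2[f^2] + 4[|\nabla f|^2]$ using $|\phi|,|\nabla\phi|\leq 1$ and arrives directly at \eqref{e:L2W12}.

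There is, however, a genuine arithmetic slip in your part~(2). With the absorbing inequality $2|\phi\langle\nabla\phi,\nabla w\rangle|\leq |\nabla\phi|^2 + \phi^2|\nabla w|^2$, the $\phi^2|\nabla w|^2$ term cancels \emph{completely} between the two sides, leaving only
\[
\int \phi^2 |A|^2 \, e^{-|x|^2/4} \;\leq\; \int\bigl(|\nabla\phi|^2 - (\tfrac12+\mu)\phi^2\bigr)\,e^{-|x|^2/4},
\]
with no $|\nabla\log f|^2$ on the left. This does not imply \eqref{e:Llog2}, and your claimed ``stronger'' inequality is false as stated. The fix is exactly what the paper does: use the asymmetric absorbing inequality $\langle\nabla\phi^2,\nabla w\rangle \leq 2|\nabla\phi|^2 + \tfrac12\phi^2|\nabla w|^2$, which retains $\tfrac12\phi^2|\nabla w|^2$ on the left at the cost of the constant $4$ (rather than $1$) in front of $|\nabla\phi|^2$. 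After this adjustment your cutoff-and-limit argument goes through unchanged.
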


  \begin{proof}
  Within this proof, we will use square brackets $\left[ \cdot \right]$ to denote the weighted integrals
 $\left[ f \right] = \int_{\Sigma} f \, \e^{- \frac{|x|^2}{4} } \, d\mu$.
 We will also use the operator
  $\cL = \Delta   - \frac{1}{2} \, x \cdot \nabla$ from Lemma \ref{l:self}.

 {\bf{Proof of (1)}}:
 If $\phi$ is a smooth function with compact support, then self-adjointness of $\cL = \Delta   - \frac{1}{2} \, x \cdot \nabla$ (Lemma \ref{l:self}) gives
  \begin{equation}	\label{e:cLv}
  	\left[ \langle \nabla \phi^2 , \nabla f^2 \rangle \right] = - \left[ \phi^2 \, \cL f^2 \right] 	=
	- 2 \, \left[ \phi^2 \, \left( |\nabla f|^2  - \left(  |A|^2 + \frac{1}{2} + \mu \right) \,    f^2 \right) \right]\, .
  \end{equation}
  where the last equality used that $\cL f^2 = 2 \, |\nabla f|^2 + 2f \, \cL f$ and
\begin{equation}	\label{e:cLv2}
	\cL f = \left(L -  |A|^2 - \frac{1}{2} \right) \, f = -  \left(  \mu +  |A|^2 + \frac{1}{2} \right) \, f \, .
\end{equation}
 Assume now that $\phi \leq 1$ and $|\nabla \phi| \leq 1$.
Rearranging the terms in \eqr{e:cLv} and
 using Cauchy-Schwarz gives
    \begin{equation}
    	  \left[ \phi^2 \,  (1 + 2 \mu + 2\,  |A|^2 ) \, f^2   \right]
	 =  4 \,  \left[ \phi \, f \,  \langle \nabla \phi  , \nabla f \rangle \right]
    +   2 \,
  	\left[  \phi^2 \,  |\nabla f|^2   \right]   \leq    2 \,
  	\left[  f^2   \right]  + 4 \, \left[ |\nabla f|^2 \right]  \, .
  \end{equation}
  Finally, applying this with $\phi = \phi_j$ where $\phi_j$ is one on $B_j$ and cuts off linearly to zero from $\partial B_j$ to $\partial B_{j+1}$, letting $j \to \infty$, and  using the monotone convergence theorem gives \eqr{e:L2W12}.

   {\bf{Proof of (2)}}:
 Since $f > 0$, $\log f$ is well-defined and we have
  \begin{equation}	\label{e:cLlogf}
	\cL \log f    =
	- |\nabla \log f|^2 + \frac{\Delta f  - \frac{1}{2} \, \langle x , \nabla f \rangle}{f}
	 =   - \mu -  |A|^2 - \frac{1}{2} - |\nabla \log f|^2 \, .
\end{equation}
 Suppose that $\eta$ is a function with compact support.
  As in (1), self-adjointness of $\cL $ (Lemma \ref{l:self}) gives
  \begin{equation}	\label{e:cLlog}
  	\left[ \langle \nabla \eta^2 , \nabla \log f  \rangle \right] = - \left[ \eta^2 \, \cL \log f \right] 	=
	  \left[ \eta^2 \, \left(  \mu +  |A|^2 + \frac{1}{2} + |\nabla \log f|^2 \right) \right] \, .
  \end{equation}
 Combining this with the absorbing inequality
   \begin{equation}	\label{e:cLlog2}
  	  \langle \nabla \eta^2 , \nabla \log f  \rangle   \leq 2 \, | \nabla  \eta |^2 + \frac{1}{2} \, \eta^2 \, |\nabla \log f |^2 \, ,
  \end{equation}
  gives that
    \begin{equation}	\label{e:Llogeta}
  	\left[  \eta^2 \, \left(    |A|^2  + |\nabla \log f|^2 \right)  \right]
	  \leq  \left[  4 \, |\nabla \eta  |^2 - 2 \, \mu \, \eta^2 \right]  \, .
  \end{equation}
  Let   $\eta_j$ be    one on $B_j$ and cut off linearly to zero from $\partial B_j$ to
   $\partial B_{j+1}$.
    Since $\phi$ is in the weighted $W^{1,2}$ space,  applying \eqr{e:Llogeta} with $\eta = \eta_j \, \phi$,
    letting $j \to \infty$,
     and  using the monotone convergence theorem gives  that \eqr{e:Llogeta} also holds with $\eta = \phi$.
     \end{proof}

 We will need the following characterization of the bottom of the spectrum when $\Sigma$ is non-compact that generalizes   (4) in Theorem \ref{t:evans}:

 \begin{Lem}	\label{l:pde}
 If $\mu_1 \ne - \infty$, then
 there is a positive function $u$ on $\Sigma$ with $L \, u = - \mu_1 \, u$.  Furthermore,
 if $v$ is in the weighted $W^{1,2}$ space and $L \, v = - \mu_1 \, v$, then $v = C \, u$ for  $C \in \RR$.
 \end{Lem}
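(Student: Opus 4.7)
The plan is to split the lemma into existence of a positive ground state, obtained by domain exhaustion, and uniqueness, obtained via a ground-state substitution.

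For existence, fix a basepoint $p \in \Sigma$ and exhaust $\Sigma$ by precompact smooth subdomains $\Omega_1 \subset \Omega_2 \subset \cdots$ with $\overline{\Omega_j} \subset \Omega_{j+1}$, $\bigcup_j \Omega_j = \Sigma$, and $p \in \Omega_1$. The remark after Theorem \ref{t:evans} produces a Dirichlet eigenfunction $u_j > 0$ on $\Omega_j$ with lowest eigenvalue $\mu_1(\Omega_j)$, normalized by $u_j(p) = 1$; positivity comes from (4) of Theorem \ref{t:evans}. Domain monotonicity of the Rayleigh quotient \eqref{e:lam1} gives that $\mu_1(\Omega_j)$ is non-increasing in $j$ and that its limit equals $\mu_1$ (the upper bound uses a compactly supported near-minimizer for $\mu_1$). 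The $u_j$ satisfy a uniformly elliptic equation with smooth coefficients on each compact subset, so the Harnack inequality combined with $u_j(p) = 1$ forces locally uniform two-sided bounds, and interior Schauder estimates extract a $C^2_{\mathrm{loc}}$-subsequential limit $u > 0$ satisfying $Lu = -\mu_1 u$.

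For uniqueness, given $v$ in the weighted $W^{1,2}$ space with $Lv = -\mu_1 v$, write $v = u\phi$ with $\phi = v/u$ (well-defined since $u > 0$). Since $Lu = -\mu_1 u$ is equivalent to $\cL u = -(|A|^2 + \tfrac{1}{2} + \mu_1) u$, testing this against $u \psi^2\,\e^{-|x|^2/4}$ for smooth compactly supported $\psi$ and invoking self-adjointness of $\cL$ (Corollary \ref{c:self}) yields the ground-state identity
\begin{equation*}
Q(u\psi) - \mu_1 \int_\Sigma (u\psi)^2\,\e^{-|x|^2/4} = \int_\Sigma u^2\,|\nabla \psi|^2\,\e^{-|x|^2/4},
\end{equation*}
where $Q(f) = \int_\Sigma \bigl(|\nabla f|^2 - (|A|^2 + \tfrac{1}{2}) f^2\bigr)\,\e^{-|x|^2/4}$ is the Dirichlet form of $-L$. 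Apply this with $\psi_j = \eta_j \phi$, where $\eta_j$ is a smooth cutoff equal to $1$ on the intrinsic ball $B_j$, vanishing outside $B_{2j}$, and satisfying $|\nabla \eta_j| \leq C/j$; the key point is that $u\psi_j = \eta_j v$ is compactly supported regardless of any integrability of $u$ alone. Since $v$ lies in weighted $W^{1,2}$, $\eta_j v \to v$ in the weighted $W^{1,2}$ norm, so the left-hand side tends to $Q(v) - \mu_1 \int_\Sigma v^2\,\e^{-|x|^2/4}$, and pairing $Lv = -\mu_1 v$ with $v$ via Corollary \ref{c:self} forces this limit to be $0$. Expanding $u^2|\nabla \psi_j|^2 = v^2|\nabla \eta_j|^2 + \eta_j^2 u^2|\nabla \phi|^2 + 2 u v \eta_j \langle \nabla \eta_j, \nabla \phi\rangle$ and bounding the cross term by Cauchy--Schwarz with parameter $\epsilon \in (0,1)$ gives
\begin{equation*}
\int_\Sigma u^2|\nabla \psi_j|^2\,\e^{-|x|^2/4} \geq (1-\epsilon) \int_\Sigma \eta_j^2 u^2|\nabla \phi|^2\,\e^{-|x|^2/4} - \tfrac{1}{\epsilon}\int_\Sigma v^2|\nabla \eta_j|^2\,\e^{-|x|^2/4}.
\end{equation*}
The last term vanishes as $j \to \infty$ because $|\nabla \eta_j| \leq C/j$ and $v$ is in weighted $L^2$, while the first increases by monotone convergence, so $(1-\epsilon)\int_\Sigma u^2|\nabla \phi|^2\,\e^{-|x|^2/4} \leq 0$ for every $\epsilon \in (0,1)$, forcing $\nabla \phi \equiv 0$ and $v = Cu$.

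The main obstacle is the uniqueness step, because $u$ is produced non-constructively by compactness and is not known to lie in any weighted Sobolev space, so one cannot integrate $vLu - uLv = 0$ directly against the Gaussian weight. The ground-state substitution circumvents this precisely because every term of the identity evaluated at $\psi_j = \eta_j v/u$ either reassembles into an expression involving only $\eta_j v$ or $\nabla(\eta_j v)$, which are controlled by the weighted $W^{1,2}$ hypothesis on $v$, or sits on the non-negative side of a Cauchy--Schwarz inequality where it can be forced to vanish rather than estimated directly.
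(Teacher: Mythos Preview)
Your existence argument is essentially identical to the paper's: exhaust by balls, take normalized positive Dirichlet ground states, pass to a limit via Harnack and elliptic estimates.

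For uniqueness, your ground-state substitution and the paper's $\log u$ computation are two packagings of the same identity. The paper applies Corollary~\ref{c:self} to the pair $(v^2,\log u)$ and to the pair $(v,v)$, subtracts, and obtains $\int_\Sigma |v\,\nabla\log u - \nabla v|^2\,\e^{-|x|^2/4}=0$. Your identity $Q(u\psi)-\mu_1\int(u\psi)^2=\int u^2|\nabla\psi|^2$ with $\psi=\eta_j v/u$ leads to the same integrand, since $u^2|\nabla(v/u)|^2=|\nabla v - v\,\nabla\log u|^2$.

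There is, however, a justification gap in your limit step. You assert that $Q(\eta_j v)-\mu_1\int(\eta_j v)^2 \to Q(v)-\mu_1\int v^2$ via weighted $W^{1,2}$ convergence of $\eta_j v$, and then that this limit vanishes by pairing $Lv=-\mu_1 v$ with $v$ through Corollary~\ref{c:self}. Both steps require $|A|^2 v^2$ to lie in weighted $L^1$: without it, $Q(v)$ is not known to be finite, and the hypothesis~\eqref{e:inl2} of Corollary~\ref{c:self} (which demands $|v\,\cL v|\in L^1$, hence $|A|^2 v^2\in L^1$) is not verified. The weighted $W^{1,2}$ assumption on $v$ alone does not give this. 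The paper closes exactly this gap in its ``Part~2'' by invoking part~(1) of Lemma~\ref{l:L2W12}, which bounds $\int |A|^2 v^2\,\e^{-|x|^2/4}$ in terms of the weighted $W^{1,2}$ norm of $v$. Alternatively, within your own framework you could observe that the right-hand side of your ground-state identity is non-negative, so $Q(\eta_j v)-\mu_1\int(\eta_j v)^2\geq 0$ for every $j$; since the $|\nabla(\eta_j v)|^2$ and $(\eta_j v)^2$ contributions converge to finite limits while $\int |A|^2(\eta_j v)^2$ is monotone in $j$, this forces $\int |A|^2 v^2<\infty$ and then the rest of your argument goes through. Either fix is short, but one of them must be supplied.
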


 \begin{proof}
 Within this proof,  square brackets $\left[ \cdot \right]$ denote the weighted integrals
 $\left[ f \right] = \int_{\Sigma} f \, \e^{- \frac{|x|^2}{4} } \, d\mu$.   We will   use the operator
  $\cL = \Delta   - \frac{1}{2} \, x \cdot \nabla$ from Lemma \ref{l:self} and Corollary \ref{c:self}.

{\bf{Part 1: Existence of $u$}}:   Fix a point $p \in \Sigma$ and let $B_k$ denote the intrinsic ball in $\Sigma$ with radius $k$ and center $p$.
 For each $k$, Theorem \ref{t:evans} implies that the lowest Dirichlet eigenvalue $\mu_1 (B_k)$ is a real number and can be represented by a positive Dirichlet eigenfunction $u_k$ on $B_k$ with
 \begin{equation}
 	L u_k = - \mu_1 (B_k) \, u_k
 \end{equation}
 and, after multiplying $u_k$ by a constant, we can assume that $u_k ( p) = 1$.
By the domain monotonicity of eigenvalues{\footnote{This   follows immediately from the variational characterization (2) and uniqueness (4) in Theorem \ref{t:evans}.}}, we know that  $\mu_1 (B_k)$ is a decreasing sequence of real numbers with
$\mu_1 (B_k) > \mu_1$ for each $k$. Furthermore, it is easy to see that
\begin{equation}
	\lim_{k \to \infty} \, \mu_1 (B_k) =  \mu_1 > - \infty \, ,
\end{equation}
where the last inequality is by assumption.
The Harnack inequality gives uniform upper and lower bounds for the $u_k$'s on compact sets.  Combining this with elliptic estimates, we get uniform $C^{2,\alpha}$ bounds on the $u_k$'s on each compact set so that Arzela-Ascoli gives a subsequence that converges uniformly in $C^2$ to a non-negative entire solution $u$ of $L \, u = - \mu_1 \, u$ with $u(p) =1$.  Another application of the Harnack inequality gives that $u$ is positive.

  {\bf{Part 2: Weighted integral estimates}}:  Since $v$ is in the weighted $W^{1,2}$ space, we can apply
part (1) of Lemma \ref{l:L2W12} to $v$ to get that $ |A| \, v$ is in the weighted $L^2$ space.  Using the equation for $\cL v$
\begin{equation}	\label{e:cLva}
	\cL \, v = \left( L -  |A|^2 - \frac{1}{2} \right) \, v = - \left(  \mu_1 +  |A|^2 + \frac{1}{2} \right) \, v \, ,
\end{equation}
it follows that
  \begin{equation}	\label{e:vw22}
  	\left[ |v|^2 + |\nabla v|^2 + |v \, \cL v| \right] < \infty \, .
  \end{equation}
    Since $v$ is in the weighted $W^{1,2}$ space,  we can apply (2) in Lemma \ref{l:L2W12} with $f=u$ and $\phi =  v$ to get that
  $|A| \, v$ and $|\nabla \log u| \, v$ are in the weighted $L^2$ space.  We conclude that
  \begin{equation}	\label{e:logu12}
  	\left[   v^2 \, | \cL \, \log u |  \right]  = \left[   v^2 \, \left| \mu_1 + \frac{1}{2} + |A|^2 + \left| \nabla \log u \right|^2  \right|  \right]  < \infty \, .
  \end{equation}
  Since $2 \, v^2 | \nabla \log u| \leq  v^2 + v^2 \,  | \nabla \log u| ^2$ and $| \nabla v^2| \,  | \nabla \log u| \leq
  |\nabla v|^2 + v^2 \,  | \nabla \log u| ^2$, we conclude
    \begin{equation}	\label{e:logu22}
  	\left[ v^2 | \nabla \log u| + | \nabla v^2| \,  | \nabla \log u|
	+   v^2 \, | \cL \, \log u |  \right]  < \infty \, .
  \end{equation}

  {\bf{Part 3: Uniqueness}}:
 Since $v^2$ and $\log u$ satisfy \eqr{e:logu22},  Corollary \ref{c:self} gives
\begin{equation}	\label{e:fromself1}
	\left[ \langle \nabla v^2 ,  \nabla \log u \rangle \right] = - \left[ v^2 \, \cL \, \log u \right]
	= \left[ v^2 \left( \mu_1 +  |A|^2 + \frac{1}{2} + |\nabla \log u|^2 \right) \right] \, ,
\end{equation}
where the second equality used the equation for $\cL \, \log u$.
  Similarly, \eqr{e:vw22} allows us to apply
Corollary \ref{c:self} to two copies of $v$ and get
\begin{equation}	\label{e:fromself2}
	\left[ |\nabla v|^2 \right] = - \left[ v  \, \cL \,  v \right]
	= \left[ v^2 \left( \mu_1 +  |A|^2 + \frac{1}{2}   \right) \right] \, ,
\end{equation}
where the second equality used the equation \eqr{e:cLva} for $\cL v$.
 Substituting \eqr{e:fromself2} into \eqr{e:fromself1}   and subtracting the left-hand side gives
\begin{equation}	\label{e:fromself3}
	0 = \left[ v^2 \,  |\nabla \log u|^2 - 2  \langle v \, \nabla \log u , \nabla v \rangle + |\nabla v|^2  \right]
	= \left[ \left| v \, \nabla \log u - \nabla v \right|^2    \right] \, .
\end{equation}
We conclude that
\begin{equation}
	v \, \nabla \log u - \nabla v = 0
\end{equation}
 which implies that $\frac{v}{u}$ is constant.
 \end{proof}

We have already seen that the mean curvature $H$ is an eigenfunction of $L$ with eigenvalue $-1$.
The next theorem shows that if $H$ changes, then the bottom of the spectrum $\mu_1$ is strictly less than $-1$.

 \begin{Thm}	\label{c:pde}
 If the mean curvature $H$ changes sign, then $\mu_1 < -1$.
 \end{Thm}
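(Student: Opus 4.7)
The plan is to argue by contradiction and show that if $\mu_1 \geq -1$, then $H$ cannot change sign. The argument hinges on the fact that $H$ is itself an eigenfunction of $L$ with eigenvalue $-1$ (Theorem \ref{t:spectral}): if $H$ belonged to the weighted $W^{1,2}$ space, the uniqueness part of Lemma \ref{l:pde} would force $H$ to be proportional to the positive eigenfunction, contradicting the sign change. If $\mu_1=-\infty$ the conclusion is immediate, so assume $\mu_1 > -\infty$; Lemma \ref{l:pde} then provides a positive function $u$ with $L u = -\mu_1 u$.

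Once $H$ is verified to satisfy the integrability condition \eqref{e:allfinte}, it becomes admissible in the Rayleigh characterization \eqref{e:lam1} (via Lemma \ref{l:infover}). Corollary \ref{c:self} applied to $u=v=H$ then yields $\int_\Sigma |\nabla H|^2 \,\e^{-|x|^2/4} = -\int_\Sigma H\,\cL H\,\e^{-|x|^2/4}$, and combined with $\cL H = (\tfrac{1}{2} - |A|^2) H$ this reduces the Rayleigh quotient of $H$ to exactly $-1$, giving $\mu_1 \leq -1$. In the borderline case $\mu_1=-1$, Lemma \ref{l:pde} forces $H = c\,u$ for some $c\in\RR$, contradicting that $H$ changes sign while $u>0$. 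Hence $\mu_1 < -1$.

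The main obstacle is therefore the weighted integrability of $H$, $\nabla H$ and $|A|\,H$, since no a priori bound on $|A|$ is assumed. The plan here is to exploit Lemma \ref{l:L2W12}(2) applied to the positive eigenfunction $u$, which already delivers weighted bounds on $|A|\,\phi$ for any $\phi$ in the weighted $W^{1,2}$ space. I would insert the compactly supported test function $\phi_R = \eta_R\,|x|$, with $\eta_R$ a standard radial cutoff equal to $1$ on $B_R$, supported in $B_{2R}$, and with $|\nabla \eta_R|\leq 2/R$. Since $|\nabla \phi_R|^2 \leq 2\,|\nabla \eta_R|^2\,|x|^2 + 2\,\eta_R^2$ and the polynomial volume growth of $\Sigma$ controls $\int |x|^p \,\e^{-|x|^2/4}$ for every $p$, each term on the right of Lemma \ref{l:L2W12}(2) is bounded uniformly in $R$; taking $R\to\infty$ by monotone convergence yields
\begin{equation}
  \int_\Sigma |A|^2 \, |x|^2 \, \e^{-\frac{|x|^2}{4}} \, d\mu < \infty \, .
\end{equation}

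From the self-shrinker equation $2H=\langle x,\nn\rangle$ we have $H^2 \leq |x|^2/4$, and from \eqref{e:jc2} we have $|\nabla H|^2 \leq |A|^2\,|x|^2/4$; together with the displayed estimate and polynomial volume growth these give
\begin{equation}
  \int_\Sigma \bigl(H^2 + |\nabla H|^2 + |A|^2\,H^2\bigr)\,\e^{-\frac{|x|^2}{4}} \, d\mu < \infty \, ,
\end{equation}
so $H$ satisfies \eqref{e:allfinte} and the integration by parts that delivers $Q(H)=-1$ is justified. Combined with the uniqueness dichotomy of Lemma \ref{l:pde}, this completes the proof that $\mu_1 < -1$.
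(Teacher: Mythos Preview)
Your proof is correct and follows essentially the same route as the paper's: you use the positive eigenfunction from Lemma \ref{l:pde} together with Lemma \ref{l:L2W12}(2) applied to an $|x|$-type cutoff to obtain $\int_\Sigma |A|^2|x|^2\,\e^{-|x|^2/4}<\infty$, deduce that $H$ lies in the weighted $W^{1,2}$ space, use it as a test function to get $\mu_1\leq -1$, and then invoke the uniqueness part of Lemma \ref{l:pde} to rule out $\mu_1=-1$. The only cosmetic difference is your contradiction framing versus the paper's direct three-step presentation; the substance and the key estimates are identical.
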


 \begin{proof}
Clearly, we may assume that $\mu_1 \ne - \infty$.

{\bf{Step 1: $H$, $|\nabla H|$, and $|A| \, |H|$ are in the weighted $L^2$ space}}.
The polynomial volume growth of $\Sigma$ and the self-shrinker equation
$H = \frac{ \langle x , \nn \rangle}{2}$ immediately yield that $H$ is in the weighted $L^2$ space, so we must show that $|\nabla H|$ and $|H| \, |A|$ are also
in the weighted $L^2$ space.

Since $\mu_1\ne - \infty$, the first part of Lemma \ref{l:pde} gives a positive function $u$ with $L \, u = - \mu_1 \, u$.
Using this $u$ in Part (2) of Lemma \ref{l:L2W12} gives
\begin{equation}
	 \int_{\Sigma}  \phi^2 \,     |A|^2    \, \e^{- \frac{|x|^2}{4} }
	  \leq \int_{\Sigma} \left(  4 \, |\nabla \phi  |^2 - 2 \, \mu_1 \, \phi^2 \right) \,  \e^{- \frac{|x|^2}{4} }  \, ,
\end{equation}
where $\phi$ is any Lipschitz function with compact support.  If we  take  $\phi$ to be $|x|$ on $B_R$ and cut it off linearly between $\partial B_{R}$ and $\partial B_{2R}$, so that  $|\nabla \phi| \leq 1$ and $\phi^2 \leq |x|^2$, then
we get
\begin{equation}	\label{e:x2a2bd}
	 \int_{B_R \cap \Sigma}      |A|^2  \, |x|^2  \, \e^{- \frac{|x|^2}{4} }
	  \leq \int_{\Sigma} \left(  4 +  2 \, |\mu_1 | \, |x|^2 \right) \,  \e^{- \frac{|x|^2}{4} }  \, .
\end{equation}
The right-hand side of \eqr{e:x2a2bd} is independent of $R$ and is finite
since $\Sigma$ has polynomial volume growth.  Thus, taking $R \to \infty$ and applying the monotone convergence theorem implies that
\begin{equation}	\label{e:x2a2bd2}
	 \int_{ \Sigma}      |A|^2  \, |x|^2  \, \e^{- \frac{|x|^2}{4} }
	  \leq \int_{\Sigma} \left(  4 +  2 \, |\mu_1 | \, |x|^2 \right) \,  \e^{- \frac{|x|^2}{4} } < \infty  \, .
\end{equation}
Using the self-shrinker equation  $H = \frac{ \langle x , \nn \rangle}{2}$, we showed in \eqr{e:jc2} that
$2 \, \nabla_{e_i}  \, H=   - a_{ij} \, \langle x , e_j \rangle$ so that
\begin{equation}
4 \, |\nabla H|^2 \leq |A|^2 \, |x|^2 {\text{ and }} |A|^2 \, H^2 \leq |A|^2 \, |x|^2 \, .
\end{equation}
  Consequently, \eqr{e:x2a2bd2} implies that $|\nabla H|$ and $|A| \, |H|$ are in the weighted $L^2$ space as desired.

{\bf{Step 2: $\mu_1 \leq -1$.}}
We will show this by using $H$ as a test function in the definition of $\mu_1$; this is allowed by Lemma \ref{l:infover} since we showed in step $1$ that $H$, $|\nabla H|$, and $|A| \, |H|$ are in the weighted $L^2$ space.   We will need that,
by Theorem \ref{t:spectral},
\begin{equation}
	 \cL \, H + \left(|A|^2 + \frac{1}{2} \right) \, H = L \, H = H \, ,
\end{equation}
 so that $H \, \cL \, H$ is in the weighted $L^1$ space.  Consequently,
 we can apply
Corollary \ref{c:self} to two copies of $H$ to get
\begin{equation}	\label{e:fromself2a}
	\int_{\Sigma}  |\nabla H|^2  \, \e^{- \frac{|x|^2}{4} }   = - \int_{\Sigma}  ( H \, \cL H ) \, \e^{- \frac{|x|^2}{4} }
	=  \int_{\Sigma} H^2 \left(  \frac{1}{2} - |A|^2  \right)   \, \e^{- \frac{|x|^2}{4} }  \, ,
\end{equation}
 so we get that $\mu_1 \leq - 1$ as claimed.

{\bf{Step 3: $\mu_1 \ne -1$.}}  If we did have that $\mu_1 = -1$, then the positive function $u$ in the first step and $H$ would both be eigenfunctions for $L$ with the same eigenvalue $\mu_1 = -1$.  Since we have shown that $H$ is  in the weighted $W^{1,2}$ space, the second part of Lemma \ref{l:pde} gives that $H$ must be a constant multiple of $u$.  However, this is impossible since $u$ is positive and $H$ changes sign.

 \end{proof}

  \subsection{Constructing unstable variations when $\mu_1 < -1$}

We have shown that if $H$ changes sign, then  $\Sigma$ has $\mu_1 < -1$.  When $\Sigma$ was closed, it followed immediately from this and  the orthogonality of eigenfunctions with different eigenvalues that $\Sigma$ was unstable.  This orthogonality uses an integration by parts  which is  not justified when $\Sigma$ is open.  Instead, we will show that the lowest eigenfunction on a sufficiently large ball is {\emph{almost}} orthogonal to $H$ and the translations.  This will be enough to prove  instability.

    In this subsection,   we will use
  square brackets $\left[ \cdot \right]_X$ to denote a  weighted integral over a set $X \subset \RR^{n+1}$
 \begin{equation}
 	\left[ f \right]_X = \int_{X \cap \Sigma} f \, \e^{- \frac{|x|^2}{4} } \, d\mu_{X \cap \Sigma} \, .
\end{equation}
  We will use this notation when $X$ is a ball $B_r$,
 the boundary $\partial B_r$ of a ball, or an annulus $A_{r_1 , r_2} = B_{r_2} \setminus B_{r_1}$.

     \vskip2mm
 The next lemma shows that two eigenfunctions of $L$ with different eigenvalues
 are almost orthogonal when one of them vanishes on the boundary of a ball, the
 other is small in a neighborhood of the boundary, and the radius is large.

   \begin{Lem}		\label{l:nearbd}
   Suppose that $\mu_1 (B_R) \in [-3/2 , -1)$ for some $R>3$,  $L u = - \mu_1(B_R) \, u$ on $B_R$,   $u$ is zero on $\partial B_R$, and $u$ is not identically zero. If
   $Lv = \mu \, v$ and  $\mu \geq -1$, then
      \begin{equation}	\label{e:nearbd}
   	\frac{  \left| \left[ v \, u   \right]_{B_R} \right| }{ \left[ u^2 \right]^{1/2}_{B_R}}
	  \leq
	  \frac{-4}{\mu_1(B_R) + 1} \, \left(  \frac{  \sqrt{7 \, \Vol (B_R \cap \Sigma)}}{
	 	  \e^{   \frac{(R-3)^2}{8}}} \,  \max_{ B_R} |v|
	+
	 \left[  |\nabla v|^2 \right]^{1/2}_{A_{R-2,R-1}} \right)
	    +   \left[ v^2 \right]^{1/2}_{A_{R-2,R}}\, .
   \end{equation}
      \end{Lem}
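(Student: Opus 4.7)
The strategy is a cutoff-adapted version of the standard eigenfunction orthogonality argument, which must handle the facts that $u$ vanishes on $\partial B_R$ while $v$ does not, and that no a priori bound on $\nabla u$ is available. Fix a smooth radial cutoff $\phi$ on $\RR^{n+1}$ with $\phi \equiv 1$ on $B_{R-2}$, $\phi \equiv 0$ outside $B_{R-1}$, $|\nabla\phi|$ and $|\cL\phi|$ bounded by an absolute constant, and $\nabla\phi$ vanishing on $\partial B_{R-2}$ and $\partial B_{R-1}$. Splitting
\[ [uv]_{B_R} = [\phi uv]_{B_R} + [(1-\phi)uv]_{B_R} \, , \]
the near-boundary summand is supported in $A_{R-2, R}$ and is bounded by Cauchy--Schwarz by $[u^2]^{1/2}_{B_R}\,[v^2]^{1/2}_{A_{R-2, R}}$, yielding the last term of the claimed estimate.

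For the bulk term, I apply the weighted self-adjointness of $\cL = L - |A|^2 - \frac{1}{2}$ (Lemma \ref{l:self}) to the compactly-supported pairs $(\phi v, u)$ and $(\phi u, v)$; subtracting and using the eigenvalue equations $Lu = -\mu_1(B_R) u$ and $Lv = \mu v$ (the multiplicative $|A|^2 + \frac{1}{2}$ contributions cancel) produces the near-orthogonality identity
\[ (\mu + \mu_1(B_R))\,[\phi uv]_{B_R} = \left[v\,\langle\nabla\phi, \nabla u\rangle - u\,\langle\nabla\phi, \nabla v\rangle\right]_{A_{R-2, R-1}} \, , \]
with the right-hand side supported where $\nabla\phi \neq 0$. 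The $[u\,\langle\nabla\phi,\nabla v\rangle]$ piece is controlled immediately by Cauchy--Schwarz by (a constant times) $[u^2]^{1/2}_{B_R}\,[|\nabla v|^2]^{1/2}_{A_{R-2, R-1}}$, accounting for the $[|\nabla v|^2]^{1/2}$ term in the lemma.

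The hard part will be the companion $[v\,\langle\nabla\phi,\nabla u\rangle]$ term, since we have no a priori control on $\nabla u$ independent of $|A|^2$. To eliminate $\nabla u$, I perform a second integration by parts: the divergence theorem on $A_{R-2, R-1}$ applied to the vector field $v\,\nabla\phi\,\e^{-|x|^2/4}$---whose normal flux vanishes through both boundary components because $\nabla\phi$ does---transfers the derivative off $u$ to yield
\[ [v\,\langle\nabla\phi,\nabla u\rangle]_{A_{R-2,R-1}} = -[uv\,\cL\phi]_{A_{R-2,R-1}} - [u\,\langle\nabla v,\nabla\phi\rangle]_{A_{R-2,R-1}} \, . \]
The second summand reduces to the previous Cauchy--Schwarz estimate. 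For $[uv\,\cL\phi]_{A_{R-2,R-1}}$, I use $|v|\leq\max_{B_R}|v|$ pointwise, split the Gaussian weight as $\e^{-|x|^2/4} = \e^{-|x|^2/8}\cdot\e^{-|x|^2/8}$, and use $|x|\geq R-2 > R-3$ on the annulus to bound one factor by $\e^{-(R-3)^2/8}$; Cauchy--Schwarz of the remaining factors against the volume then produces $\sqrt{\Vol(B_R\cap\Sigma)}\,[u^2]^{1/2}_{B_R}$, with $\sup|\cL\phi|$ and the cutoff constants absorbed into the $\sqrt{7}$. Finally, the hypotheses $\mu\geq -1$ and $\mu_1(B_R)\in[-3/2,-1)$ imply $|\mu+\mu_1(B_R)|^{-1} \leq -4/(\mu_1(B_R)+1)$ (for $\mu$ in the intended range, since $\mu + \mu_1(B_R) \leq \mu_1(B_R)+1 < 0$), and dividing through by $[u^2]^{1/2}_{B_R}$ assembles the claimed inequality. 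The essential trick is the second integration by parts, which trades the uncontrollable $\nabla u$ for the harmless multiplier $\cL\phi$.
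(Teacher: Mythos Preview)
Your second integration by parts is a nice idea, but the claim that $|\cL\phi|$ is bounded by an absolute constant is false, and this breaks the stated inequality. On $\Sigma$ one has $\cL\phi = \Delta_{\Sigma}\phi - \tfrac{1}{2}\langle x,\nabla^{\Sigma}\phi\rangle$; for a radial cutoff $\phi=\psi(|x|)$ on the annulus $A_{R-2,R-1}$ the drift contributes $-\tfrac{1}{2}\psi'(|x|)\,|x^{T}|^{2}/|x|$, and the mean-curvature term hidden in $\Delta_{\Sigma}\phi$ contributes $-H\langle\nn,\nabla^{\RR^{n+1}}\phi\rangle = -\tfrac{1}{2}\psi'(|x|)\,|x^{\perp}|^{2}/|x|$ (using $H=\langle x,\nn\rangle/2$). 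These combine to $-\tfrac{1}{2}\psi'(|x|)\,|x|$, which is of order $R$, not $O(1)$. Consequently your argument only yields the lemma with $\sqrt{7}$ replaced by $C\,R$ for an absolute $C$. That weaker bound is still adequate for the downstream application in Lemma~\ref{l:ortho} (the exponential $\e^{-(R-3)^{2}/8}$ eats any polynomial), so your route is salvageable for the theorem, but it does not prove the lemma as stated.

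The paper takes a different route that avoids any second integration by parts and does produce the constant $\sqrt{7}$. Rather than introducing a cutoff, it applies Stokes' theorem on $B_{r}$ for a variable $r\in[R-2,R-1]$, obtaining the boundary identity $(-\mu_{1}(B_R)+\mu)[vu]_{B_{r}}=[v\,\partial_{r}u-u\,\partial_{r}v]_{\partial B_{r}}$, and then uses the coarea formula to select a good $r$ at which each boundary integral is controlled by the corresponding annular integral. The crucial point you missed is that $|\nabla u|$ \emph{is} controllable on the annulus: since $u>0$ is the lowest Dirichlet eigenfunction, part~(2) of Lemma~\ref{l:L2W12} (the logarithmic estimate) bounds $[|\nabla u|^{2}/u^{2}]_{A_{R-2,R-1}}$ by $[4|\nabla\eta|^{2}-2\mu_{1}\eta^{2}]\le 7\,\Vol(B_R\cap\Sigma)\,\e^{-(R-3)^{2}/4}$ for a suitable linear cutoff $\eta$, and Cauchy--Schwarz then controls $[|\nabla u|]$. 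That is precisely where the $7$ originates. So your approach trades away the positivity of $u$ (which you never invoke) at the cost of introducing an $R$-dependent multiplier through $\cL\phi$.
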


   \begin{proof}
 If $v$ also vanished on $\partial B_R$, then an integration by parts argument would show that $u$ and $v$
 are orthogonal.  The key for the integration by parts is that if
  $\cL = \Delta   - \frac{1}{2} \, x \cdot \nabla$ is the operator from Lemma \ref{l:self}, then
   \begin{equation}
   	v \, \cL u - u \, \cL v = v \, L u - u \, L v = (-\mu_1(B_R) + \mu) \, v \, u  \, .
\end{equation}
Since
   $\dv \, \left( (v \, \nabla u - u \nabla v) \, \e^{- \frac{|x|^2}{4} }  \right) = \e^{- \frac{|x|^2}{4} } \, \left( v \, \cL u - u \, \cL  v \right)$,
   Stokes' theorem gives for   $r \leq R $
   \begin{equation}	\label{e:fromstokes}
   	(-\mu_1 (B_R) + \mu) \, \left[ v \, u   \right]_{B_r} = \left[
	  (v \, \partial_r u - u \, \partial_r v) \right]_{\partial B_r} \, .
   \end{equation}
   We cannot control the boundary term when $r=R$, but we will be able to use the co-area formula to bound it for $r$ near $R$ (this is where the term in the big brackets in \eqr{e:nearbd} comes from); the remainder is then bounded by the last term in \eqr{e:nearbd}.

 We will need a $L^1$ bound on $|\nabla u|$ on annuli.  To get this, choose a cutoff function $\phi$ which is
 one on $A_{R-2,R-1}$ and cuts off linearly to be zero on $\partial B_R$ and $\partial B_{R-3}$ and then apply (2) in Lemma \ref{l:L2W12} to get
 \begin{equation}
 	\left[ \frac{ |\nabla u|^2}{u^2} \right]_{A_{R-2,R-1} } \leq   \left[ 7 \right]_{A_{R-3,R}} \leq 7\, \Vol (B_R \cap \Sigma) \, \e^{ - \frac{(R-3)^2}{4}} \, .
 \end{equation}
Combining this with the Cauchy-Schwarz inequality gives
\begin{equation}	\label{e:L1br}
	\left[   |\nabla u|  \right]^2_{A_{R-2,R-1} } = \left[ \frac{ |\nabla u| }{u }  \, u  \right]^2_{A_{R-2,R-1} }
	  \leq 7\, \Vol (B_R \cap \Sigma) \, \e^{ - \frac{(R-3)^2}{4}} \, \left[  u^2  \right]_{B_{R} } \, .
\end{equation}
 If we let $\alpha$   denote the positive number $(  -\mu_1(B_R)  -1)$, then \eqr{e:fromstokes} gives
    \begin{equation}	\label{e:alphae}
   	\alpha \, \left| \left[ v \, u   \right]_{B_r} \right|  \leq  \max_{\partial B_r} |v| \,  \left[ |\nabla u| \right]_{\partial B_r} + \left[ u^2 \right]^{1/2}_{\partial B_r} \,
	  \left[ |\nabla v|^2 \right]^{1/2}_{\partial B_r}  \, .
   \end{equation}
   Given a non-negative $L^1$ function $f$ on $A_{R-2,R-1} \cap \Sigma$, it follows from the coarea formula that
   \begin{equation}
   	\int_{R-2}^{R-1} \, \left[ f \right]_{\partial B_r} \, dr = \left[  f \, |\nabla |x|| \right]_{A_{R-2, R-1}} \leq
		\left[  f   \right]_{A_{R-2, R-1}} \, ,
   \end{equation}
   so we conclude that there is a measurable set $I_{f} \subset [R-2,R-1]$ with Lebesgue measure at most $1/4$ so that if $r \notin I_f$, then
     \begin{equation}
   	  \left[ f \right]_{\partial B_r}    \leq 4 \,
		\left[  f   \right]_{A_{R-2, R-1}} \, .
   \end{equation}
   Applying this with $f$ equal to $|\nabla u|$, $f$ equal to $|\nabla v|^2$, and $f$ equal to $u^2$,
    we conclude that at least one quarter of the  $r$'s in $[R-2 , R-1]$  simultaneously satisfy
    \begin{align}
   	\left[ |\nabla u| \right]_{\partial B_r} &\leq 4 \, \left[ |\nabla u| \right]_{A_{R-2,R-1}} \, , \\
   	\left[ |\nabla v|^2 \right]_{\partial B_r} &\leq 4 \, \left[  |\nabla v|^2 \right]_{A_{R-2,R-1}} \, , \\
	\left[ u^2 \right]_{\partial B_r} &\leq 4 \, \left[  u^2 \right]_{A_{R-2,R-1}} \leq 4 \, \left[ u^2 \right]_{B_R} \, .
   \end{align}
   Fix one such $r$.
   Substituting these into \eqr{e:alphae} and using \eqr{e:L1br} on the $|\nabla u|$ integral
    gives
    \begin{align}
   	 \alpha \, \left| \left[ v \, u   \right]_{B_r} \right|   &\leq
	 4 \,   \max_{ B_R} |v|   \, \left[ |\nabla u| \right]_{A_{R-2,R-1}}
	+ 4 \,
	  \left[  u^2 \right]^{1/2}_{B_R} \, \left[  |\nabla v|^2 \right]^{1/2}_{A_{R-2,R-1}}
	 \notag \\
	 &\leq  4 \, \left(    \sqrt{7 \, \Vol (B_R \cap \Sigma)} \,
	 	  \e^{ - \frac{(R-3)^2}{8}} \,  \max_{ B_R} |v|
	+
	 \left[  |\nabla v|^2 \right]^{1/2}_{A_{R-2,R-1}} \right)  \, \left[  u^2 \right]^{1/2}_{B_R}
	    \end{align}
  The lemma follows from this since
   \begin{equation}
   	 \left| \left[ v \, u   \right]_{B_R} \right|   \leq   \left| \left[ v \, u   \right]_{B_r} \right| +   \left[ |v \, u|  \right]_{A_{R-2,R}}
	 \leq \left| \left[ v \, u   \right]_{B_r} \right| +  \left[  u^2   \right]^{1/2}_{B_r} \,  \left[ v^2 \right]^{1/2}_{A_{R-2,R}} \, .
   \end{equation}
   \end{proof}

   The next lemma gives the desired instability.

   \begin{Lem}	\label{l:ortho}
   If $\mu_1 < -1$, then there exists $\bar{R}$ so that if $R \geq \bar{R}$ and $u$ is a Dirichlet eigenfunction for $\mu_1 (B_R)$, then
   for any $h \in \RR$ and any $y \in \RR^{n+1}$ we have
   \begin{equation}	\label{e:ortho}
   	\left[  -u \, L \, u + 2 u \, h \, H + u \,
	 \langle y , \nn \rangle
		    -   h^2 \, H^2
		  - \frac{\langle y , \nn \rangle^2}{2}  \right]_{B_R} < 0 \, .
   \end{equation}
   \end{Lem}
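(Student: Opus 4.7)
The plan is to normalize $u$ so that $[u^2]_{B_R}=1$ and rewrite the integral in \eqr{e:ortho} using $-uLu=\mu_1(B_R)u^2$ as a quadratic form
\begin{equation*}
Q(h,y) = \mu_1(B_R) + 2hA + \langle y,B\rangle - h^2 H_2 - \tfrac{1}{2}\,y^{T}\!M y,
\end{equation*}
where $A=[uH]_{B_R}$, $B=[u\nn]_{B_R}\in\RR^{n+1}$, $H_2=[H^2]_{B_R}$, and $M_{ij}=[\nn_i\nn_j]_{B_R}$. Domain monotonicity gives $\mu_1(B_R)\searrow\mu_1<-1$, so I fix $\delta>0$ and choose $R$ large enough that $\mu_1(B_R)\leq -1-\delta$. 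The problem reduces to showing that the non-$\mu_1(B_R)$ terms in $Q$ sum to less than $\delta$ once $R$ is large.

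The central step is to apply Lemma \ref{l:nearbd} with $v=H$ (an $L$-eigenfunction with eigenvalue $-1$, by Theorem \ref{t:spectral}) and with $v=\langle y,\nn\rangle$ for unit $y$ (eigenvalue $-1/2$); both eigenvalues satisfy $\mu\geq -1$ as required. I would check that each of the three tail quantities on the right of \eqr{e:nearbd} vanishes as $R\to\infty$: the exponentially weighted first term is killed because $|H|\leq|x|/2\leq R/2$ and $\Vol(B_R\cap\Sigma)$ grows only polynomially (while $|\langle y,\nn\rangle|\leq|y|$); for the gradient term, \eqr{e:jc2} gives $|\nabla H|^2\leq\tfrac{1}{4}|A|^2|x|^2$ and \eqr{e:miracle} gives $|\nabla\langle y,\nn\rangle|^2\leq|A|^2|y|^2$, so the annular integrals vanish by the global bound \eqr{e:x2a2bd2} and dominated convergence; and the last term is the annular tail of $v^2 e^{-|x|^2/4}$, which vanishes by polynomial volume growth. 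The prefactor $-4/(\mu_1(B_R)+1)$ stays bounded by $4/\delta$. The conclusion is $|A|\to 0$ and $|B|\to 0$ as $R\to\infty$.

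The remaining step is to complete the square. Since $H\not\equiv 0$ (otherwise Corollary \ref{c:mincone} would force $\Sigma$ to be a hyperplane, which has $\mu_1=-1/2$, contradicting $\mu_1<-1$), monotone convergence gives $H_2\to[H^2]_\Sigma>0$, so $H_2\geq c>0$ for large $R$, and AM--GM yields $2hA-h^2 H_2\leq A^2/H_2$. For the $y$-term I would decompose $y=y_0+y_1$ where $V_0\subset\RR^{n+1}$ is the subspace of directions along which $\Sigma$ splits isometrically (equivalently, $\langle y_0,\nn\rangle\equiv 0$ on $\Sigma$); the $V_0$-part contributes zero to both $\langle y,B\rangle$ and $y^{T}\!M y$ and may be dropped. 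On $V_0^\perp$ the limit matrix $[\nn\otimes\nn]_\Sigma$ is positive definite, so entrywise convergence $M\to[\nn\otimes\nn]_\Sigma$ together with compactness of the unit sphere in $V_0^\perp$ yields a $\lambda_0>0$ with $y_1^{T}\!My_1\geq\lambda_0|y_1|^2$ for all large $R$; AM--GM then gives $\langle y_1,B\rangle-\tfrac{1}{2}y_1^{T}\!My_1\leq|B|^2/(2\lambda_0)$. Combining, $Q(h,y)\leq\mu_1(B_R)+A^2/H_2+|B|^2/(2\lambda_0)\leq -1-\delta+o(1)<0$ for all $R$ sufficiently large. The main obstacle is the possible degeneracy of $M$ when $\Sigma$ splits off a line; the orthogonal decomposition along split directions handles it cleanly since those directions contribute nothing to either the linear or the quadratic parts of $Q$.
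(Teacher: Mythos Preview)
Your overall strategy works in the range $-\tfrac{3}{2}\le\mu_1<-1$, but there is a gap you do not address: Lemma~\ref{l:nearbd} carries the hypothesis $\mu_1(B_R)\in[-3/2,-1)$, and you only arrange $\mu_1(B_R)\le -1-\delta$. When $\mu_1<-\tfrac{3}{2}$ (the statement allows even $\mu_1=-\infty$), domain monotonicity forces $\mu_1(B_R)<-\tfrac{3}{2}$ for all large $R$, so the lemma does not apply as stated; relatedly, your tail control via \eqr{e:x2a2bd2} is only established under the assumption $\mu_1\ne-\infty$ (that is where \eqr{e:x2a2bd2} is derived, in Step~1 of Theorem~\ref{c:pde}). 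The paper handles this regime without any almost-orthogonality: once $\mu_1(B_R)<-\tfrac{3}{2}$, the pointwise absorbing inequalities $2uhH\le u^2+h^2H^2$ and $u\,\langle y,\nn\rangle\le\tfrac{1}{2}u^2+\tfrac{1}{2}\langle y,\nn\rangle^2$ already bound the left side of \eqr{e:ortho} by $\bigl[(\mu_1(B_R)+\tfrac{3}{2})\,u^2\bigr]_{B_R}<0$.

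Even in the remaining range the paper's argument is considerably leaner than yours. Rather than proving $B\to 0$, decomposing along split directions, and controlling the smallest eigenvalue of $M$ on $V_0^{\perp}$, the paper disposes of the entire $y$-dependence at the outset by the same Cauchy--Schwarz step $u\,\langle y,\nn\rangle\le\tfrac{1}{2}u^2+\tfrac{1}{2}\langle y,\nn\rangle^2$, which cancels the $-\tfrac{1}{2}\langle y,\nn\rangle^2$ term and leaves only the quadratic $[(\tfrac{1}{2}+\mu_1(B_R))u^2+2uhH-h^2H^2]_{B_R}$ in $h$. Lemma~\ref{l:nearbd} is then invoked once, with $v=H$, to force $[uH]_{B_R}^2\le\tfrac{1}{2}\,[H^2]_{B_R}\,[u^2]_{B_R}$ for large $R$, after which a single completion of the square gives strict negativity. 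Your route through $v=\langle y,\nn\rangle$ and the splitting analysis is correct within its range of validity but does more work than needed.
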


   \begin{proof}
   Using the Cauchy-Schwartz inequality $ ab \leq \frac{1}{2} \, (a^2 + b^2)$ on the cross-term
  $ u \,
	 \langle y , \nn \rangle $,  the left hand side of \eqr{e:ortho} is bounded from above by
\begin{equation}	     	\label{e:ortho2}
	 \left[ \left( \frac{1}{2} + \mu_1(B_R) \right) \, u^2 + 2 u \, h H
		    -   h^2 \, H^2
		\right]_{B_R} 	  \, .
\end{equation}
We will show that this is  negative when $R$ is   large.
There are two cases depending on $\mu_1$.

   Suppose first that
   $\mu_1 < -\frac{3}{2} $ and choose $\bar{R}$ so that $\mu_1 (B_{\bar{R}}) < - \frac{3}{2}$.
 Given any $R \geq \bar{R}$, then \eqr{e:ortho2} is strictly less than
   \begin{equation}	
	 \left[ -  u^2 + 2 u \, h H
		    -   h^2 \, H^2
		\right]_{B_R}  =
	- \left[ 	 \left(    u  - h H \right)^2
		\right]_{B_R}	  \, ,
\end{equation}
which  gives \eqr{e:ortho} in this case.

Suppose now that $- \frac{3}{2} \leq \mu_1 < -1 $.  In this case, we will take $\bar{R}$ to be the maximum of $\bar{R}_1$ and $\bar{R}_2$, where   $\bar{R}_1$ is chosen so that $\mu(B_R) < -1$ for all $R \geq \bar{R}_1$ and $\bar{R}_2$ will be chosen to handle the cross-term.  Precisely, we will prove that there is some $\bar{R}_2$ so that for all $R \geq \bar{R}_2$
\begin{equation}	\label{e:claimuH}
	  \left[ u \, H \right]^2_{B_R}
	  \leq \frac{1}{2} \, \left[   H^2 \right]_{B_R} \, \left[ u^2 \right]_{B_R} \, .
\end{equation}
It is then easy to see that  \eqr{e:ortho2} is negative when $R \geq \max \{ \bar{R}_1 , \bar{R}_2 \}$.
Namely, using $\mu_1 (B_R) < -1$ and  \eqr{e:claimuH}  in \eqr{e:ortho2} gives
\begin{align}	
	 \left[ \left( \frac{1}{2} + \mu_1(B_R) \right) \, u^2 + 2 u \, h H
		    -   h^2 \, H^2
		\right]_{B_R} 	&<
		  - \frac{1}{2}  \left[ u^2 \right]_{B_R}
		  + \sqrt{2} \, h \, \left[  u^2 \right]^{1/2}_{B_R}  \, \left[ H^2 \right]^{1/2}_{B_R}
		    -   h^2 \, \left[ H^2
		\right]_{B_R}  \notag \\
		&= - \left(  \frac{1}{\sqrt{2}} \,  \left[ u^2 \right]^{1/2}_{B_R} - h \,  \left[ H^2 \right]^{1/2}_{B_R}
		\right)^2
		  \, .
\end{align}

It remains to prove the existence of $\bar{R}_2$ so that the claim \eqr{e:claimuH} holds for all $R \geq \bar{R}_2$.
Clearly, we may  assume that $H$ is not identically zero, so there is some   $H_0 > 0$ with
\begin{equation}	\label{e:H0low}
	 \left[   H^2 \right]_{B_R} \geq H_0 {\text{ for all }} R > H_0^{-1} \, .
\end{equation}
Applying Lemma \ref{l:nearbd} with $v=H$ and $\mu = - 1$ gives that
      \begin{equation}	\label{e:mygf}
	\frac{   \left[ H \, u   \right]^2_{B_R}  }{ \left[ u^2 \right]_{B_R}} \leq
	 2 \, \left( \frac{-4}{\mu_1 (B_R) + 1} \right)^2 \, \left(  \frac{  (7\, C \, R^d )^{1/2} \,  R }{
	 	  \e^{   \frac{(R-3)^2}{8}}}
	+
	R \, \left[  |A|^2   \right]^{1/2}_{A_{R-2,R-1}} \right)^2
	    + 2\,   \left[ R^2 \right]_{A_{R-2,R}}\, ,
   \end{equation}
   where we substituted in the bound  $|H| \leq |x|$, the gradient bound $|\nabla H| \leq |A| \, |x|$ which follows from   \eqr{e:jc2},
    and the polynomial volume bound $\Vol (B_R \cap \Sigma) \leq C \, R^d$ (which holds for some $C$ and $d$ and all $R \geq 1$).  To bound the $|A|$ term,
   choose a cutoff function $\phi$ which is
 one on $A_{R-2,R-1}$ and cuts off linearly to be zero on $\partial B_R$ and $\partial B_{R-3}$ and then apply (2) in Lemma \ref{l:L2W12} to get
 \begin{equation}	\label{2abd}
 	\left[   |A|^2  \right]_{A_{R-2,R-1} } \leq   \left[ 7 \right]_{A_{R-3,R}} \leq 7\, \Vol (B_R \cap \Sigma) \, \e^{ - \frac{(R-3)^2}{4}} \leq 7 \, C \, R^d \, \e^{ - \frac{(R-3)^2}{4}}  \, .
 \end{equation}
 It follows that all of the terms in \eqr{e:mygf} decay exponentially  and, thus, that
 $ \frac{   \left[ H \, u   \right]^2_{B_R}  }{ \left[ u^2 \right]_{B_R}}
$ can be made as small as we like by taking $\bar{R}_2$ sufficiently large.  Since $ \left[   H^2 \right]_{B_R}$ has a uniform positive lower bound by
\eqr{e:H0low}, we can choose $\bar{R}_2$ so that \eqr{e:claimuH} holds for all $R \geq \bar{R}_2$.
\end{proof}

\section{The classification of self-shrinkers with $H\geq 0$}	\label{s:huisken}

In \cite{H3},   Huisken showed  that the only smooth closed  self-shrinkers with non-negative mean curvature in $\RR^{n+1}$ (for $n > 1$) are round spheres (i.e., $\SS^n$).  In the remaining case $n=1$,   Abresch and Langer, \cite{AbLa},  classified all smooth closed self-shrinking curves in $\RR^2$ and showed that the   {\emph{embedded}} ones   are round circles.

In the non-compact case, Huisken showed in \cite{H4} that  the only smooth open embedded self-shrinkers  in $\RR^{n+1}$
with $H \geq 0$, polynomial volume growth, and $|A|$ bounded are
 isometric products of a round sphere and a linear subspace (i.e. $\SS^k\times \RR^{n-k}\subset \RR^{n+1}$).  The assumption of a bound on $|A|$    was  natural in \cite{H4} since Huisken was classifying tangent flows associated to type I singularities and these automatically satisfy such a bound.  However, for our applications to generic flows we do not have an a priori bound on $|A|$ and, thus, we are led to show that
 Huisken's classification holds even without the $|A|$ bound.  This is the next theorem  (Theorem \ref{t:huisken} from the introduction):

 \begin{Thm}	\label{t:huisken2}
 $\SS^k\times \RR^{n-k}$   are the only   smooth complete embedded self-shrinkers without boundary, with polynomial volume growth, and   $H \geq 0$ in $\RR^{n+1}$.
  \end{Thm}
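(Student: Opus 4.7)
The strategy is to follow Huisken's classification argument from \cite{H4}, but to replace the pointwise $|A|$ bound used there with weighted integral estimates that follow from the polynomial volume growth together with the fact that $H$ is a positive eigenfunction of $L$.

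First I would reduce to the case $H > 0$. Since $\Sigma$ is a self-shrinker, Theorem \ref{t:spectral} gives $L H = H$. Combined with $H \geq 0$ and the strong maximum principle for the elliptic operator $L$, either $H > 0$ everywhere or $H \equiv 0$. In the latter case, Corollary \ref{c:mincone} forces $\Sigma$ to be a hyperplane through the origin (the $k = 0$ case of the theorem, where we interpret $\SS^0 \times \RR^n$ as $\RR^n$).

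Assume now $H > 0$. The new ingredient beyond \cite{H4} is weighted integrability for $|A|$ and its derivatives. Applying part (2) of Lemma \ref{l:L2W12} with $f = H$ (eigenvalue $\mu = -1$) yields, for every Lipschitz $\phi$ with compact support,
\begin{equation}
\int_{\Sigma} \phi^2 \bigl(|A|^2 + |\nabla \log H|^2\bigr) \, \e^{-|x|^2/4}
\;\leq\; \int_{\Sigma} \bigl(4\,|\nabla \phi|^2 + 2\,\phi^2\bigr) \, \e^{-|x|^2/4}.
\end{equation}
Taking $\phi$ to be $(1+|x|)^k$ cut off linearly near $\partial B_R$ and letting $R \to \infty$, the polynomial volume growth bounds the right-hand side uniformly, giving finite weighted $L^2$ estimates on $(1+|x|)^k \, |A|$ and $(1+|x|)^k \, |\nabla \log H|$ for every $k$. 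Elliptic regularity for the Simons-type equation satisfied by $A$ on a self-shrinker, combined with these bounds, then yields analogous weighted estimates on $|\nabla A|$ and higher derivatives.

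With these estimates in hand, I would execute Huisken's computation on the function $w = \log H$ (equivalently on $|A|^2 / H^2$). Using Simons' identity for $\tfrac{1}{2}\Delta |A|^2$ together with the self-shrinker equation and Theorem \ref{t:spectral} for $L H$, one derives a Bochner-type identity for $L w$ in which a nonnegative quantity controlled by $|\nabla A - A \otimes \nabla \log H|^2$ appears as an error. Multiplying by an appropriate test function of the form $\eta_R^2 \, \e^{-|x|^2/4}$ for a cutoff $\eta_R$ supported in $B_{R+1}$ and equal to $1$ on $B_R$, and integrating by parts via Corollary \ref{c:self}, reduces everything to controlling cross-terms from $\nabla \eta_R$. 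These cross-terms involve products like $|A|^3 \, \eta_R \, |\nabla \eta_R|$; this is precisely where the weighted polynomial estimates on $|A|$ are decisive, since they provide an integrable majorant and let us pass to the limit $R \to \infty$ by dominated convergence. The resulting identity forces $|A|^2/H^2$ to be constant, and indeed that each principal curvature of $\Sigma$ is constant.

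The main obstacle is the justification of the integration by parts at each stage: without the a priori $|A|$ bound of \cite{H4}, one cannot discard boundary terms trivially, and the weighted $L^p$ control on $|A|$ must be strong enough (and for enough derivatives) to dominate every error term uniformly in $R$. Once the constancy of $|A|^2/H^2$ and of the principal curvatures is established, Huisken's rigidity argument from \cite{H3, H4} applies verbatim: the Codazzi equation forces the null distribution of $A$ to be parallel of constant rank $n-k$, so $\Sigma$ splits isometrically as $\SS^k \times \RR^{n-k}$, with the sphere radius $\sqrt{2k}$ pinned down by $H = \langle x, \nn \rangle / 2$.
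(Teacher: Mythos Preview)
Your overall architecture matches the paper's: reduce to $H>0$ via the Harnack inequality, establish weighted integral estimates for $|A|$ strong enough to justify integration by parts in a Simons-type computation, deduce $|A|=\beta H$ and $|\nabla A|^2=|\nabla|A||^2$, then run the pointwise rigidity argument. The reduction step and the basic estimate $\bigl[\,\phi^2|A|^2\,\bigr]\le \bigl[\,4|\nabla\phi|^2+2\phi^2\,\bigr]$ from Lemma~\ref{l:L2W12}(2) are exactly right.

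The genuine gap is in your second paragraph. Knowing that $(1+|x|)^k|A|$ lies in the weighted $L^2$ space for every $k$ controls the \emph{growth} of $|A|$ at infinity, but it gives you no higher \emph{integrability}: it does not yield $\bigl[\,|A|^4\,\bigr]<\infty$, and indeed $|A|\sim e^{|x|^2/16}$ would satisfy your estimate while having $|A|^4 e^{-|x|^2/4}$ non-integrable. Your appeal to ``elliptic regularity for the Simons-type equation'' does not repair this, because that equation is $L\,A=A$, whose zeroth-order coefficient is $|A|^2$---precisely the unbounded quantity you are trying to control. Likewise the cross-terms you identify, of type $|A|^3\eta|\nabla\eta|$, cannot be dominated by the bounds you have established.

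What the paper does instead (Proposition~\ref{p:ssy}) is a Schoen--Simon--Yau type argument: insert $\phi=\eta\,|A|$ into the stability-type inequality to get $\bigl[\eta^2|A|^4\bigr]\le(1+\epsilon)\bigl[\eta^2|\nabla|A||^2\bigr]+\text{lower order}$, and play this against the integrated Simons inequality \eqr{e:linalg}, which gives a reverse inequality with coefficient $1+\tfrac{2}{n+1}-\epsilon$ on the $|\nabla|A||^2$ term. For $\epsilon$ small the $|A|^4$ term absorbs, yielding $\bigl[\,|A|^4\,\bigr]<\infty$ and then $\bigl[\,|\nabla A|^2\,\bigr]<\infty$. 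Only with these in hand can the integrations by parts in the $\log H$ computation be justified. A smaller point: the rigidity step does not directly give constant principal curvatures; one first obtains $|\nabla A|^2=|\nabla|A||^2$, and the splitting comes from a case analysis on the rank of $A$ (the rank-one case requiring a separate argument for self-shrinking curves).
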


  The $\SS^k$ factor in Theorem \ref{t:huisken2} is round and has radius $\sqrt{2k}$; we allow the possibilities of a hyperplane (i.e., $k=0$) or a sphere ($n-k = 0$).

 \subsection{Two general inequalities  bounding $|\nabla |A||$ by $|\nabla A|$}

 The next lemma holds for all hypersurfaces:

\begin{Lem}	\label{l:csA}
If we fix a point $p$ and choose a frame
$e_i$,
$i=1,\dots,n$, so that $a_{ij}$ is in diagonal form at $p$, that is,  $a_{ij}=\lambda_i\,\delta_{ij}$,
then we have at $p$ that
\begin{equation}        \label{e:csA}
        |\nabla |A| |^2\leq \sum_{i,k} a_{ii,k}^2 \leq \sum_{i,j,k} a_{ij,k}^2 = |\nabla A|^2 \, .
\end{equation}

Moreover,
\begin{equation}  \label{e:linalg}
	 \left(1+\frac{2}{n+1} \right)\,|\nabla |A||^2
         \leq  |\nabla A|^2  + \frac{2n}{n+1} \, |\nabla H|^2  \, .
\end{equation}
\end{Lem}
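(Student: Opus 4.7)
My plan is to work at the point $p$ in the diagonalizing frame and set $h_{ijk} := a_{ij,k}$. The Codazzi equation $a_{ij,k} = a_{ik,j}$ together with the symmetry of $A$ makes $h_{ijk}$ totally symmetric in $i,j,k$. For \eqref{e:csA}, I will differentiate $|A|^{2}=\sum_{i,j}a_{ij}^{2}$ to obtain $|A|\,\nabla_{k}|A|=\sum_{i}\lambda_{i}h_{iik}$ at $p$; Cauchy--Schwarz then gives $(\nabla_{k}|A|)^{2}\le\sum_{i}h_{iik}^{2}$, and summing over $k$ combined with the trivial bound $\sum_{i,k}h_{iik}^{2}\le\sum_{i,j,k}h_{ijk}^{2}$ yields the full chain \eqref{e:csA}.

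For \eqref{e:linalg}, multiplying through by $n+1$ reduces matters to $(n+3)\,|\nabla|A||^{2}\le(n+1)\,|\nabla A|^{2}+2n\,|\nabla H|^{2}$. The main observation is that total symmetry of $h_{ijk}$ allows one to count permutations and obtain
\[
|\nabla A|^{2}=\sum_{i}h_{iii}^{2}+3\sum_{i\ne j}h_{iij}^{2}+6\sum_{i<j<l}h_{ijl}^{2}\ \ge\ \sum_{k}\Bigl(h_{kkk}^{2}+3\sum_{i\ne k}h_{iik}^{2}\Bigr),
\]
while $|\nabla H|^{2}=\sum_{k}T_{k}^{2}$ with $T_{k}:=\sum_{i}h_{iik}$. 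I will then reduce the full inequality to the per-$k$ statement
\[
(n+3)\Bigl(\sum_{i}\lambda_{i}h_{iik}\Bigr)^{\!2}\ \le\ (n+1)|A|^{2}\Bigl(h_{kkk}^{2}+3\sum_{i\ne k}h_{iik}^{2}\Bigr)+2n|A|^{2}T_{k}^{2},
\]
which sums over $k$ to give the claim.

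To establish the per-$k$ inequality I will write $a=h_{kkk}$ and let $b\in\RR^{n-1}$ denote the vector with entries $h_{iik}$ for $i\ne k$; similarly let $v\in\RR^{n}$ be the vector whose first entry is $\lambda_{k}$ and whose remaining entries are $\lambda_{i}$ for $i\ne k$, and let $x=(a,b)\in\RR^{n}$, so that $|v|^{2}=|A|^{2}$. The per-$k$ inequality then reads $x^{T}\bigl(|v|^{2}N-(n+3)vv^{T}\bigr)x\ge0$, where
\[
N\ =\ \operatorname{diag}(n+1,3n+3,\dots,3n+3)+2n\,\mathbf{1}\mathbf{1}^{T}.
\]
Since $vv^{T}\preceq|v|^{2}I$, it suffices to show $N\succeq(n+3)I$. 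A direct computation with $S:=\sum_{i}b_{i}$ yields
\[
x^{T}(N-(n+3)I)x\ =\ (2n-2)a^{2}+4naS+2n|b|^{2}+2nS^{2},
\]
and the Cauchy--Schwarz bound $|b|^{2}\ge S^{2}/(n-1)$ reduces this to $(2n-2)a^{2}+4naS+2n^{2}S^{2}/(n-1)$, which has vanishing discriminant and is therefore the perfect square $\bigl(\sqrt{2n-2}\,a+\sqrt{2n^{2}/(n-1)}\,S\bigr)^{2}\ge0$.

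The main obstacle will be the careful bookkeeping of multiplicities coming from total symmetry of $h_{ijk}$: the constants $n+1$, $3n+3$, $2n$, and $n+3$ must line up precisely so that the residual quadratic form has vanishing discriminant and reduces to a perfect square. Once $N\succeq(n+3)I$ is verified in this way, summing the per-$k$ inequalities over $k$ and dividing by $|A|^{2}$ (the case $|A|\equiv 0$ being trivial) immediately produces the refined Kato estimate \eqref{e:linalg}.
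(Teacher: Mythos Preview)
Your proof of \eqref{e:csA} is essentially identical to the paper's. For \eqref{e:linalg}, your argument is correct but proceeds along a genuinely different route from the paper.

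The paper argues more directly in the spirit of Schoen--Simon--Yau: starting from $|\nabla|A||^{2}\le\sum_{i,k}a_{ii,k}^{2}$, it isolates the diagonal-in-all-indices terms and substitutes $a_{ii,i}=-\bigl(H_{i}+\sum_{j\ne i}a_{jj,i}\bigr)$, applies the elementary bound $(\sum_{j}b_{j})^{2}\le n\sum_{j}b_{j}^{2}$, and then uses Codazzi to rewrite $\sum_{i\ne k}a_{ii,k}^{2}$ as $\tfrac12(\sum_{i\ne k}a_{ik,i}^{2}+\sum_{i\ne k}a_{ki,i}^{2})$, which are disjoint pieces of $|\nabla A|^{2}$. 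A short linear combination of this estimate with the first inequality in \eqref{e:csA} yields \eqref{e:linalg} in a few lines, without any matrix analysis.

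Your approach instead packages the problem as a per-$k$ quadratic-form inequality and proves the stronger, $v$-independent statement $N\succeq(n+3)I$ by a discriminant computation. This is perfectly valid and has the advantage of being systematic and self-contained (one never needs to spot which pieces of $|\nabla A|^{2}$ to recombine), at the cost of being somewhat less transparent about \emph{why} the constants work out. The step $vv^{T}\preceq|v|^{2}I$ discards the eigenvalue direction entirely, which is harmless here since $N-(n+3)I$ turns out to be positive semidefinite, but it is worth noting that this reduction is in principle lossy. One small point: your Cauchy--Schwarz bound $|b|^{2}\ge S^{2}/(n-1)$ tacitly assumes $n\ge2$; for $n=1$ the vector $b$ is empty and $N-(n+3)I=0$, so the inequality holds with equality, which you should remark on explicitly.
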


\begin{proof}
Observe first that, since $a_{ij}$ is symmetric, we may choose $e_i$,
$i=1,\dots,n$, so that $a_{ij}$ is in diagonal form at $p$.
  Since $\nabla |A|^2 = 2 \, |A| \, \nabla |A|$, we have at $p$
\begin{equation}   \label{e:ecss1}
        4\,|A|^2\,|\nabla |A||^2
        =\sum_{k=1}^{n} \left( (\sum_{i,j=1}^{n} a_{ij}^2) _k \right) ^2
 = 4\,\sum_{k=1}^{n} \left( \sum_{i=1}^{n}
        a_{ii,k}\,\lambda_{i} \right) ^2\leq 4\,|A|^2\sum_{i,k=1}^{n} a_{ii,k}^2 \,
        ,
 \end{equation}
where the second  equality used that $a_{ij}$ is in diagonal form at $p$ and the inequality
used  the Cauchy-Schwarz inequality on the vectors $(a_{11,k}, \dots ,
a_{nn,k})$ and $(\lambda_1 , \dots , \lambda_{n})$.   This proves the first inequality in \eqr{e:csA}; the second inequality follows trivially.

To prove \eqr{e:linalg} observe that by   \eqr{e:csA} and the definition $H = - \sum a_{jj}$, we have
\begin{align}   \label{e:ecss3}
        |\nabla |A||^2 &\leq \sum_{i,k=1}^{n} a_{ii,k}^2
        =\sum_{i\ne k} a_{ii,k}^2+\sum_{i=1}^{n} a_{ii,i}^2
         = \sum_{i\ne k} a_{ii,k}^2+\sum_{i=1}^{n}
                \left(H_i +  \sum_{ \{ j \, | \, i\ne j \} } a_{jj,i}\right) ^2  \notag
\\
        &\leq \sum_{i\ne k} a_{ii,k}^2+ n \, \sum_{i=1}^{n}
               \left( H_i^2 +  \sum_{ \{ j \, | \, i\ne j \} } a_{jj,i}^2 \right)
        =n \, |\nabla H|^2 + (n+1)\sum_{i\ne k} a_{ii,k}^2 \\
        &=n \, |\nabla H|^2 + (n+1) \, \sum_{i\ne k} a_{ik,i}^2
        = n \, |\nabla H|^2 + \frac{n+1}{2}\,\left( \sum_{i\ne k} a_{ik,i}^2
        +\sum_{i\ne k} a_{ki,i}^2\right)\, .\notag
\end{align}
The second inequality in \eqr{e:ecss3} used the algebraic fact
$
    \left( \sum_{j=1}^{n} b_j \right)^2 \leq n \, \sum_{j=1}^{n}
    b_j^2 $ and the last two equalities used the Codazzi equations (i.e., $a_{ik,j} = a_{ij,k}$; see, e.g., (B.5) in \cite{Si}).
 Multiplying  \eqr{e:ecss3} by $\frac{2}{n+1}$ and adding this to the first inequality in \eqr{e:csA} gives
\begin{align}   \label{e:ecss4}
        (1+\frac{2}{n+1})\,|\nabla |A||^2
        &\leq \sum_{i,k=1}^{n}a_{ii,k}^2
        +\sum_{i\ne k}a_{ik,i}^2
        +\sum_{i\ne k} a_{ki,i}^2  + \frac{2n}{n+1} \, |\nabla H|^2  \notag
\\
        &\leq \sum_{i,j,k=1}^{n} a_{ij,k}^2 + \frac{2n}{n+1} \, |\nabla H|^2\, ,
\end{align}
which completes the proof.
 \end{proof}

 \subsection{Simons' inequality for $|A|$}
  We will assume throughout the remainder of this section  that $\Sigma \subset \RR^{n+1}$ is a smooth complete embedded self-shrinker without boundary and with polynomial volume growth.

The next lemma computes the  operator $ L= \Delta + |A|^2 + \frac{1}{2} - \frac{1}{2} \, x \cdot \nabla$ on the second fundamental form; this is
analogous to Simons' equation for minimal hypersurfaces (see, e.g., section $2.1$ in \cite{CM4}).

 \begin{Lem}	\label{l:simonsA}
If we extend the operator $L$ to tensors, then
$L \, A = A$.  In particular, if $|A|$ does not vanish, then
\begin{equation}	\label{e:LmodA}
	L \, |A| = |A|  +  \frac{\left| \nabla A \right|^2 - \left| \nabla |A| \right|^2}{ |A|}
	 \geq |A|  \, .
\end{equation}
\end{Lem}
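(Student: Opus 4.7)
The plan is to prove the tensor identity $LA = A$ first (adapting Huisken's Simons-type computation for self-shrinkers), then derive the formula for $L|A|$ via a Bochner identity and conclude using the Kato-type inequality already established in Lemma \ref{l:csA}.

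For the tensor identity, I would work in a local orthonormal frame $e_i$ with $\nabla_{e_i}^T e_j = 0$ at the point of interest, so that $L a_{ij} = a_{ij}$ is equivalent to the pointwise identity
\[
\Delta a_{ij} - \tfrac{1}{2}\langle x,\nabla a_{ij}\rangle \;=\; \tfrac{1}{2}a_{ij} - |A|^2 a_{ij}.
\]
The right-hand side comes from combining two ingredients. The first is Simons' identity for hypersurfaces of $\RR^{n+1}$, namely
$\Delta a_{ij} = \nabla_i\nabla_j H - H\,a_{ik}a_{kj} + |A|^2 a_{ij}$, which depends only on the ambient being flat and uses Codazzi $a_{ij,k}=a_{ik,j}$. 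The second is the self-shrinker equation $H = \tfrac{1}{2}\langle x,\nn\rangle$: differentiating once gives $2\nabla_i H = -a_{ij}\langle x,e_j\rangle$ exactly as in \eqref{e:jc2}, and differentiating again (just as in \eqref{e:jc3}--\eqref{e:forlast} but without tracing) yields an expression for $\nabla_k\nabla_i H$ in terms of $a_{ij,k}\langle x,e_j\rangle$, $a_{ik}$, and $H\,a_{ij}a_{jk}$. Using Codazzi once more to relabel indices in the $a_{ij,k}\langle x,e_j\rangle$ term produces precisely $\tfrac{1}{2}\langle x,\nabla a_{ik}\rangle$, and the $H\,a_{ij}a_{jk}$ term cancels the analogous term in Simons' identity. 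What remains is the desired $\tfrac{1}{2}a_{ij} - |A|^2 a_{ij}$.

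For the second part, at any point where $|A|>0$ I would use the elementary Bochner identities
\[
\Delta |A|^2 = 2\,a_{ij}\Delta a_{ij} + 2|\nabla A|^2,\qquad \langle x,\nabla |A|^2\rangle = 2\,a_{ij}\langle x,\nabla a_{ij}\rangle,
\]
and their analogues for $|A|^2 = |A|\cdot|A|$, to write $\cL|A|^2$ in two different ways, where $\cL = \Delta - \tfrac{1}{2}\langle x,\nabla(\cdot)\rangle$. Since the tensor identity is equivalent to $\cL a_{ij} = \tfrac{1}{2}a_{ij} - |A|^2 a_{ij}$, one expression gives $\cL|A|^2 = |A|^2 - 2|A|^4 + 2|\nabla A|^2$, while the other gives $\cL|A|^2 = 2|A|\cL|A| + 2|\nabla |A||^2$. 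Equating, dividing by $2|A|$, and passing from $\cL$ to $L = \cL + |A|^2 + \tfrac{1}{2}$ yields
\[
L|A| = |A| + \frac{|\nabla A|^2 - |\nabla |A||^2}{|A|},
\]
and then the inequality $L|A| \geq |A|$ is immediate from the Kato-type inequality \eqref{e:csA} in Lemma \ref{l:csA}.

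The only real obstacle is step one: tracking signs in Simons' identity under the paper's conventions (where $\bar H = -H\nn$ and $H = \tfrac{1}{2}\langle x,\nn\rangle$) and making sure the $H\,a_{ij}a_{jk}$ cross-term from Simons cancels the corresponding term produced by twice-differentiating the self-shrinker equation. Once that bookkeeping is done, the Bochner step and the application of Lemma \ref{l:csA} are routine.
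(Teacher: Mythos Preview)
Your approach is essentially identical to the paper's: it too quotes Simons' identity (in the form $(\Delta A)_{ij} = -|A|^2 a_{ij} - Ha_{ik}a_{jk} - H_{ij}$ under its convention $H=-a_{ii}$, citing lemma~B.8 of \cite{Si}), substitutes the Hessian of $H$ from \eqref{e:jc3}--\eqref{e:forlast} so that the $Ha_{ik}a_{kj}$ terms cancel, applies Codazzi to reach $LA=A$, and then derives $L|A|$ by expanding $L(|A|^2)^{1/2}$ via the chain rule and invoking \eqref{e:csA}---equivalent to your two-ways-to-compute-$\cL|A|^2$ argument. The only correction is the sign bookkeeping you already flagged: in the paper's convention the $|A|^2 a_{ij}$ and $H_{ij}$ terms in Simons' identity carry the opposite signs from what you wrote.
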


\begin{proof}
 For a general hypersurface (without using the self-shrinker equation),   the Laplacian of $A$ is (see, e.g., lemma B.8 in \cite{Si} where the sign convention for the $a_{ij}$'s is reversed)
 \begin{equation}	\label{e:simsim}
 	  \left( \Delta  A \right)_{ij}  = -    |A|^2 \, a_{ij} - H \, a_{ik} \, a_{jk} - H_{ij} \, .
 \end{equation}
 Here $H_{ij}$ is the $ij$ component of the Hessian of $H$.
 Working at a point $p$ and choosing the frame $e_i$ so that $\nabla_{e_i}^T e_j (p) = 0$,
 it follows from \eqr{e:jc3} and \eqr{e:forlast} that
\begin{equation}	\label{e:jc3forlast}
	2 \,    H_{ij} =
	 - a_{ik,j} \, \langle x , e_k \rangle
	  - a_{ij}
	 - 2 \, Ha_{ik}a_{kj}   \, .
\end{equation}
Substituting this into \eqr{e:simsim} and using Codazzi's equation (i.e., $a_{ik,j} = a_{ij,k}$), we get at $p$
 \begin{equation}	\label{e:simsim2}
 	  \left( \Delta  A \right)_{ij}  = -    |A|^2 \, a_{ij}
	  + a_{ij,k} \, \langle \frac{x}{2} , e_k \rangle
	  + \frac{1}{2} \, a_{ij}
	   \, ,
 \end{equation}
 so that at $p$ we have $L \, A = A$.  Since $p$ is arbitrary, this holds everywhere on $\Sigma$.

To get the second claim, first compute
  (without using the above equation  for $A$) that
\begin{align}
	L \, |A| &= L \, \left( |A|^2 \right)^{1/2} =   \frac{ \Delta |A|^2 }{2 \, |A|} -  \frac{ \left| \nabla |A|^2 \right|^2}{4 \, |A|^3} +
	\left( |A|^2 + \frac{1}{2} \right) \, |A| - \frac{1}{4} \, \langle x , \frac{\nabla |A|^2}{|A|} \rangle \notag \\
	&=   \frac{ \langle A , \Delta  A \rangle + |\nabla A|^2}{  |A|} -  \frac{\left| \nabla |A|^2 \right|^2}{4 \, |A|^3} +
	\left( |A|^2 + \frac{1}{2} \right) \, |A| - \frac{ \langle A ,   \nabla_{\frac{x}{2}}   A   \rangle}{|A|} \\
	&= \frac{ \langle A , L \, A \rangle}{|A|} +  \frac{ \left| \nabla A \right|^2 - \left| \nabla |A| \right|^2}{|A|}
	 \, . \notag
\end{align}
This is well-defined and smooth since $|A| \ne 0$.    Substituting $L \, A = A$  into this and using \eqr{e:csA} gives \eqr{e:LmodA}.
\end{proof}

\subsection{Weighted estimates for $|A|$ and a uniqueness result}

In this subsection,  square brackets $\left[ \cdot \right]$ denote the weighted integrals
 $\left[ f \right] = \int_{\Sigma} f \, \e^{- \frac{|x|^2}{4} } \, d\mu$.   We will   use the operator
  $\cL = \Delta   - \frac{1}{2} \, x \cdot \nabla$ from Lemma \ref{l:self} and Corollary \ref{c:self}.

The next proposition adapts an estimate of Schoen, Simon, and Yau, \cite{ScSiY},  for stable minimal
hypersurfaces to  self-shrinkers with positive mean curvature.  The idea is to carefully play two inequalities off of each other: a stability   type inequality with a cutoff function in terms of $|A|$ and a Simons' type inequality (see subsection $2.3$ in \cite{CM4} for the minimal case).  The stability type inequality in this case will come from that $H > 0$ is an eigenfunction for $L$.

\begin{Pro}	\label{p:ssy}
If $H> 0$, then $\left[ |A|^2 + |A|^4 + |\nabla |A||^2 + |\nabla A|^2 \right] < \infty$.
\end{Pro}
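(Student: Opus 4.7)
The plan is a Schoen-Simon-Yau-type argument in the Gaussian-weighted setting, with the positive mean curvature $H$ playing the role of a positive first eigenfunction for the stability operator.  The two inputs are Simons' identity $LA = A$ from Lemma \ref{l:simonsA} (and its corollary $L|A| \geq |A|$), together with the stability-type inequality of Lemma \ref{l:L2W12}(2) applied to $f = H$, which is available since $H > 0$ and $LH = H$ by Theorem \ref{t:spectral}.  Below I write $[\,\cdot\,] = \int_\Sigma \cdot \, \e^{-|x|^2/4}$ for the Gaussian-weighted integral.

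First I would obtain $[|A|^2] < \infty$.  Sharpening the AM-GM step in the proof of Lemma \ref{l:L2W12}(2) to the parameter $\alpha = 1$ (rather than $1/2$) yields
\begin{equation}	\label{e:sharpstab}
[\eta^2 |A|^2] \leq [|\nabla\eta|^2] + \tfrac{1}{2}[\eta^2]
\end{equation}
for any Lipschitz $\eta$ in the weighted $W^{1,2}$ space.  Taking $\eta = \chi_j$, a cutoff equal to $1$ on $B_j$ with $|\nabla\chi_j|\leq 1$, and letting $j\to\infty$ by monotone convergence --- using polynomial volume growth and the Gaussian weight to keep the right-hand side bounded --- gives $[|A|^2] < \infty$.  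Applying the same inequality to $\eta = |x|\chi_j$ (with $|\nabla |x|| \leq 1$) then yields $[|A|^2 |x|^2] < \infty$, and hence $[|\nabla H|^2] < \infty$ via the pointwise bound $|\nabla H|^2 \leq \tfrac{1}{4}|A|^2 |x|^2$ that follows from \eqref{e:jc2}.

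Next I would upgrade to $[|A|^4] < \infty$ by substituting $\eta = \chi |A|$ into \eqref{e:sharpstab} with $\chi$ compactly supported.  Expanding $|\nabla(\chi |A|)|^2$ and handling the cross term with Cauchy-Schwarz with parameter $\delta > 0$ gives
\begin{equation}
[\chi^2 |A|^4] \leq (1+\tfrac{1}{\delta})[|A|^2|\nabla\chi|^2] + (1+\delta)[\chi^2|\nabla|A||^2] + \tfrac{1}{2}[\chi^2|A|^2].
\end{equation}
The refined Kato inequality \eqref{e:linalg} of Lemma \ref{l:csA} trades $[\chi^2|\nabla|A||^2]$ for $\tfrac{n+1}{n+3}[\chi^2|\nabla A|^2]$ plus a term controlled by the already-finite $[\chi^2|\nabla H|^2]$.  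In parallel, testing the Simons identity $\tfrac{1}{2}\cL|A|^2 = |\nabla A|^2 + \tfrac{1}{2}|A|^2 - |A|^4$ (derived from $LA = A$) against $\chi^2$, using self-adjointness of $\cL$ from Corollary \ref{c:self}, and absorbing the boundary cross term via Cauchy-Schwarz with parameter $\tau > 0$ produces
\begin{equation}
(1-\tau)[\chi^2|\nabla A|^2] \leq [\chi^2|A|^4] + \tfrac{1}{\tau}[|\nabla\chi|^2 |A|^2].
\end{equation}

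Feeding the Simons bound into the stability bound puts a coefficient $\tfrac{(1+\delta)(n+1)}{(n+3)(1-\tau)}$ in front of $[\chi^2|A|^4]$ on the right-hand side.  The crux of the argument --- and the main obstacle it must overcome --- is that this coefficient has to be made strictly less than $1$; this is possible precisely because the refined Kato factor $\tfrac{n+1}{n+3}$ is already strictly below $1$, so small enough $\delta, \tau > 0$ suffice.  Absorbing $[\chi^2|A|^4]$ to the left gives $[\chi^2|A|^4] \leq C_n\bigl([|\nabla\chi|^2|A|^2] + [\chi^2|\nabla H|^2] + [\chi^2|A|^2]\bigr)$, all of whose terms are uniformly bounded under $\chi = \chi_j$ by the first paragraph.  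Passing $j \to \infty$ via monotone convergence gives $[|A|^4] < \infty$, the Simons equality then returns $[|\nabla A|^2] < \infty$, and $[|\nabla |A||^2] \leq [|\nabla A|^2]$ from the trivial Kato bound closes out all four claims.  The only remaining technical nuisance is the failure of $|A|$ to be smooth at its zero set when used as a test function, handled either by Rademacher's theorem (since $|A|$ is locally Lipschitz) or by smoothing to $\sqrt{|A|^2 + \varepsilon}$ and sending $\varepsilon \to 0$ by dominated convergence.
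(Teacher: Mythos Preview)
Your proposal is correct and follows essentially the same Schoen--Simon--Yau-style approach as the paper: derive the stability inequality \eqref{e:stab1} from the positivity of $H$, test it with $\eta = \chi|A|$, and absorb the resulting $[\chi^2|A|^4]$ term against itself using the Simons identity together with the refined Kato inequality \eqref{e:linalg}.  The only organizational difference is that the paper first combines refined Kato with the Simons equality into a single pointwise inequality \eqref{e:oneiwant} and then integrates, whereas you apply refined Kato and the integrated Simons bound in two separate steps; the absorption constant $\frac{(1+\delta)(n+1)}{(n+3)(1-\tau)}$ you obtain is equivalent to the paper's $\frac{1+\epsilon}{1+\frac{2}{n+1}-\epsilon}$.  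One minor remark: your final paragraph's worry about the zero set of $|A|$ is unnecessary here, since $H>0$ forces $|A|^2 \geq H^2/n > 0$ everywhere.
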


\begin{proof}
First, since $H > 0$,
 $\log H$ is well-defined and Theorem \ref{t:spectral} gives
  \begin{equation}	\label{e:cLlogH}
	\cL \log H  =
	- |\nabla \log H|^2 + \frac{\Delta H  - \frac{1}{2} \, \langle x , \nabla H \rangle}{H}
	 =   \frac{1}{2}  -  |A|^2 - |\nabla \log  H |^2 \, .
\end{equation}
  Given any compactly supported function $\phi$, self-adjointness of $\cL $ (Lemma \ref{l:self}) gives
  \begin{equation}	\label{e:cLloga}
  	\left[ \langle \nabla \phi^2 , \nabla \log H \rangle \right] = - \left[ \phi^2 \, \cL \log H \right] 	=
	  \left[ \phi^2 \, \left(    |A|^2 -  \frac{1}{2} + |\nabla \log H|^2 \right) \right] \, .
  \end{equation}
  Combining this with the  inequality
  $| \langle \nabla \phi^2 , \nabla \log H  \rangle|   \leq   | \nabla  \phi |^2 +
   \phi^2 \, |\nabla \log H |^2$ gives the ``stability inequality''
    \begin{equation}	\label{e:stab1}
	  \left[ \phi^2 \,      |A|^2 \right] \leq \left[ |\nabla \phi|^2 +  \frac{1}{2} \, \phi^2 \right] \, .
  \end{equation}
  We will apply this with $\phi = \eta \, |A|$ where $\eta \geq 0$ has compact support to get
     \begin{align}	\label{e:stab2}
	  \left[ \eta^2 \,      |A|^4 \right]
	  &\leq \left[ \eta^2 \, | \nabla |A| |^2 + 2 \eta \, |A| \,  |\nabla |A|| \, |\nabla \eta|
	+  |A|^2 \, |\nabla \eta |^2  +
	    \frac{1}{2} \,   \eta^2 \, |A|^2 \right]  \notag \\
	    &\leq (1 + \epsilon) \, \left[  \eta^2 \, | \nabla |A| |^2  \right]
	+ \left[  |A|^2 \,  \left(  \left( 1 + \frac{1}{\epsilon} \right) \, |\nabla \eta |^2   +
	    \frac{1}{2} \,  \eta^2  \right) \right]   \, ,
  \end{align}
  where $\epsilon > 0$ is arbitrary and the last inequality used the absorbing inequality
   $2ab \leq \epsilon a^2 + \frac{b^2}{\epsilon}$.

Second, combining the Leibniz rule $\cL f^2 = 2 |\nabla f|^2 + 2 f \, \cL f$ for a general function $f$, the equation
$\cL =  L - |A|^2-\frac{1}{2}$,  the first equality in \eqr{e:LmodA},
 and
\eqr{e:linalg} gives
 the inequality
\begin{align}	\label{e:oneiwant}
\cL\,|A|^2
&= 2 \, |\nabla |A||^2 + 2 \, |A| \, \cL |A|  =
2 \, |\nabla |A||^2 + 2 \, |A| \,  \left( L |A|
 -   |A|^3- \frac{1}{2}  |A| \right) \notag \\
 &=2 |\nabla A |^2 + |A|^2 - 2\,  |A|^4
\notag \\
			&\geq  2 \, \left(1+\frac{2}{n+1} \right)\,|\nabla |A||^2 -
		\frac{4n}{n+1} \, |\nabla H|^2 + |A|^2 - 2\,  |A|^4 \, .
\end{align}
Integrating this against $\frac{1}{2} \, \eta^2$ and using the self-adjointness of $\cL $ (Lemma \ref{l:self}) gives
\begin{equation}	
	- 2 \,  \left[ \langle \eta \, |A| \,  \nabla \eta , \nabla   |A| \rangle \right]
	\geq   \left[   \eta^2 \, \left(1+\frac{2}{n+1} \right)\,|\nabla |A||^2 -
		\frac{2n}{n+1} \, \eta^2 \,  |\nabla H|^2   -    \eta^2 \, |A|^4 \right] \, ,
\end{equation}
where we dropped the $|A|^2 \eta^2$ term that only helped us. Using the absorbing inequality
$2ab \leq \epsilon a^2 + \frac{b^2}{\epsilon}$ gives
\begin{equation}	\label{e:intsimo}
	\left[      \eta^2 \, |A|^4 \right] +  \left[ \frac{2n}{n+1} \, \eta^2 \,  |\nabla H|^2    + \frac{1}{\epsilon} \, |A|^2 \, |\nabla \eta|^2 \right]
	\geq  \left(1+\frac{2}{n+1} - \epsilon \right)\,  \left[   \eta^2 \, |\nabla |A||^2 \right]  \, ,
\end{equation}

We will assume that $|\eta| \leq 1$ and $|\nabla \eta|\leq 1$.
Combining   \eqr{e:stab2} and \eqr{e:intsimo} gives
  \begin{equation}	\label{e:stab22}
	  \left[ \eta^2 \,      |A|^4 \right]
	   	    \leq  \frac{1 + \epsilon}{ 1+\frac{2}{n+1} - \epsilon} \,
		    \left[      \eta^2 \, |A|^4 \right] +  C_{\epsilon} \,  \left[    |\nabla H|^2    +
		     |A|^2   \right]    \, ,
  \end{equation}
  where the constant $C_{\epsilon}$ depends only on $\epsilon$.
  Choosing $\epsilon > 0$ small so that $\frac{1 + \epsilon}{ 1+\frac{2}{n+1} - \epsilon} < 1$ (i.e., $\epsilon < \frac{1}{n+1}$), we can absorb the  $\left[ \eta^2 \,      |A|^4 \right] $ term on the right to get
    \begin{equation}	\label{e:stab23}
	  \left[ \eta^2 \,      |A|^4 \right]
	   	    \leq     C  \,  \left[    |\nabla H|^2    +
		     |A|^2   \right]  \leq  C \, \left[ |A|^2 (1 + |x|^2) \right]
		       \, ,
  \end{equation}
  for some constant $C$ depending only on $n$ where the last inequality used
  the gradient bound $|\nabla H| \leq |A| \, |x|$ which follows from   \eqr{e:jc2}.  Since $H>0$, it follows from
Part (2) of Lemma \ref{l:L2W12} and the polynomial volume growth
that $\left[ |A|^2 (1 + |x|^2) \right] < \infty$, so \eqr{e:stab23} and the monotone convergence theorem  give that
 $ \left[  |A|^4 \right] < \infty$.

The bound $\left[ |\nabla |A||^2 \right] < \infty$ follows immediately from  \eqr{e:intsimo},
  $ \left[  |A|^4 \right] < \infty$,  and the monotone convergence theorem,
  so it only remains to show that $\left[ |\nabla A|^2 \right] < \infty$.  To do this, integrate the  equality in \eqr{e:oneiwant}
		against $\frac{1}{2} \, \eta^2$ and use the self-adjointness of $\cL $ (Lemma \ref{l:self}) to get
\begin{equation}	
	\left[ \eta^2 ( |\nabla A |^2   -    |A|^4 ) \right] \leq
	 2 \,  \left[    |A| \,   | \nabla   |A| | \right]  \leq   \left[    |A|^2 +  | \nabla   |A| |^2  \right] \, .
\end{equation}
Here we used again that $|\eta| \leq 1$ and $|\nabla \eta| \leq 1$.
Since $\left[ |A|^2 + |\nabla |A||^2 + |A|^4 \right] < \infty$, the monotone convergence theorem gives that
  $\left[ |\nabla A|^2 \right] < \infty$, completing the proof.
\end{proof}

\begin{proof}
(of Theorem \ref{t:huisken} = Theorem \ref{t:huisken2}).
Since $H \geq 0$ and  $L H = H$ by Theorem \ref{t:spectral}, the Harnack inequality implies that either $H \equiv 0$ or $H > 0$ everywhere.  In the trivial case where $H \equiv 0$, the self-similar equation implies that $\Sigma$ is a smooth minimal cone and, hence,
  a hyperplane through $0$.  Therefore, we will assume below that $H > 0$.

In the first step, we will prove weighted integral estimates that will be needed to justify various integrations by parts in the second  step.  The second step uses $L H = H$ and $L \, |A| \geq |A|$ (and the estimates from the first step) to show that $|A| = \beta \, H$ for a constant $\beta > 0$.  As in Huisken, \cite{H4}, this  geometric identity is the key for proving the classification; we do this in the third step.

\vskip2mm
{\bf{Step 1: Weighted estimates}}: Since $H > 0$,
Proposition \ref{p:ssy}
gives
\begin{equation}	\label{e:ssy}
	 \left[ |A|^2 + |A|^4 + |\nabla |A||^2 + |\nabla A|^2 \right] < \infty \, .
\end{equation}
Thus, $|A|$ is in the weighted $W^{1,2}$ space, so (2) in Lemma \ref{l:L2W12} gives
$
	\left[ |A|^2 \, | \nabla \log H |^2 \right] < \infty$.
	Furthermore, using that $\cL \log H = \frac{1}{2} - |A|^2 - | \nabla \log H|^2$, we also get that
$\left[ |A|^2 \, |\cL \log H | \right] < \infty$.
  This gives by  the trivial inequality $2ab \leq a^2 + b^2$ the integral estimate
    \begin{equation}	\label{e:logu22H}
  	\left[ |A|^2 | \nabla \log H| + | \nabla |A|^2| \,  | \nabla \log H|
	+   |A|^2 \, | \cL \, \log H |  \right]  < \infty \, .
  \end{equation}

The definitions  of $\cL$ and $L$ and the equality in \eqr{e:LmodA} give
  \begin{equation}
   |A| \, \cL |A| = |A| \, \left( L \, |A| - |A|^2 \, |A| - \frac{1}{2} \, |A| \right) = \frac{1}{2} \, |A|^2 - |A|^4
   + |\nabla A|^2 - |\nabla |A||^2 \, ,
  \end{equation}
  and combining this with \eqr{e:ssy} and  the trivial inequality $2ab \leq a^2 + b^2$ gives
  \begin{equation}	\label{e:modAmodA}
  	\left[ |A| | \nabla |A|| + | \nabla |A||^2
	+   |A|  \, | \cL \,|A| |  \right]  < \infty \, .
  \end{equation}

 {\bf{Step 2: Geometric identities}}:  By
 \eqr{e:logu22H},
  	 we can apply Corollary \ref{c:self} to $|A|^2$ and $\log H$ to get that
\begin{equation}	\label{e:fromself1z}
	\left[ \langle \nabla |A|^2 ,  \nabla \log H \rangle \right] = - \left[ |A|^2 \, \cL \, \log H \right]
	= \left[ |A|^2 \left(   |A|^2 - \frac{1}{2} + |\nabla \log H|^2 \right) \right] \, ,
\end{equation}
where the second equality used the equation for $\cL \, \log  H$.
  Similarly, \eqr{e:modAmodA}  allows us to apply
Corollary \ref{c:self} to two copies of $|A|$ and get
\begin{equation}	\label{e:fromself2z}
	\left[ |\nabla |A||^2 \right] = - \left[ |A| \, \cL \,  |A| \right] =  - \left[ |A| \, \left( L \, |A| - |A|^2 \, |A| - \frac{1}{2} \, |A| \right)  \right]
	\leq \left[      |A|^4 - \frac{1}{2}   \, |A|^2  \right] \, ,
\end{equation}
where the inequality used that $L \, |A| \geq |A|$ by
Lemma \ref{l:simonsA}.
 Substituting \eqr{e:fromself2z} into \eqr{e:fromself1z}   and subtracting the left-hand side gives
\begin{equation}	\label{e:fromself3z}
	0 \geq \left[ |A|^2 \,  |\nabla \log H|^2 - 2  \langle |A| \, \nabla \log H , \nabla |A| \rangle + |\nabla |A||^2  \right]
	= \left[ \left| |A| \, \nabla \log H - \nabla |A| \right|^2    \right] \, .
\end{equation}
We conclude that $ |A| \, \nabla \log H - \nabla |A|  \equiv 0$ and, thus, that
 $|A| = \beta \, H$ for a constant $\beta > 0$.  In particular, $|A|$  satisfies the equation $L |A| = |A|$ and, again by Lemma \ref{l:simonsA}, we get
 \begin{equation}	\label{e:forhui}
 	|\nabla A|^2 = |\nabla |A||^2 \, .
\end{equation}

  {\bf{Step 3: Classifying $\Sigma$ using the geometric identities}}:
 Now that we have obtained the key geometric identities \eqr{e:forhui} and $|A| = \beta \, H$, the rest of the argument follows Huisken's argument
 on pages $187$ and $188$ of \cite{H4}.

 As in  Lemma \ref{l:csA}, fix a point $p \in \Sigma$ and
choose $e_i$,
$i=1,\dots,n$, so that at $p$  we have
$  a_{ij}=\lambda_i\,\delta_{ij}$.   We showed in  \eqr{e:ecss1} that
\begin{equation}   \label{e:ecss1qq}
        |A|^2\,|\nabla |A||^2
 = \sum_{k} \left( \sum_{i}
        a_{ii,k}\,\lambda_{i} \right) ^2\leq |A|^2\sum_{i,k} a_{ii,k}^2
        \leq  |A|^2\sum_{i,j,k}  a_{ij,k}^2 =   |A|^2 \, |\nabla A|^2  \,
        .
 \end{equation}
By \eqr{e:forhui}, we
must have equality in the two  inequalities in \eqr{e:ecss1qq} so we conclude that:
\begin{enumerate}
\item For each $k$, there is a constant $\alpha_k$ so that
$a_{ii,k} = \alpha_k \, \lambda_i {\text{ for every }} i \, .$
\item If $i \ne j$, then $a_{ij,k} = 0$.
\end{enumerate}
Since   $a_{ij,k}$ is fully symmetric in $i$, $j$, and $k$ by the Codazzi equations, (2) implies
\begin{enumerate}
\item[(2')]   $a_{ij,k} = 0$ unless $i=j=k$.
\end{enumerate}
If $\lambda_i \ne 0$ and $j\ne i$, then $0 = a_{ii,j} = \alpha_j \, \lambda_i$ so we must have $\alpha_j = 0$.  In particular, if the rank of $A$ is at least two at $p$, then every $\alpha_j = 0$ and, thus, (1) implies that $\nabla A (p) = 0$.

\vskip2mm
We now consider two separate cases depending on the rank of $A$.

\vskip2mm
{\bf{Case 1: The rank is greater than one}}:
Suppose first that there is some point $p$  where the rank of  $A$ is at least two.
We will show that the rank of $A$ is at least two everywhere.  To see this,
 for each $q \in \Sigma$, let $\lambda_1 (q)$ and $\lambda_2 (q)$ be the
  two eigenvalues of $A(q)$ that are largest in absolute value and define the set
 \begin{equation}
 	\Sigma_2 = \{ q \in \Sigma \, | \, \lambda_1 (q) = \lambda_1 (p) {\text{ and }}
	\lambda_2 (q) = \lambda_2 (p) \} \, .
\end{equation}
The subset $\Sigma_2$ contains $p$ and,
since the $\lambda_i$'s are continuous in $q$, it is automatically closed. Given any point $q \in \Sigma_2$, it follows that   the rank of $A$ is at least $2$ at $q$.
 Since this is an open condition, there is an open set $\Omega$ containing $q$ where the rank of $A$ is at least two.  However, we have already shown that $\nabla A = 0$ on this set and this implies that the eigenvalues of $A$ are constant on $\Omega$.  Thus, $\Omega \subset \Sigma_2$ and we conclude that $\Sigma_2$ is open.  Since $\Sigma$ is connected, we conclude that $\Sigma = \Sigma_2$ and, thus, also that
  $\nabla A = 0$ everywhere on $\Sigma$.

   In theorem $4$ of \cite{L}, Lawson showed that every smooth hypersurface with $\nabla A = 0$ splits isometrically as a product of a sphere and a linear space.  Finally, such a product satisfies the self-shrinker equation only if  the sphere is centered at $0$ and has the appropriate radius.

 \vskip2mm
{\bf{Case 2: The rank is one}}:  Since $H >0$, the remaining case is where the rank of $A$ is exactly one at every point.   It follows that there is a   unit  vector field $V$ so that
\begin{equation}
	A (v,w) = H \, \langle v , V \rangle \, \langle w , V \rangle \, .
\end{equation}
The vector field $V$ is well-defined in a neighborhood of each point, but a priori is globally well-defined only up to $\pm 1$ (it is really the direction that is globally well-defined).

Working at a point $p$ and using the notation of \eqr{e:ecss1qq},  suppose that $\lambda_1 > 0$ (i.e, that $e_1 = V$ at $p$).  It follows from (1) and (2') that $a_{ij,k} = 0$ except possibly when $i=j=k=1$.  This means that at $p$
\begin{equation}	\label{e:nabuAvw}
	\nabla_u A (v,w) = \frac{\nabla_V A(V,V)}{H}  \, \langle u , V \rangle \,  A(v,w)
	\, .
\end{equation}
Since $\nabla A$ and  $\frac{\nabla_V A(V,V)}{H}  \, \langle \cdot , V \rangle \,  A$ are both $(0,3)$-tensors and $p$ is arbitrary, this equality is independent of the choice of frame and, thus, \eqr{e:nabuAvw} holds at every point.
Hence, if $\gamma(t)$ is a unit speed geodesic on $\Sigma$ and $v(t)$ is a parallel vector field tangent to $\Sigma$ along $\gamma$,
then
\begin{equation}
	\partial_t A(v,v) = \nabla_{\gamma'} A (v,v) = \frac{\nabla_V A(V,V)}{H}  \, \langle \gamma' , V \rangle \,  A(v,v)  \, .
\end{equation}
In particular, if $v(0)$ is in the kernel of $A$ at $ \gamma(0)$, then $A(v,v)$ vanishes along $\gamma(t)$ (the proof of this is sometimes called ``Gronwall's lemma'').  In other words, the kernel of $A$ is preserved under parallel transport.  Therefore, if $v(0)$ is in the kernel of $A$ at $ \gamma(0)$, then
\eqr{e:nabuAvw} gives
\begin{equation}
	\langle \nabla^{\RR^{n+1}}_{\gamma'} v    , \nn   \rangle    = A_{\gamma(t)} \,  (\gamma' (t) , v(t)) \equiv 0 \, ,
\end{equation}
so the vector field $v(t)$ is not just parallel along the curve, but it is actually {\underline{constant}}.  It follows that there are $(n-1)$ constant vectors $e_2 , \dots , e_n$ tangent to $\Sigma$ that give a global orthonormal frame  for the kernel of $A$.  Consequently, $\Sigma$ is invariant under the (isometric) translations in the $(n-1)$-plane spanned by $e_2 , \dots , e_n$.
Therefore,  $\Sigma$ is a product of a curve $\tilde{\gamma} \subset \RR^2$  and this   $(n-1)$-dimensional subspace.  The curve  $\tilde{\gamma}$ is a smooth embedded self-shrinking curve in $\RR^2$ with positive curvature (i.e., it is convex) and with ``polynomial length growth''.   Finally, by Corollary \ref{c:curvey} below, $\tilde{\gamma}$ must be a round circle.
\end{proof}

 The next lemma, which was used in the last paragraph of the preceding proof,  shows that any complete self-shrinking curve in $\RR^2$ is either a straight line or it lies in a bounded  set.
 In particular, if it has ``polynomial length growth'', then it is either a straight line or it is closed.  We were unable to find a proof of this in the literature, so we will include one here:

\begin{Lem}		\label{l:curvey}
If $\gamma: \RR \to \RR^2$ satisfies $|\gamma'| =1$ and $H= \frac{1}{2} \, \langle x , \nn \rangle$, then
 either $\gamma$ is a straight line through the origin or $H^2$ is positive and $|x|^2$ is   bounded.
\end{Lem}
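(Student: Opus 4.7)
\medskip

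\noindent\textbf{Proof proposal.} The plan is to derive two ODEs for the functions $f(s) = |\gamma(s)|^2$ and $H(s)$, then observe that one of them forces a dichotomy ($H$ is either identically zero or nowhere zero), and the other yields a conserved quantity pinning $f$ to a bounded set in the nonzero case.

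First I would set up the Frenet formulas with the paper's sign conventions. Since $\Delta x = -H\nn$ on a hypersurface, for a curve this reads $T' = -H\nn$ where $T = \gamma'$. Differentiating the unit normal gives $\nn' = HT$ (consistent with $\nabla_{e_i}\nn = -a_{ij}e_j$ and $a_{11} = \langle T',\nn\rangle = -H$). Using $\frac{1}{2}f' = \langle \gamma, T\rangle$ and the self-shrinker equation $H = \frac{1}{2}\langle \gamma,\nn\rangle$, straightforward differentiation gives
\begin{equation}
f'' \;=\; 2 - 4H^2, \qquad H' \;=\; \tfrac{H}{4}\,f'.
\end{equation}
These are the two equations that drive everything.

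Next I would exploit the $H$-equation. Since $H' = (f'/4)\,H$ is linear in $H$, Gronwall (or direct integration $\log|H(s)| - \log|H(s_0)| = (f(s)-f(s_0))/4$) forces the dichotomy: either $H\equiv 0$, or $H$ never vanishes and
\begin{equation}
H(s) \;=\; C\,e^{f(s)/4}
\end{equation}
for some nonzero constant $C$. In the first case $\gamma$ is a straight line, and substituting into $H = \tfrac12\langle x,\nn\rangle = 0$ forces the (constant) signed distance from the origin to vanish, so the line passes through $0$. This handles the trivial case and is the first assertion of the lemma.

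For the nontrivial case, substitute $H^2 = C^2 e^{f/2}$ into $f'' = 2 - 4H^2$ to get $f'' = 2 - 4C^2 e^{f/2}$. Multiplying by $f'$ and integrating produces the first integral
\begin{equation}
(f')^2 \;=\; 4f - 16C^2\,e^{f/2} + 2E
\end{equation}
for a constant $E$. Now the main observation: the right-hand side must remain $\geq 0$ along $\gamma$, yet the function $g(f) = 4f - 16C^2 e^{f/2}$ tends to $-\infty$ as $f\to\infty$ (the exponential dominates the linear term). Hence the set $\{f\geq 0 : g(f)+2E \geq 0\}$ is bounded above, which forces $|\gamma(s)|^2 = f(s)$ to stay in a bounded interval for all $s\in\RR$. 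Combined with $H^2 = C^2 e^{f/2} > 0$, this is the second alternative.

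I do not anticipate a real obstacle here: everything reduces to the two ODEs above, and the only subtlety is keeping the signs straight in the Frenet formulas (making sure $T' = -H\nn$ and $\nn' = HT$ match the paper's convention $H=\dv\nn$). The energy-type argument for boundedness is standard once the first integral is in hand.
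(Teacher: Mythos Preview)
Your proof is correct and follows essentially the same route as the paper: both derive $H' = \tfrac{1}{4}Hf'$ and hence the conserved quantity $H^2 e^{-|x|^2/2}$, which is exactly the paper's $E(\gamma)$. The one minor difference is the boundedness step: the paper avoids your first integral by using the pointwise Cauchy--Schwarz bound $H^2 \le |x|^2/4$ directly (so $H^2 e^{-2H^2} \ge E$ forces $H$ bounded, then $e^{|x|^2/2}=H^2/E$ bounds $|x|^2$); note also that your integration constant $E$ is actually zero, since the Pythagorean identity $|\gamma|^2 = \langle\gamma,T\rangle^2 + \langle\gamma,\nn\rangle^2$ gives $(f')^2 = 4f - 16H^2$ on the nose.
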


\begin{proof}
The geodesic curvature   is   $H = - \langle
\nabla_{\gamma'} \gamma' , \nn \rangle$.
 Since $\nabla_{\gamma'} \nn = H \, \gamma'$,   we get
\begin{equation}	\label{e:Hpgo}
	2 \, H' =  \nabla_{\gamma'} \, \langle x , \nn \rangle
	= \langle \gamma' , \nn \rangle  + \langle x , \nabla_{\gamma'} \nn \rangle = H  \, \langle x , \gamma' \rangle \, .
\end{equation}
Similarly, differentiating $|x|^2$ gives
\begin{equation}	\label{e:x2go}
	\left( |x|^2 \right)' =   \nabla_{\gamma'} \, \langle x , x \rangle = 2 \, \langle \gamma' , x \rangle   \, .
\end{equation}
Combining these two equations, we see that
\begin{equation}
	\e^{\frac{|x|^2}{2} } \, \left( H^2 \, \e^{- \frac{|x|^2}{2} } \right)' =   	  2 \, H \, H' - \frac{1}{2} \, H^2 \, \left( |x|^2 \right)'   	= 	   H^2 \, \langle x , \gamma' \rangle -  H^2 \, \langle x , \gamma' \rangle =
	 0  \, ,
\end{equation}
so the quantity  $ H^2 \, \e^{- \frac{|x|^2}{2} } $
 is constant on $\gamma$; call this constant $E = E(\gamma)$.

 To see why this implies that $H^2$ is bounded, we use that
  $H^2 \leq \frac{|x|^2}{4}$ and, thus,
 \begin{equation}
 	H^2 \, \e^{- 2 \, H^2 } \geq  H^2 \, \e^{- \frac{|x|^2}{2} }  = E (\gamma) \, .
 \end{equation}
 In particular, since  $\lim_{u \to \infty} (u^2 \e^{-2u^2}) = 0$ and we may assume that $E(\gamma) > 0$ (otherwise $H \equiv 0$), we see that $H$
is  bounded.

Suppose that $\gamma$ is not a straight line through the origin.  It follows that $E(\gamma) > 0$ and, thus, that   \begin{equation}
 	\e^{ \frac{|x|^2}{2} } = \frac{H^2}{E(\gamma)}  \, .
 \end{equation}
 It follows   that $H^2> 0$ and, since $H^2$ is bounded by (1), that $|x|^2$ is bounded.
\end{proof}

It follows from Lemma \ref{l:curvey} that any simple complete self-shrinking curve in $\RR^2$ without boundary and with polynomial length growth  must either be a straight line or must be closed and convex.  Combining this with   Gage-Hamilton,   \cite{GH},  gives the following corollary:{\footnote{Gage-Hamilton showed that the curve shortening flow of any simple closed convex curve  becomes extinct at a round point; in particular, the round circle is the only simple closed convex self-shrinker.  This can also be seen by using Hamilton, \cite{Ha1}, or Huisken, \cite{H5}.  Finally,
  one could alternatively use the
Abresch and Langer, \cite{AbLa} classification of simple self-shrinkers; see Andrews, \cite{An}, for yet another  alternative proof that, unlike \cite{AbLa}, is not computer-assisted.}}

\begin{Cor}		\label{c:curvey}
Any simple complete self-shrinking curve in $\RR^2$ without boundary and with polynomial length growth  must either be a straight line or    a round circle.
\end{Cor}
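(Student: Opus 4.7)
The plan is to use Lemma \ref{l:curvey} to produce a sharp dichotomy and then invoke the Gage--Hamilton classification in the bounded case.

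First I would apply Lemma \ref{l:curvey} to $\gamma$. This immediately splits the argument into two cases: either $\gamma$ is a straight line through the origin, in which case we are done, or $H^2$ is strictly positive everywhere and $|x|^2$ is uniformly bounded on $\gamma$. So the remainder of the proof reduces to showing that in the second case $\gamma$ must be a round circle.

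Next, I would use the bounded image together with polynomial length growth to upgrade ``complete and simple'' to ``closed.'' Concretely, if $\sup_\gamma |x|^2 \leq R_0^2$, then for all $r \geq R_0$ we have $\Length(B_r(0) \cap \gamma) = \Length(\gamma)$, so the polynomial length growth hypothesis forces $\Length(\gamma) < \infty$. A simple complete curve in $\RR^2$ parametrized by arclength on all of $\RR$ would necessarily have infinite length, so $\gamma$ must instead be a closed embedded curve (topologically $\SS^1$). Since $H$ is continuous and nowhere vanishing on this connected curve, $H$ has constant sign, and after choosing the appropriate orientation $\gamma$ is a simple closed convex curve.

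Finally, I would invoke the Gage--Hamilton theorem \cite{GH} (as noted in the footnote after the statement): the only simple closed convex self-shrinking curve in $\RR^2$ is the round circle. The main, and really only, conceptual step is the case analysis already contained in Lemma \ref{l:curvey}; everything after that is a soft argument (bounded plus finite length forces closedness) followed by citing the classification of convex self-shrinking curves. There is no substantial obstacle beyond making sure the ``bounded image plus polynomial length growth implies closed'' step is carefully justified for simple complete curves.
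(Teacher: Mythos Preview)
Your proposal is correct and follows essentially the same route as the paper: apply Lemma~\ref{l:curvey} to get the dichotomy, argue that in the bounded case the curve is closed and convex, and then invoke Gage--Hamilton~\cite{GH}. The only difference is that you spell out the ``bounded image plus polynomial length growth implies finite length, hence closed'' step, which the paper leaves implicit.
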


Corollary \ref{c:curvey} has the following immediate consequences for self-shrinkers in $\RR^3$:

\begin{Cor}		\label{c:curvey2}
 Any smooth complete embedded self-shrinker in $\RR^3$ without boundary and with polynomial area growth  that
 splits off a line must either be a plane or    a round cylinder.
\end{Cor}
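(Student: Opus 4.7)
My plan is to reduce the statement to the already-proven one-dimensional classification in Corollary \ref{c:curvey}. Concretely, suppose $\Sigma \subset \RR^3$ is a smooth complete embedded self-shrinker with polynomial area growth that splits off a line isometrically. By ``splits off a line'' I mean that there is a unit vector $v \in \RR^3$ so that $\Sigma$ is invariant under translation by $tv$ for every $t \in \RR$. First I would observe that since translation by $tv$ is an isometry of $\Sigma$, the vector $v$ must lie in the tangent space of $\Sigma$ at every point; consequently the unit normal $\nn$ is everywhere orthogonal to $v$.

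Next I would choose the plane $P = v^{\perp}$ through the origin and set $\tilde{\gamma} = \Sigma \cap (P + p_0)$, where $p_0 \in P$ is chosen so that $\Sigma$ decomposes as $\Sigma = \RR v + \tilde{\gamma}$ with $\tilde{\gamma} \subset P + p_0$; by translating along $v$ (which preserves $\Sigma$) we can even arrange $\tilde{\gamma} \subset P$. The key computation is that for any $x = tv + y \in \Sigma$ with $y \in \tilde{\gamma}$, one has $\nn(x) = \nn_{\tilde{\gamma}}(y) \in P$ and $H_{\Sigma}(x) = H_{\tilde{\gamma}}(y)$; hence
\begin{equation}
H_{\tilde{\gamma}}(y) = H_{\Sigma}(x) = \frac{\langle x, \nn(x) \rangle}{2} = \frac{\langle tv + y, \nn_{\tilde{\gamma}}(y) \rangle}{2} = \frac{\langle y, \nn_{\tilde{\gamma}}(y) \rangle}{2},
\end{equation}
because $v \perp \nn_{\tilde{\gamma}}(y)$. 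Thus $\tilde{\gamma}$ is a smooth complete embedded self-shrinking curve in $P \cong \RR^2$ without boundary.

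Finally I would transfer the polynomial area growth of $\Sigma$ to polynomial length growth of $\tilde{\gamma}$. This is immediate from the product structure: for instance, $\Area(B_R^{\RR^3}(0) \cap \Sigma) \geq \sqrt{2}\,R \cdot \Length(B_{R/\sqrt{2}}^{P}(0) \cap \tilde{\gamma})$, so the bound $\Area(B_R \cap \Sigma) \leq C R^d$ yields $\Length(B_r \cap \tilde{\gamma}) \leq C' r^{d-1}$ for $r \geq 1$. With this in hand, Corollary \ref{c:curvey} forces $\tilde{\gamma}$ to be either a straight line through $0$ or a round circle centered at $0$; in the first case $\Sigma = \RR v + \tilde{\gamma}$ is a $2$-plane through the origin, and in the second case $\Sigma$ is a round cylinder with axis $\RR v$.

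The only mild obstacle is justifying that one can arrange $\tilde{\gamma} \subset v^{\perp}$ (i.e., that the cross-section passes through the origin) rather than through an arbitrary parallel plane; as sketched above, this is handled by splitting $p_0 = (p_0)_{\parallel} + (p_0)_{\perp}$ along $\RR v \oplus v^{\perp}$ and absorbing the $v$-component into the line factor, using translation invariance of $\Sigma$ along $v$. Everything else is a routine consequence of the already-established Corollary \ref{c:curvey}.
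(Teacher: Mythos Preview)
Your proposal is correct and is precisely the argument the paper has in mind: the paper states Corollary~\ref{c:curvey2} as an ``immediate consequence'' of Corollary~\ref{c:curvey} without giving further details, and you have simply spelled out the straightforward reduction (cross-section is an embedded self-shrinking curve in $v^\perp$ with polynomial length growth, hence a line or round circle). The only cosmetic point is that your discussion of $p_0$ is more elaborate than necessary---you can just take $\tilde{\gamma} = \Sigma \cap v^\perp$ directly, since translation-invariance along $v$ guarantees this intersection is nonempty and that $\Sigma = \RR v + \tilde{\gamma}$.
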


 \begin{Rem}
Lemma \ref{l:curvey} holds also for immersed self-shrinking curves.  The fact that
the quantity $H^2 \, \e^{- \frac{|x|^2}{2} }$ is constant on $\gamma$ seems not to
have been observed and may be useful in other problems.
\end{Rem}

\section{The classification of stable non-compact self-shrinkers}

 In this section, suppose that $\Sigma \subset \RR^{n+1}$ is a complete non-compact hypersurface without boundary that satisfies $H = \frac{\langle  x , \nn \rangle}{ 2}$    and  has polynomial volume growth.
 Throughout, $ L= \Delta + |A|^2 + \frac{1}{2} - \frac{1}{2} \, x \cdot \nabla$ will be operator from  the second variation formula for the $F_{0,1}$ functional.

  \subsection{The classification of $F$-stable non-compact self-shrinkers}

We can now prove that hyperplanes are the only open $F$-stable self-shrinkers with polynomial volume growth.
To do this, we will show that if $\Sigma$ is not a hyperplane, then there is a compactly supported variation of $\Sigma$ for which $F''$ is negative
no matter which values of $y$ and $h$ that we choose.

\begin{proof}
(of Theorem \ref{t:linearstable2}).
We will consider three separate cases.

\vskip2mm
\noindent {\bf{Case 1: $H$ vanishes identically.}}  It follows that  $\Sigma$ is minimal and, since $H =\frac{ \langle x , \nn \rangle}{2}$,  $\Sigma$ is also a cone.  However, the only (smooth) embedded minimal cones are hyperplanes.

\vskip2mm
\noindent {\bf{Case 2: $H$ does not vanish anywhere.}}
   In this case, Theorem \ref{t:huisken} (our extension of Huisken's classification of mean convex self-shrinkers from \cite{H4}) gives that $\Sigma$ is a product $\RR^k \times \SS^{n-k}$ where the $\SS^{n-k}$ has radius $\sqrt{2(n-k)}$ and $0 < k< n$.  We will see that these are all $F$-unstable.    Observe that   $H = \sqrt{(n-k)}/\sqrt{2}$ and $|A|^2 = \frac{1}{2}$ are  constant.  Let $x_1$ be the coordinate function corresponding to the first coordinate in the $\RR^k$.  Since $x_1$ is harmonic on $\Sigma$ and $\nabla x_1 = \partial_{1}$, we get that
   \begin{align}	\label{e:cLx1}
   	\cL \, x_1 = \Delta  \, x_1     - \frac{1}{2} \langle  x , \nabla x_1 \rangle = - \frac{1}{2} \, x_1 \, , \\
   	L \, x_1 = \cL \, x_1 + \frac{1}{2} \, x_1 + |A|^2 \, x_1  = \frac{1}{2} \, x_1 \, .   \label{e:Lx1}
   \end{align}
   It follows that $|x_1|$, $|\nabla x_1|$, and $|\cL x_1|$ are all in the weighted $L^2$ space
   since $\Sigma$ has polynomial volume growth.  Since the constant functions are also in the weighted $W^{1,2}$ space,  Corollary \ref{c:self} with $u=1$ and $v = x_1$ gives
 \begin{equation}	\label{e:secvar2bc}
	 0 =    \int_{\Sigma}
	 \left[  \cL \, x_1
		\right]
		 \, \e^{\frac{-|x|^2}{4}}
		 = - \int_{\Sigma}
	 \left[  \frac{x_1}{2}
		\right]
		 \, \e^{\frac{-|x|^2}{4}}     \, .
\end{equation}
Suppose next that
   $y \in \RR^{n+1}$ is fixed and $\phi$ is a bounded function on $\RR^k$, so that $\phi (x_1 , \dots , x_k) \, x_1$ is in the weighted $L^1$ space on $\Sigma$.  It follows from Fubini's theorem that
   \begin{align}
   	&\int_{\Sigma} \phi (x_1 , \dots , x_k) \, x_1 \, \langle y , \nn \rangle \,  \e^{\frac{-|x|^2}{4}}  \notag \\
	&\quad \quad =
	\int_{(x_1 , \dots , x_k) \in \RR^k} \, \, \left[ \phi (x_1 , \dots , x_k) \,  x_1 \, \e^{\frac{-(x_1^2 + \dots + x_k^2 + 2(n-k))}{4}} \int_{\SS^{n-k}} \langle y , \nn \rangle \,
	\right]  \, .
   \end{align}
  It is easy to see that
   $\langle y , \nn \rangle$ is independent of $(x_1, \dots , x_k)$ and is an eigenfunction of the Laplacian $\Delta_{\SS^{n-k}}$
   on the $\SS^{n-k}$ factor.  In particular,  $\langle y , \nn \rangle$ integrates to zero on each slice where $(x_1 , \dots , x_k)$ is fixed, so we conclude that
     \begin{equation}	\label{e:fjx1}
   	\int_{\Sigma} \phi (x_1 , \dots , x_k) \,  x_1 \, \langle y , \nn \rangle \,  \e^{\frac{-|x|^2}{4}}  =0  \, .
   \end{equation}
We would like to use $x_1$ as the variation, but it does not have compact support.  To deal with this, let
  $\phi_j$ be a $C^2$ cutoff function on $\RR^k$ that is identically one on $B_j$, cuts off to zero between $\partial B_j$ and $\partial B_{j+2}$,
  has $|\phi_j| \leq 1$,  $|\nabla \phi_j| \leq 2$, and $|\Delta \phi_j| \leq C$ for a fixed constant $C$.
     Therefore, using  $f_j = \phi_j \, x_1$ as a variation,
    Theorem \ref{t:secvar} gives
  \begin{equation}	\label{e:secvar2b}
	 F_{f_j}''  =   (4\pi)^{-n/2} \,  \int_{\Sigma}
	 \left[ - f_j \, L f_j + 2 \frac{\sqrt{n-k}}{\sqrt{2}} \, h \, f_j
		     -     \frac{(n-k) \, h^2}{2}
		   - \frac{\langle y , \nn \rangle^2}{2}
		\right]
		 \, \e^{\frac{-|x|^2}{4}}
		 \, ,
\end{equation}
where we used \eqr{e:fjx1} to get rid of the $f_j \, \langle y , \nn \rangle$ term.  Using \eqr{e:Lx1} and
\eqr{e:secvar2bc},  it follows easily  from the monotone convergence theorem that
  \begin{equation}	\label{e:secvar2bcc}
	 \lim_{j \to \infty}  \int_{\Sigma}
	 \left[ - f_j \, L f_j
		\right]
		 \, \e^{\frac{-|x|^2}{4}}  =    - \int_{\Sigma}
	 \left[   \frac{x_1^2}{2}
		\right]
		 \, \e^{\frac{-|x|^2}{4}}
		 {\text{ and }}
		 \lim_{j \to \infty}  \int_{\Sigma}
	  f_j
 \, \e^{\frac{-|x|^2}{4}}  = 0
		 \, .
\end{equation}
Hence,
for all $j$ sufficiently large, we get
\begin{equation}
  F_{f_j}'' \leq
		  \frac{1}{2} \, \, \,   (4\pi)^{-n/2}   \, \int_{\Sigma}
	\left[ - \frac{x_1^2}{2} \right]
		 \, \e^{\frac{-|x|^2}{4}}  \, .
\end{equation}
 Since this is negative for every $h$ and $y$, we get that $\Sigma$ is unstable as claimed.

\vskip2mm
\noindent {\bf{Case 3: $H$ vanishes somewhere, but not identically.}}     By Theorem \ref{c:pde}, we know that $\mu_1 < -1$.  Therefore, we can apply Lemma \ref{l:ortho} to get some $\bar{R}$ so that if $R \geq \bar{R}$ and $u$ is a Dirichlet eigenfunction for $\mu_1 (B_R)$, then
   for any $h \in \RR$ and any $y \in \RR^{n+1}$ we have
   \begin{equation}	\label{e:u0a}
   	\int_{B_R \cap \Sigma} \left[  -u \, L \, u + 2 u \, h \, H + u \,
	 \langle y , \nn \rangle
		    -   h^2 \, H^2
		  - \frac{\langle y , \nn \rangle^2}{2}  \right] \, \e^{ - \frac{|x|^2}{4} } < 0 \, .
   \end{equation}
   If we fix any $R \geq \bar{R}$ and then use the corresponding $u$ in
  the second variation formula from Theorem \ref{t:secvar}, then we get that $F''$ is negative for every $h$ and $y$, so we get that $\Sigma$ is unstable as claimed.
\end{proof}

\subsection{Non-compact entropy-stable self-shrinkers}

In this section, we will use our characterization of entropy-stable self-shinkers together with our classification of $F$-stable self-shinkers to  prove Theorems \ref{c:nonlin1a} and \ref{t:nonlin1a}.    By Corollary \ref{c:curvey2}, the only embedded self-shrinkers in $\RR^3$ that split off a line are cylinders and planes, so 
Theorem \ref{c:nonlin1a} is an immediate consequence of  part (2) of Theorem \ref{t:nonlin1a}.

Throughout this subsection, $\Sigma \subset \RR^{n+1}$ will be a smooth complete embedded self-shrinker without boundary and with $\lambda (\Sigma) < \infty$.

As mentioned, it suffices to prove Theorem \ref{t:nonlin1a} that classifies entropy-stable self-shrinkers in $\RR^{n+1}$; we do this next.

\begin{proof}
(of Theorem \ref{t:nonlin1a}).
We have already shown that the sphere is the only smooth embedded {\underline{closed}}  entropy-stable self-shrinker, so we may assume that $\Sigma$ is open.

We will first show part (2) of the theorem:
\begin{quote}
If $\Sigma$ is not a round $\SS^n$ and does not split off a line, then $\Sigma$ can be perturbed to a  graph $\tilde{\Sigma}$ of a compactly supported function $u$  over $\Sigma$ with $\lambda (\tilde{\Sigma}) < \lambda (\Sigma)$.  In fact, we can take $u$ to have arbitrarily small $C^m$ norm for any fixed $m$.
\end{quote}
Since $\Sigma$ does not split off a line, this follows immediately from combining Theorem
 \ref{t:nonlin1c} and  Theorem \ref{t:linearstable2}.

Now that we have part (2), the key for proving part  (1) is the following observation:\\
{\bf{Claim}}:
If $\Sigma = \RR^{n-k} \times \Sigma_0$ where $\Sigma_0 \subset \RR^{k+1}$, then
\begin{equation}
	F^{\RR^{n+1}}_{x , t_0} (\Sigma) = F^{\RR^{k+1}}_{x_0 , t_0} (\Sigma_0) \, ,
\end{equation}
 where $x_0$ is the projection of $x$ to $\RR^{k+1}$.

{\bf{Proof of the claim}}:
By induction, it suffices to do the case where $(n-k) = 1$ and $k+1 = n$.  Given a vector $y \in \RR^{n+1}$, set
$y = (y_1 , y_0)$ with $y_1$ and $y_0$ the
  projections of $y$ to  $\RR^1$ and  $\RR^n$, respectively.  Since
  \begin{equation}
  	\int_{\RR} \e^{\frac{-|x_1- y_1|^2}{4t_0}} \, dy_1 =
	(4 \, t_0)^{\frac{1}{2}}  \, \int_{\RR} \e^{ -s^2 } \,  ds = ( 4 \, \pi \, t_0)^{\frac{1}{2}} \, ,
	  \end{equation}
  Fubini's theorem gives
\begin{align}	
	F^{\RR^{n+1}}_{x , t_0} (\Sigma) &=
 	  (4\pi t_0)^{-n/2} \, \int_{\RR \times \Sigma_0} \, \e^{\frac{-|x- y|^2}{4t_0}}  =  (4\pi t_0)^{-n/2} \, \int_{\Sigma_0} \,
	  \left( \int_{\RR} \e^{\frac{-|x_1- y_1|^2}{4t_0}} \, dy_1 \right) \e^{\frac{-|x_0- y_0|^2}{4t_0}}   \notag \\
	  &=   (4\pi t_0)^{-(n-1) /2} \, \int_{\Sigma_0} \,
	  \e^{\frac{-|x_0- y_0|^2}{4t_0}} =  F^{\RR^{n}}_{x_0 , t_0} (\Sigma_0)
	  \, .
\end{align}
This gives the claim.

To prove (1),
suppose that $\Sigma = \RR^{n-k}  \times \Sigma_0$ where $\Sigma_0$ is a self-shrinking hypersurface in $\RR^{k+1}$ and is {\underline{not}} a $k$-dimensional round sphere and does not split off another line.
Part (2) that we just proved gives a graph $\tilde{\Sigma}_0$ of a compactly supported function $u$ on $\Sigma_0$ so that $\lambda_{\RR^{k+1}}  \, (\tilde{\Sigma}_0) < \lambda_{\RR^{k+1}}  \, ( {\Sigma}_0) $.
 Part (1)   follows from combining this with the claim.

\end{proof}

\section{Regularity of $F$-stable self-shrinkers}

Recall that the combined work of Brakke (Allard), \cite{B}, \cite{Al}, Huisken, \cite{H3},   Ilmanen, \cite{I1}, and White, \cite{W4}, yields that, for a MCF in $\RR^{n+1}$ starting at a smooth closed embedded hypersurface, any tangent flow (also past the first singular time) has the property that the time $-1$ slice is an $n$-dimensional $F$-stationary integral varifold\footnote{An $F$-stationary integral varifold is a weakly defined minimal hypersurface in the conformally changed metric on $\RR^{n+1}$} with Euclidean volume growth.   The main result of this section, Theorem \ref{t:mainreg} below, shows that if the regular part of such an $n$-dimensional $F$-stationary integral varifold is orientable and $F$-stable and the singular set has finite $(n-2)$-dimensional Hausdorff measure, then it is smooth (at least for $n\leq 6$) and thus Theorem \ref{t:liketo} applies and shows that such a self-shrinker is either a round sphere or a hyperplane; see Corollary \ref{c:mainreg} below.

\begin{Thm}   \label{t:mainreg}
Let $\Sigma\subset \RR^{n+1}$ be an $n$-dimensional integral varifold that is a time slice of a tangent flow of a MCF starting at a smooth closed embedded hypersurface in $\RR^{n+1}$.  If the regular part of $\Sigma$ is orientable and $F$-stable and the singular set has finite $(n-2)$-dimensional Hausdorff measure, then it corresponds to an embedded, analytic hypersurface away from a closed set of singularities of Hausdorff dimension at most $n-7$, which is absent if $n \leq 6$ and is discrete if $n=7$.
\end{Thm}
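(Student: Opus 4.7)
The plan is to interpret $F$-stability of the regular part as ordinary stability of $\Sigma$ regarded as a minimal hypersurface in the smooth conformal metric $g_{ij} = \e^{-|x|^2/(2n)}\,\delta_{ij}$ on $\RR^{n+1}$ (as indicated in the footnote following Conjecture \ref{c:m1}), and then to invoke the Schoen--Simon regularity theorem for stable stationary integral varifolds with an a priori bound on the singular set.

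First I would translate the hypotheses. By Theorem \ref{t:secvar}, the second variation of $F$ at a self-shrinker decomposes as
\[
F'' = (4\pi)^{-n/2}\int_{\Sigma}\Big(-fLf + 2fhH - h^2 H^2 + f\langle y,\nn\rangle - \frac{1}{2}\langle y,\nn\rangle^2\Big)\,\e^{-|x|^2/4},
\]
and by Theorem \ref{t:spectral} the mean curvature $H$ and the normal components $\langle v,\nn\rangle$ of constant vectors $v\in\RR^{n+1}$ are eigenfunctions of $L$, spanning a subspace $\cK$ of dimension at most $n+2$ in the weighted $L^2$ space. If a compactly supported test function $f$ on the regular part is weighted-$L^2$ orthogonal to $\cK$, then the cross-terms in $F''$ vanish for the trivial choice $h=0$, $y=0$, so $F$-stability forces $-\int fLf\,\e^{-|x|^2/4}\geq 0$; this is exactly the Jacobi-field stability inequality for $\Sigma$ viewed as a minimal hypersurface in the conformal metric $g$. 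Since $\cK$ is finite dimensional, the Morse index of $L$ on any relatively compact piece of the regular part is bounded by $\dim\cK$, and a standard linear-algebra reduction (subtract the projection onto $\cK$) recovers the usual pointwise stability inequality for compactly supported variations on sufficiently small smooth subdomains.

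Next I would apply Schoen--Simon. The polynomial (in fact Euclidean) area growth of $\Sigma$ combined with Huisken's monotonicity (cf.\ Lemma \ref{l:volb}) translates into Euclidean local area bounds in $g$. The regular part is orientable by assumption and locally stable by the previous step, and the singular set has locally finite $(n-2)$-dimensional Hausdorff measure, which is precisely the a priori smallness hypothesis required to run the Schoen--Simon regularity machine (formulated for stable stationary integer rectifiable varifolds in a smooth Riemannian ambient). Consequently, $\Sigma$ corresponds to a smooth embedded minimal hypersurface in the analytic metric $g$ away from a closed set of Hausdorff dimension at most $n-7$, which is empty when $n\leq 6$ and discrete when $n=7$. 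Analyticity of the regular part then follows because $g$ is real analytic and the minimal surface equation in $g$ is an elliptic PDE with analytic coefficients.

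The main obstacle is the first step: rigorously reducing $F$-stability, which by definition involves an outer optimization over the auxiliary parameters $(h,y)$ encoding translation and dilation of the functional, to the pointwise minimal-surface stability hypothesis used by Schoen--Simon. What makes this work is the crucial structural feature that $F''$ differs from the bare quadratic form $-\int fLf\,\e^{-|x|^2/4}$ only through couplings of $f$ to the finite-dimensional Jacobi subspace $\cK$ spanned by $H$ and $\langle v,\nn\rangle$; orthogonality to $\cK$ therefore extracts the clean stability inequality, and because $\dim\cK\leq n+2$ is controlled, this correction can be absorbed locally without affecting the applicability of the regularity theorem on small enough subdomains.
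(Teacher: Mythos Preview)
Your overall architecture matches the paper's: view $\Sigma$ as stationary in the conformal metric, extract a local stability-type inequality from $F$-stability, and invoke Schoen--Simon. The gap is in the extraction step. Your orthogonality argument correctly shows that $-\int fLf\,\e^{-|x|^2/4}\geq 0$ whenever $f\perp\cK$, hence the Morse index of $L$ on any compact subdomain of the regular part is at most $n+2$. But ``subtract the projection onto $\cK$'' does not deliver the pointwise stability inequality on a small ball $B_r(x_0)$: if $f$ is supported in $B_r(x_0)$, then $f-f_{\cK}$ is not (the functions $H$ and $\langle v,\nn\rangle$ do not vanish near $\partial B_r(x_0)$), so it is not an admissible test function there. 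And a Morse index bound alone does not control $\mu_1$: near a singular point, where $|A|^2$ may blow up, one could have $\mu_1(B_r)\to-\infty$ as $r\to 0$ while the index stays bounded. Schoen--Simon requires the inequality $\int|A_g|_g^2\phi^2\leq(1+\epsilon)\int|\nabla_g\phi|_g^2$ for \emph{all} compactly supported $\phi$, not just those in a fixed finite-codimension subspace.

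The paper closes this gap by extracting a quantitative eigenvalue lower bound rather than an index bound. Lemma~\ref{l:not2bad} shows, by completing the square in $F''$, that $\mu_1(L,\Omega)<-\tfrac{3}{2}$ would make $F''<0$ for the first Dirichlet eigenfunction and for \emph{every} choice of $h,y$; thus $F$-stability forces $\mu_1\geq-\tfrac{3}{2}$ on every subdomain. This uniform bound, combined with the Dirichlet--Poincar\'e inequality on small balls (derived in Lemma~\ref{l:almoststable} from the Michael--Simon Sobolev inequality and the Euclidean volume bound coming from the tangent-flow hypothesis), absorbs the zeroth-order difference between $L$ and the Jacobi operator of the conformal metric and yields the $(1+\epsilon)$-stability inequality for all $\phi$ supported in a sufficiently small ball. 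That completing-the-square step, giving a hard lower bound on the bottom of the spectrum rather than a codimension bound on the negative cone, is the ingredient your argument is missing.
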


Recall that, for a MCF, at any point in space-time tangent flows exist and are Brakke flows.  In particular, the time slices of such a tangent flow are $F$-stationary integral varifolds.

As an immediate corollary of Theorem \ref{t:mainreg} combined with Theorem \ref{t:liketo},  we get:

\begin{Cor}   \label{c:mainreg}
Let $\{\cT_t\}_{t<0}$ be a tangent flow of a MCF starting at a smooth closed embedded hypersurface in $\RR^{n+1}$.  If the regular part of $\cT_{-1}$ is orientable and $F$-stable and the singular set has finite $(n-2)$-dimensional Hausdorff measure and $n\leq 6$, then $\cT_{-1}$ is either a round sphere or a hyperplane.
\end{Cor}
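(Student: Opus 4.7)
The plan is to combine the regularity statement of Theorem \ref{t:mainreg} with the classification of $F$-stable self-shrinkers in Theorem \ref{t:liketo}. Since the hypotheses of Corollary \ref{c:mainreg} are precisely the hypotheses of Theorem \ref{t:mainreg} together with the restriction $n \leq 6$, I would first apply Theorem \ref{t:mainreg} to $\cT_{-1}$. This yields that $\cT_{-1}$ corresponds to an embedded analytic hypersurface away from a closed singular set of Hausdorff dimension at most $n-7$; in the range $n \leq 6$ this exceptional set is empty, so $\cT_{-1}$ is a smooth, embedded, analytic hypersurface of $\RR^{n+1}$ without boundary.

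Next, I would verify that this smooth hypersurface satisfies the remaining hypotheses of Theorem \ref{t:liketo}. Being a time slice of a tangent flow of the MCF starting at a smooth closed embedded hypersurface $M_0$, the varifold $\cT_{-1}$ is the limit of parabolic rescalings of $M_t$, and the uniform local area bound in Lemma \ref{l:volb} (together with its corollary) guarantees that every time slice of such a blow up has polynomial (indeed Euclidean) volume growth. Hence $\cT_{-1}$ is a smooth complete embedded self-shrinker without boundary, with polynomial volume growth, and $F$-stable with respect to compactly supported variations by assumption.

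Finally, I would invoke Theorem \ref{t:liketo} directly: under exactly these hypotheses, the only $F$-stable self-shrinker in $\RR^{n+1}$ is either the round sphere or a hyperplane, which is the desired conclusion. The role of the dimension restriction $n \leq 6$ is purely to eliminate the singular set produced by Theorem \ref{t:mainreg} so that Theorem \ref{t:liketo} can be applied in its smooth form; no further analysis is needed beyond checking that the polynomial volume growth hypothesis passes to tangent flows, and the main content of the corollary is already carried by the two theorems being combined.
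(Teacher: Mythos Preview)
Your proposal is correct and follows essentially the same route as the paper, which states the corollary as an immediate consequence of combining Theorem \ref{t:mainreg} with Theorem \ref{t:liketo}. Your explicit check that polynomial volume growth passes to tangent flows via Lemma \ref{l:volb} and its corollary is a helpful elaboration of a point the paper leaves implicit.
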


Recall that we defined the operator $L$ in Section \ref{s:two} by
\begin{equation} \label{e:nes1}
L\,v=\e^{\frac{|x|^2}{4}}\,\text{div}\left(\e^{-\frac{|x|^2}{4}}\,\nabla v\right)+|A|^2v+\frac{1}{2}v\, .
\end{equation}

\begin{Lem}	\label{l:not2bad}
Suppose that $\Sigma\subset \RR^{n+1}$ is a $F$-stationary $n$-dimensional integral varifold
and $\Omega \subset \Sigma$ is an open subset of the regular part of $\Sigma$.
If $\mu_1 (L , \Omega) < -\frac{3}{2} $, then $\Omega$ is $F$-unstable.
\end{Lem}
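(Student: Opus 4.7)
The plan is, under the hypothesis $\mu_1(L,\Omega) < -\tfrac{3}{2}$, to exhibit a compactly supported normal variation in the regular part of $\Sigma$ whose second variation $F''$ is strictly negative regardless of the auxiliary parameters $y$ and $h$. The entire argument is local and lives on a smooth relatively compact subdomain of $\Omega$, so the singular set of $\Sigma$ plays no role beyond the fact that variations vanish near it.

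First I would invoke domain monotonicity (an immediate consequence of the Rayleigh quotient definition \eqref{e:lam1} restricted to compactly supported test functions) to pick a smooth relatively compact $\Omega' \subset \Omega$ with $\mu_1(L,\Omega') < -\tfrac{3}{2}$. On $\Omega'$, Corollary \ref{t:evans} applied with Dirichlet boundary conditions provides a positive Dirichlet eigenfunction $f$ with $Lf = -\mu_1(L,\Omega')\,f$. Extend $f$ by zero; the resulting function is compactly supported in the regular part of $\Sigma$, so the second variation formula of Theorem \ref{t:secvar} applies without modification to the normal variation $f\,\nn$.

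Next comes the main algebraic step. Setting $\mu = \mu_1(L,\Omega')$, the integrand in \eqref{e:secvar2} becomes
\begin{equation*}
  \mu\,f^2 + 2\,f\,h\,H - h^2\,H^2 + f\,\langle y,\nn\rangle - \tfrac{1}{2}\,\langle y,\nn\rangle^2.
\end{equation*}
Applying the absorbing inequalities $2\,f\,h\,H \leq f^2 + h^2\,H^2$ and $f\,\langle y,\nn\rangle \leq \tfrac{1}{2}\,f^2 + \tfrac{1}{2}\,\langle y,\nn\rangle^2$ (both instances of $2ab \leq a^2 + b^2$), the two cross terms are exactly absorbed into the negative quadratic forms in $h$ and in $\langle y,\nn\rangle$, and the integrand is pointwise bounded above by $\bigl(\mu + \tfrac{3}{2}\bigr)\,f^2$. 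By the choice of $\Omega'$ this coefficient is strictly negative, so $F'' < 0$ for every $h \in \RR$ and every $y \in \RR^{n+1}$, which is precisely $F$-instability.

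There is essentially no obstacle; the only point requiring care is the matching of constants, which works out because the two cross-term absorptions contribute $1$ and $\tfrac{1}{2}$ respectively to the coefficient of $f^2$, giving exactly the threshold $-\tfrac{3}{2}$ that appears in the hypothesis. The fact that $\Sigma$ is only an $F$-stationary integral varifold (rather than a smooth self-shrinker) is irrelevant because $f$ is supported in the smooth part and $F$-stationarity ensures the first variation vanishes there, so all of the identities underlying Theorem \ref{t:secvar} remain valid.
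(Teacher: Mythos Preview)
Your proof is correct and follows essentially the same approach as the paper: the paper also picks a compactly supported test function $f_0$ with $-\int f_0\,L f_0 < -\tfrac{3}{2}\int f_0^2$, substitutes into the second variation formula of Theorem \ref{t:secvar}, and then completes the square to write the integrand as $-\tfrac{1}{2}(f_0 - \langle y,\nn\rangle)^2 - (f_0 - hH)^2$, which is the same algebra as your two absorbing inequalities. Your extra step of passing to a Dirichlet eigenfunction on a compactly contained $\Omega'$ is harmless but unnecessary, since any compactly supported test function beating the Rayleigh threshold suffices.
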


\begin{proof}
If $\mu_1 (L , \Omega) < -\frac{3}{2} $, then we
 get a function $f_0$ with compact support in $\Omega$
 satisfying
  \begin{equation}	
	  -  \int_{\Omega}  \left( f_0 \, L f_0  \right) \, \e^{- \frac{|x|^2}{4} }  <  - \frac{3}{2}  \int_{\Omega}
	   f_0^2 \, \e^{- \frac{|x|^2}{4} }   \, .
  \end{equation}
	  Substituting $f_0$ into the second variation formula from Theorem \ref{t:secvar} gives
  \begin{align}	
	F'' &<   (4\pi)^{-n/2} \, \int_{\Omega}
	 \left[ - \frac{3}{2}  \, f_0^2 + 2 f_0 \, h H + f_0 \,  \langle y , \nn \rangle
		    -   h^2 \, H^2
		  - \frac{\langle y , \nn \rangle^2}{2}
		\right]
		 \, \e^{\frac{-|x|^2}{4}}   \notag \\
		 &= (4\pi)^{-n/2} \, \int_{\Omega}
	 \left[ - \frac{1}{2}  \, \left(f_0  -  \langle y , \nn \rangle \right)^2 -
	 \left(    f_0  - h H \right)^2
		\right]
		 \, \e^{\frac{-|x|^2}{4}}	  \, .
\end{align}
Since $F'' < 0$ no matter which values of $h$ and $y$ that we use, it follows that $\Omega$ is $F$-unstable.
\end{proof}

In the next lemma, $\text{reg}(\Sigma)$ is the regular part of an integral varifold $\Sigma$.  Moreover, in this next lemma, we will assume that $\Vol (B_r(x))\leq V\,r^n$ for all $0<r<1$ and all $x$.  As we have noted before, this is automatically satisfied for tangent flows.

 \begin{Lem}  \label{l:almoststable}
Given $\epsilon>0$,  an integer $n$, and $R$, $V>0$, there exists $r_0=r_0(\epsilon,  n,R,V)>0$ such that the following holds:

Suppose that $\Sigma\subset \RR^{n+1}$ is an $n$-dimensional $F$-stationary integral varifold, $F$-stable on the regular part and $\Vol (B_r(x)\cap \Sigma)\leq V\,r^n$ for all $0<r<1$ and $x\in \RR^{n+1}$, then:
\begin{itemize}
\item  $\Sigma$ is stationary with respect to the metric $g_{ij}=\e^{-\frac{|x|^2}{4}}\,\delta_{ij}$ on $\RR^{n+1}$.
\item For all $x_0\in B_R(0)$ and for all smooth functions $\phi$ with compact support in the regular part of $B_{r_0}(x_0)\cap \Sigma$, we have the stability-type inequality
\begin{equation}  \label{e:stabwi}
\int_{\Sigma}|A_g|_g^2\,\phi^2\,d\Vol_g\leq (1+\epsilon)\,\int_{\Sigma}|\nabla_g \phi|_g^2\,d\Vol_g\, .
\end{equation}
\end{itemize}
Here $A_g$, $\nabla_g$, $|\cdot |_g$, and $d\Vol_g$ are with respect to the metric $g=g_{ij}$ on $\RR^{n+1}$.
\end{Lem}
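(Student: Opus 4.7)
The plan is to deduce the first bullet directly from the variational characterization in Section~\ref{s:one}, and to prove the second bullet by combining the second variation formula (Theorem~\ref{t:secvar}) with a small-support Poincar\'e-type estimate on the regular part. The first bullet is essentially immediate: by Proposition~\ref{p:critall}, $F$-stationarity with respect to compactly supported normal variations on $\text{reg}(\Sigma)$ (with $x_0=0$, $t_0=1$ fixed) is precisely the statement that $\Sigma$ is a critical point of the weighted volume $\int e^{-|x|^2/4}\,d\mu$, which is a constant multiple of the $n$-dimensional volume in the conformal metric $g$; hence $\Sigma$ is stationary in~$g$.

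For the second bullet, I apply Theorem~\ref{t:secvar} to the variation $\phi\,\nn$ with $\phi$ supported in $\text{reg}(\Sigma)\cap B_{r_0}(x_0)$ and exploit that $F$-stability gives $\sup_{h,y} F''(\phi,h,y)\geq 0$. Since $F''(\phi,\cdot,\cdot)$ is a concave quadratic in $(h,y)$, the supremum is explicit: writing $A_1:=\int \phi H\, e^{-|x|^2/4}$, $B_1:=\int H^2\, e^{-|x|^2/4}$, $\vec v:=\int \phi\,\nn\, e^{-|x|^2/4}$ and $M:=\int \nn\,\nn^T\, e^{-|x|^2/4}$, the $F$-stability condition rearranges to
\begin{equation*}
\int \phi\,(-L\phi)\, e^{-|x|^2/4}\;\geq\;-\frac{A_1^{2}}{B_1}-\tfrac12\,\vec v^{\,T}M^{-1}\vec v.
\end{equation*}
The crucial observation is that the pointwise Young inequalities $2\phi h H-h^2 H^2\leq \phi^2$ and $2\phi\langle y,\nn\rangle-\langle y,\nn\rangle^2\leq \phi^2$ force the uniform controls $A_1^2/B_1\leq \int \phi^2\, e^{-|x|^2/4}$ and $\vec v^{\,T}M^{-1}\vec v\leq \int \phi^2\, e^{-|x|^2/4}$ (independent of the global geometry of~$\Sigma$). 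Plugging these into the weighted integration-by-parts identity from Theorem~\ref{t:spectral} yields the ``stability-with-error'' estimate
\begin{equation*}
\int |A|^2\phi^2\, e^{-|x|^2/4}\;\leq\;\int |\nabla\phi|^2\, e^{-|x|^2/4}+\int \phi^2\, e^{-|x|^2/4}.
\end{equation*}

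The remaining step is to absorb the error $\int \phi^2$ into an arbitrarily small multiple of $\int |\nabla\phi|^2$ and then to convert to the intrinsic conformal-metric inequality~\eqref{e:stabwi}. Since $\Sigma$ is $F$-stationary, $|H|=|\langle x,\nn\rangle|/2\leq (R+1)/2$ on $B_{R+1}$; together with the density bound~$V$, the Michael-Simon Sobolev inequality~\cite{Al} applied on the smooth set $\text{reg}(\Sigma)$ supplies a Poincar\'e-type estimate $\int \phi^2\leq C(n,V,R)\,r_0^2\int |\nabla\phi|^2$ for $\phi$ supported in $B_{r_0}(x_0)\cap \text{reg}(\Sigma)$. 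Taking $r_0$ small enough upgrades the previous inequality to $\int |A|^2\phi^2 e^{-|x|^2/4}\leq (1+\epsilon/2)\int |\nabla\phi|^2 e^{-|x|^2/4}$, and the conformal transformation formulas (whose multiplicative conversion factors are bounded and uniformly close to one on $B_{r_0}(x_0)\subset B_{R+1}$ as $r_0\to 0$) absorb the remaining slack into~$\epsilon$. The main obstacle is executing the Michael-Simon/Poincar\'e step uniformly across this class of varifolds (so that $C(n,V,R)$ is genuinely geometry-independent within the hypotheses) and then carefully tracking the conformal-metric conversion so that no $r_0$-independent loss appears in the final constant.
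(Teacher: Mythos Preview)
Your proposal is correct and follows essentially the same route as the paper. The only cosmetic difference is in how you obtain the key ``stability-with-error'' inequality $\int |A|^2\phi^2\,e^{-|x|^2/4}\le \int |\nabla\phi|^2\,e^{-|x|^2/4}+\int \phi^2\,e^{-|x|^2/4}$: the paper packages this as Lemma~\ref{l:not2bad} (the contrapositive statement $\mu_1(L,\Omega)\ge -\tfrac32$ on any $F$-stable region) and then reads off the inequality from the variational characterization of $\mu_1$, whereas you optimize $F''$ over $(h,y)$ directly and bound the resulting terms by the pointwise Young inequalities---these two derivations are equivalent. The remaining steps (Michael--Simon Sobolev $\Rightarrow$ Dirichlet--Poincar\'e on small balls, and the conformal conversion between $A$ and $A_g$ with $g=e^{-|x|^2/(2n)}\delta$) are exactly what the paper does, with the paper supplying the explicit formulas for $A_g$, $d\mathrm{Vol}_g$, and $|\nabla_g\phi|_g$ that you would need to carry out the conversion.
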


\begin{proof}
By inspection, the argument  in Section \ref{s:one} that showed that smooth self-shrinkers easily generalizes to show that $\Sigma$ is a stationary varifold in the metric $g_{ij}$ on $\RR^{n+1}$.  We need to show that, in the $g_{ij}$ metric on $\RR^{n+1}$ where the varifold is stationary, the usual second variational operator (or stability operator) satisfies a stability-type inequality, i.e., \eqr{e:stabwi}, on sufficiently small balls.  This will follow from $\mu_1(L,\text{reg}(\Sigma))\geq -\frac{3}{2}$ together with the Sobolev and Cauchy-Schwarz inequalities.

The bulk of the lemma is to establish \eqr{e:stabwi} for $A$, $\nabla=\nabla_{\RR^{n+1}}$, and with respect to the volume on $\Sigma$ coming from thinking of $\Sigma$ as a subset of Euclidean space.  This is because the lemma will follow from this with a slightly worse $\epsilon$ because of the following:

For a metric of the form $g_{ij}=f^2 \, \delta_{ij}$, the Christoffel symbols are
\begin{equation}
\Gamma_{k\ell}^i=\frac{1}{2}\,g^{im}\,(g_{mk,\ell}+g_{m\ell,k}-g_{k\ell,m})
=\frac{f_{\ell}\,\delta_{ik}+f_k\,\delta_{i\ell}-f_i\,\delta_{k\ell}}{f}\, .
\end{equation}
Hence, for vector fields $X$ and $Y$
\begin{align}
\nabla^g_XY&=\nabla_X^{\RR^{n+1}}Y+X_k\,Y_{\ell}\,\Gamma_{k\ell}^i\, \partial_i
=\nabla_X^{\RR^{n+1}}Y+X_k\,Y_{\ell}\,\frac{f_{\ell}\,\delta_{ik}+f_k\,\delta_{i\ell}-f_i\,\delta_{k\ell}}{f}\, \partial_i  \notag \\
&= \nabla_XY + \frac{
\langle Y , \nabla f \rangle \, X + \langle X , \nabla f \rangle \, Y - \langle X , Y \rangle \, \nabla f }{f} \, ,
\end{align}
where all quantities in the second line are computed in the Euclidean metric.  If the vector fields $X$ and $Y$ are tangent to   $\Sigma$ whose (Euclidean) unit normal is $\nn$,  then the second fundamental form $A^g$ in the $g$ metric is given by
\begin{equation}
A^g(X,Y) = f^2 \, \langle  \nabla^g_XY , f^{-1} \, \nn \rangle =  f  \, \langle  \nabla^g_XY ,  \nn \rangle
 = f  \, A(X,Y)  -  \langle X , Y \rangle \, \langle \nabla f ,  \nn \rangle  \, ,
\end{equation}
where the last equality used that $X$ and $Y$ are tangential while $\nn$ is normal.  If $e_i$ is a $\delta_{ij}$-orthonormal frame for $\Sigma$, then $f^{-1} \, e_i$ is a frame in the new metric and, thus,
\begin{equation}
	a^g_{ij} \equiv f^{-2} \, A^g ( e_i , e_j ) = f^{-1} \, a_{ij} - f^{-2} \,  \langle e_i , e_j \rangle \, \langle \nabla f , \nn \rangle \, .
\end{equation}
Squaring and using an absorbing inequality yields for all $\delta>0$
\begin{equation}
|A_g|_g^2\leq (1+\delta)\frac{|A|^2}{f^2}+n \,\left(1+\frac{1}{\delta}\right)\frac{|\nabla f|^2}{f^4}\, .
\end{equation}
Moreover, $d\Vol_g=f^n\,d\Vol$, and $f\,|\nabla_g\phi |_g=|\nabla \phi|$.    Integrating
\begin{align}  \label{e:AgA}
\int_{\Sigma}|A_g|_g^2\,\phi^2\,d\Vol_g&\leq (1+\delta) \,\sup_{|\phi|>0}f^{n-2}\int_{\Sigma}|A|^2\,\phi^2\,d\Vol\\
&+n \,\left(1+\frac{1}{\delta}\right)\sup_{|\phi|>0}(|\nabla f|^2\,f^{n-4})\int_{\Sigma}\phi^2d\Vol\, .\notag
\end{align}
Since $f=\e^{-\frac{|x|^2}{4n}}$ and the support of $\phi$ is contained in $B_{r_0}(x_0)$,  it follows that
\begin{equation}  \label{e:supinf}
\sup_{|\phi|>0}f^{n-2}\leq (1+O_{R,n}(r_0))\inf_{|\phi|>0}f^{n-2}\, ,
\end{equation}
where $O_{R,n}(r_0)\to 0$ as $r_0\to 0$.
The lemma now easily follows from \eqr{e:AgA} and \eqr{e:supinf}
provided we can show \eqr{e:stabwi} for $A$, $\nabla$, $|\cdot|$, $d\Vol$ and we can show that
$\int_{\Sigma} |\phi|^2\,d\Vol$ can be bounded by a small constant times $\int_{\Sigma}|\nabla \phi|^2\,d\Vol$ (the last will follow, after choosing $r_0$ sufficiently small, from the Dirichlet-Poincar\'e inequality that we show below).

In the remainder of the lemma, gradients, second fundamental forms, and volumes are all with respect to the Euclidean metric $\delta_{ij}$ and the metric on $\Sigma$ that it induces.

Observe first that by the Sobolev inequality  (theorem $18.6$ on page $93$ of Simon, \cite{Si}; cf. also \cite{Al}, \cite{MiSi}) for any smooth non-negative function $\psi\in C_0^{\infty}(B_{r_0}(x_0)\cap \Sigma)$ with compact support on the regular part of the varifold (here we also use that since it is $F$-stationary $H=\frac{\langle x,\nn\rangle}{2}$ and that $|x_0|\leq R$)
\begin{align}  \label{e:nes2}
\left(\int_{\Sigma}\psi^{\frac{n}{n-1}}\right)^{\frac{n-1}{n}}&\leq C\,\int_{\Sigma}(|\nabla \psi|+|H|\,\psi)\leq C\,\int_{\Sigma}(|\nabla \psi|+|x|\,\psi)\notag\\
&\leq C\,\int_{\Sigma}(|\nabla \psi|+(R+r_0)\,\psi)\, .
\end{align}
On the other hand, by the H\"older inequality
\begin{equation}
\int_{\Sigma} \psi\leq \Vol (B_{r_0}(x_0)\cap \Sigma)^{\frac{1}{n}}\,\left(\int_{\Sigma} \psi^{\frac{n}{n-1}}\right)^{\frac{n-1}{n}}\, .
\end{equation}
Combining these two inequalities and setting $\psi=|\phi|$ yields
\begin{equation}
\int_{\Sigma} |\phi |\leq C\,  \Vol (B_{r_0}(x_0)\cap \Sigma)^{\frac{1}{n}}\left( \int_{\Sigma} |\nabla \phi|+(R+r_0)\int_{\Sigma} |\phi |\right)\, .
\end{equation}
Combining this with the bound $\Vol (B_{r_0}(x_0)\cap \Sigma)\leq V\,r_0^n$ yields
\begin{equation}
(1-CV^{\frac{1}{n}} r_0\,(R+r_0))\int_{\Sigma} |\phi |\leq C\,V^{\frac{1}{n}} \,r_0\int_{\Sigma}|\nabla \phi |\, .
\end{equation}
Finally, choosing $r_0$ sufficiently small and applying the Cauchy-Schwarz inequality gives the Dirichlet-Poincar\'e inequality for a constant $C$ (here, as elsewhere in this lemma, $C$ denotes a constant, though the actual constant may change from line to line)
\begin{equation}   \label{e:DP}
\int_{\Sigma}|\phi|^2 \leq C\,r_0^2\int_{\Sigma}|\nabla \phi|^2\, .
\end{equation}

Combining \eqr{e:nes1} and Lemma \ref{l:not2bad} yields
\begin{align}
\int_{\Sigma}|A|^2\,\phi^2\,\e^{-\frac{|x|^2}{4}}&
=\int_{\Sigma}\phi\,L\,\phi\,\e^{-\frac{|x|^2}{4}}+\int_{\Sigma}|\nabla \phi|^2\,\e^{\frac{-|x|^2}{4}}
-\frac{1}{2}\int_{\Sigma}\phi^2\,\e^{-\frac{|x|^2}{4}}\notag\\
&\leq -\left(\mu_1+\frac{1}{2}\right)\int_{\Sigma}\phi^2\,\e^{-\frac{|x|^2}{4}}
+ \int_{\Sigma}|\nabla \phi|^2\,\e^{\frac{-|x|^2}{4}}\\
&\leq \int_{\Sigma}\phi^2\,\e^{-\frac{|x|^2}{4}}
+ \int_{\Sigma}|\nabla \phi|^2\,\e^{\frac{-|x|^2}{4}}\, .\notag
\end{align}
Multiplying both sides by $\e^{\frac{|x_0|^2}{4}}$ and using that $||x_0|^2 -|x|^2|\leq r_0(2R+r_0)$ on $B_{r_0}(x_0)$ yields
\begin{align}
\e^{-r_0(R+r_0)}\int_{\Sigma}|A|^2\,\phi^2&\leq \int_{\Sigma}|A|^2\,\phi^2\,\e^{\frac{|x_0|^2-|x|^2}{4}}\notag\\
&\leq \int_{\Sigma}\phi^2\,\e^{\frac{|x_0|^2-|x|^2}{4}}
+ \int_{\Sigma}|\nabla \phi|^2\,\e^{\frac{|x_0|^2-|x|^2}{4}}\\
&\leq \e^{r_0(R+r_0)}\int_{\Sigma}\phi^2
+\e^{r_0(R+r_0)}\int_{\Sigma}|\nabla \phi|^2\, .\notag
\end{align}
Hence,
\begin{equation}
\int_{\Sigma}|A|^2\,\phi^2\leq \e^{2r_0(R+r_0)}\int_{\Sigma}\phi^2+\e^{2r_0(R+r_0)}\int_{\Sigma}|\nabla \phi|^2
	\, .
\end{equation}
The lemma now easily follows from the Dirichlet-Poincar\'e inequality after choosing $r_0$ sufficiently small.
\end{proof}

\begin{proof}
(of Theorem \ref{t:mainreg}).
This is an immediate consequence of \cite{ScSi} and Lemma \ref{l:almoststable}.  Note that the argument in \cite{ScSi} goes through with the slightly weaker stability inequality \eqr{e:stabwi}.
\end{proof}

The proof of Theorem \ref{t:mainreg} relied on the result of Schoen-Simon, \cite{ScSi}.  This result has recently been significantly sharpened by Wickramasekera, \cite{Wi}.  Thus, if we use Wickramasekera's result, then we get a sharper regularity result that we describe below.  Before recalling the theorem of Wickramasekera, we need the
following definition of his:

Fix any $\alpha\in (0,1)$ we say that an $n$-dimesional varifold $\Sigma\subset \RR^{n+1}$ satisfies the $\alpha$-{\it{structural hypothesis}} if no singular point of $\Sigma$ has a neighborhood in which the support of $\Sigma$ is the union of embedded $C^{1, \alpha}$ hypersurfaces with boundary meeting (only) along an $(n-1)$-dimensional embedded $C^{1, \alpha}$ submanifold.

\begin{Pro}	\label{p:wi0}
Let $\Sigma\subset \RR^{n+1}$ be an $n$-dimensional $F$-stationary integral varifold having Euclidean volume growth and orientable, $F$-stable regular part.  Fix any $\alpha \in (0, 1)$, and suppose that $\Sigma$ satisfies the $\alpha$-structural hypothesis.

The varifold then corresponds to an embedded, analytic hypersurface away from a closed set of singularities of Hausdorff dimension at most $n-7$, which is absent if $n \leq 6$ and is discrete if $n=7$.
\end{Pro}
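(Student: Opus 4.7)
The plan is to follow the template already used for Theorem~\ref{t:mainreg}, substituting Wickramasekera's regularity theorem \cite{Wi} for the Schoen--Simon result \cite{ScSi} at the final step. The key bridge is Lemma~\ref{l:almoststable}, which converts the hypothesis of $F$-stationarity / $F$-stability into a genuine stationarity and near-stability statement in the conformally changed metric $g_{ij}=\e^{-|x|^2/(2n)}\,\delta_{ij}$; once this is in hand, the varifold $\Sigma$ becomes a candidate to which Wickramasekera's theorem directly applies.

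First, I would verify that the Euclidean volume growth assumption gives the quantitative local bound $\Vol(B_r(x)\cap\Sigma)\leq V\,r^n$ for $r<1$, $x\in B_R$, needed to invoke Lemma~\ref{l:almoststable}. Then, for any $\epsilon>0$ and any point $x_0\in B_R(0)$, Lemma~\ref{l:almoststable} supplies a radius $r_0=r_0(\epsilon,n,R,V)>0$ such that $\Sigma$ is stationary in $(\RR^{n+1},g)$ and satisfies the stability-type inequality
\begin{equation}
\int_\Sigma |A_g|_g^2\,\phi^2\,d\Vol_g \;\leq\; (1+\epsilon)\int_\Sigma |\nabla_g\phi|_g^2\,d\Vol_g
\end{equation}
for every $\phi\in C_c^\infty(\mathrm{reg}(\Sigma)\cap B_{r_0}(x_0))$. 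Since $R$ is arbitrary, after an exhaustion this shows that on every sufficiently small ball in the regular part, $\Sigma$ is a stable minimal hypersurface in $(\RR^{n+1},g)$ with respect to the weakened stability constant $(1+\epsilon)$.

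Next, I would appeal to Wickramasekera's theorem. Its hypotheses are: $\Sigma$ is a stationary integral $n$-varifold with locally bounded density ratios, whose regular part is orientable and stable, and which satisfies the $\alpha$-structural hypothesis. The first two hold in $(\RR^{n+1},g)$ by the previous paragraph and by assumption; the $\alpha$-structural hypothesis is part of the statement of the proposition. The only point requiring comment is that Wickramasekera's proof, like Schoen--Simon's, is robust under replacing the sharp stability inequality by the $(1+\epsilon)$-inequality for $\epsilon$ arbitrarily small --- exactly the observation already used in the proof of Theorem~\ref{t:mainreg}. Applying the theorem then yields that $\Sigma$ corresponds to a smooth embedded hypersurface outside a closed singular set of Hausdorff dimension at most $n-7$, empty for $n\leq 6$ and discrete for $n=7$. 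Analyticity on the regular part follows because, in the $g$-metric, $\Sigma$ is a smooth minimal hypersurface and hence real analytic by standard elliptic regularity; analyticity in the Euclidean sense is equivalent since the conformal factor $\e^{-|x|^2/(2n)}$ is analytic.

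The main technical obstacle is the one already isolated in the proof of Theorem~\ref{t:mainreg}: namely, the need to absorb the conformal factor cleanly so that the $F$-stability inequality becomes the minimal-surface stability inequality (up to a factor $(1+\epsilon)$) on small balls. This is handled by the absorbing inequality and Dirichlet--Poincar\'e estimate inside Lemma~\ref{l:almoststable}, and the only care needed here is to confirm that Wickramasekera's argument --- like Schoen--Simon's --- goes through under this mildly weakened stability hypothesis; after that point the conclusion is immediate.
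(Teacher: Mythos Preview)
Your proposal is correct and follows essentially the same approach as the paper: the paper does not spell out a separate proof of this proposition but indicates that it is obtained by repeating the argument for Theorem~\ref{t:mainreg} verbatim, replacing the Schoen--Simon result by Wickramasekera's theorem (stated as Proposition~\ref{p:wi}), with Lemma~\ref{l:almoststable} again supplying stationarity and the $(1+\epsilon)$-stability inequality on small balls. Your write-up makes explicit exactly this template, including the remark that Wickramasekera's argument, like Schoen--Simon's, tolerates the slightly weakened stability constant.
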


As mentioned,   the proof of this proposition will use a very recent result of Wickramasekera, \cite{Wi}, in place of the result of Schoen-Simon.
 For  convenience of the reader state, we state this next (the proposition is stated for a unit ball in Euclidean space, but holds for all sufficiently small balls in a fixed Riemannian manifold).

\begin{Pro}  \label{p:wi}
(Wickramasekera \cite{Wi}).
Consider an $n$-dimensional stationary integral varifold $\Sigma$  in an open ball in $\RR^{n+1}$ having finite mass and orientable regular part.  Suppose also that for all smooth functions with compact support contained in the regular part of $\Sigma$ we have the stability-type inequality $\int_{\Sigma} |A|^2\,\phi^2\leq (1+\epsilon)\,\int_{\Sigma} |\nabla \phi|^2$ for some sufficiently small $\epsilon>0$.   Fix any $\alpha \in (0, 1)$, and suppose that $\Sigma$ satisfies the $\alpha$-structural hypothesis.

The varifold then corresponds to an embedded, analytic hypersurface away from a closed set of singularities of Hausdorff dimension at most $n-7$, which is absent if $n \leq 6$ and is discrete if $n=7$.
\end{Pro}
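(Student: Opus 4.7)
\medskip

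\noindent\textbf{Proof proposal for Proposition \ref{p:wi}.} The plan is to follow the general strategy of the Schoen--Simon regularity paper but with essential new ingredients that allow the multi-sheeted behavior permitted by the weaker $\alpha$-structural hypothesis. The end goal is a dichotomy at every point $p \in \Sigma$: either a suitable rescaling of $\Sigma$ decomposes into a finite union of smooth stable minimal graphs over a hyperplane (and hence $p$ is a regular point by classical elliptic theory), or every tangent cone at $p$ is a stationary integral cone whose singular set is strictly smaller in dimension than that of $\Sigma$ itself (setting up a Federer dimension-reduction argument). The first alternative will require the $\alpha$-structural hypothesis in an essential way, as that hypothesis is precisely what rules out the ``book-like'' classical singularities that would otherwise destroy sheeting; the second, once the first is secured, delivers the codimension-$7$ bound exactly as in \cite{ScSi}.

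First I would establish the \emph{sheeting theorem}: if a $p$ satisfies $\Theta(\Sigma,p)=q\in\ZZ^+$ and some rescaling of $\Sigma$ about $p$ is $L^2$-close to a hyperplane of multiplicity $q$, then in a smaller ball $\Sigma$ consists of $q$ smooth, stable, minimal graphs (possibly overlapping along a thin set) over that hyperplane. The proof proceeds by an induction on $q$. For $q=1$ this is Allard's theorem. For larger $q$ one argues by contradiction with a sequence $\Sigma_j \to q\llbracket P\rrbracket$, normalizes by the ``excess'' $E_j^{1/2}$ where $E_j$ is the flatness relative to $P$, and extracts a \emph{fine blow-up} in the sense of Almgren/Simon. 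The stability inequality with $\epsilon$ small forces the blow-up to be a stationary, $q$-valued Dirichlet minimizer from $P$ into $P^\perp$ (equivalently a $q$-valued harmonic function); the $\alpha$-structural hypothesis and the inductive hypothesis together rule out branch-type singularities of the blow-up in a neighborhood of its $(n-1)$-dimensional branch set, forcing the blow-up to split as $q$ ordinary harmonic sheets. Standard Campanato-type decay for harmonic functions then yields the required improvement of flatness, and iteration plus elliptic regularity upgrades the $q$ sheets from $C^{1,\alpha}$ to analytic.

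The hardest step, and the one occupying the bulk of the technical work, will be this \emph{fine blow-up} analysis: one must show that the blow-up function $u:P\to \mathcal{A}_q(P^\perp)$ is genuinely a Dirichlet minimizer (as opposed to merely harmonic on its regular part), that it has no ``classical'' branch set along which two or more sheets meet transversally, and that its graph's singular set is itself of sufficiently small dimension for the inductive machinery to close. The stability inequality enters through Simon-type covering/PDE estimates that convert $\int |A|^2\phi^2$ control into $W^{1,2}$ control of the excess, and through the observation that any nontrivial nonzero transverse classical singularity of the blow-up would, pulled back to $\Sigma_j$, contradict the $\alpha$-structural hypothesis. This is where Wickramasekera's main innovation sits: all previous approaches (Schoen--Simon, Simon's cylindrical tangent-cone paper) required either strict stability (which forces $\epsilon = 0$) or an a priori quantitative non-degeneracy which we do not have here.

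Finally, with the sheeting theorem in hand, I would combine it with a \emph{minimum distance theorem} (which says that $\Sigma$ cannot be $L^2$-close to a stationary cone $\mathbf{C}$ whose singular set has dimension $\le n-7$ without actually being smooth near the spine of $\mathbf{C}$) to perform Federer--Almgren dimension reduction on $\mathrm{sing}\,\Sigma$. Each tangent cone at a singular point is a stationary integral cone; if it is a hyperplane (necessarily of multiplicity $\ge 2$), the sheeting theorem applies and the point was actually regular, a contradiction; otherwise the tangent cone is genuinely singular, and applying the same analysis on a link reduces the dimension bound by one at each step. Federer's lemma then gives $\dim_{\mathcal{H}} \mathrm{sing}\,\Sigma \le n-7$, with $\mathrm{sing}\,\Sigma = \emptyset$ when $n\le 6$ and $\mathrm{sing}\,\Sigma$ discrete when $n=7$. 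Analyticity of the regular part follows from standard elliptic regularity applied to the minimal surface equation in Monge form on each sheet.
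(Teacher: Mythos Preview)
The paper does not prove this proposition at all: it is stated as a result of Wickramasekera, attributed to \cite{Wi}, and quoted for the reader's convenience without proof. So there is no ``paper's own proof'' against which to compare your proposal. What you have written is a reasonable high-level outline of the strategy in Wickramasekera's regularity paper (sheeting theorem via fine blow-up analysis, minimum distance theorem, Federer dimension reduction), but since the present paper simply cites the result, your sketch goes well beyond anything the authors supply here.
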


\appendix

\section{Calculations for graphs over a hypersurface}		\label{s:AC}

In this appendix, we will calculate various geometric quantities for a one-parameter family $\Sigma_s$ of (normal) graphs over
 a smooth embedded hypersurface $\Sigma \subset \RR^{n+1}$.    To define $\Sigma_s$, we fix a  unit normal $\nn$ on $\Sigma$ and a
    function $u$ on $\Sigma$ and let  $\Sigma_s$ be given by
\begin{equation}	
      F(\cdot , s) : \Sigma \to \RR^{n+1} {\text{ with }} F(p,s) = p + s\, u(p) \, \nn (p) \, .
\end{equation}
Let $\nn (p,s)$ denote the unit normal to $\Sigma_s$ at the point $F(p,s)$; clearly, we have that $\nn (p,0) = \nn (p)$.  Similarly, let $H(p,s)$ be the mean curvature of $\Sigma_s$ at $F(p,s)$.

\begin{Lem}	\label{l:calcnH}
 We have that
 \begin{align}	 \label{e:Nip}
	\frac{\partial \nn }{\partial s} (p,0) & =  - \nabla u   (p) \, , \\
 	\frac{\partial H}{\partial s} (p,0) & =   - \Delta \, u (p) - |A|^2 (p)  \, u (p)  \, ,  \label{e:big4}
 \end{align}
where $\Delta$ and $A$ are the Laplacian and second fundamental form, respectively,  of $\Sigma$.
\end{Lem}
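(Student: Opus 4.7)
The plan is to prove both identities by direct differentiation of the defining relations at $s=0$. The statement (A.1) about the normal is essentially linear-algebraic, while (A.2) is a standard Jacobi-type computation; the only potentially awkward point is keeping track of how the induced metric on $\Sigma_s$ changes with $s$.

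For (A.1), I would fix a local orthonormal frame $\{e_i\}$ for $\Sigma$ near $p$ and differentiate the two conditions that characterize $\nn(p,s)$, namely $|\nn(p,s)|^2 = 1$ and $\langle \nn(p,s), \partial_i F(p,s) \rangle = 0$. The first gives $\langle \partial_s \nn|_{s=0}, \nn\rangle = 0$. For the second, compute
\begin{equation}
\partial_s \partial_i F(p,0) = \partial_i\bigl( u(p)\,\nn(p)\bigr) = (e_i u)\,\nn - u\,a_{ij}\,e_j,
\end{equation}
using $\nabla_{e_i}\nn = -a_{ij}e_j$ from \eqr{e:aij}. Pairing with $\nn$ and solving yields $\langle \partial_s \nn|_{s=0}, e_i\rangle = -e_i u$, which combined with the normal component gives $\partial_s \nn|_{s=0} = -\nabla u$.

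For (A.2), I would work with the identity $H(s) = -g^{ij}(s)\, a_{ij}(s)$, where $g_{ij}(s) = \langle \partial_i F(\cdot,s), \partial_j F(\cdot,s)\rangle$ and $a_{ij}(s) = \langle \partial_i \partial_j F(\cdot,s), \nn(p,s)\rangle$; working at a point $p$ with a frame chosen so that $\nabla^T_{e_i} e_j(p) = 0$ lets me replace coordinate derivatives by covariant ones. A direct computation using $\partial_s F = u\nn$ and $\partial_i \nn = -a_{ij}e_j$ gives
\begin{equation}
\partial_s g_{ij}\big|_{s=0} = -2 u\, a_{ij},
\end{equation}
so that $\partial_s g^{ij}|_{s=0} = 2u\,a_{ij}$ in this frame. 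For the second fundamental form, expanding $\partial_s \partial_i \partial_j F|_{s=0} = \partial_i \partial_j(u\nn)$ and using $\langle \partial_i \partial_j \nn, \nn\rangle = -a_{ik}a_{jk}$ (derived by differentiating $\langle \partial_j\nn,\nn\rangle = 0$), together with the observation that $\langle \partial_i \partial_j F|_{s=0}, \partial_s \nn\rangle = a_{ij}\langle \nn, -\nabla u\rangle = 0$, yields
\begin{equation}
\partial_s a_{ij}\big|_{s=0} = \nabla_i\nabla_j u - u\,a_{ik}a_{jk}.
\end{equation}

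Finally, combining these two variations gives
\begin{equation}
\partial_s H\big|_{s=0} = -(2u\,a_{ij})a_{ij} - \delta^{ij}\bigl(\nabla_i \nabla_j u - u\,a_{ik}a_{jk}\bigr) = -\Delta u - |A|^2 u,
\end{equation}
which is exactly \eqr{e:big4}. The only real subtlety is the bookkeeping in computing $\partial_s a_{ij}|_{s=0}$, where one must carefully separate the normal contribution (which picks up the Hessian of $u$ and the squared second fundamental form) from the tangential contribution (which vanishes thanks to the formula for $\partial_s \nn$ established in the first step); everything else reduces to applying \eqr{e:aij} and the product rule.
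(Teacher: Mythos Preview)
Your proof is correct and follows essentially the same strategy as the paper: differentiate the defining relations $|\nn|^2=1$ and $\langle \nn, F_i\rangle=0$ to obtain \eqr{e:Nip}, then compute $\partial_s g_{ij}$ and $\partial_s a_{ij}$ and combine via $H=-g^{ij}a_{ij}$. The only cosmetic difference is in the computation of $\partial_s a_{ij}$: the paper parallel-extends the frame off $\Sigma$ and uses an auxiliary lemma (Lemma~\ref{l:chri}) to track how $\nabla_{e_i} e_{n+1}$ varies in the normal direction, whereas you commute the mixed partials of $F$ directly and use $\langle \partial_i\partial_j\nn,\nn\rangle=-a_{ik}a_{jk}$, which is a slightly more economical bookkeeping for this particular result.
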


 Before proving the lemma, we
   choose an orthonormal frame $\{ e_i \}_{i\leq n}$ for the tangent space to $\Sigma$.
  This gives a frame $\{ e_1 , \dots , e_n , \, e_{n+1} = \nn \}$ for the tangent space to $\RR^{n+1}$ at points in $\Sigma$.     Let
  $a_{ij} = \langle \nabla_{e_i} e_j , \nn \rangle$ be the second fundamental form for $\Sigma$, so that
  \begin{equation}    \label{e:diffnn}
    \nabla_{e_i} \nn = -  a_{ij} \, e_j \, .
\end{equation}

\begin{Rem}
By convention, we sum over repeated indices in expressions such as \eqr{e:diffnn}.  In this section, often we sum over $j \leq n$ (as in \eqr{e:diffnn}), but we  occasionally sum over all $j \leq n+1$.  To avoid confusion, we will use the greek index $\alpha$   for sums over   $1, \dots, n+1$.
\end{Rem}

We will extend both the function $u$ and this frame to a
  small (normal) neighborhood of $\Sigma$ by parallel translation in the normal (i.e., $e_{n+1}$) direction, so that $\langle e_{n+1} , \nabla u \rangle =0$   and
\begin{equation}    \label{e:onstig}
    \nabla_{e_{n+1}} \e_i =   0 \, .
\end{equation}

The next lemma computes how covariant derivatives of   $e_{n+1}$ vary as we move off of $\Sigma$ in the normal direction.

\begin{Lem}	\label{l:chri}
If   we set $f(p, h) =  \langle \nabla_{e_i}  e_{n+1} , e_j \rangle (p + h \, \nn (p))$
for $p \in \Sigma$ and $h \in \RR$,
  then
\begin{equation}
    \frac{ \partial f}{ \partial h} (p,0) = - a_{ik} (p) \, a_{jk} (p) \, .
\end{equation}
\end{Lem}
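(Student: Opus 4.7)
The plan is to view $h \mapsto p + h\,\nn(p)$ as a normal geodesic in $\RR^{n+1}$, so that $\partial_h$ at a point of $\Sigma$ is precisely the flat covariant derivative in the direction $e_{n+1}$. Then the quantity to differentiate, $\langle \nabla_{e_i} e_{n+1}, e_j\rangle$, is a scalar function, and
\[
\frac{\partial f}{\partial h}(p,0) = \nabla_{e_{n+1}}\,\langle \nabla_{e_i} e_{n+1}, e_j\rangle\big|_{p} = \langle \nabla_{e_{n+1}}\nabla_{e_i} e_{n+1},\, e_j\rangle\big|_p + \langle \nabla_{e_i} e_{n+1},\, \nabla_{e_{n+1}} e_j\rangle\big|_p.
\]
By \eqref{e:onstig}, $\nabla_{e_{n+1}} e_j = 0$ (and in fact $\nabla_{e_{n+1}} e_\alpha = 0$ for every $\alpha$, since the full frame is parallel-transported in the normal direction). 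Hence the second term drops out and everything reduces to computing the first term.

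For the first term I would exploit the flatness of $\RR^{n+1}$, which gives $\nabla_{e_{n+1}}\nabla_{e_i} e_{n+1} - \nabla_{e_i}\nabla_{e_{n+1}} e_{n+1} = \nabla_{[e_{n+1},e_i]} e_{n+1}$. Since $\nabla_{e_{n+1}} e_{n+1} = 0$ by the parallel extension, the first summand on the left vanishes, so
\[
\nabla_{e_{n+1}}\nabla_{e_i} e_{n+1} = \nabla_{[e_{n+1},e_i]} e_{n+1}.
\]
Moreover $[e_{n+1},e_i] = \nabla_{e_{n+1}} e_i - \nabla_{e_i} e_{n+1} = -\nabla_{e_i} e_{n+1}$, again by \eqref{e:onstig}. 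Restricting to $\Sigma$ and using \eqref{e:diffnn}, we get $[e_{n+1},e_i]\big|_\Sigma = a_{ik}\, e_k$.

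Substituting this into the bracket formula and applying \eqref{e:diffnn} one more time yields, at $p \in \Sigma$,
\[
\nabla_{e_{n+1}}\nabla_{e_i} e_{n+1} = \nabla_{a_{ik} e_k} e_{n+1} = a_{ik}\,\nabla_{e_k} e_{n+1} = -a_{ik}\, a_{k\ell}\, e_\ell,
\]
and pairing with $e_j$ gives $-a_{ik} a_{kj} = -a_{ik} a_{jk}$ by symmetry of the second fundamental form, which is exactly the claim. There is no real obstacle here; the only thing one must be careful about is keeping track of which extension property is being used at each step (the parallel extension of $e_{n+1}$ itself is what kills both $\nabla_{e_{n+1}} e_{n+1}$ and $\nabla_{e_{n+1}} e_j$, and it is what lets the commutator computation collapse to a single term).
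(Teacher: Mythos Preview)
Your argument is correct and is essentially the same as the paper's: both use the parallel extension to kill $\nabla_{e_{n+1}} e_j$ and $\nabla_{e_{n+1}} e_{n+1}$, then the flatness of $\RR^{n+1}$ to reduce $\nabla_{e_{n+1}}\nabla_{e_i} e_{n+1}$ to $\nabla_{[e_{n+1},e_i]} e_{n+1}$, and finally \eqref{e:diffnn} twice. One small slip in your write-up: after the curvature identity you say ``the first summand on the left vanishes,'' but it is the \emph{second} summand $\nabla_{e_i}\nabla_{e_{n+1}} e_{n+1}$ that vanishes (as your next displayed equation makes clear).
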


\begin{proof}
Differentiating $f$ gives
\begin{equation}
    \frac{\partial f}{\partial h} (p,h) =  e_{n+1} \langle \nabla_{e_i} e_{n+1} , e_j \rangle =   \langle \nabla_{e_{n+1}} \nabla_{e_i} e_{n+1} , e_j
    \rangle  \, ,
    \end{equation}
    where the terms on the right are evaluated at $p+ h \, \nn(p)$ and  the last equality used \eqr{e:onstig}.
    Using that the Riemann curvature (of $\RR^{n+1}$) vanishes, we get
  \begin{equation}
    \frac{\partial f}{\partial h} (p,0)
    =   \langle  \nabla_{e_i}  \nabla_{e_{n+1}} e_{n+1} , e_j
    \rangle +
    \langle \nabla_{[ e_{n+1} , e_i]} e_{n+1} , e_j
    \rangle
    \, ,
\end{equation}
where  all the covariant
derivatives are now performed at $p \in \Sigma$.
To compute this, use \eqr{e:onstig}  and  \eqr{e:diffnn} to get   that $\nabla_{e_{n+1}} e_{n+1} = 0$    and
at $p$
\begin{equation}
	[e_{n+1} , e_i] = \nabla_{e_{n+1}} e_i - \nabla_{e_i} e_{n+1} = a_{ik} \,
e_k \, ,
\end{equation}
so that
\begin{equation}
   \frac{\partial f}{\partial h} (p,0)  =  a_{ik}
    \langle \nabla_{e_k } e_{n+1} , e_j
    \rangle = - a_{ik} \, a_{jk}
    \, .
\end{equation}
\end{proof}

\begin{proof}
(of Lemma \ref{l:calcnH}).
The tangent space to $\Sigma_s$
 is spanned by $\{ F_i \}_{i \leq n} $   where
\begin{equation}    \label{e:Xi}
    F_i (F(p,s)) =  dF_{(p,s)} \, (e_i(p))  = e_i(p)  + s\, u_i (p) \, \nn (p) - s\, u (p) a_{ik} (p) \, e_k (p)  \, .
\end{equation}
Here we used \eqr{e:diffnn} to differentiate $\nn$ and we used
that $\nabla_v p = v$ for any vector $v$ since $p$ is the position vector on $\Sigma$.
Using \eqr{e:Xi}, we compute the metric $g_{ij}$ for the graph relative to the frame $F_i$:
\begin{equation}
    g_{ij}(p,s) \equiv \langle F_i , F_j \rangle =   \delta_{ij} + s^2 \, u_i u_j +
    s^2 \, u^2  a_{ik}  a_{jk} - 2 \, s \, u  a_{ij}   \, ,
\end{equation}
where all of the quantities on the right are evaluated at the point $p \in \Sigma$.
 The second fundamental form  of $\Sigma_s$ at $F(p,s)$ relative to the frame $F_i$ is given by
\begin{equation}	\label{e:aijps}
 	a_{ij}(p,s) \equiv \langle \nabla_{F_i} F_j  , \nn \rangle = - \langle \nabla_{F_i} \nn   , F_j \rangle
	\, ,
\end{equation}
where this time  all quantities on the right are evaluated at the point $F(p,s)$.

{\bf{Differentiating the metric and normal}}:  By \eqr{e:Xi},
 the $s$  derivative of $F_i$   is
\begin{equation}    \label{e:Xip}
   \frac{\partial F_i}{\partial s} (p,0)  =   u_i (p) \, e_{n+1} (p) -  u (p) \, a_{ik} (p) \, e_k (p) \, .
\end{equation}
If we differentiate $\langle F_i , \nn \rangle = 0$ and $\langle \nn , \nn \rangle = 1$ with respect to $s$, then we get that the vector $\nn'(p) \equiv \frac{ \partial \nn }{\partial s} (p,0)$ satisfies
\begin{equation}
	  \langle e_i (p) , \nn'(p) \rangle = - \langle \nn (p) ,
	   \frac{\partial F_i}{\partial s} (p,0) \rangle
	   {\text{ and }} \langle \nn (p)  , \nn' (p) \rangle = 0 \, .
\end{equation}
Using \eqr{e:Xip}, the first equation becomes  $\langle e_i (p) , \nn'(p) \rangle = - u_i$ so we get
the first claim \eqr{e:Nip}.

Similarly, using \eqr{e:Xip} gives that
  the $s$ derivative of $g_{ij}$ at $s=0$ is
\begin{equation}    \label{e:gij}
   \frac{\partial g_{ij} }{\partial s} (p,0) =\langle e_i (p)  ,     \frac{\partial F_j}{\partial s} (p,0) \rangle
   + \langle    \frac{\partial F_i}{\partial s} (p,0) , e_j (p)  \rangle
    =  -   2 \, u(p) \, a_{ij} (p)  \, .
\end{equation}
Since $g_{ij} (p,0) = \delta_{ij}$, it follows   from \eqr{e:gij} that the derivative of the inverse metric is  \begin{equation}    \label{e:gijin}
   \frac{\partial g^{ij} }{\partial s} (p,0) =     2 \, u(p) \, a_{ij} (p)   \, .
\end{equation}

{\bf{The Taylor expansion of the second fundamental form}}:
 We will compute the first order Taylor expansion (in $s$) of $a_{ij}(p,s)$.
 We will use $Q$ below to denote terms that are at least quadratic in $s$; $Q$ may mean different things even in the same line.  We will use $F_i' (p)$ to denote $\frac{\partial F_i}{\partial s} (p,0)$ (whose value was recorded in \eqr{e:Xip}).

  Using \eqr{e:Nip}, we can expand $\nn (p,s)$ as
  \begin{equation}	\label{e:nps}
  	\nn (p,s) = e_{n+1} - s \, \nabla u (p) + Q \, .
  \end{equation}
Using this, we expand $  \nabla_{F_i} \nn $ to get
\begin{equation}	\label{e:a21}
     -\nabla_{F_i} \nn (p,s) = - \nabla_{e_i} e_{n+1} + s \, \nabla_{e_i}  \, (u_k (p) \, e_k)  - s \, \nabla_{F_i'} e_{n+1} + Q \, ,
\end{equation}
where each covariant derivative on the right is performed at   the point $F(p,s)$.  We will expand each of the three terms on the right in \eqr{e:a21} separately.
Using Lemma \ref{l:chri} and the fact that the frame is parallel in the $e_{n+1}$ direction, the first term is
\begin{equation}	\label{e:t1}
	- \nabla_{e_i} e_{n+1} \, (F(p,s))
	=  a_{ik} (p) \, e_k + s \, u  \,  a_{in}   \, a_{kn}   \, e_k (p) + Q \, .
\end{equation}
Since $s \, \nabla_{e_i} e_k \, (F(p,s)) = s\, \nabla_{e_i} e_k (p) +Q$,
the second term is
 \begin{equation}	\label{e:t2}
	s \, \nabla_{e_i}  \, (u_k (p) \, e_k) \, (F(p,s)) = s \, \nabla_{e_i} \nabla u (p) + Q
	 = s \, S(e_i , e_{\alpha}) \, e_{\alpha} (p)   + Q \, ,
\end{equation}
where  $S(v,w) = \langle \nabla_v \nabla u , w \rangle $ is the ($\RR^{n+1}$) hessian of $u$.  Similarly, using
Lemma \ref{l:chri} and the formula \eqr{e:Xip} for $F_i'$,  the
  third term is
\begin{align}	\label{e:t3}
- s \, \nabla_{F_i'} e_{n+1}  \, (F(p,s))  &=     -s \, u_i \, \nabla_{e_{n+1}}  e_{n+1} (p)
  + s \, u    a_{in}   \, \nabla_{e_n} e_{n+1} (p) + Q   \notag \\
  &=
	- s \, u   \,  a_{in}   \,  a_{kn}   \, e_k (p)+ Q \, ,
\end{align}
where the last equality used \eqr{e:diffnn}.
Since the $a_{in}  \, a_{kn} $ terms cancel in \eqr{e:t1} and \eqr{e:t3},
substituting these three into \eqr{e:a21} gives
\begin{equation}	\label{e:a22}
     -\nabla_{F_i} \nn (p,s) =  a_{ik} (p) \, e_k  + s \, S(e_i , e_{\alpha}) \, e_{\alpha} (p)   +  Q \, ,
\end{equation}
By \eqr{e:aijps}, taking the inner product of this with
\begin{equation}
	F_j (F(p,s)) = e_j (p) +  s \,
	 u_j   \, e_{n+1} (p) -  s \, u  \, a_{jm}   \, e_m (p)
\end{equation}
gives (all terms on the right are evaluated at $p$)
\begin{align}		\label{e:big3}
    a_{ij} (p,s) &=  \langle a_{ik} \, e_k  , e_j \rangle
    +  s \,  \langle  a_{ik} \, e_k ,   \, \left( u_j   \, e_{n+1}   -   u  \, a_{jm}   \, e_m   \right)    \rangle
   + s \,  \langle S(e_i , e_{\alpha}) \, e_{\alpha}    , e_j \rangle
      + Q  \notag \\
      &=  a_{ij} - s \, u \,  a_{ik}  a_{jk}   + s \, S_{\Sigma} (e_i , e_j) + Q
      \,  .
\end{align}
 Here, in the last equality, we used that the Euclidean hessian $S(e_i , e_j)$ agrees with the submanifold hessian
$S_{\Sigma} (e_i , e_j)$ at $p$ since
  the normal derivative of $u$ is zero.

  {\bf{Putting it all together to compute $H$}}:
  Since $H(p,s) = - g^{ij} (p,s) \, a_{ij} (p,s)$, we can use \eqr{e:gijin} and \eqr{e:big3} to expand $H$
\begin{align}		\label{e:big4a}
    H(p,s) &=  - \left( \delta_{ij} + 2 s\, u \,  a_{ij} \right) \, \left(  a_{ij} -  s \, u \,  a_{ik}  a_{jk}
      + s \, S_{\Sigma} (e_i , e_j)
    \right)  + Q  \notag  \\
    &=  - a_{ii} - s \, \left( \Delta  u + |A|^2 \, u \right) + Q
      \, ,
\end{align}
where all quantities on the last line are evaluated at the point $p \in \Sigma$, $\Delta$ is the Laplacian on $\Sigma$, and $|A|^2 (p) = \sum_{i,j \leq n} a_{ij}^2 (p)$.  This gives \eqr{e:big4}.
\end{proof}

\end{document}